\documentclass{article}
\usepackage{amsmath,amssymb,amsthm}
\usepackage{enumerate}
\usepackage{xcolor, wasysym}
\usepackage{multirow}
\usepackage{comment}
\usepackage[marginpar = .6in, left=1in,right=1in]{geometry}
\usepackage{graphicx,import}
\usepackage{overpic}
\usepackage{tikz}
\usepackage{tikz-cd}
\usepackage{quiver}

\usepackage[colorlinks,citecolor=black,linkcolor=black]{hyperref}
\usepackage[nameinlink]{cleveref}



\theoremstyle{plain}
\newtheorem{THM}{Theorem}[section]

\newtheorem{PROP}[THM]{Proposition}
\newtheorem{LEM}[THM]{Lemma}
\newtheorem{COR}[THM]{Corollary}

\newtheorem{CLAIM}{Claim}
\newtheorem{CLAIME}{Claim}
\newtheorem{CASE}{Case}

\theoremstyle{definition}
\newtheorem{DEF}[THM]{Definition}
\newtheorem{RMK}[THM]{Remark}
\newtheorem{EX}[THM]{Example}
\newtheorem{Q}[THM]{Question}

\newenvironment{customthm}[1]
  {\innercustomthm}
  {\endinnercustomthm}

\newenvironment{customcor}[1]
  {\innercustomcor}
  {\endinnercustomcor}

\DeclareMathOperator{\Id}{Id}

\DeclareMathOperator{\Out}{Out}
\DeclareMathOperator{\PMap}{PMap}
\DeclareMathOperator{\PMapc}{PMap_{c}}
\DeclareMathOperator{\PHE}{PHE}

\DeclareMathOperator{\PPHE}{PPHE}
\DeclareMathOperator{\Homeo}{Homeo}
\DeclareMathOperator{\Map}{Map}
\DeclareMathOperator{\diam}{diam}
\DeclareMathOperator{\supp}{supp}
\DeclareMathOperator{\rk}{rk}
\DeclareMathOperator{\cork}{cork}
\DeclareMathOperator{\asdim}{asdim}

\newcommand{\Z}{\mathbb{Z}}
\newcommand{\R}{\mathbb{R}}


\newcommand{\cD}{\mathcal{D}}

\newcommand{\cF}{\mathcal{F}}
\newcommand{\cH}{\mathcal{H}}
\newcommand{\cL}{\mathcal{L}}
\newcommand{\cV}{\mathcal{V}}
\newcommand{\cU}{\mathcal{U}}
\newcommand{\cP}{\mathcal{P}}

\newcommand{\cR}{\mathcal{R}}
\newcommand{\cT}{\mathcal{T}}
\newcommand{\A}{\mathcal{A}}

\def\G{{\Gamma}}

\def\a{{\alpha}}
\def\e{{\epsilon}}
\newcommand{\inv}{^{-1}}
\newcommand{\arr}{\rightarrow}

\newcommand{\PMapcc}[1]{\overline{\PMapc(#1)}}
\newcommand{\defeq}{:=}
\newcommand{\la}{\langle}
\newcommand{\ra}{\rangle}

\renewcommand{\>}{\rangle}

\newcommand{\wm}[1]{\varphi_{\left(#1\right)}}

\newcommand{\lasso}{%
    \begin{tikzpicture}[scale=0.1, thick]
    {
    [yshift = -1]
    \draw circle(1);
    \draw (1.0,0.0)--(5.0,0.0);
    }
    \end{tikzpicture}%
}

\usetikzlibrary{shapes.geometric, arrows, cd}
\tikzstyle{start} = [rectangle, rounded corners, minimum width=3cm, text width=3cm, minimum height=1cm,text centered, draw=black, fill=yellow!30]
\tikzstyle{CB} = [rectangle, rounded corners, minimum width=3cm, text width=3.2cm, minimum height=1cm,text centered, draw=black, fill=blue!20]
\tikzstyle{nonCB} = [rectangle, rounded corners, minimum width=3cm, text width=3.2cm, minimum height=1cm,text centered, draw=black, fill=red!30]
\tikzstyle{locCB} = [rectangle, rounded corners, minimum width=4cm, text width=4.2cm, minimum height=1cm,text centered, draw=black, fill=blue!20]
\tikzstyle{locnonCB} = [rectangle, rounded corners, minimum width=4cm, text width=4.2cm, minimum height=1cm,text centered, draw=black, fill=red!30]
\tikzstyle{io} = [trapezium, trapezium left angle=70, trapezium right angle=110, text width=3cm, minimum width=3cm, minimum height=1cm, text centered, draw=black, fill=gray!30]
\tikzstyle{decision} = [diamond, minimum width=2.5cm, minimum height=2.5cm, text width=1.5cm, text centered, draw=black, fill=green!30]
\tikzstyle{locdecision} = [diamond, minimum width=1.5cm, minimum height=1.5cm, text width=2cm, text centered, draw=black, fill=green!30]
\tikzstyle{locdecision2} = [diamond, minimum width=1.5cm, minimum height=1.5cm, text width=2.9cm, text centered, draw=black, fill=green!30]
\tikzstyle{andBlock} = [semicircle, shape border rotate=270, minimum width=1cm, minimum height=1cm, text width = 1cm, text centered, draw=black, fill=orange!60]
\tikzstyle{arrow} = [thick,->,>=stealth]

\title{Coarse Geometry of Pure Mapping Class Groups of Infinite Graphs}
\author{George Domat, Hannah Hoganson, Sanghoon Kwak}
\date{}

\begin{document}

\maketitle

\begin{abstract}
  We discuss the large-scale geometry of pure mapping class groups of locally finite, infinite graphs, motivated by
  recent work of Algom-Kfir--Bestvina \cite{AB2021}
  and the work of Mann--Rafi \cite{mann2022large} on the large-scale geometry of mapping class groups of infinite-type surfaces. Using the framework of Rosendal \cite{rosendal2022} for coarse geometry of non-locally compact groups, we classify when the pure mapping class group of a locally finite, infinite graph is globally coarsely bounded (an analog of compact) and when it is locally coarsely bounded (an analog of locally compact). 
  
  Our techniques give lower bounds on the first integral cohomology of the pure mapping class group for some graphs and allow us to compute the asymptotic dimension of all locally coarsely bounded pure mapping class groups of infinite rank graphs. This dimension is always either zero or infinite. 
\end{abstract}

\noindent \textbf{Keywords:} mapping class groups, $\Out(F_{n})$, proper homotopy equivalence, coarse geometry, Polish groups, asymptotic dimension

\section{Introduction}

The mapping class group of a surface, $\Map(S)$, is the group of orientation-preserving homeomorphisms of the surface up to isotopy. Mapping class groups of finite-type surfaces have been a classical field of study for several decades, and within the past decade, there has been a newfound interest in the study of mapping class groups of infinite-type, also known as \emph{big}, surfaces. See \cite{aramayona2020big} for a recent survey on the topic of mapping class groups of infinite-type surfaces. 

The study of mapping class groups of finite-type surfaces is also intimately connected with the study of outer automorphism groups of free groups, $\Out(F_{n})$. There is a rich dictionary between $\Map(S)$ and $\Out(F_{n})$ when $S$ is of finite type, and this has led to numerous results in both fields. See \cite{bestvina2014geometry} and \cite{vogtmann2002automorphisms} for an in-depth survey of $\Out(F_{n})$ and its connections to mapping class groups. 

This connection and the recent interest in mapping class groups of infinite-type surfaces begs the question: What is an appropriate ``big'' or ``infinite-type'' analog of $\Out(F_{n})$? Recent work of Algom-Kfir--Bestvina proposes such a definition. 

\begin{DEF} [{\cite[Definition 1.1]{AB2021}}]
    Let $\Gamma$ be a locally finite, connected graph. The \text{mapping class group} of $\Gamma$, denoted by $\Map(\Gamma)$, is the group of proper homotopy equivalences of $\Gamma$ up to proper homotopy. 
\end{DEF}

When $\Gamma$ is finite, this definition exactly recovers the group $\Out(F_{n})$, where $n$ is the rank of $\Gamma$. Thus, when $\Gamma$ is infinite, we obtain a version of a ``big $\Out(F_{n})$.'' In \cite{AB2021} the authors prove a version of Nielsen realization for these groups. Another natural ``big" analog of $\Out(F_{n})$ is $\Out(F_{\infty})$, where $F_{\infty}$ is a countable-rank free group. In \cite{DHK2022Aut} we show that $\Out(F_{\infty})$ has the quasi-isometry type of a point.

Mapping class groups of surfaces are topological groups with a topology coming from the compact-open topology on $\Homeo(S)$. In the finite-type setting, these groups are finitely generated, countable, and discrete \cite{Dehn1987,Lickorish1964}. In the infinite-type setting, these groups are not compactly generated, but they are Polish and homeomorphic to the irrationals \cite{aramayona2020big,PatelVlamis}. The same holds for $\Map(\Gamma)$ when $\Gamma$ is an infinite graph \cite{AB2021}. This presents a challenge in applying the tools of geometric group theory and coarse geometry to these groups. However, recent work of Rosendal \cite{rosendal2022} has established a framework for studying the coarse geometry of non-locally compact Polish groups. Rosendal introduces the notion of ``coarsely bounded'' sets which serve as a replacement for compact subsets. Rosendal proves that if a group is generated by a coarsely bounded generating set, then the quasi-isometry type with respect to the word metric induced by that generating set is well-defined, i.e., the identity map with a different coarsely bounded generating set is a quasi-isometry. If a group is only known to be locally coarsely bounded, then it still admits a well-defined coarse equivalence (a weakening of quasi-isometry) type, provided that it also \emph{has arbitrarily small subgroups} (See \Cref{ssec:pmspace}).

Within this framework, Mann-Rafi \cite{mann2022large} have begun the study of the coarse geometry of mapping class groups of infinite-type surfaces. Under a mild condition on infinite-type surfaces, they give a complete classification of surfaces whose mapping class groups are coarsely bounded (have trivial geometry), have a coarsely bounded neighborhood about the identity (well-defined coarse equivalence type), or have a coarsely bounded generating set (well-defined quasi-isometry type).

Our goal is to establish similar results as Mann-Rafi in the setting of infinite-type graphs. The \textbf{pure mapping class group}, $\PMap(\Gamma)$, of a graph $\Gamma$ is the subgroup of $\Map(\Gamma)$ which fixes the set of ends of $\Gamma$ pointwise. We give a complete classification of when these subgroups are coarsely bounded, i.e., have trivial geometry, and when these groups are locally coarsely bounded. We use $E(\G)$ to represent the end space of $\G$ and $E_{\ell}(\G)$ for the subspace of ends accumulated by loops, defined in \Cref{sec:preliminaries}. See \Cref{fig:flowchart} for a summary of our classification of coarse boundedness of $\PMap(\G)$.

\begin{customthm}{A}\label{THM:Main}
    Let $\Gamma$ be a locally finite, infinite graph. Then $\PMap(\G)$ is coarsely bounded if and only if one of the following holds. \begin{enumerate}
        \item $\G$ has rank zero, or
        \item $\G$ has rank one and has one end, i.e. $\G=\lasso$, or
        \item $\G$ satisfies both:
        \begin{itemize}
            \item $|E_{\ell}(\G)| = 1$, and
            \item $E(\G) \setminus E_{\ell}(\G)$ discrete.
        \end{itemize}
    \end{enumerate}
\end{customthm}

\begin{figure}[ht!]
\centering
\makebox[\textwidth][c]{
\scalebox{.7}{
\begin{tikzpicture}[node distance=2cm]

\node (start) [start] {Start};
\node (in) [io, below of=start,yshift=-.1cm] {$\Gamma$ = locally finite, infinite graph};
\node (decrank) [decision, below of=in, yshift=-1cm] {\mbox{\hspace{-2px}$\textrm{rank}(\Gamma)=?$}};
\draw [arrow] (start) -- (in);
\draw [arrow] (in) -- (decrank);

\node (CBrank0) [CB, below left of=decrank, xshift= -1.5cm, yshift=-.5cm] {$\textrm{PMap}(\Gamma)$ is CB (\Cref{prop:rank0})};
\node (declasso) [decision, below of=decrank, yshift=-1.5cm] {$\vphantom{\overset{\overset{?}{=}}{=}}\Gamma\underset{?}{=}\lasso$};
\node (nonCBrank234) [nonCB, below right of=decrank, xshift = 1.5cm, yshift=-.5cm] {\mbox{$\textrm{PMap}(\Gamma)$ is Not CB} (\Cref{cor:finiterank})};
\draw [arrow] (decrank) -| node[pos=.75,anchor=west] {0} (CBrank0);
\draw [arrow] (decrank) -- node[pos=.5,anchor=west] {1} (declasso);
\draw [arrow] (decrank) -| node[pos=.75,fill=white] {$\ge 2$,finite} (nonCBrank234);

\node (CBlasso) [CB, below left of=declasso, xshift= -1.5cm, yshift=-.5cm] {$\textrm{PMap}(\Gamma)$ is CB (\Cref{prop:rank1})};
\node (nonCBrank1) [nonCB, below right of=declasso, xshift = 1.5cm, yshift=-.5cm] {\mbox{$\textrm{PMap}(\Gamma)$ is Not CB} (\Cref{prop:rank1})};
\draw [arrow] (declasso) -| node[pos=.75,anchor=west] {Yes} (CBlasso);
\draw [arrow] (declasso) -| node[pos=.75,anchor=east] {No}(nonCBrank1);

\node (decnumacl) [decision, right of=decrank, xshift = 6.4cm, yshift=2.1cm] {\mbox{\hspace{-3px}$|E_{\ell}(\Gamma)|=?$}};
\node (decdiscrete) [decision, below of=decnumacl, yshift=-3cm] {$E \setminus E_{\ell}$ discrete?};
\draw [arrow] (decrank) -- ++(6,0) |-(decnumacl);
\draw [arrow] (decrank) -- node[pos=.75,fill=white] {$\infty$}++(6,0) |- (decdiscrete);

\node (nonCBloopshift) [nonCB, below of=decnumacl, yshift = -.5cm] {\mbox{$\textrm{PMap}(\Gamma)$ is Not CB} (\Cref{thm:twoendnotCB})};

\draw [arrow] (decnumacl.south) -- node[pos=.35,anchor=east] {$\ge 2$}(nonCBloopshift);

\node (nonCBnondiscrete) [nonCB, below of=decdiscrete, yshift=-.5cm] {\mbox{$\textrm{PMap}(\Gamma)$ is Not CB} (\Cref{thm:oneendnotCB})};
\draw [arrow] (decdiscrete) -- node[pos=.35,anchor=east] {No}(nonCBnondiscrete);

\node (AND) [andBlock, right of=decdiscrete, xshift = 2cm,yshift=.62cm]{AND};
\draw [arrow] (decnumacl.east) -- node[pos=.3,anchor=north]{1} ++(1.05,0) |- (AND.north west);
\draw [arrow] (decdiscrete.east) -- node[pos=.3,anchor=north] {Yes} (AND.south west);
\node (CBdiscrete) [CB, right of=nonCBnondiscrete, xshift= 3.5cm] {$\textrm{PMap}(\Gamma)$ is CB (\Cref{THM:oneendcb})};
\draw [arrow] (AND.east) -| (CBdiscrete);
\end{tikzpicture}
} 
} 
\caption{Flowchart for classifying coarsely bounded $\PMap(\G)$.}
\label{fig:flowchart}
\end{figure}
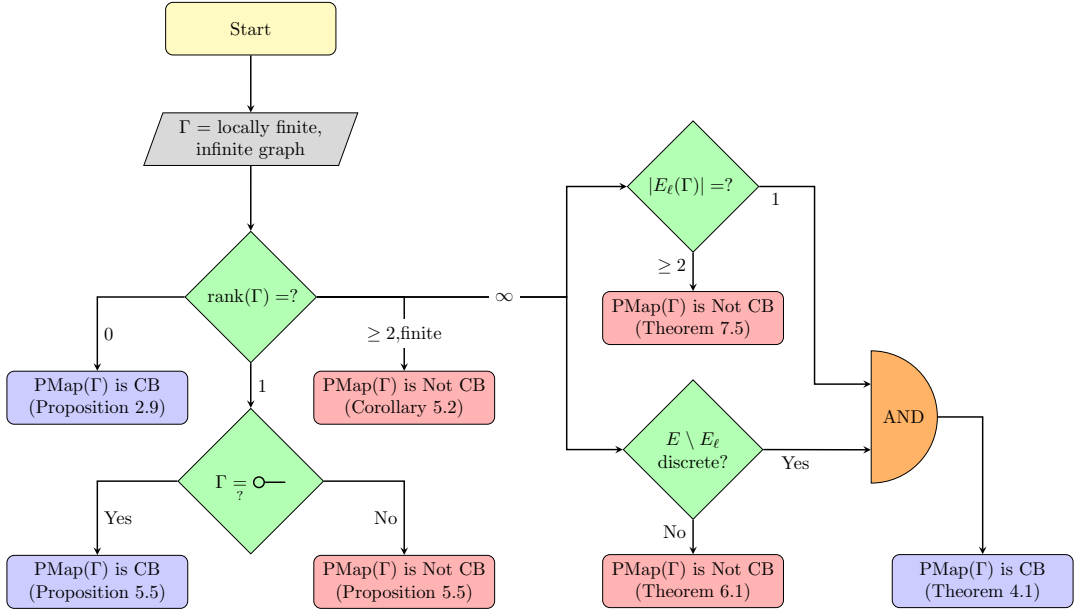

We note that this classification is \emph{not} the same classification as the one for surfaces. That is, there are surfaces with non-CB pure mapping class groups for which the ``corresponding'' graph has a CB pure mapping class group. The simplest example is the punctured Loch Ness Monster surface (one end accumulated by genus with one isolated puncture) versus the Hungry Loch Ness Monster graph (one end accumulated by loops and one other end).

We can also extend these results to the full mapping class group in some cases.

\begin{customcor}{B} 
\label{COR:MapisCB}
  If $\G$ has at least two ends accumulated by loops, and has finite end space, then the full group $\Map(\G)$ is \emph{not} coarsely bounded.
  If $\G$ has a single end accumulated by loops and $E(\G)\setminus E_{\ell}(\G)$ is discrete, then $\Map(\G)$ is coarsely bounded. 
\end{customcor}

The techniques that we use to prove \Cref{THM:Main} yield further results. The following result is analogous to a result of Aramayona-Patel-Vlamis \cite{APV2020} in the surface setting. 

\begin{customthm}{C} \label{THM:indicable} Let $\Gamma$ be a locally finite, infinite
  graph with $|E_{\ell}(\Gamma)| \geq 2$. Then $\PMap(\Gamma)$ has a nontrivial continuous
  homomorphism to $\Z$. Furthermore, if $|E_{\ell}(\G)| = n$ with
  $2\leq n< \infty$, then $\rk\left(H^{1}(\PMap(\G);\Z)\right) \geq n-1$. If
  $\lvert E_{\ell}(\G)\rvert = \infty$, then
  $H^{1}(\PMap(\G);\Z) = \bigoplus_{i=1}^{\infty} \Z$.
\end{customthm}

In particular, \Cref{THM:indicable} tells us that if $\G$ has more than one end accumulated by loops then $\PMap(\G)$ is indicable, that is, it admits a surjective homomorphism to $\Z$.
The techniques used to prove \Cref{THM:Main} also allow us to give a complete classification of graphs for which $\PMap(\G)$ is \emph{locally CB}, that is, $\PMap(\G)$ has a CB neighborhood of the identity. See \Cref{fig:flowchartloccb} for a summary of the results and proofs of \Cref{THM:LocallyCB}.
The \textbf{core graph} of $\Gamma$, denoted by $\Gamma_{c}$, is the smallest subgraph that contains all immersed loops in $\G$.

\begin{customthm}{D}\label{THM:LocallyCB}
    Let $\G$ be a locally finite, infinite graph. Then $\PMap(\G)$ is locally coarsely bounded if and only if one of the following holds. \begin{enumerate}
        \item $
        \G$ has finite rank, or
        \item $\G$ satisfies both:
        \begin{itemize}
            \item $|E_{\ell}(\G)| < \infty$, and
            \item only finitely many components of $\G \setminus \G_c$ have infinite end spaces.
      \end{itemize}
    \end{enumerate}
\end{customthm}

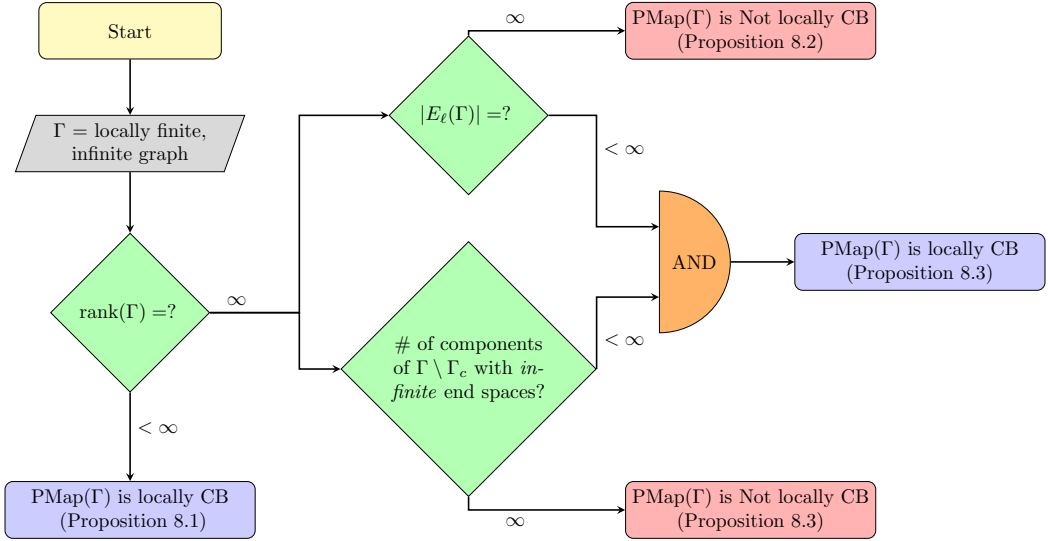
\begin{figure}[ht!]
\centering
\makebox[\textwidth][c]{
\scalebox{.75}{
\begin{tikzpicture}[node distance=2cm]
\node (start) [start] {Start};
\node (in) [io, below of=start] {$\Gamma$ = locally finite, infinite graph};
\node (decrank) [locdecision, below of=in, yshift=-1cm] {\mbox{\hspace{-2px}$\textrm{rank}(\Gamma)=?$}};
\draw [arrow] (start) -- (in);
\draw [arrow] (in) -- (decrank);

\node (CBfiniterk) [locCB, below of=decrank, yshift=-1.5cm] {\mbox{$\textrm{PMap}(\Gamma)$ is locally CB}\\~(\Cref{prop:finitelocCB})};

\draw [arrow] (decrank) --  node[pos=.4,anchor=west] {$<\infty$}(CBfiniterk);

\node (decnumacl) [locdecision, right of=decrank, xshift = 4cm, yshift = 3.5cm] {\mbox{\hspace{-3px}$|E_{\ell}(\Gamma)|=?$}};
\node (decnumofcomps) [locdecision2, below of=decnumacl,yshift=-2.5cm] {\mbox{$\#$ of components} \mbox{of $\G \setminus \G_c$ with} \emph{infinite} end spaces?};
\draw [arrow] (decrank) --++(3,0) |- (decnumacl);
\draw [arrow] (decrank) --++(3,0) node[pos=.3,anchor=south] {$\infty$} |- (decnumofcomps);

\node (AND) [andBlock, right of=decnumacl, xshift = 2cm, yshift=-2.6cm]{AND};
\node (locCBfiniteindisc) [locCB, right of=AND,xshift = 2cm] {\mbox{$\textrm{PMap}(\Gamma)$ is locally CB} (\Cref{prop:finiteendloopsloccb})};
\node (locnonCBloopshift) [locnonCB, right of=decnumacl, xshift=3cm, yshift =1.5cm] {\mbox{$\textrm{PMap}(\Gamma)$ is Not locally CB} (\Cref{prop:infendloopslocCB})};
\node (locnonCBinfindisc) [locnonCB, right of=decnumofcomps, xshift=3cm, yshift =-2.5cm] {\mbox{$\textrm{PMap}(\Gamma)$ is Not locally CB} \mbox{(\Cref{prop:finiteendloopsloccb})}};

\draw [arrow] (decnumacl) |- node[pos=.65,anchor=south] {$\infty$} (locnonCBloopshift);
\draw [arrow] (decnumofcomps) |- node[pos=.65,anchor=north] {$\infty$}(locnonCBinfindisc);

\draw [arrow] (decnumacl.east) -- ++(.85,0)|- node[pos=.15,anchor=west] {$<\infty$} (AND.north west);
\draw [arrow] (decnumofcomps.east) |- node[pos=.2,anchor=west] {$<\infty$} (AND.south west);
\draw [arrow] (AND) -- (locCBfiniteindisc);

\end{tikzpicture}
} 
} 
\caption{Flowchart for classifying locally CB $\PMap(\G)$.}
\label{fig:flowchartloccb}
\end{figure}

This condition of being locally CB, together with having arbitrarily small subgroups, is enough to guarantee that our groups have a well-defined coarse equivalence type. While this is not as strong as having a well-defined quasi-isometry type (guaranteed by CB-generation), we can still start computing some coarse invariants for these groups. In particular, the asymptotic dimension is a coarse equivalence invariant. We compute the asymptotic dimension of these locally CB groups for all infinite rank graphs. The asymptotic dimension of the mapping class groups of some infinite-type surfaces was shown to be infinite in \cite{GRV2021}. We show that for graphs, there are pure mapping class groups of infinite asymptotic dimension and those of dimension 0 (that are not themselves CB). 

\begin{customthm}{E}\label{THM:asympdim}
    Let $\G$ be a locally finite, infinite rank graph with $\PMap(\G)$ locally CB. Then
    \[
            \asdim \PMap(\G) =
            \begin{cases}
                     0 & \text{if $|E_\ell(\G)|=1$} \\
                     \infty & \text{otherwise; $|E_\ell(\G)| >1$.}
            \end{cases}
    \]
\end{customthm}

To see when $\PMap(\G)$ has asymptotic dimension $0$, we build a simplicial tree for $\PMap(\G)$ to act on. Elements of $\PMap(\G)$ coarsely embed as the set of leaves in this tree. This tree comes equipped with a natural height function with $\PMap(\G)$ acting transitively on each level set. The tree is one-ended so that $\PMap(\G)$ also fixes the point at infinity.

To begin getting a feel for how these groups function, we highlight two key differences between mapping class groups of infinite graphs and surfaces:

\textbf{$\Map(\Gamma)$ always displaces finite subgraphs of $\Gamma$ when $\Gamma$ has infinite rank.}  Nondisplaceable subsurfaces were integral to the arguments of Mann and Rafi \cite{mann2022large}. To see why any subgraph can be displaced by this mapping class group, let $\Gamma$ be a graph with infinite rank and $\Delta$ be any finite subgraph. The complement of $\Delta$ in $\Gamma$ is still infinite rank, so we can find a subgraph $\Delta'$ of the same rank as $\Delta$, but disjoint from $\Delta$. Both $\Delta$ and $\Delta'$ deformation retract onto a rose of equal rank. Thus, we can interchange $\Delta$ and $\Delta'$ by a proper homotopy equivalence to displace $\Delta$. When $\Gamma$ has nonzero \emph{finite} rank $r$, this is not the case. For example, we can choose a subgraph of rank greater than $\frac{r}{2}$, which must be nondisplaceable.

\textbf{The restriction of a (proper) homotopy equivalence may not be a homotopy equivalence.}
For example, consider a rose with two loops labeled by $a$ and $b$. The map defined by $a \mapsto a$ and $b \mapsto ab$ is a homotopy equivalence, but it maps a loop labeled by $b$ of rank 1 to a subgraph of rank 2, while homotopy equivalences induce isomorphisms on fundamental groups. This contrasts with the fact that the restriction of a surface homeomorphism to a subsurface is still a homeomorphism. This observation demonstrates that there is no analogous change of coordinates principle for graphs, which is a commonly used technique in the field of mapping class groups of surfaces.

\section*{Outline}
In \Cref{sec:preliminaries}, we give the necessary background. In particular, we
review some of the known facts about locally finite graphs of infinite type and
their end spaces, and some basic facts on coarse structures on groups.
\Cref{SEC:Elements} gives the reader a hands-on introduction to different
elements that exist in $\PMap(\G)$. \Cref{sec:CBPMCG} concerns graphs with one
end accumulated by loops.  In \Cref{sec:finiteposrk}, we use the structure of
$\PMap(\G)$ for graphs with finite, positive rank, established in \cite{AB2021},
to show that almost all of these groups are not CB. In \Cref{SEC:Length}, we
define length functions on a large class of graphs---graphs with infinite trees
or combs---showing that their pure mapping class groups are not CB and act on
simplicial trees. We define flux maps in \Cref{sec:TwoEnds} and use them to
prove \Cref{THM:indicable} and that if $\G$ has at least two ends accumulated by
loops, then $\PMap(\G)$ is not CB. The results of Sections \ref{sec:CBPMCG}
through \ref{sec:TwoEnds} together prove \Cref{THM:Main}. In
\Cref{ssec:LocalCB}, we use the previously established techniques to classify
graphs for which $\PMap(\G)$ is locally CB, proving Theorem \ref{THM:LocallyCB}.
Finally, in \Cref{sec:asymdim},  we compute the asymptotic dimension of
$\PMap(\G)$ that are locally CB with $\G$ of infinite rank, proving
\Cref{THM:asympdim}. We include an appendix in which we give an explicit proof that ultrametric spaces are $0$-hyperbolic.

\section*{Acknowledgements}

  The authors are grateful to Mladen Bestvina for suggesting this project, reading the first draft, and providing thoughtful comments. The authors would also like to thank Priyam Patel for a careful reading of an earlier draft and especially correcting the condition for \Cref{LEM:compositionTongues}. Also we thank Yael Algom-Kfir for attentive comments resulting in both the simpler proof of \Cref{LEM:multiplebps} and the clarification of the proof of \Cref{LEM:folding}.
  Thank you to Ryan Dickmann and Brian Udall for their helpful conversations, and Ty Ghaswala and Anschel Schaffer-Cohen for useful comments. We also thank the referee for numerous helpful comments. In addition, the authors acknowledge support from NSF grants DMS--1906095 (Hoganson), DMS--1905720 (Domat, Kwak), and RTG DMS--1840190 (Domat, Hoganson). 

\section{Preliminaries}
\label{sec:preliminaries}
\subsection{Infinite Locally Finite Graphs}
\label{ss:InfLocFinGraphs}

Let $\Gamma$ be a locally finite, infinite, connected graph. We often forget about the actual graph structure of $\Gamma$ and regard it simply as its underlying topological space. Since $\Gamma$ can be realized as the direct limit of nested finite graphs, the fundamental group of $\Gamma$ is free. We define the \textbf{rank} of $\Gamma$, denoted by $\rk(\Gamma) \in \Z_{\ge 0} \cup \{\infty\}$, to be the rank of its fundamental group.  The \textbf{space of ends} $E(\Gamma)$ of $\Gamma$ is the inverse limit:
\[
  E(\Gamma):= \varprojlim_{K\subset \Gamma} \pi_{0}(\Gamma \setminus K),
\] 
where the limit runs over the compact subsets $K \subset \Gamma$.  Equipped with the usual inverse limit topology, $E(\Gamma)$ becomes a totally disconnected compact metrizable space, so is homeomorphic to a closed subset of the Cantor set. Also, $E(\Gamma)$ compactifies $\Gamma$ in the union $\Gamma \cup E(\Gamma)$ (sometimes referred to as the ``end compactification'' or Freudenthal compactification). This allows us to define the neighborhood of an end in the ambient graph $\Gamma$ by picking a neighborhood in the union first then taking the intersection with $\Gamma$. Note this is different from the neighborhood of an end in the end space $E(\Gamma)$.

We define the \textbf{space of ends accumulated by loops}, denoted by $E_{\ell}(\Gamma)$, to be the subspace of $E(\Gamma)$ consisting of ends for which every neighborhood in $\Gamma$ is of infinite rank.  When no confusion will arise we sometimes refer to $(E(\G),E_{\ell}(\G))$ as simply $(E,E_{\ell})$. Any end that is accumulated by ends accumulated by loops is itself accumulated by loops, and thus $E_{\ell}$ is a closed subspace of $E$. Observe that the rank of $\Gamma$ is infinite if and only if $E_{\ell}$ is nonempty.  We remark that elements of $E_{\ell}$ are called \textit{non-$\infty$-stable ends} in \cite{ayala1990proper}, and \textit{ends accumulated by genus} in \cite{AB2021}.

We will consider the homeomorphism type of the pair $(E,E_{\ell})$, where we say $(E,E_{\ell}) \cong (E',E'_{\ell})$ when there is a homeomorphism $h:E \arr E'$ which restricts to a homeomorphism $h\vert_{E_{\ell}}: E_{\ell} \arr E'_{\ell}$. Now we define the \textbf{characteristic triple} of $\Gamma$ as the triple $\left(\rk (\Gamma), E(\Gamma), E_{\ell}(\Gamma)\right)$. We say two characteristic triples $(r,E,E_{\ell})$ and $(r',E',E_{\ell}')$ are \textbf{isomorphic} if $r=r'$ and $(E,E_{\ell})$ is homeomorphic to $(E',E'_{\ell})$.

As Ker{\' e}kj{\' a}rt{\' o} \cite{Kerekjarto1923} and Richards's \cite{Richards1963} classification theorem for surfaces is the foundation for the study of infinite-type surfaces, we use the following classification for locally finite graphs established by Ayala--Dominguez--M{\'a}rquez--Quintero:

\begin{THM}[{\cite[Theorem 2.7]{ayala1990proper}, Proper Homotopy Classification of Locally Finite Graphs}]
  \label{thm:InfGraphClass}
  An isomorphism $(r,E,E_\ell) \to (r',E',E'_\ell)$ of characteristic pairs of two locally finite, connected graphs $\Gamma$ and $\Gamma'$ extends to a proper homotopy equivalence $\Gamma \to \Gamma'$. If $\G$ and $\G'$ are trees, then this extension is unique up to proper homotopy.
\end{THM}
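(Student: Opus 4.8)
The plan is to show that the characteristic triple is a complete proper-homotopy invariant by constructing the proper homotopy equivalence directly from a given isomorphism $(r,E,E_\ell)\to(r',E',E'_\ell)$ via a compatible-exhaustion argument, in the spirit of Richards's proof of the classification of surfaces. There are two assertions: existence of a proper homotopy equivalence extending the isomorphism, and, in the tree case, uniqueness of that extension up to proper homotopy. I would establish existence first and then bootstrap the uniqueness statement from the same inductive machinery.

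For existence I would fix exhaustions $\Gamma=\bigcup_n K_n$ and $\Gamma'=\bigcup_n K'_n$ by finite connected subgraphs with $K_n\subset\mathrm{int}(K_{n+1})$, chosen so that no complementary component of any $K_n$ is relatively compact, i.e. every complementary component carries at least one end. Each exhaustion then yields, for every $n$, a finite clopen partition of the end space indexed by the unbounded components of $\Gamma\setminus K_n$; these partitions refine as $n$ grows and generate the topology of $E(\Gamma)$. Because $E(\Gamma)$ is compact, totally disconnected and metrizable, the homeomorphism $h\colon E\to E'$ carries one such partition system to a comparable one, and after passing to subsequences and enlarging the $K_n,K'_n$ I would arrange a bijective correspondence between the complementary components of $K_n$ and those of a matching $K'_{m(n)}$ under which $h$ sends the ends of each component into the ends of its partner. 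This reduces the topological matching of end spaces to a combinatorial matching of the pieces $A_n:=K_{n+1}\setminus\mathrm{int}(K_n)$ with their partners $A'_{m(n)}$.

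Next I would match the rank data. The rank of $\Gamma$ is encoded by the first Betti numbers of $K_1$ and of the pieces $A_n$ together with the combinatorics of how successive pieces are attached, and an end lies in $E_\ell$ precisely when every neighborhood of it has infinite rank, i.e. when infinitely many of the pieces feeding into that end carry loops. Using the isomorphism of triples I would further refine the two exhaustions so that matched pieces carry equal rank and compatible attaching patterns, with infinite rank forced to accumulate exactly along $h(E_\ell)=E'_\ell$. I then build the map inductively: choose a homotopy equivalence $K_1\to K'_{m(1)}$ (finite connected graphs of equal rank are homotopy equivalent to a common rose) and extend it across each piece $A_n\to A'_{m(n)}$, using that these are finite graphs of matching rank whose frontier is a finite set of points distributed among matched components, so one can prescribe a homotopy equivalence relative to the incoming and outgoing frontier. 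Concatenating these maps produces a continuous $f\colon\Gamma\to\Gamma'$, and $f$ is automatically proper since the preimage of any $K'_m$ lies in a finite union of the pieces $K_1,A_1,A_2,\dots$.

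Finally I would verify that $f$ is a proper homotopy equivalence and deduce the tree uniqueness. Running the same construction with $h^{-1}$ produces a candidate proper inverse $g$, and I would show $g\circ f$ and $f\circ g$ are properly homotopic to the identity by building the homotopies one piece at a time, keeping each homotopy supported in a bounded band of pieces so the total homotopy stays proper. For trees, where $r=0$ and $E_\ell=\emptyset$ and each piece is a finite forest, the same scheme shows that any two proper homotopy equivalences inducing the same map on ends are properly homotopic: on each finite piece the two maps are homotopic, and the contractibility of trees lets one splice these homotopies while pushing the ambiguity out toward the ends in a proper manner. I expect the main obstacle to be exactly this control of properness in the inductive step, namely ensuring simultaneously that the rank-and-frontier matching across all pieces is realized by maps whose gluing is continuous and whose correcting homotopies do not leak unboundedly toward infinity, which is what makes the equivalence proper rather than merely an ordinary homotopy equivalence.
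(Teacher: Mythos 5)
You should know that the paper does not prove this theorem at all: it is imported verbatim from \cite[Theorem 2.7]{ayala1990proper}, so there is no internal proof to compare against. The proof in that cited source runs through \emph{standard models}: one shows every locally finite graph is properly homotopy equivalent to a canonical graph built from its characteristic triple (a tree realizing the pair $(E,E_\ell)$ with loops attached accumulating exactly at $E_\ell$), so that isomorphic triples yield the same model; tree-uniqueness then comes from the fact that two proper maps of locally finite trees inducing the same map on ends are properly homotopic. Your direct compatible-exhaustion scheme is a genuinely different route, and most of it is sound: matching complementary components via the clopen partitions of $E$, equalizing piece ranks (possible because infinite rank accumulates only along $E_\ell$ on both sides, matched by $h$), gluing homotopy equivalences rel finite frontier sets (van Kampen with trivial edge groups gives a $\pi_1$-isomorphism at every finite stage), and the tree-uniqueness argument, which works precisely because there are no loops for ambiguity to wrap around as it is pushed toward the ends.

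The genuine gap is in your final verification. You propose to build $g$ by ``running the same construction with $h^{-1}$'' \emph{independently} of $f$, and then properly homotope $g\circ f$ to the identity piece by piece. But a proper self-map of $\Gamma$ inducing the identity on the end space and on $\pi_1$ need \emph{not} be properly nullhomotopic when $\Gamma$ has loops: a word map supported on an interval of a ray (\Cref{DEF:WordMap}) induces the identity on both invariants yet represents a nontrivial mapping class. This is exactly the phenomenon underlying the factor $\cR$ in \Cref{THM:ABsemidirect} and the unbounded length function of \Cref{prop:lengthfunction} --- such a twist cannot be pushed to infinity, since sliding its support outward wraps points arbitrarily far out the ray back around a fixed loop, making the track homotopy non-proper (and \Cref{RMK:notPHE} shows, relatedly, that ``proper $+$ homotopy equivalence $+$ correct end map'' does not even imply PHE). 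So for an unlucky independent choice of $g$, the piecewise homotopies you construct on adjacent pieces will disagree on their shared frontiers by exactly such a ray twist, and no splicing or escape-to-infinity argument can remove it. The repair is standard but must be built in: construct $g$ by a back-and-forth \emph{correlated} with $f$, choosing each piece map of $g$ rel frontier so as to invert the corresponding piece map of $f$ up to a homotopy supported in a bounded band of pieces --- or else route the whole argument through the standard models of \cite{ayala1990proper}, where this coherence is supplied by the canonical form. Your tree case is unaffected, since there $\pi_1$ is trivial and the obstruction vanishes.
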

Conversely, by definition of ends, a proper homotopy equivalence yields an isomorphism between characteristic pairs. Therefore, two connected graphs $\Gamma$ and $\Gamma'$ have the same proper homotopy type if and only if they have isomorphic characteristic triples of end spaces.

For the remainder of the paper, we assume all of our graphs to be connected, infinite, and locally finite. For a graph $\Gamma$, we let $\Gamma_{c}\subset \Gamma$, the \textbf{core graph} of $\Gamma$, be the smallest subgraph that contains all immersed loops.

We will make use of the fact that connected graphs are $K(G,1)$ spaces for free groups. In particular, we use the following proposition.  

\begin{PROP}[{\cite[Proposition 1B.9]{Hatcher2002}}] \label{PROP:Hatcher}
  Let $X$ be a connected CW complex and let $Y$ be $K(G,1)$. Then every homomorphism $\pi_1(X,x_0) \arr \pi_1(Y,y_0)$ is induced by a map $(X,x_0)\arr (Y,y_0)$ that is unique up to homotopy fixing $x_0$.
\end{PROP}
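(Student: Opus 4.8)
The plan is to construct the map, and afterward the homotopy, cell by cell over the skeleta of $X$, exploiting the single structural fact that $Y$ being a $K(G,1)$ forces $\pi_n(Y)=0$ for all $n\ge 2$. This confines all genuine content to dimensions $\le 2$, where the hypothesis on $\pi_1$ does the work, while every higher-dimensional extension problem is solved for free by the vanishing of the homotopy groups.

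For existence, I would first choose a maximal tree $T$ in the $1$-skeleton $X^1$ and send all of $T$ to $y_0$. Each $1$-cell $e_\alpha$ outside $T$ determines a loop $g_\alpha \in \pi_1(X^1,x_0)$, and I would map $e_\alpha$ to a loop in $Y$ representing $\varphi$ applied to the image of $g_\alpha$ in $\pi_1(X,x_0)$; this defines $f$ on $X^1$ realizing the composite $\pi_1(X^1)\to\pi_1(X)\xrightarrow{\varphi}\pi_1(Y)$. To extend $f$ over a $2$-cell with attaching map $\psi\colon S^1\to X^1$, note that $[\psi]$ dies in $\pi_1(X)$ since it bounds the cell, so $\varphi[\psi]=0$ and $f\circ\psi$ is null-homotopic in $Y$, which permits the extension. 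For each $n$-cell with $n\ge 3$, the attaching sphere $S^{n-1}$ maps into $Y$ representing a class in $\pi_{n-1}(Y)=0$, so $f$ again extends; iterating over all skeleta produces the desired $f\colon(X,x_0)\to(Y,y_0)$ inducing $\varphi$.

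For uniqueness, suppose $f_0,f_1\colon (X,x_0)\to(Y,y_0)$ both induce $\varphi$; I would build a homotopy $H\colon X\times I\to Y$ rel $x_0$ by the same skeletal induction, now applied to the pair $\bigl(X\times I,\ X\times\{0,1\}\cup\{x_0\}\times I\bigr)$. The cleanest route is first to homotope $f_0$ and $f_1$ rel $x_0$ so that both collapse a maximal tree $T$ to $y_0$ (possible since $T$ is contractible rel $x_0$, using the homotopy extension property), effectively reducing to the one-vertex case. Then extending $H$ across the $2$-cell $e\times I$ over a $1$-cell $e$ requires a boundary loop of the form $(f_0\circ e)(f_1\circ e)^{-1}$ to be null-homotopic in $Y$, which holds precisely because $(f_0)_*=(f_1)_*=\varphi$ on $\pi_1$; across cells $e\times I$ with $\dim e\ge 2$ the obstruction lies in $\pi_{\ge 2}(Y)=0$. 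I expect the main obstacle to be purely the basepoint and tree bookkeeping in this uniqueness step—ensuring the homotopy is genuinely relative to $x_0$ and that the path choices on the $1$-skeleton of $X\times I$ remain compatible—rather than any deep topology, since the asphericity of $Y$ eliminates every obstruction above dimension two.
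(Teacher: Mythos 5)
Your proposal is correct, and it reproduces essentially the argument of Hatcher's Proposition 1B.9, which the paper itself does not prove but simply cites: build $f$ skeletally (maximal tree to $y_0$, $1$-cells to loops representing $\varphi$, $2$-cells extendable because attaching loops die in $\pi_1(X)$, higher cells extendable because $\pi_{n}(Y)=0$ for $n\ge 2$), and obtain uniqueness by the same induction over the pair $\bigl(X\times I,\ X\times\{0,1\}\cup\{x_0\}\times I\bigr)$. There is nothing to correct; the basepoint and tree bookkeeping you flag is handled exactly as you suggest, via the homotopy extension property applied to the contraction of $T$ rel $x_0$.
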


A consequence of this proposition is the following lemma. 

\begin{LEM} \label{LEM:multiplebps}
    Let $\Gamma$ be a connected, finite graph of rank at least $2$ and let $x_{1},\ldots,x_{n}$ be distinct points in $\Gamma$. Let $f,g: \Gamma \rightarrow \Gamma$ be two homotopy equivalences that both fix $x_1,\ldots,x_n$. If $f_{*,i},g_{*,i}:\pi_{1}(\Gamma,x_{i}) \rightarrow \pi_{1}(\Gamma,x_{i})$ are the same induced maps for all $i$ then $f$ is homotopic to $g$ via a homotopy fixing $x_{1},\ldots, x_{n}$. 
\end{LEM}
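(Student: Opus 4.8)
The plan is to first produce a homotopy from $f$ to $g$ that fixes only $x_1$, and then show that the obstruction to promoting it to a homotopy fixing all of $x_1,\ldots,x_n$ vanishes. Since $\Gamma$ is a connected graph, it is a $K(F,1)$ for the free group $F=\pi_1(\Gamma,x_1)$. Both $f$ and $g$ induce the homomorphism $f_{*,1}=g_{*,1}\colon \pi_1(\Gamma,x_1)\to\pi_1(\Gamma,x_1)$, so by the uniqueness clause of \Cref{PROP:Hatcher} there is a homotopy $H\colon\Gamma\times I\to\Gamma$ from $f$ to $g$ with $H(x_1,t)=x_1$ for all $t$. This $H$ need not fix the remaining basepoints, and the entire difficulty lies in correcting this.

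For each $j\in\{2,\ldots,n\}$, since $f(x_j)=g(x_j)=x_j$, the track $\gamma_j(t):=H(x_j,t)$ is a loop based at $x_j$. I would next invoke the standard change-of-basepoint relation for a free homotopy: for any loop $\omega$ at $x_j$ one has $g_{*,j}([\omega])=[\gamma_j]^{-1}f_{*,j}([\omega])[\gamma_j]$ in $\pi_1(\Gamma,x_j)$. By hypothesis $g_{*,j}=f_{*,j}$, so $[\gamma_j]$ commutes with $f_{*,j}([\omega])$ for every $\omega$. Because $f$ is a homotopy equivalence, $f_{*,j}$ is an isomorphism, hence surjective, so $[\gamma_j]$ lies in the center of $\pi_1(\Gamma,x_j)$. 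This is the one place the rank hypothesis enters: $\pi_1(\Gamma,x_j)$ is free of rank at least $2$, so its center is trivial, and therefore $[\gamma_j]=1$; that is, each track $\gamma_j$ is null-homotopic rel endpoints.

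Finally, I would upgrade $H$ to a homotopy fixing every $x_i$ using the homotopy extension property. After subdividing so that each $x_i$ is a vertex, set $B=(\Gamma\times\partial I)\cup(\{x_1,\ldots,x_n\}\times I)\subseteq\Gamma\times I$, a subcomplex, so that $B\hookrightarrow\Gamma\times I$ is a cofibration. On $B$ I define a homotopy of $H|_B$ that is stationary on $\Gamma\times\partial I$ and on $\{x_1\}\times I$, and on each $\{x_j\}\times I$ contracts the null-homotopic loop $\gamma_j$ to the constant loop at $x_j$ rel endpoints; the endpoint conditions guarantee compatibility at the corners $(x_j,0)$ and $(x_j,1)$, where $f(x_j)=g(x_j)=x_j$ is already fixed. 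Applying the homotopy extension property extends this to a homotopy of $H$, whose terminal map $H'$ still restricts to $f$ and $g$ on $\Gamma\times\partial I$ and now sends each $\{x_i\}\times I$ to $x_i$. Thus $H'$ is the desired homotopy from $f$ to $g$ fixing $x_1,\ldots,x_n$.

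I expect the main obstacle to be the bookkeeping in this last step: checking the corner compatibility and confirming that the extension does not disturb the endpoints $f,g$ or the already-fixed point $x_1$. The conceptual heart, by contrast, is the short centralizer computation, which is precisely where rank at least $2$ (the trivial center of a nonabelian free group) is indispensable; for rank $1$ the tracks could be essential loops and the conclusion genuinely fails.
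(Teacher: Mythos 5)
Your proposal is correct and follows essentially the same route as the paper's proof: a homotopy rel $x_1$ from the uniqueness clause of \Cref{PROP:Hatcher}, the observation that each track $\gamma_j$ is central (hence trivial, since a free group of rank at least $2$ has trivial center), and the homotopy extension property to straighten the tracks. The only difference is cosmetic: you contract all the loops $\gamma_2,\ldots,\gamma_n$ simultaneously in a single application of the homotopy extension property on $B=(\Gamma\times\partial I)\cup(\{x_1,\ldots,x_n\}\times I)$, whereas the paper fixes the points one at a time by induction --- your version is slightly cleaner since all tracks of the initial homotopy are already known to be null-homotopic at once.
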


\begin{proof}
    First, applying \Cref{LEM:multiplebps} to $f_{*,1}=g_{*,1}$, we find a homotopy $H_1:\G \times I \to \G$ between $f$ and $g$, fixing $x_1$. Fixed by $f$ and $g$, the point $x_2$ describes a loop $\alpha$ in $\G$ during the homotopy $H_1$. Formally, $H_1(x_2,I) = \alpha$. We claim $\alpha$ is nulhomotopic. Indeed, setting $x_2$ as the base point and from the homotopy $H_1$ between $f$ and $g$, we have (e.g. by \cite[Lemma 1.19]{Hatcher2002}): 
    \[
        f_{*,2}([\gamma]) = [\alpha] \cdot g_{*,2}([\gamma]) \cdot [\alpha]^{-1}, \qquad \text{for every $[\gamma] \in \pi_1(\G,x_2)$,}
    \]
    which is actually equal to $g_{*,2}([\gamma])$ by the assumption. Since $g_{*,2}$ is an isomorphism, it follows that $[\alpha]$ is in the center of $\pi_1(\G,x_2)$, which is trivial as $\G$ has rank at least 2. Hence, $[\alpha]=1$.
    
    Now using the nulhomotopy of $\alpha$, we illustrate the way to modify $H_1$ to obtain a homotopy \emph{rel} $x_1,x_2$ between $f$ and $g$. Consider first the pair:
    \[
    (\G \times I \times I,\ \left(\{x_1,x_2\} \times I \times I\right) \cup \left(\G \times \{0,1\} \times I\right)) =: (\G \times I \times I,\ A),
    \]
    on which we will use the Homotopy Extension Property. For this, we specify the homotopy $\cH : (\G \times I \times \{0\} \cup A) \to \G$ ``between homotopies'' we want to extend. First, we define $\cH|_{\G \times I \times \{0\}} = H_1$. On $\{x_1\} \times I \times I$, define $\cH$ as the constant map to $x_1 \in \G$. Using the nulhomotopy of $\alpha$, define $\cH$ on $\{x_2\} \times I \times I$ so that $\cH(\{x_2\} \times I \times \{0\}) = \alpha$ and $\cH(\{x_2\} \times I \times \{1\}) = \{x_2\}$. On $\G \times \{0\} \times I$ and $\G \times \{1\} \times I$, define $\cH$ as $f$ and $g$ respectively. To be precise, for every $t \in I$:
    \[
        \cH(x,0,t) := f(x), \qquad \cH(x,1,t) := g(x).
    \]
    Then we apply Homotopy Extension Property on the pair $(\G \times I \times I,\ A)$ with $\cH$ to obtain the extension of $\cH$, defined on the full domain $\G \times I \times I$. Denoting the extension by $\cH$ again, the map $\cH|_{\G \times I \times \{1\}}$ is the desired homotopy $H_2$ between $f$ and $g$, rel $x_1$ \emph{and} $x_2$.

    Inductively we repeat the same process for each of the remaining fixed points.
    Namely, given the homotopy $H_k$ between $f$ and $g$ rel $x_1,\ldots,x_k$, we obtain a new homotopy $H_{k+1}$ between $f$ and $g$ rel $x_1,\ldots,x_k$ \emph{and} $x_{k+1}$ using the Homotopy Extension Property on the pair
    \[
        (\G \times I \times I, \ \left(\{x_1,\ldots,x_{k+1}\} \times I \times I\right) \cup \left(\G \times \{0,1\} \times I\right)) =: (\G \times I \times I,\ A),
    \]
    with the ``initial homotopy'' being $H_k$.
    Iterating the process, ultimately we obtain the homotopy $H_n$ betwen $f$ and $g$ rel $x_1,\ldots, x_n$, concluding the proof.
 \end{proof}

\subsection{Big Mapping Class Groups of Infinite Graphs}
\label{ss:BMCGofGraphs}

A continuous map is \textbf{proper} if the inverse image of every compact set is compact.

\begin{DEF}

For any (infinite) locally finite graph $\Gamma$, we define $\PHE(\Gamma)$ as
the group of proper homotopy equivalences (or PHEs). Recall that we say a map $f:\Gamma \rightarrow \Gamma$ is a \textbf{proper homotopy equivalence} if $f$ is proper and there exists some $g:\Gamma \rightarrow \G$ that is also \emph{proper} such that $gf$ and $fg$ are \emph{properly} homotopic to the identity. 

We define the \textbf{mapping class group} of $\Gamma$, $\Map(\Gamma)$,
as the group of proper homotopy classes of proper homotopy equivalences on $\Gamma$:

\[
    \Map(\Gamma) = \PHE(\Gamma)/\text{proper homotopy}.
\]
\end{DEF} 

Note that when $\Gamma$ is a finite graph this definition recovers $\Out(F_{n})$ where $n = \rk(\G)$. Thus we see that by taking $\G$ to be infinite we obtain a type of ``big $\Out(F_{n})$''.

\begin{RMK}
  \label{RMK:notPHE}
    To ensure that $\PHE(\G)$ is a group, we do need to include in the definition of a proper homotopy equivalence that the inverse homotopy equivalence is also \textit{proper}. That is, there are examples of homotopy equivalences that are proper but whose homotopy inverses are never proper.
    To illustrate, let $\Gamma$ be the graph with one end which is accumulated by loops. Label each loop by $a_1,a_2,a_3 \cdots$, which we identify with the corresponding elements in $\pi_1(\Gamma)$. Consider a map $f : \Gamma \to \Gamma$, whose induced map $f_*$ on $\pi_1(\Gamma)$ is defined by
    $a_1 \mapsto a_1$, and $a_i \mapsto a_{i-1}a_i$ for $i \ge 2$.

    Since $f_*$ is an isomorphism, $f$ is a homotopy equivalence. Moreover, the inverse homotopy equivalence $g$ of $f$ induces $f_*^{-1}:\pi_1(\Gamma) \to \pi_1(\Gamma)$, defined by
    \[
    a_1 \mapsto a_1, \quad a_2 \mapsto a_1^{-1}a_2, \quad a_3 \mapsto a_2^{-1}a_1a_3, \quad a_4 \mapsto a_3^{-1}a_1^{-1}a_2 a_4 \cdots.
    \]

    It can be seen that $f^{-1}_*$ maps every loop in $\Gamma$ around $a_1$. Therefore, the preimage of $a_1$ under the any representative of inverse homotopy equivalence of $f$ is never compact, so $f$ has no proper inverse homotopy equivalence. This forces $f \not\in \PHE(\G).$
\end{RMK}

\begin{DEF} \label{def:support} 
    For $\phi \in \PHE(\Gamma)$ we say that $\phi$ is \textbf{totally supported} on $K \subset \Gamma$ if $\phi(K) = K$ and $\phi\vert_{\G\setminus K} = \Id$. We say that $[\phi] \in \Map(\Gamma)$ is \textbf{totally supported} on $K$ if there is a proper homotopy representative of $[\phi]$ that is totally supported on $K$. 
\end{DEF}

\begin{RMK}
    We will use the term support in its usual way. That is, if $\phi \in \PHE(\Gamma)$, then the \textbf{support} of $\phi$ is the closure of the set of $x \in \Gamma$ such that $\phi(x) \neq x$. Note that for homeomorphisms (e.g.\ of a surface), being supported on $K$ is equivalent to being totally supported on $K$. This is not true for homotopy equivalences, since they are not necessarily injective. 
\end{RMK}

We would like to endow $\Map(\Gamma)$ with a topology that comes from the topology of $\hat{\Gamma} = \Gamma \cup E(\Gamma)$, the end compactification of $\Gamma$.
To do so, we put the compact-open topology on the set $\mathcal{C}(\hat{\Gamma})$ of continuous maps on $\hat{\Gamma}$.
A proper homotopy equivalence on $\Gamma$ extends to a continuous map on $\hat{\Gamma}$, so we can embed $\PHE(\Gamma)$ into $\mathcal{C}(\hat{\Gamma})$, from which $\PHE(\Gamma)$ inherits the subspace topology.
Algom-Kfir and Bestvina show in \cite[Corollary 4.3]{AB2021} that the map
\[
    q: \PHE(\Gamma) \to \Map(\Gamma)
\]
is an open map. Thus, $\Map(\Gamma)$ inherits the quotient topology from the topology on $\PHE(\G)$.

With this topology, a neighborhood basis about the identity map in $\Map(\Gamma)$ is given as follows. For each finite subgraph $K$ of $\Gamma$, we have an open neighborhood $\cV_K$ of the identity given by:
\begin{align*}
    \cV_K = &\{[f] \in \Map(\Gamma):\ \exists f' \in [f] \quad \text{s.t.}
    \quad f'|_K = \Id_K, \\
    &\text{ and $f'$ preserves each complementary component of $K$.}\}
\end{align*}

Algom-Kfir and Bestvina prove that these sets are clopen subgroups \cite[Proposition 4.7]{AB2021}. They also show \cite[Proposition 4.11]{AB2021} that this topology makes $\Map(\Gamma)$ into a \textit{Polish}(separable and metrizable) group, with the underlying space homeomorphic to $\Z^\infty$.

\subsection{Pure Mapping Class Groups and Homeomorphism Groups of End Spaces}
\label{ss:PMAPandHomeo}

Recall as in \Cref{ss:InfLocFinGraphs}, every proper homotopy equivalence of a graph $\Gamma$ extends to a homeomorphism of the space of ends $(E,E_{\ell})$ of $\Gamma$. Thus we see that $\Map(\Gamma)$ acts on the space of ends via homeomorphisms. 

\begin{DEF}
    The \textbf{pure mapping class group}, $\PMap(\Gamma)$, is the kernel of the action of $\Map(\Gamma)$ on the space of ends of $\Gamma$. 
\end{DEF}

The pure mapping class group is a closed subgroup of $\Map(\G)$ and thus is Polish with respect to the subspace topology. These groups fit into the following short exact sequence:
\[
    1 \longrightarrow \PMap(\Gamma) \longrightarrow \Map(\Gamma) \longrightarrow \Homeo(E,E_{\ell}) \longrightarrow 1.
\]

In particular, if $\G$ is a tree (i.e., of rank 0), then by \Cref{thm:InfGraphClass} every self proper homotopy equivalence of $\G$ arises from the homeomorphism of the end space $E(\G)$.
By definition every element in $\PMap(\G)$ induces the identity map on $E(\G)$, so we deduce that $\PMap(\G)=1$. Also, from the short exact sequence we have $\Map(\G)\cong \Homeo(E(\G))$. We record these observations as follows:
\begin{PROP}
\label{prop:rank0}
Let $\G$ be a locally finite, infinite tree. Then $\PMap(\G)=1$ and $\Map(\G) \cong \Homeo(E(\G))$.
\end{PROP}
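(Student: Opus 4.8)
The plan is to leverage the uniqueness clause of the proper homotopy classification of trees (\Cref{thm:InfGraphClass}) together with the defining short exact sequence for $\PMap(\G)$. First I would record the structural facts special to trees: since $\G$ is a tree, $\rk(\G)=0$, and because the rank is infinite precisely when $E_\ell(\G)\neq\emptyset$, we have $E_\ell(\G)=\emptyset$. Hence the pair $(E(\G),E_\ell(\G))$ is simply $(E(\G),\emptyset)$, so $\Homeo(E,E_\ell)=\Homeo(E(\G))$ and the short exact sequence
\[
1 \longrightarrow \PMap(\G) \longrightarrow \Map(\G) \longrightarrow \Homeo(E(\G)) \longrightarrow 1
\]
is at our disposal.

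To show $\PMap(\G)=1$, I would take an arbitrary $[\phi]\in\PMap(\G)$ with representative $\phi\in\PHE(\G)$. By definition of the pure mapping class group, $\phi$ induces the identity homeomorphism on $E(\G)$; since $\rk(\G)=0$ and $E_\ell(\G)=\emptyset$, the associated isomorphism of characteristic triples $(0,E,\emptyset)\to(0,E,\emptyset)$ is the \emph{identity}. The identity map $\Id_\G\in\PHE(\G)$ induces exactly this same isomorphism. Thus $\phi$ and $\Id_\G$ are both proper homotopy equivalences extending one and the same isomorphism of characteristic pairs. Because $\G$ is a tree, the uniqueness clause of \Cref{thm:InfGraphClass} applies: the extension of a given isomorphism is unique up to proper homotopy. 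Hence $\phi$ is properly homotopic to $\Id_\G$, so $[\phi]=1$ in $\Map(\G)$, and as $[\phi]$ was arbitrary, $\PMap(\G)=1$.

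With the kernel trivial, the short exact sequence collapses to the desired isomorphism $\Map(\G)\cong\Homeo(E(\G))$, once I note that the quotient map $\Map(\G)\to\Homeo(E(\G))$ is surjective. Surjectivity comes from the existence clause of \Cref{thm:InfGraphClass}: any $h\in\Homeo(E(\G))$ is an isomorphism of characteristic pairs $(0,E,\emptyset)\to(0,E,\emptyset)$ and therefore extends to a proper homotopy equivalence of $\G$ whose mapping class maps to $h$.

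The only real subtlety is the careful invocation of the uniqueness clause, which is the single place where the tree hypothesis is indispensable: for positive-rank graphs the extension need not be unique, and indeed $\PMap(\G)$ is typically nontrivial there. Accordingly, the step I would treat most attentively is verifying that an element of $\PMap(\G)$ induces precisely the identity isomorphism of the \emph{full} characteristic triple $(\rk,E,E_\ell)$ — not merely the identity on $E$ — so that it can legitimately be compared against $\Id_\G$ under the uniqueness statement.
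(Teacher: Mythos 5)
Your proof is correct and follows essentially the same route as the paper: the paper likewise deduces $\PMap(\G)=1$ from the uniqueness clause of \Cref{thm:InfGraphClass} for trees (every proper homotopy equivalence inducing the identity on $E(\G)$ is then properly homotopic to $\Id_\G$) and obtains $\Map(\G)\cong\Homeo(E(\G))$ from the short exact sequence, with surjectivity supplied by the existence clause. Your added care about the full characteristic triple $(0,E,\emptyset)$ is a correct, if brief-in-the-paper, observation rather than a divergence.
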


We equip $\Homeo(E,E_{\ell})$ with the compact-open topology. Note that for a finite clopen partition $\cP = \{P_{1}, P_{2}, \ldots, P_{n}\}$ of $E$ the sets of the form
\begin{align*}
    \cU_{\cP} = \{f \in \Homeo (E,E_{\ell})\ \vert\ f(P_{i})=P_{i} \text{ for all } i\}
\end{align*}
give a neighborhood basis about the identity in $\Homeo(E,E_{\ell})$. 

\begin{RMK}
    \label{RMK:TwoTops}
    Note that the map $q: \Map(\Gamma) \to \Homeo(E,E_{\ell})$ is an open map, so in particular a quotient map. Hence, the quotient topology on $\Homeo(E,E_{\ell})$ induced by $q$ coincides with the compact-open topology on $\Homeo(E,E_{\ell})$ generated by the maps fixing partitions of end space. Indeed, consider a basic set $\cV_{K}$ in $\Map(\Gamma)$ for some compact set $K \subset \Gamma$. Then by definition of $q$, the image $q(\cV_{K})$ is the set of homeomorphisms on $E$ which preserve the partition of $(E,E_{\ell})$ induced by $\Gamma \setminus K$, which forms a basic set of the compact-open topology of $\Homeo(E,E_{\ell})$, so $q$ is an open map. 
\end{RMK}

\subsection{Stallings Folds}

We will make use of the notion of Stallings folds defined and used in \cite{stallings1983topology} throughout Section \ref{sec:CBPMCG}.

\begin{DEF}
    A \textbf{morphism} of graphs is a continuous map that sends vertices to vertices and edges to edges. An \textbf{immersion} is a locally injective morphism of graphs. 
\end{DEF}

\begin{DEF}
    Let $\Gamma$ be a graph and $e_{1},e_{2}$ two edges in $\Gamma$ sharing a vertex. Form a new graph $\Gamma' = \Gamma / e_{1} \sim e_{2}$. The natural quotient morphism $\Gamma \rightarrow \Gamma'$ is a \textbf{fold}. 
\end{DEF}

Folds come in two flavors depending on whether $e_{1}$ and $e_{2}$ share only a single vertex or both vertices. Type 1 folds are the folds between edges sharing only one vertex and Type 2 folds are the folds between edges sharing both vertices. While Type 1 folds are $\pi_{1}$-isomorphisms, Type 2 folds are only $\pi_1$-surjective and not $\pi_1$-injective. In fact, a Type 1 fold is a proper homotopy equivalence.

\begin{THM} [\cite{stallings1983topology}]
    Let $f:\Gamma \rightarrow \Gamma'$ be a morphism between two finite graphs. Then $f$ can be factored as: 
    \begin{align*}
        \Gamma = \Gamma_{0} \overset{\phi_1}{\longrightarrow} \Gamma_{1} \overset{\phi_2}{\longrightarrow} \Gamma_{2} \overset{\phi_3}{\longrightarrow} \cdots \overset{\phi_n}{\longrightarrow} \Gamma_{n} \overset{h}{\longrightarrow} \Gamma'
    \end{align*}
    where the last map $h$ is an immersion and all the other maps $\{\phi_i\}_{i=1}^n$ are folds. 
\end{THM}

While Stallings' theorem is for finite graphs, we will be partially folding morphisms on \emph{infinite} graphs in order to obtain an immersion on a finite subgraph.
When we are folding proper homotopy equivalences, we do not use Type 2 folds as they are not $\pi_1$-isomorphisms.

\subsection{Coarse Structures on Groups}
\label{ss:coarse}

In this section, we give some definitions and basic results about coarse structures on spaces, first introduced by Roe \cite{roe2003lectures}. For more details on this section, refer to \cite[Chapter 2]{rosendal2022}. In particular, we do not state the formal definition of coarse boundedness here but only the relevant equivalent definitions as worked out in \cite{rosendal2022}.

\begin{DEF}[{\cite[Proposition 2.15]{rosendal2022}}]
  \label{PROP:RosendalCB}
    Let $A$ be a subset of a Polish group $G$. Then we say that $A$ is \textbf{coarsely bounded (CB)} in $G$ if one of the following equivalent conditions is satisfied.
    \begin{enumerate}[(1)]
        \item (Rosendal's Criterion)
            For every neighborhood $\cV$ of the identity in $G$, there is a finite subset $\cF$ of $G$ and some $n \geq 1$ such that $A \subset (\cF\cV)^{n}$.
        \item
            For every continuous action of $G$ on a metric space $X$ and every $x \in X$, $\diam(A \cdot x) < \infty$.
    \end{enumerate}
  \end{DEF}

\begin{EX}
    \label{ex:finiteCB}
    Any finite group equipped with the discrete topology is coarsely bounded. Similarly, any compact topological group is coarsely bounded.
\end{EX}

    Thanks to the following observation deduced from \Cref{PROP:RosendalCB}, whenever we have the conclusion that $\PMap(\G)$ is CB \emph{in itself} we can extend it to the fact that $\PMap(\G)$ is CB \emph{in $\Map(\G)$}.
    
  \begin{COR}
    \label{COR:CBinLargerGroups}
    Let $G$ be a Polish group and $H$ be a Polish subgroup. If $H$ is CB in itself, then $H$ is CB in $G$.
  \end{COR}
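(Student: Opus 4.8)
The plan is to deduce this directly from the two equivalent formulations of coarse boundedness recorded in \Cref{PROP:RosendalCB}, exploiting that coarse boundedness is an ``upward-closed'' property: a bound witnessing that $H$ is CB in itself automatically witnesses that $H$ is CB inside any larger group containing it. I would phrase the argument using condition (2), since it makes the transfer most transparent, and then note that Rosendal's Criterion gives an equally short alternative.

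Concretely, assume $H$ is CB in itself, and let $G$ act continuously on a metric space $X$, with $x \in X$ arbitrary. First I would observe that the restriction of the action map $G \times X \to X$ to $H \times X$ is continuous: since $H$ carries the subspace topology, the inclusion $H \times X \hookrightarrow G \times X$ is continuous, so the composite $H \times X \to X$ is as well. Thus we obtain a continuous action of the Polish group $H$ on $X$, and applying condition (2) of \Cref{PROP:RosendalCB} to the hypothesis that $H$ is CB in itself yields $\diam(H \cdot x) < \infty$. As the action and the point $x$ were arbitrary, condition (2) for $G$ is satisfied, so $H$ is CB in $G$.

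Alternatively, I would invoke Rosendal's Criterion (condition (1)) directly. Given any neighborhood $\cV$ of the identity in $G$, the intersection $\cV \cap H$ is a neighborhood of the identity in $H$ for the subspace topology; since $H$ is CB in itself, there exist a finite subset $\cF \subseteq H$ and some $n \geq 1$ with $H \subseteq (\cF(\cV \cap H))^{n} \subseteq (\cF\cV)^{n}$. As $\cF$ is then a finite subset of $G$, this is precisely the criterion for $H$ to be CB in $G$. The only point requiring care — and the nearest thing to an obstacle — is the topological bookkeeping, namely checking that the restricted action remains continuous (equivalently, that $\cV \cap H$ is genuinely an identity neighborhood in $H$); both facts are immediate from $H$ having the subspace topology, so no substantive difficulty arises.
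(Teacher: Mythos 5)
Your main argument is exactly the paper's proof: the paper likewise observes that any continuous $G$-action on a metric space restricts to a continuous $H$-action and applies condition (2) of \Cref{PROP:RosendalCB}, and your added justification of the restriction's continuity is correct. Your alternative via Rosendal's Criterion (intersecting $\cV$ with $H$ and enlarging $(\cF(\cV\cap H))^{n}$ to $(\cF\cV)^{n}$) is also sound, but it is just a bonus on top of the same approach.
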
 
  
  \begin{proof}
      Any continuous action of $G$ on $X$ will restrict to a continuous action of $H$ on $X$, so this follows from $(2)$ of \Cref{PROP:RosendalCB}.
  \end{proof}

  In the category of coarse spaces, isomorphisms are given by coarse equivalences.
  We will not state the definition here as it can be quite technical, but note that it extends the notion of a quasi-isometry to the larger class of spaces equipped with a coarse structure. In \Cref{ssec:pmspace} we will state a version of the definition for pseudo-metric spaces. Again we refer to \cite{rosendal2022} for details on this and we collect a few facts below that will be useful to us. All of the proofs of the statements below are either contained in or given by elementary arguments using the definitions in \cite[Chapter 2]{rosendal2022}.
    
    \begin{PROP}[Coarse boundedness is a coarse equivalence invariant]
      \label{PROP:CBisCEI}
      If $X$ and $Y$ are coarsely equivalent, then $X$ is CB if and only if $Y$ is CB.
  \end{PROP}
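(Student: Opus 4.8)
The plan is to work directly from the coarse-structure definitions in \cite[Chapter 2]{rosendal2022} rather than from the group-specific criterion of \Cref{PROP:RosendalCB}. Recall that in a coarse space $(X,\cE_X)$ the whole space is \emph{coarsely bounded} precisely when the entourage $X\times X$ lies in the coarse structure $\cE_X$; this is the meaning of ``$X$ is CB''. A coarse equivalence is witnessed by a pair of bornologous maps $f\colon X\to Y$ and $g\colon Y\to X$ that are coarse inverses, meaning in particular that $F\defeq\{(y,f(g(y))):y\in Y\}\in\cE_Y$ and that the analogous set in $\cE_X$ is controlled. Since the data of a coarse equivalence is symmetric in $X$ and $Y$, it suffices to prove one implication: assuming $X$ is CB, I will show $Y$ is CB.

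First I would push the bound forward. Because $f$ is bornologous it sends controlled sets to controlled sets, and applying this to $X\times X\in\cE_X$ gives $(f\times f)(X\times X)=f(X)\times f(X)\in\cE_Y$, so the image $f(X)$ is a bounded subset of $Y$. The remaining step is to absorb the gap between $f(X)$ and all of $Y$ using coarse surjectivity, which is exactly what the closeness of $f\circ g$ to the identity of $Y$ encodes. Concretely, for any $(y_1,y_2)\in Y\times Y$ one chains $(y_1,f(g(y_1)))\in F$, then $(f(g(y_1)),f(g(y_2)))\in f(X)\times f(X)$, then $(f(g(y_2)),y_2)\in F^{-1}$, which exhibits
\[
Y\times Y\ \subseteq\ F^{-1}\circ\big(f(X)\times f(X)\big)\circ F.
\]
As $\cE_Y$ is closed under taking inverses, compositions, and subsets of entourages, the right-hand side lies in $\cE_Y$, hence $Y\times Y\in\cE_Y$ and $Y$ is CB. Applying the same argument with the roles of $X$ and $Y$ (and of $f$ and $g$) interchanged gives the converse.

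For readers thinking in the pseudo-metric model of \Cref{ssec:pmspace}, this is just the statement that a finite-diameter space stays finite-diameter under coarse equivalence: if $\diam(X)=D<\infty$ and $\rho_+$ is an upper control function for $f$, then $\diam(f(X))\le\rho_+(D)$, and coarse surjectivity within distance $C$ bounds $\diam(Y)\le\rho_+(D)+2C$. I do not expect a genuine obstacle here; the only point requiring care is bookkeeping---using the correct ``whole-space-is-an-entourage'' formulation of CB for abstract coarse spaces (as opposed to the Polish-group criterion in \Cref{PROP:RosendalCB}) and invoking the coarse-structure axioms in the right order. The content is entirely formal.
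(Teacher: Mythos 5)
Your proof is correct, and it is essentially the paper's approach: the paper gives no written proof of this proposition, stating only that it follows by ``elementary arguments using the definitions in \cite[Chapter 2]{rosendal2022}'', and your entourage-chaining argument (CB as $X\times X\in\cE_X$, pushing forward by bornologousness, absorbing the gap via closeness of $f\circ g$ to $\Id_Y$) is exactly such an argument. The one cosmetic slip is the order of relational composition: with the convention $E_1\circ E_2=\{(x,z):\exists y,\ (x,y)\in E_1,\ (y,z)\in E_2\}$ your chain exhibits $Y\times Y\subseteq F\circ\bigl(f(X)\times f(X)\bigr)\circ F^{-1}$ rather than the reversed order you wrote, which is immaterial since $\cE_Y$ is closed under inverses, compositions, and subsets.
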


The following variant of a proposition of Rosendal tells us how coarse geometries of groups in a short exact sequence are related to one another. 

\begin{PROP}[cf. {\cite[Proposition 4.37]{rosendal2022}}]
  \label{PROP:SES} Suppose $K$ is a closed normal subgroup of a Polish group $G$ and assume that $K$ is coarsely bounded in $G$. Then the quotient map
\[
    \pi: G \to G/K
\]
is a coarse equivalence. In particular, $G$ is CB if and only if $G/K$ is CB. 
\end{PROP}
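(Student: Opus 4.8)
The plan is to verify that $\pi$ satisfies the defining conditions of a coarse equivalence with respect to the (left-)coarse structures on $G$ and $G/K$: that it is bornologous, that it is a coarse embedding, and that it is coarsely surjective. Recall that in Rosendal's framework a basis for the entourages of a Polish group is given by the sets $E_B = \{(x,y) : x^{-1}y \in B\}$ as $B$ ranges over the coarsely bounded subsets, so each of these conditions can be phrased purely in terms of CB sets. Coarse surjectivity is immediate because $\pi$ is onto, so all the content lies in how $\pi$ transports coarse boundedness in each direction.

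First I would record the easy (bornologous) direction: a continuous homomorphism sends CB sets to CB sets. If $B$ is CB in $G$ and $G/K$ acts continuously on a metric space $X$, then precomposing with $\pi$ gives a continuous $G$-action whose orbit of $x$ under $B$ is exactly $\pi(B)\cdot x$, so $\diam(\pi(B)\cdot x) < \infty$ by condition $(2)$ of \Cref{PROP:RosendalCB}; hence $\pi(B)$ is CB in $G/K$. Using the identity $\pi(x)^{-1}\pi(y) = \pi(x^{-1}y)$ this gives $(\pi\times\pi)(E_B) \subseteq E_{\pi(B)}$, so $\pi$ maps entourages to entourages.

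The main step is the reverse direction: for every CB set $\bar B \subseteq G/K$ the full preimage $\pi^{-1}(\bar B)$ is CB in $G$; since $(\pi\times\pi)^{-1}(E_{\bar B}) = E_{\pi^{-1}(\bar B)}$, this is exactly what makes $\pi$ a coarse embedding. Here both hypotheses enter. Because $\pi$ is an open map (the quotient of a Polish group by a closed normal subgroup), for a given identity neighborhood $\cV$ in $G$ the image $\pi(\cV)$ is an identity neighborhood in $G/K$, and coarse boundedness of $\bar B$ then yields a finite set $\bar{\cF}$ and an $n$ with $\bar B \subseteq (\bar{\cF}\,\pi(\cV))^n$. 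Lifting $\bar{\cF}$ to a finite $\cF \subseteq G$ and using $\pi((\cF\cV)^n) = (\bar{\cF}\,\pi(\cV))^n$ together with $\pi^{-1}\pi(S) = SK$ gives $\pi^{-1}(\bar B) \subseteq (\cF\cV)^n K$. Finally, coarse boundedness of $K$ in $G$ supplies a finite $\cF_K$ and an $m$ with $K \subseteq (\cF_K\cV)^m$, so with $\cF' = \cF \cup \cF_K$ we get $\pi^{-1}(\bar B) \subseteq (\cF'\cV)^{n+m}$, which verifies Rosendal's Criterion. I expect the main obstacle to be exactly this bookkeeping: combining the two coverings into a single expression of the form $(\cF'\cV)^{N}$, and cleanly invoking the openness of $\pi$ to pull the neighborhood $\cV$ across the quotient.

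With $\pi$ shown to be bornologous, a coarse embedding, and coarsely surjective, it is a coarse equivalence. The final assertion then follows from \Cref{PROP:CBisCEI}: since coarse boundedness is a coarse equivalence invariant, $G$ is CB if and only if $G/K$ is CB.
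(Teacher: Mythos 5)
Your proof is correct. Note, though, that the paper itself gives no proof of \Cref{PROP:SES}: it is stated with a ``cf.''\ citation to Rosendal's Proposition 4.37, under the blanket remark that such facts are ``either contained in or given by elementary arguments using the definitions'' in Rosendal's Chapter 2. Your argument is exactly the elementary verification the authors allude to, so there is nothing in the paper to diverge from; what you supply is a self-contained check, in terms of the entourages $E_B$, that $\pi$ is bornologous, coarsely proper, and (trivially) cobounded. The two substantive points are handled correctly: images of CB sets are CB via condition (2) of \Cref{PROP:RosendalCB} applied to the precomposed action, and preimages of CB sets are CB via Rosendal's criterion, where openness of $\pi$ (automatic for quotient maps of topological groups) pushes an identity neighborhood $\cV$ forward to $\pi(\cV)$, the identity $\pi^{-1}\pi(S)=SK$ exposes the fiber, and coarse boundedness of $K$ in $G$ absorbs it into $(\cF'\cV)^{n+m}$; the identity $(\pi\times\pi)^{-1}(E_{\bar B})=E_{\pi^{-1}(\bar B)}$ then converts this into the coarse-embedding condition. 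One small point worth making explicit if you write this up: since $K$ is closed, $G/K$ is again Polish, which is what licenses applying Rosendal's criterion (and the equivalence of conditions (1) and (2)) in the quotient. Your derivation of the ``in particular'' clause from \Cref{PROP:CBisCEI} is fine, and in fact your two displayed inclusions give it directly without invoking invariance: $\pi(G)=G/K$ handles one direction and $\pi^{-1}(G/K)=G$ the other.
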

This together with \Cref{COR:CBinLargerGroups}, the short exact sequence in \Cref{ss:PMAPandHomeo} and \Cref{RMK:TwoTops} allows us to conclude:

\begin{COR} \label{COR:OutCEHomeo}
  Let $\Gamma$ be a locally finite, infinite graph with end space $(E,E_{\ell})$.
  If $\PMap(\Gamma)$ is CB, then $\Map(\Gamma)$ is coarsely equivalent to $\Homeo(E,E_{\ell})$.
\end{COR}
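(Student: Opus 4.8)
The plan is to realize $\Homeo(E,E_{\ell})$ as the quotient $\Map(\G)/\PMap(\G)$ and then transport the triviality-of-obstruction through the short-exact-sequence version of Rosendal's theorem recorded in \Cref{PROP:SES}. Concretely, I want to show that the quotient map $\Map(\G) \to \Map(\G)/\PMap(\G)$ is a coarse equivalence and that its target is, as a topological group, exactly $\Homeo(E,E_{\ell})$; chaining these two facts gives the claimed coarse equivalence.

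First I would verify the hypotheses needed to invoke \Cref{PROP:SES} with $G = \Map(\G)$ and $K = \PMap(\G)$. By the discussion in \Cref{ss:PMAPandHomeo}, $\PMap(\G)$ is a closed subgroup of $\Map(\G)$, and it is normal because it is defined as the kernel of the action of $\Map(\G)$ on the end space. The one remaining hypothesis is coarse boundedness of $\PMap(\G)$ \emph{in the ambient group} $\Map(\G)$, whereas we are only handed the hypothesis that $\PMap(\G)$ is CB \emph{in itself}. I would bridge this gap with \Cref{COR:CBinLargerGroups}: since $\PMap(\G)$ is a Polish subgroup of the Polish group $\Map(\G)$ and is CB in itself, it is CB in $\Map(\G)$.

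With both hypotheses in place, \Cref{PROP:SES} yields that the quotient map $\pi : \Map(\G) \to \Map(\G)/\PMap(\G)$ is a coarse equivalence. It then remains to identify the target. The short exact sequence in \Cref{ss:PMAPandHomeo} provides a continuous group isomorphism $\Map(\G)/\PMap(\G) \cong \Homeo(E,E_{\ell})$, and \Cref{RMK:TwoTops} records that the quotient topology induced by $q$ coincides with the compact-open topology used to define $\Homeo(E,E_{\ell})$. Hence this identification is an isomorphism of topological (indeed Polish) groups, so in particular a coarse equivalence, and composing with $\pi$ gives that $\Map(\G)$ is coarsely equivalent to $\Homeo(E,E_{\ell})$.

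Since every ingredient is supplied by the preceding results, there is no serious analytic obstacle; the only step demanding care is the final identification. One must be sure that the abstract quotient $\Map(\G)/\PMap(\G)$ is being compared with $\Homeo(E,E_{\ell})$ equipped with the \emph{correct} topology, so that the group isomorphism is genuinely a homeomorphism and therefore preserves the coarse structure. This is precisely the content of \Cref{RMK:TwoTops}, and flagging its role explicitly is what makes the conclusion rigorous rather than a mere abstract-group statement.
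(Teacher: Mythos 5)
Your proposal is correct and follows essentially the same route as the paper, which proves this corollary precisely by combining \Cref{PROP:SES}, \Cref{COR:CBinLargerGroups}, the short exact sequence from \Cref{ss:PMAPandHomeo}, and \Cref{RMK:TwoTops}. You have merely spelled out the same chain of citations in more detail, correctly flagging that \Cref{RMK:TwoTops} is what makes the identification of the quotient with $\Homeo(E,E_{\ell})$ topological rather than merely algebraic.
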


Finally, we verify that the property of being CB is closed under passing to (open) finite index subgroups and extensions.

\begin{PROP}[cf. {\cite[Proposition 5.67]{rosendal2022}}]
    \label{PROP:finiteindexCB}
    Let $G$ be a Polish group and $H \le G$ be a finite index open Polish subgroup. Then $H$ is CB if and only if $G$ is CB.
\end{PROP}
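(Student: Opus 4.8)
The plan is to verify coarse boundedness directly through Rosendal's criterion, \Cref{PROP:RosendalCB}(1), rather than through the short exact sequence machinery; throughout I may assume that every neighborhood of the identity is open and contains $e$, since shrinking only helps. The forward implication, that $H$ being CB forces $G$ to be CB, is the easy half and needs nothing about normality. Fix a finite left transversal $\cF_0$ so that $G=\cF_0 H$. Given a neighborhood $\cV$ of the identity in $G$, the set $\cV\cap H$ is a neighborhood of the identity in $H$ because $H$ is open, so coarse boundedness of $H$ yields a finite $\cF_1\subseteq H$ and an $m$ with $H\subseteq(\cF_1(\cV\cap H))^m\subseteq(\cF_1\cV)^m$. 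Then $G=\cF_0 H\subseteq\cF_0(\cF_1\cV)^m\subseteq(\cF\cV)^{m+1}$ for the finite set $\cF=\cF_0\cup\cF_1\cup\{e\}$, so $G$ is CB.

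For the reverse implication the main obstacle is that $H$ need not be normal, so I would first pass to the normal core $N\defeq\bigcap_{g\in G}gHg\inv$. Since $gHg\inv$ depends only on the coset $gH$, this is a \emph{finite} intersection of open subgroups; hence $N$ is an open, normal, finite-index subgroup of $G$ contained in $H$ (indeed it is the kernel of the action of $G$ on the finite set $G/H$). Because the forward implication already gives ``$N$ CB $\Rightarrow H$ CB'' (apply it to the open finite-index inclusion $N\le H$, using that $N$ is open and of finite index in $H$), it suffices to prove the normal statement: if $N\trianglelefteq G$ is open, normal, and of finite index and $G$ is CB, then $N$ is CB.

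The heart of the argument is this normal case, which I would prove by a Schreier-style rewriting. Fix a finite transversal $T$ for $N$ in $G$. Given a neighborhood $\cV$ of the identity in $N$, set $\cV_0\defeq\bigcap_{t\in T}t\inv\cV t$; since $N$ is normal each conjugation is a homeomorphism of $N$ fixing $e$, so $\cV_0$ is again a neighborhood of the identity, it satisfies $\cV_0\subseteq\cV$ (take $t=e$), and $t\cV_0 t\inv\subseteq\cV$ for every $t\in T$. As $N$ is open, $\cV_0$ is a neighborhood of the identity in $G$, so coarse boundedness of $G$ gives $N\subseteq G\subseteq(\cF\cV_0)^n$ for some finite $\cF\subseteq G$. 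Now for $a\in N$ written as a length-$2n$ word alternating between letters of $\cF$ and of $\cV_0$, I process the word from the left while maintaining the normal form (partial product) $=(\text{element of }N)\cdot(\text{element of }T)$; at each step the newly produced $N$-syllable has the shape $\tau' x\,\tau''^{-1}$ with $\tau',\tau''\in T$. When $x\in\cF$ this lies in the finite set $\cF_N\defeq\{t f t'^{-1}:t,t'\in T,\ f\in\cF\}\cap N$, and when $x\in\cV_0\subseteq N$ normality forces $\tau''=\tau'$, so the syllable is $\tau' x\,\tau'^{-1}\in\cV$ by the defining property of $\cV_0$. Since $a\in N$ the terminal transversal letter is trivial, so $a$ is a product of at most $2n$ elements of $\cF_N\cup\cV\subseteq(\cF_N\cup\{e\})\cV$, giving $N\subseteq((\cF_N\cup\{e\})\cV)^{2n}$ with $\cF_N\cup\{e\}\subseteq N$ finite. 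By Rosendal's criterion $N$ is CB, completing the reduction. The two delicate points are keeping every conjugating element inside the finite transversal $T$ (which is exactly what the incremental left-to-right rewriting buys, and where normality is essential) and choosing $\cV_0$ so that the conjugated $\cV_0$-syllables land back inside the originally prescribed $\cV$.
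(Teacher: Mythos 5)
Your proof is correct, and on the harder direction it takes a genuinely different route from the paper. The forward half ($H$ CB $\Rightarrow G$ CB) is essentially the paper's argument — both decompose $G$ into finitely many cosets of $H$ and absorb the transversal into the finite set, though the paper first upgrades to ``$H$ is CB in $G$'' via \Cref{COR:CBinLargerGroups} (the action criterion), while you stay inside Rosendal's criterion (1) by intersecting $\cV$ with $H$; this is a cosmetic difference. For the converse, the paper simply invokes \cite[Proposition 5.67]{rosendal2022}: an open finite-index subgroup is closed, hence \emph{coarsely embedded}, so coarse boundedness of $G$ transfers down in one line. You instead make the proposition self-contained: you pass to the normal core $N=\bigcap_{g}gHg\inv$ (correctly noting this is a finite intersection, so $N$ is open, normal, and of finite index, and that ``$N$ CB $\Rightarrow H$ CB'' follows from your forward half applied to $N\le H$), and then prove the normal case by a Schreier rewriting. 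The two delicate points you flag are handled correctly: shrinking to $\cV_0=\bigcap_{t\in T}t\inv\cV t$ guarantees conjugated $\cV_0$-syllables return to $\cV$, and normality is exactly what makes the transversal letter unchanged when a $\cV_0$-letter is processed (since $\tau x\in \tau N=N\tau$), while $\cF$-letters contribute syllables in the finite set $\{tft'^{-1}\}\cap N$; with $e\in T$ (which you implicitly assume and should state) the terminal transversal letter of an element of $N$ is trivial, giving $N\subseteq((\cF_N\cup\{e\})\cV)^{2n}$. The trade-off: the paper's proof is shorter and exposes the structural fact doing the work (coarse embeddedness of closed subgroups with finite-index openness), whereas yours is longer but elementary — it needs only Rosendal's criterion, avoids the black-box citation entirely, and even yields explicit control of the exponent in terms of the one furnished for $G$.
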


\begin{proof}
    Let $[G:H] = n$ and $G/H=\{g_1H,\ldots,g_nH\}$. Assume first that $H$ is CB. By \Cref{COR:CBinLargerGroups}, $H$ is CB in $G$. Then for any identity neighborhood $\cV$ in $G$, there exist a finite set $\cF \subset G$ and $m>0$ such that $H \subset (\cF \cV)^m$. Now, taking $\cF'=\{g_1,\ldots,g_n\}\cup \cF$:
    \[
        G=\bigcup_{i=1}^n(g_iH) \subset \bigcup_{i=1}^n g_i(\cF \cV)^m \subset (\cF'\cV)^{m+1},
    \]
    which implies that $G$ is CB.

    Conversely, suppose that $G$ is CB. Since $G$ is a topological group and $H$ is a finite index open subgroup, it is also closed. Then by \cite[Proposition 5.67]{rosendal2022} $H$ is \emph{coarsely embedded} in $G$: a subset $A \subset H$ is CB in $H$ if and only if $A$ is CB in $G$. Since $G$ is CB, $H$ is CB in $G$ by definition, which further implies that $H$ is CB in itself, concluding the proof.
\end{proof}

Because any closed finite index subgroup of a topological group is open, we have the following. 

\begin{COR}
  \label{COR:finiteEndCB}
  Let $\Gamma$ be a locally finite, infinite graph with a finite end space. Then $\PMap(\Gamma)$ is CB if and only if $\Map(\Gamma)$ is CB. 
\end{COR}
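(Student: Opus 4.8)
The plan is to exhibit $\PMap(\Gamma)$ as a finite-index open subgroup of $\Map(\Gamma)$ and then apply \Cref{PROP:finiteindexCB} verbatim. First I would use the hypothesis that the end space $E = E(\Gamma)$ is finite. Since $E$ is finite it carries the discrete topology, so every element of $\Homeo(E,E_{\ell})$ is just a permutation of the finitely many points of $E$ that carries $E_{\ell}$ to itself; in other words $\Homeo(E,E_{\ell})$ is the finite subgroup of $\mathrm{Sym}(E)$ stabilizing the subset $E_{\ell}$. In particular $\Homeo(E,E_{\ell})$ is a \emph{finite} group.

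Next I would feed this into the short exact sequence from \Cref{ss:PMAPandHomeo}, namely $1 \to \PMap(\Gamma) \to \Map(\Gamma) \to \Homeo(E,E_{\ell}) \to 1$, which identifies the quotient $\Map(\Gamma)/\PMap(\Gamma)$ with $\Homeo(E,E_{\ell})$. Since the latter is finite, $\PMap(\Gamma)$ has finite index in $\Map(\Gamma)$. Recalling that $\PMap(\Gamma)$ is a closed subgroup of $\Map(\Gamma)$ (and hence itself Polish), I would invoke the remark immediately preceding the statement: a closed finite-index subgroup of a topological group is open, because its complement is a finite union of cosets, each closed as a translate of a closed set. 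Thus $\PMap(\Gamma)$ is a finite-index, open, Polish subgroup of the Polish group $\Map(\Gamma)$.

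Finally I would apply \Cref{PROP:finiteindexCB} with $G = \Map(\Gamma)$ and $H = \PMap(\Gamma)$, which yields at once that $\PMap(\Gamma)$ is CB if and only if $\Map(\Gamma)$ is CB. There is no genuine obstacle here: the corollary is a direct specialization of \Cref{PROP:finiteindexCB}, and the only points needing verification are that $\Homeo(E,E_{\ell})$ is finite (immediate from finiteness of $E$) and that the closed finite-index subgroup $\PMap(\Gamma)$ is open (the preceding remark), both of which are routine. The content is entirely packaged into the earlier results, so the proof should be only a few lines.
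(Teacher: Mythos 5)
Your proposal is correct and is precisely the argument the paper intends: the sentence preceding the corollary ("any closed finite index subgroup of a topological group is open") signals exactly your route of combining the short exact sequence, the finiteness of $\Homeo(E,E_{\ell})$ for finite $E$, and \Cref{PROP:finiteindexCB}. Nothing is missing, and the level of detail you supply (openness via the complement being a finite union of closed cosets) is if anything more explicit than the paper's.
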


\subsection{Groups as Pseudo-metric Spaces} \label{ssec:pmspace}
This section will provide a background for \Cref{ss:asdim0} and \Cref{ssec:asdimInf}, in which we compute the asymptotic dimension of locally CB $\PMap(\G)$. See \cite[Chapter 2 and Section 3.6]{rosendal2022} for more details on this in the CB setting and \cite{cornulier2016} in the locally compact setting. The goal of this section is to show the following proposition.

\begin{PROP}
\label{prop:WDCEpmap}
Let $\G$ be a locally finite, infinite graph with locally CB $\PMap(\G)$. Then there exists a pseudo-metric on $\PMap(\G)$ that is well-defined up to coarse equivalence.
\end{PROP}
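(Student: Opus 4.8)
The plan is to produce a left-invariant pseudo-metric $d$ on $\PMap(\G)$ whose bounded sets are \emph{exactly} the coarsely bounded subsets, so that $d$ generates Rosendal's left-coarse structure $\cE_L$. Once this is arranged, well-definedness is essentially automatic: $\cE_L$ is intrinsic to the group, so any two pseudo-metrics generating it have the same bounded sets, and hence the identity map is a coarse equivalence between them. Thus the whole content of the proposition is to construct one such $d$ and to verify the matching of bounded sets with CB sets.

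First I would record that $\PMap(\G)$ is non-archimedean, i.e.\ it has a neighborhood basis at the identity consisting of open subgroups. Indeed, the clopen subgroups $\cV_K$ form such a basis for $\Map(\G)$ \cite{AB2021}, so the intersections $H_K := \cV_K \cap \PMap(\G)$ form a neighborhood basis of clopen subgroups of $\PMap(\G)$. Because $\PMap(\G)$ is locally CB, there is a CB neighborhood $\cV$ of $\Id$; choosing $K$ with $H_K \subseteq \cV$ and using that subsets of CB sets are CB (immediate from condition (1) of \Cref{PROP:RosendalCB}), I obtain an \emph{open CB subgroup} $H := H_K$. This is precisely where the hypothesis of arbitrarily small subgroups is used: it upgrades mere local coarse boundedness into a single open CB subgroup from which to build the metric.

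Next I would build $d$. Since $\PMap(\G)$ is Polish, hence separable, and $H$ is open, the coset space $\PMap(\G)/H$ is countable; fix coset representatives $g_0 = \Id, g_1, g_2, \ldots$ so that $S := H \cup \{g_n^{\pm 1} : n \ge 0\}$ generates $\PMap(\G)$. Assign weight $0$ to each element of $H$ and weight $n+1$ to $g_n^{\pm 1}$, and let $d$ be the associated left-invariant word pseudo-metric, where $d(\phi,\psi)$ is the least total weight of an $S$-word representing $\phi^{-1}\psi$; note $d(\phi,\psi)=0$ exactly when $\phi H = \psi H$, which is why $d$ is only a pseudo-metric. The key verification is that the $d$-bounded sets coincide with the CB sets. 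For one inclusion, after absorbing consecutive $H$-factors (as $H$ is a subgroup) a ball of radius $R$ lies in a finite product of the CB sets $H$ and $\cF_R := \{g_n^{\pm 1} : n+1 \le R\}$, and finite products of CB sets are CB \cite{rosendal2022}; hence $d$-bounded sets are CB, and here the growth condition on the weights is exactly what keeps the balls from meeting infinitely many generators. For the reverse, if $A$ is CB then applying condition (1) of \Cref{PROP:RosendalCB} with the identity neighborhood $H$ yields a finite $\cF$ and an $m$ with $A \subseteq (\cF H)^m$; each element of $\cF$ has finite $d$-weight and $H$ has weight $0$, so $(\cF H)^m$, and therefore $A$, is $d$-bounded.

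The main obstacle is precisely this matching of bounded sets with CB sets, since it is what certifies that $d$ generates the intrinsic coarse structure $\cE_L$ rather than some accidental metric structure. Granting it, the conclusion is formal: for a left-invariant pseudo-metric the generated coarse structure is determined by its family of balls, hence by its bounded sets; so any two pseudo-metrics obtained by this recipe (for different choices of $H$, of coset representatives, or of divergent weights) have the same bounded sets, namely the CB sets, and the identity is therefore a coarse equivalence between them. This shows the coarse equivalence class of $d$ is independent of all choices, which is the assertion of the proposition. I would cite \cite{rosendal2022} for the background facts that the CB sets form a bornology stable under subsets and finite products, and that a left-invariant pseudo-metric is determined up to coarse equivalence by its bounded sets.
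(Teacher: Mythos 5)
Your existence construction coincides with the paper's (\Cref{prop:LocCBCPmetric}): an open CB subgroup $H$ extracted from the basis of clopen subgroups $\cV_K$, countably many coset representatives via separability, and the associated left-invariant weighted word pseudo-metric with divergent weights; your check that balls of radius $R$ land in a finite product of $H$ and a finite set of generators is exactly the paper's coarse-properness verification. Where you genuinely diverge is the uniqueness step. The paper proves an asymmetric comparison lemma (\Cref{prop:WDCEtype}): if $d$ is coarsely proper, then the identity map to \emph{any} continuous left-invariant pseudo-metric $d'$ is coarsely Lipschitz, the crux being that a CB set cannot support an unbounded \emph{continuous} length function — so continuity of $d'$ is essential there. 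You instead upgrade coarse properness to the two-sided identification of $d$-bounded sets with CB sets (the converse inclusion via condition (1) of \Cref{PROP:RosendalCB} applied with the neighborhood $H$), and then deduce uniqueness formally: for left-invariant pseudo-metrics with identical bounded families, $\Phi_{+}(m)=\sup\{d'(\Id,g)\ \vert\ d(\Id,g)\le m\}$ is finite because the $d$-ball is $d$-bounded, hence CB, hence $d'$-bounded, and left-invariance converts this into a coarse equivalence via the identity. This is correct and buys a continuity-free argument (you never need to check $d$ is continuous, which the paper does); the cost is a weaker comparison, valid only within the matched-bornology class, whereas \Cref{prop:WDCEtype} compares against arbitrary continuous left-invariant pseudo-metrics — a strictly stronger statement the paper later exploits (e.g., against the displacement pseudo-metric $d_{\cD}$ in the proof of \Cref{thm:asdimInf}). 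For the proposition as stated, your proof is complete.
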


\begin{DEF}
A group $G$ is said to \textbf{have arbitrarily small open subgroups} if for any identity neighborhood $V$ of $G$, there exists an open subgroup $K$ of $G$ such that $K \subset V$. 
\end{DEF}

\begin{DEF}[{\cite[Definition 2.51]{rosendal2022}}]
  Let $G$ be a Polish group and $d$ a pseudo-metric on $G$. Then $d$ is said to be \textbf{coarsely proper} if for every $x_{0} \in G$ and $R\ge 0$, the metric ball $\{x \in G\ \vert\ d(x_{0},x)\le R\}$ is CB in $G$.
\end{DEF}

\begin{PROP}[Existence, {cf. \cite[Theorem 2.38]{rosendal2022}}] \label{prop:LocCBCPmetric}
  Let $G$ be a separable, metrizable, locally CB, topological group that has arbitrarily small open subgroups. Then $G$ admits a continuous left-invariant and coarsely proper pseudo-metric.
\end{PROP}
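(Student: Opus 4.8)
The plan is to reduce everything to a single open coarsely bounded subgroup and then build a weighted word pseudo-metric relative to it. First I would combine the two hypotheses: since $G$ is locally CB it has a CB open identity neighborhood $V$, and since $G$ has arbitrarily small open subgroups there is an open subgroup $K \subseteq V$. As a subset of the CB set $V$, the subgroup $K$ is itself CB, so $G$ contains an \emph{open CB subgroup} $K$. Because $K$ is open its left cosets form a disjoint open cover of $G$, and separability then forces the coset space $G/K$ to be countable; fixing representatives $g_0 = 1, g_1, g_2, \ldots$ exhibits $S \defeq K \cup \{g_n^{\pm 1} : n \ge 1\}$ as a countable generating set of $G$, since every element lies in some $g_m K$.

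Next I would define a weighted length function. Assign weight $0$ to every element of $K$ and weight $w(g_n^{\pm 1}) \defeq n$ to each heavy generator, and set
\[
\ell(g) \defeq \inf\Big\{ \textstyle\sum_{i=1}^m w(s_i) \ :\ g = s_1 \cdots s_m,\ s_i \in S \Big\},
\]
with $d(g,h) \defeq \ell(g^{-1}h)$. Concatenating factorizations gives subadditivity $\ell(gh) \le \ell(g)+\ell(h)$, the symmetry of $S$ and of $w$ gives $\ell(g^{-1}) = \ell(g)$, and $\ell(1)=0$; hence $d$ is a left-invariant pseudo-metric (it is genuinely a pseudo-metric and not a metric precisely because $\ell$ vanishes on $K$). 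For continuity it suffices to check continuity of $\ell$ at the identity, and this is immediate: $\ell \equiv 0$ on the open neighborhood $K$, so $\{\ell < \epsilon\}$ contains the open set $K$ for every $\epsilon>0$; left-invariance then upgrades this to continuity of $d$ on all of $G \times G$.

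The crux is coarse properness, i.e.\ that every ball $B_R = \{g : \ell(g) \le R\}$ is CB. Here the growth of the weights does the work: a factorization of total weight at most $R$ can only use heavy letters $g_n^{\pm 1}$ with $n \le R$, and at most $\lfloor R \rfloor$ of them since each such letter has weight at least $1$. Collapsing the intervening weight-$0$ blocks (a product of $K$-elements is a single $K$-element) shows
\[
B_R \subseteq (K \cup F_R)^{2\lfloor R\rfloor + 1}, \qquad F_R \defeq \{g_n^{\pm 1} : 1 \le n \le R\},
\]
a finite product of the CB set $K$ with the finite (hence CB) set $F_R$. Since CB sets form an ideal---closed under subsets, finite unions, products and left translation, all elementary consequences of \Cref{PROP:RosendalCB}---the right-hand side is CB, so $B_R$ is CB, and a general ball $x_0 B_R$ is a left translate of $B_R$ and thus also CB. I expect this last counting-and-collapsing step to be the main technical point: one must carefully bound the number of heavy letters and reduce an arbitrary $S$-word to the alternating normal form above, after which the ideal properties of CB sets finish the argument. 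The hypotheses enter exactly as follows: local CB-ness together with arbitrarily small open subgroups produce the open CB subgroup $K$, while separability guarantees that $G/K$---and hence the supply of heavy generators---is countable.
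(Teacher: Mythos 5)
Your proposal is correct and takes essentially the same route as the paper's proof: both pass to an open CB subgroup inside a CB identity neighborhood, use separability to enumerate countably many cosets $g_iK$, assign weight $i$ to the representatives $g_i^{\pm 1}$ and weight $0$ to the subgroup, and verify coarse properness of the resulting weighted word pseudo-metric by trapping each ball $B_R$ inside a finite power of the CB set $K$ together with the finitely many heavy generators of weight at most $R$. Your locally-constant continuity argument and the explicit bound $B_R \subseteq (K \cup F_R)^{2\lfloor R\rfloor+1}$ are if anything slightly cleaner than the paper's sequence-based continuity check and its stated exponent $\lfloor R \rfloor$, but the substance is identical.
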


\begin{proof}
  Let $\cV$ be a CB neighborhood of the identity in $G$. By taking smaller subgroups, we may assume $\cV$ is an open subgroup of $G$.
  Since $G$ is separable and metrizable, it is Lindel{\" o}f. Hence, $G$ can be covered by countably many (disjoint) cosets of $\cV$. Write $G = \bigcup_{i=0}^{\infty}g_{i}\cV$, with $g_{0} = \Id$ and $g_{i} \in G$ for $i>0$.
  Define a length function $\ell$ on $G$ by $\ell(g_{i})=\ell(g_{i}^{-1})=i$ for $i \ge 0$ and $\ell(u)=0$ for $u \in \cV$.
  Set $S= \{g_{i}^{\pm 1}\}_{i=0}^{\infty} \cup \cV$.
  Now, for a general element $g \in G$, define:
  \[
    \ell(g):= \inf\left\{\left.\sum_{j=1}^{m} \ell(s_{j})\ \right|\ g = s_{1}\cdots s_{m}, \quad \text{ for some } m \in \Z^{+},\ s_{i} \in S \right\}.
  \]
  Then $\ell$ induces a left-invariant pseudo-metric $d$ on $G$. To check $\ell$ is continuous on $G$, we check that $\ell^{-1}(\{n\})$ is closed for each $n \in \Z_{\ge 0}$. Choose a convergent sequence $\{h_i\}_{i \in \Z^{+}}$ with $h_i \arr h$, such that $\ell(h_i)=n$. We want to show $\ell(h)=n$. In fact, the sequence $\{h^{-1}h_i\}_{i \in \Z^+}$ converges to $\Id$. As $\cV$ is open, by taking the tail of the sequence we may assume $h^{-1}h_i \in \cV$ for all $i \in \Z^+$, so $\ell(h^{-1}h_i)=0$. However, by the triangle inequality:
  \[
    n= \ell(h_i) - \ell(h_i^{-1}h) \le \ell(h) \le \ell(h_i) + \ell(h_i^{-1}h) = n,
  \]
  so $\ell(h)=n$, concluding $\ell$ is continuous.
   
  Now to check that $d$ is coarsely proper, it suffices to check that the metric balls centered at the identity are CB. Namely, let $B_{R}=\{g \in G| \ell(g) \le R\}$ for $R \ge 0$.
  We have $B_{0}=\cV$, which is CB in $G$. Now assume $R>0$. Observe that
  \[
    B_{R} \subset (\cV \cup \{g_{1},\ldots,g_{R}\})^{\lfloor R\rfloor}.
  \]
  Since a finite union of CB-sets is CB, and a finite power of a CB-set is CB, it follows that $(\cV \cup \{g_{1},\ldots,g_{R}\})^{R}$ is CB in $G$, so $B_{R}$ is CB in $G$, concluding the proof.
\end{proof}

To prove that the continuous, left-invariant, coarsely proper, pseudo-metric is well-defined, we introduce the definition of a coarse equivalence between pseudo-metric spaces.

\begin{DEF}[Coarse Equivalence]
    \label{def:metricCE}
      Let $f:(X,d_X) \to (Y,d_Y)$ be a map between two pseudo-metric spaces. Then $f$ is said to be \textbf{coarsely Lipschitz} if there exists a non-decreasing function $\Phi_{+}:[0,\infty) \to [0,\infty)$, called an \textbf{upper control function} of $f$, such that
      \[
      d_Y(f(x),f(x')) \le \Phi_{+}(d_X(x,x')),
      \]
      for all $x,x' \in X$. Similarly, $f$ is said to be \textbf{coarsely expanding} if there exists a non-decreasing function $\Phi_{-}:[0,\infty) \to [0,\infty]$, called a \textbf{lower control function} of $f$, 
      such that $\lim_{r \to \infty}\Phi_{-}(r) \to \infty$ and
      \[
      \Phi_{-}(d_X(x,x')) \le d_Y(f(x),f(x')),
      \]for all $x,x' \in X$. We say $f$ is a \textbf{coarse embedding} if it is coarsely Lipschitz and coarsely expanding. 
      Further, $f$ is said to be \textbf{coarsely surjective} if there exists a $C \ge 0$ such that for any $y \in Y$ there exists an $x \in X$ such that $d_Y(y,f(x))\le C$.
      Finally, the map $f$ is a \textbf{coarse equivalence} if it is a coarse embedding and coarsely surjective.
    \end{DEF}

\begin{PROP}[Uniqueness, {cf. \cite[Lemma 2.52]{rosendal2022}}]
  \label{prop:WDCEtype}
  Let $G$ be a separable, metrizable, locally CB, topological group that has arbitrarily small subgroups. Then a continuous, left-invariant, coarsely proper, pseudo-metric on $G$ is well-defined up to coarse equivalence.
  More generally, if $d,d'$ are continuous, left-invariant, pseudo-metrics on $G$ and $d$ is coarsely proper, then the identity map $\Id:(G,d) \to (G,d')$ is coarsely Lipschitz.
\end{PROP}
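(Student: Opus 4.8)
The plan is to prove the second, more general assertion first, since the uniqueness statement follows from it formally by applying it in both directions. So I would begin by fixing continuous, left-invariant pseudo-metrics $d,d'$ on $G$ with $d$ coarsely proper, and produce an upper control function for $\Id\colon (G,d)\to(G,d')$. By left-invariance of both pseudo-metrics, $d(x,x')=d(e,x^{-1}x')$ and $d'(x,x')=d'(e,x^{-1}x')$ for all $x,x'\in G$, so it suffices to bound $d'(e,g)$ in terms of $d(e,g)$ as $g$ ranges over $G$. Accordingly I would set
\[
\Phi_{+}(R):=\sup\{\,d'(e,g)\ :\ d(e,g)\le R\,\},
\]
which is visibly non-decreasing; the entire content of the general statement is the finiteness $\Phi_{+}(R)<\infty$ for each $R\ge 0$.

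To prove this finiteness, note first that the ball $B_R:=\{g\in G : d(e,g)\le R\}$ is CB in $G$ because $d$ is coarsely proper. The task is then to convert coarse boundedness into a bound on $d'$-diameter, and here I would use characterization $(2)$ of \Cref{PROP:RosendalCB}. Let $X$ be the metric space obtained from $(G,d')$ by identifying points at $d'$-distance zero, and let $\bar g\in X$ denote the class of $g$. Left translation descends to an action of $G$ on $X$ by isometries, which is continuous because $d'$ is continuous and left-invariant; continuity of $d'$ also makes $g\mapsto\bar g$ continuous. Applying $(2)$ to the basepoint $\bar e$ gives $\diam(B_R\cdot\bar e)<\infty$, that is, $\sup_{g,h\in B_R}d'(g,h)<\infty$. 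Since $e\in B_R$, this yields $\Phi_{+}(R)<\infty$, and then $d'(x,x')=d'(e,x^{-1}x')\le\Phi_{+}(d(e,x^{-1}x'))=\Phi_{+}(d(x,x'))$, so $\Id\colon(G,d)\to(G,d')$ is coarsely Lipschitz.

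For the uniqueness statement, let $d$ and $d'$ both be continuous, left-invariant, and coarsely proper; such pseudo-metrics exist by \Cref{prop:LocCBCPmetric}. Applying the general statement to the ordered pair $(d,d')$ and then to $(d',d)$ shows that $\Id\colon(G,d)\to(G,d')$ and $\Id\colon(G,d')\to(G,d)$ are both coarsely Lipschitz, with upper control functions $\Phi_{+}$ and $\Psi_{+}$ respectively. The identity is trivially coarsely surjective, so it remains only to exhibit a lower control function for $\Id\colon(G,d)\to(G,d')$. From $d(x,x')\le\Psi_{+}(d'(x,x'))$ I would define $\Phi_{-}(r):=\inf\{\,t\ge 0 : \Psi_{+}(t)\ge r\,\}$; monotonicity of $\Psi_{+}$ gives $\Phi_{-}(d(x,x'))\le d'(x,x')$, and $\Phi_{-}(r)\to\infty$ as $r\to\infty$ since $\Psi_{+}$ is finite-valued (in the degenerate case where $\Psi_{+}$ is bounded, i.e.\ $(G,d)$ has finite diameter, one has $\Phi_{-}\equiv\infty$ past that bound, which is permitted by \Cref{def:metricCE}). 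Thus $\Id\colon(G,d)\to(G,d')$ is a coarse embedding that is coarsely surjective, hence a coarse equivalence.

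I expect the main obstacle to be the middle step, converting coarse boundedness of the $d$-ball $B_R$ into a finite $d'$-diameter. This is the only place where the hypotheses genuinely enter, and the subtlety is that $d'$ is merely a pseudo-metric: one must pass to the metric quotient $X$ and carefully check that the left-translation action there is a continuous isometric action before the orbit-diameter characterization of coarse boundedness can be invoked. By contrast, the reduction via left-invariance and the final inversion of $\Psi_{+}$ into a lower control function are routine, the latter requiring only the bookkeeping noted above for the finite-diameter case.
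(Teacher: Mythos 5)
Your proposal is correct and follows essentially the same route as the paper: both reduce to the coarsely Lipschitz statement, define $\Phi_{+}$ as the supremum of $d'$-distances over a $d$-ball, and derive finiteness from coarse properness of $d$ together with the fact that a CB set must have bounded $d'$-diameter (the paper phrases this as a contradiction via a continuous length function obtained from $d'$, which is the same use of condition (2) of \Cref{PROP:RosendalCB} that you make explicit through the metric quotient). Your write-up is if anything slightly more careful than the paper's, since you spell out the passage to the metric quotient and the explicit inversion of $\Psi_{+}$ into a lower control function, both of which the paper leaves implicit.
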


 \begin{proof}
    It suffices to show the latter statement because if $\Id:(G,d') \to (G,d)$ is coarsely Lipschitz, then the Lipschitz constants give a lower control function of the inverse map, $\Id:(G,d) \to (G,d')$, showing that $\Id: (G,d) \to (G,d')$ is a coarse embedding. Because the identity map is (coarsely) surjective, it follows that $\Id:(G,d) \to (G,d')$ is a coarse equivalence.
    
   Hence, we prove $\Id:(G,d) \to (G,d')$ is coarsely Lipschitz: There exists a non-decreasing function $\Phi_{+}:[0,\infty) \to [0,\infty)$ such that
     $d'(\Id,g) \le \Phi_{+}(d(\Id,g))$
     for every $g \in G$. Define $\Phi_{+}$ as:
     \[
       \Phi_{+}(m) = \sup\{d'(\Id,g)\ \vert\ \text{for $g \in G$ s.t. } d(\Id,g)\le m\}.
     \]
     Then by definition $\Phi_{+}$ is non-decreasing. Hence, it suffices to prove that $\Phi_{+}$ only admits a finite value. Suppose for the sake of contradiction $\Phi_{+}(m)=\infty$ for some $m>0$. This implies that there exists a sequence $\{g_{n}\}_{n=1}^{\infty}$ of elements in $G$ such that $d'(\Id,g_{n}) \to \infty$ as $n \to \infty$, but $d(\Id,g_{n})\le m$ for all $n\ge 1$. Note $d$ is coarsely proper, so $B_{m}:= \{g \in G\ \vert\ d(\Id,g)\le m\}$ is CB in $G$. However, $G$ admits a continuous length function $\ell'$, which is unbounded on $B_{m}$ obtained from $d'$ and this contradicts the assumption that $B_{m}$ is CB in $G$. nTherefore, $\Phi_{+}(m) < \infty$ for every $m\ge 0$ and this concludes the proof.
 \end{proof}

Now we are ready to prove \Cref{prop:WDCEpmap}.

\begin{proof}[Proof of \Cref{prop:WDCEpmap}]
    In \Cref{ss:BMCGofGraphs}, we have seen $\PMap(\G)$ is Polish and has arbitrarily small open basic subgroups $\cV_K$.
    Therefore, by \Cref{prop:LocCBCPmetric} we obtain a coarsely proper pseudo-metric on $\PMap(\G)$, which is well-defined up to coarse equivalence by \Cref{prop:WDCEtype}.
\end{proof}

\section{Elements of $\PMap(\G)$} \label{SEC:Elements}

In this section we call attention to different types of elements in $\PMap(\G)$, specifically \emph{word maps}, \emph{loop swaps}, and \emph{loop shifts}. We use word maps and loop swaps in \Cref{sec:CBPMCG} and loop shifts in \Cref{sec:TwoEnds}. In order to define these maps we first introduce some standard forms and notation for graphs. Additionally, we hope this section provides the reader with a better hands-on understanding of the groups $\PMap(\G)$.

Throughout this section we use \emph{loop} in the graph theoretic sense, that is an edge whose initial and terminal vertices are the same. We use \emph{based loop} to refer to an element of a fundamental group.

\subsection{Standard Forms of Graphs}
\label{ss:stdforms}

Standard models for locally finite, infinite graphs were introduced in \cite{ayala1990proper} and are used in \cite{AB2021}.  Our arguments do not require graphs be standard models. In particular we do not require the underlying tree to be binary, and sometimes we introduce artificial vertices of valence two. We will instead use graphs in \emph{standard form}, defined as follows. \begin{DEF}
    A locally finite graph, $\G$, is in \textbf{standard form} if $\G$ is a tree with loops attached at some of the vertices. We endow $\G$ with the path metric that assigns each edge length $1$.
\end{DEF} Standard form is strictly weaker than standard model, that is, every standard model is a graph in standard form. Note that for a graph in standard form, the underlying tree is a spanning tree and it is unique. Another benefit of standard form is that it allows us to talk about the fundamental group in a very concrete way. Specifically, we can orient and enumerate the loops as $\{\alpha_i\}_{i \in I}$ for some $I\subset \Z_{\geq 0}$, and call the vertices to which they are incident $\{v_i\}_{i \in I}$. For a basepoint $x_0$ in the tree, let $a_i$ be the based loop resulting from pre- and post-concatenating each loop $\a_{i}$ with the geodesic from $x_0$ to $v_{i}$. Then the collection $\{a_i\}_{i\in I}$ forms a basis for $\pi_1(\G,x_0)$.

 \Cref{sec:CBPMCG} focuses on graphs whose end spaces contain only one end accumulated by loops. The simplest such graph is the Loch Ness Monster graph, which is the graph with exactly one end and that end is accumulated by loops.  We named this graph in analogy with the Loch Ness Monster surface, which has a single end that is accumulated by genus. The next simplest class of graphs are those whose end space contains finitely many points, one of which is accumulated by loops. We call these graphs \emph{Hungry Loch Ness Monsters,} and include \Cref{fig:hungry} to demonstrate why. 
\begin{figure}[ht!]
	    \centering
	    \includegraphics[width=.8\textwidth]{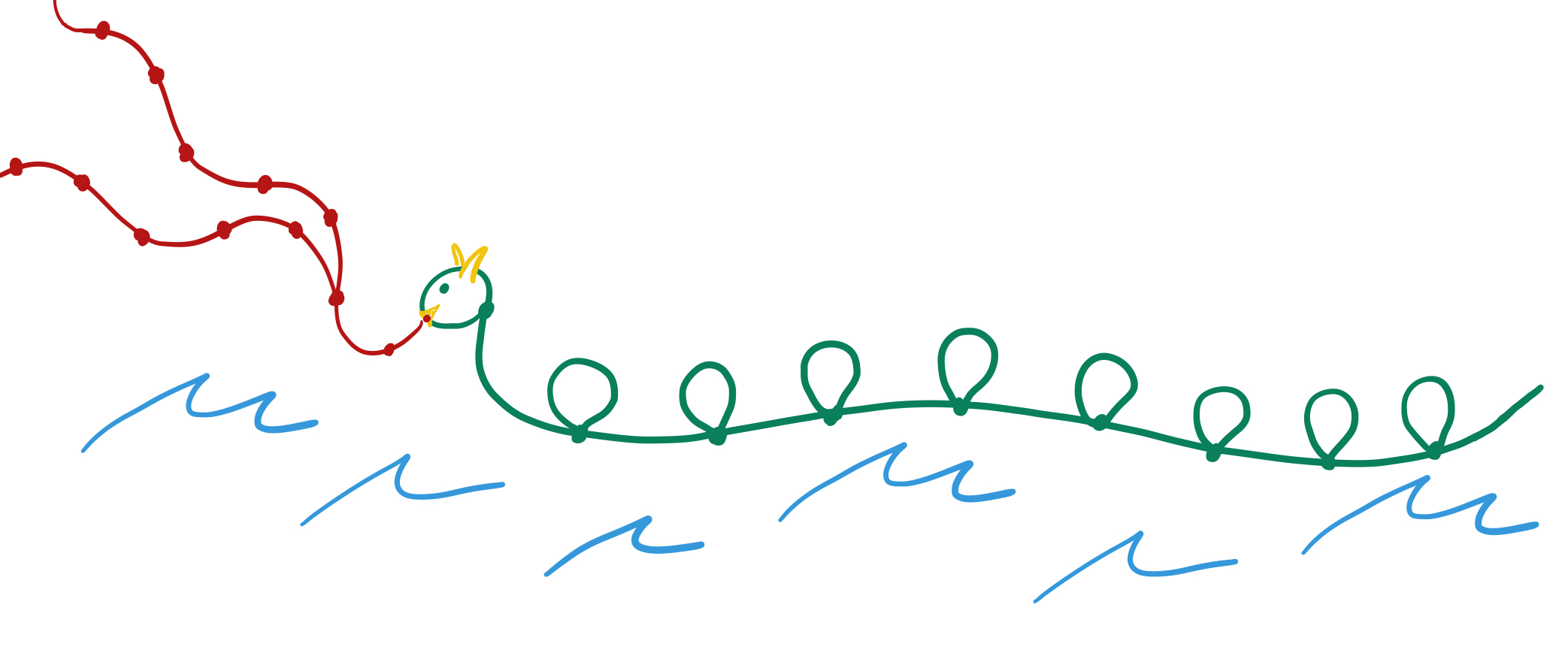}
		    \caption{A Hungry Loch Ness Monster graph, with two tongues.}
	    \label{fig:hungry}
\end{figure}

Let $\G_N$ for $N \in \Z_{\ge 0}$ refer to the locally finite graph with $|E_{\ell}|=1$ and $|E| = N+1$.  Let $\G_{\infty}$ refer to the locally finite graph with $|E_{\ell}|=1$, $|E|=\infty$ but $E \setminus E_{\ell}$ having no accumulation points. We call the graph $\Gamma_{\infty}$ the \emph{Millipede Monster graph} (see \Cref{fig:lochness}) and following the above, $\Gamma_{0}$ is the Loch Ness Monster graph and $\Gamma_{N}$ for $N \in \Z_{>0}$ are the Hungry Loch Ness Monster graphs. We also note that the core graph of \emph{any} graph $\Gamma$ with $|E_{\ell}(\G)|=1$ is properly homotopic to $\Gamma_{0}$. 

\begin{figure}[ht!]
	    \centering
	    \def\svgwidth{.8\textwidth}
		    \import{pics/}{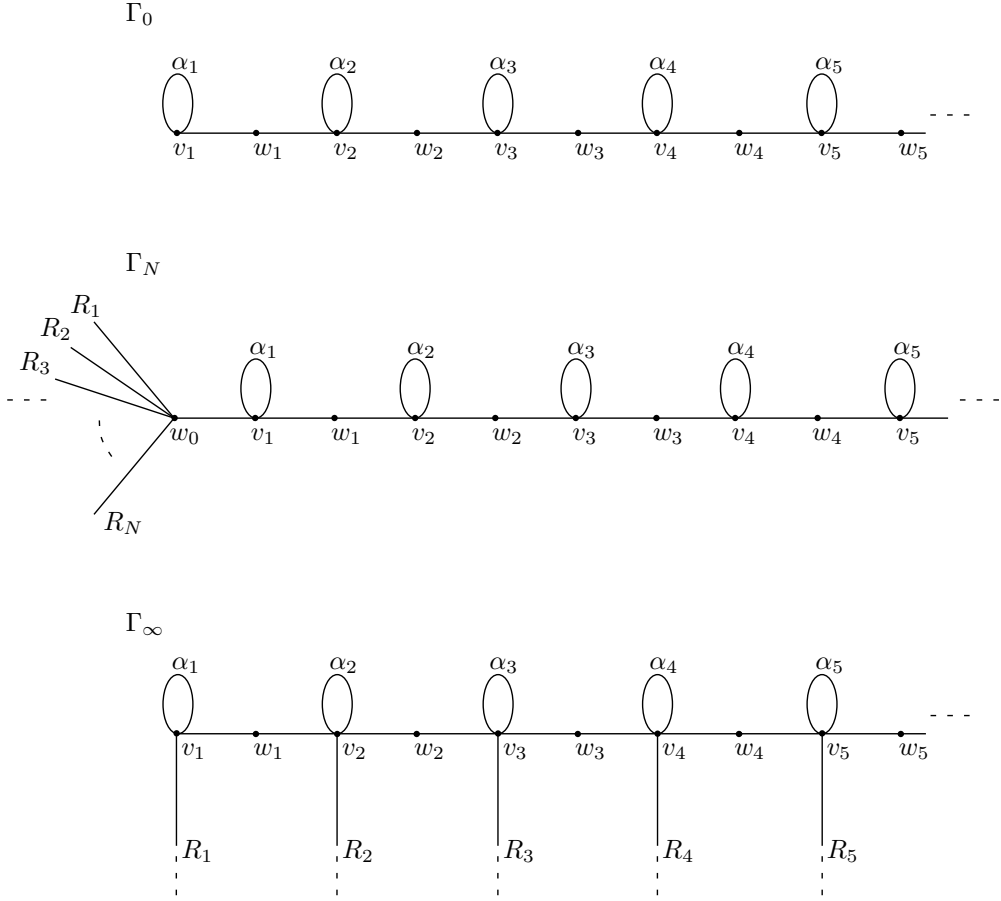}
		    \caption{Standard forms and labels for the Loch Ness Monster graph ($\Gamma_{0}$), the Hungry Loch Ness Monster graph with $N$ rays attached ($\Gamma_{N}$), and the Millipede Monster graph ($\Gamma_{\infty}$).}
	    \label{fig:lochness}
\end{figure}

We use the vertex labeling $\{v_i\},\{w_i\}$ from \Cref{fig:lochness} to introduce the following notation for any graph $\Gamma$ in standard form with $|E_{\ell}(\G)|=1$. Note that if $\Gamma \neq \Gamma_{N}$ for some $N$ then we are only labeling $\Gamma_{c} \subset \G$ following the labeling on $\Gamma_{0}$. The notation $(v_{i},v_{j})$ is used to designate the geodesic in $\G_c$ connecting $v_{i}$ and $v_{j}$. The notation $[v_{i},v_{j}]$ designates the subgraph consisting of $(v_{i},v_{j})$ together with the loops $\alpha_{k}$ for all $k$ between $i$ and $j$, inclusive of $i$ and $j$. We can replace $v_i$ with $w_i$ and still use parenthesis to indicate the line segment in the core graph and closed brackets to include any loops which are incident to the line segment. We use $A_{i,j}$ to denote the free factor of $\pi_1(\G,x_0)$ (for any basepoint) coming from $[v_i,v_j]$, that is $A_{i,j}=\<a_k\>_{k=i}^{j}$.

\subsection{Loop Swaps} \label{ssec:loopswaps}
We will make ample use of a specific class of maps that swap sets of loops. First, we define them explicitly on the graphs $\G_N$ with $N \in \Z_{\ge 0}$ equipped with the path metric. Then we extend the definition to the millipede monster graph $\G_\infty$.

\begin{DEF}
    Given a triple $(n,m_{1},m_{2}) \in (\Z_{\geq 0})^3$ satisfying $m_{2}-m_{1} \ge n$ we define the \textbf{loop swap} determined by the triple $(n,m_{1},m_{2})$, denoted by $\cL(n,m_{1},m_{2})$, to be the map which swaps the $n$ loops starting at $v_{m_{1}}$ with the $n$ loops starting at $v_{m_{2}}$. That is, $\cL(n,m_{1},m_{2})$ is the map that interchanges $[v_{m_{1}},v_{m_{1}+n-1}]$ and $[v_{m_{2}},v_{m_{2}+n-1}]$ isometrically, stretches the following edges to the following paths,
    \begin{align*}
        (w_{m_{1}-1},v_{m_{1}}) &\mapsto (w_{m_{1}-1},v_{m_{2}}) \\
        (v_{m_{1}+n-1},w_{m_{1}+n-1}) &\mapsto (v_{m_{2}+n-1},w_{m_{1}+n-1}) \\
        (w_{m_{2}-1},v_{m_{2}}) &\mapsto (w_{m_{2}-1},v_{m_{1}}) \\
        (v_{m_{2}+n-1},w_{m_{2}+n-1}) &\mapsto (v_{m_{1}+n-1},w_{m_{2}+n-1}),
    \end{align*}
    and is the identity everywhere else. If the graph is the Loch Ness Monster and $m_{1}=1$ then $\cL(n,1,m_{2})$ is defined in the same way without the first stretch map, as there is no edge $(w_{0},v_{1})$ to stretch.

\end{DEF} 

\begin{figure}[ht!]
	    \centering
	    \def\svgwidth{.8\textwidth}
		    \import{pics/}{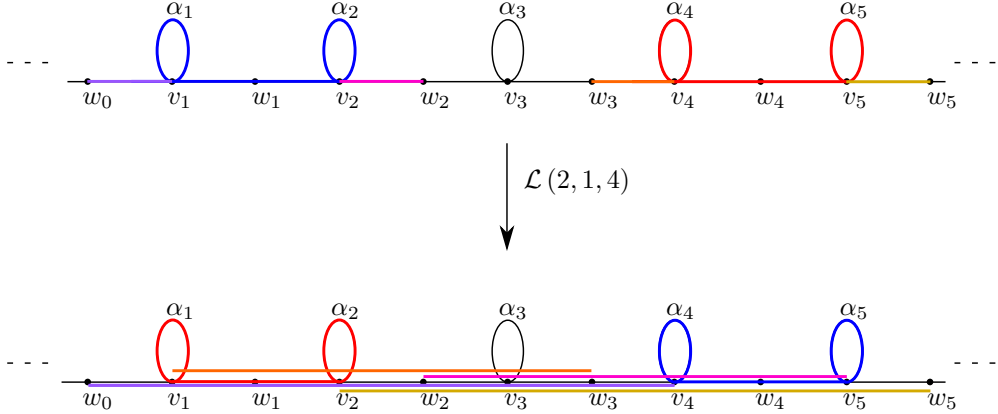}
		    \caption{Example of the loop swap $\cL \left(2,1,4\right)$ on the Hungry Loch Ness Monster.} 
	    \label{fig:loopswapexample}
\end{figure} 

See \Cref{fig:loopswapexample} for an example of a loop swap on a Hungry Loch Ness Monster. We now make a few remarks about loops swaps.

\begin{RMK} 
\begin{enumerate}[(1)]
    \item 
    Loop swaps are always proper homotopy equivalences. 
    \item
    $\cL(n,m_{1},m_{2})^{2}$ is properly homotopic to the identity, so the corresponding mapping classes of loop swaps have order two. 
    \item
    The vertices $w_{m_{1}-1},w_{m_{1}+n-1}, w_{m_{2}-1},$ and $w_{m_{2}+n-1}$ are all fixed points of $\cL(n,m_{1},m_{2})$. 
    \item
    If $K = [v_{m_{1}},v_{m_{1}+n-1}]$, then $\cL(n,m_{1},m_{2})(K) \cap K$ is empty. 
    \item
    If $x_{0} \in \Gamma$ is fixed by some $\cL(n,m_{1},m_{2})$ then the induced map on $\pi_{1}(\Gamma,x_{0})$ is given by:
    \begin{align*}
        \cL(n,m_{1},m_{2})_{*}: a_{i}\mapsto
        \begin{cases}  a_{m_{2}+(i-m_{1})} &\text{ if } m_{1}\leq i < m_{1}+n, \\
        a_{m_{1}+(i-m_{2})} &\text{ if } m_{2} \leq i < m_{2}+n,\\ 
        a_{i} &\text{ otherwise.} 
        \end{cases}
    \end{align*}
\end{enumerate}
\end{RMK}

One can extend the definition of $\cL(n,m_{1},m_{2})$ to the millipede monster graph $\G_\infty$ by similarly interchanging the subgraphs $[v_{m_{1}},v_{m_{1}+n-1}]$ and $[v_{m_{2}},v_{m_{2}+n-1}]$ isometrically and now stretching subsegments of each of the $R_{i}$ that are incident to these subgraphs along the spanning tree of $\G_{\infty}$.

With some care one can also define loop swaps on any graph $\G$ with large enough rank, but we do not need them for the arguments in this paper.

\subsection{Word Maps}
 Let $\G$ be a graph with $\rk(\G) >0$. Consider its standard form as given in \Cref{ss:stdforms} and orient and enumerate the loops $\{\alpha_i\}_{i \in I}$ with $I \subset \Z_{\geq 0}$, and call the vertices at which they are based $\{v_i\}_{i \in I}$.
  Pick a base point $x \in \G$. Now identify $\pi_1(\G,x)$ with $\<a_i\>_{i\in I}$ where $a_i$ is the based loop that traverses $\alpha_i$. Let $w\in \pi_1(\G ,x)$ and write $w=a_{i_1}^{\pm}a_{i_2}^{\pm}\cdots a_{i_m}^{\pm}$. The \textbf{word path} associated to $w$ in $\G$ is the path that begins at $v_{i_1}$ and traverses $\alpha_{i_1}$ in the forward or backward orientation according to the sign of $a_{i_1}^{\pm}$ in $w$, then travels along the tree to $v_{i_2}$ and traverses $\alpha_{i_2}$ according to the sign and continues in this manner, ending at $v_{i_m}$.

\begin{DEF}\label{DEF:WordMap}
    Let $I\subset e$ be a connected subset of an edge $e\in \G$ and identify $I$ with the interval $[0,1]$, and further subdivide $I$ into $[0,\frac{1}{4}] \cup [\frac14, \frac34] \cup [\frac34, 1]$. If $I$ is contained in an edge of $\G \setminus \G_{c}$ then by convention we orient $I=[0,1]$ so that $0$ is farther from $\G_{c}$ than $1$. See \Cref{RMK:I_ConventionMatters} for why we follow this convention.
    
    We can define a \textbf{word map}, denoted by $\wm{w,I}$, supported on $I$ as follows: the interval $[0,\frac14]$ is mapped to the path in the tree from $0$ to $v_{i_1}$, the interval $[\frac14, \frac34]$ is mapped to the word path associated to $w$, and the interval $[\frac34,1]$ is mapped to the path in the tree from $v_{i_m}$ to $1$. The word map is the identity on the rest of $\G$, see \Cref{fig:wordexample}. 
\end{DEF}
    
\begin{figure}[ht!]
	    \centering
	    \def\svgwidth{.8\textwidth}
		    \import{pics/}{wordexample.pdf_tex}
		    \caption{A word map $\wm{a_3a_4a_1,I}$} 
	    \label{fig:wordexample}
\end{figure}
    
To see that $\wm{w,I}$ is proper, note that $\wm{w,I}$ is compactly supported on $I$, and $\wm{w,I}(I)$ is also compact. To see that $\wm{w,I}$ is a non-trivial element of $\Map(\G)$ observe that it induces a non-trivial automorphism of at least one of $\pi_1(\G,0)$ and $\pi_1(\G,1)$. In fact, the induced map will be conjugation on a free factor of $\pi_1(\G,1)$.

In particular, if $\G=\G_N$ and $I \subset (v_j,v_{j+1})$ positively oriented, then the induced map on $\pi_1(\G,v_j)$ is the partial conjugation
\begin{align*}
   \left( \wm{w,I}\right)_* (a_i) &=
   \begin{cases} a_i &\text{ if } i\leq j, \\
   w a_i w\inv &\text{ if } i> j.
   \end{cases}
\end{align*}

The composition of two word maps on the same edge is again a word map. In fact, we have a nice composition rule when the words are supported on $\G \setminus \G_c$:

\begin{LEM}[Composition rule] \label{LEM:compositionTongues}
If $I$ is contained in an edge of $\G \setminus \G_{c}$ and $w_{1},w_{2}$ are two words in $\pi_{1}(\G,x)$, then 
\[
\wm{w_1,I}\circ \wm{w_2,I} \simeq \wm{w_1w_2,I},
\]
and the homotopy is proper.
\end{LEM}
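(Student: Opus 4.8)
The plan is to reduce the claim to a statement about paths. Both $\wm{w_1,I}\circ\wm{w_2,I}$ and $\wm{w_1w_2,I}$ are the identity on $\G\setminus I$: each factor is the identity outside the compact interval $I$, so their composition is as well. Hence each of the two maps is determined by its restriction to $I$, which (the endpoints $0,1$ of $I$ being fixed) is a path from $0$ to $1$ in $\G$ whose image lies in a fixed finite subgraph $K\subset\G$ containing $I$ together with all the relevant tree geodesics and word paths. I would first observe that it suffices to show these two paths are homotopic rel $\{0,1\}$: given such a path homotopy, which can be taken inside $K$ and which fixes $0$ and $1$, extending it by the constant identity homotopy on $\G\setminus I$ glues continuously to a homotopy of self-maps of $\G$ supported in $K$, and a homotopy supported in a compact set is automatically proper.

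Next I would compute the rel-endpoint homotopy class of a single word-map path. Write $P_w\colon I\to\G$ for the restriction of $\wm{w,I}$ to $I$, and let $\iota$ denote the inclusion path $s\mapsto s$ from $0$ to $1$. Here the orientation convention for $I$ (that $0$ is farther from $\G_c$ than $1$) is essential: it forces the tree geodesic from $0$ to the starting vertex $v_{i_1}$ of the word path to factor as $\iota$ followed by a geodesic from $1$ into the core. Using that the spanning tree is simply connected (so tree paths with common endpoints agree rel endpoints and cancel freely), I would show
\[
P_w\ \simeq\ \iota\cdot \hat w\quad(\mathrm{rel}\ \{0,1\}),
\]
where $\hat w\in\pi_1(\G,1)$ is the image of $w$ under the change-of-basepoint isomorphism $\pi_1(\G,x)\to\pi_1(\G,1)$ along the geodesic from $x$ to $1$; concretely $\hat w$ is the loop at $1$ obtained by running out to each loop $\alpha_{i_k}$, traversing it, and returning.

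Then I would compute the composition. The key geometric point, again using the orientation convention, is to identify which part of the image path $P_{w_2}$ re-enters $I$: the word path of $w_2$ and the closing geodesic lie on the core side of $1$ and meet $I$ only at the point $1$, so the only subpath of $P_{w_2}$ lying in $I$ is its initial $\iota$-subpath. Since $\wm{w_1,I}$ is the identity off $I$ and equals $P_{w_1}$ on $I$, applying it to $P_{w_2}$ replaces exactly that initial $\iota$-subpath by $P_{w_1}$, giving
\[
\wm{w_1,I}\circ P_{w_2}\ =\ P_{w_1}\cdot \hat w_2\ \simeq\ \iota\cdot \hat w_1\cdot \hat w_2\ =\ \iota\cdot \widehat{w_1w_2}\ \simeq\ P_{w_1w_2},
\]
where the middle steps use the previous paragraph and the fact that change of basepoint is a homomorphism, so $\hat w_1\hat w_2=\widehat{w_1w_2}$. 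This is exactly the desired rel-endpoint path homotopy, and the reduction of the first paragraph then finishes the proof.

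I expect the main obstacle to be the geometric bookkeeping in the last step: correctly tracking how $\wm{w_1,I}$ acts on the image path $P_{w_2}$ and, in particular, verifying that $P_{w_2}$ re-enters $I$ only along its initial $\iota$-subpath. This is precisely where the hypothesis $I\subset\G\setminus\G_c$ and the orientation convention on $I$ (cf.\ \Cref{RMK:I_ConventionMatters}) are used: they guarantee that the core-bound geodesics and the word path attach on the far side of $1$ and do not interfere with the interval, which is exactly what makes the composition rule take the simple concatenated form.
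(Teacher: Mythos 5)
Your proposal is correct and follows essentially the same route as the paper's proof: both arguments reduce proper homotopy to an ordinary homotopy supported on $I$, track how the composite maps the initial segment of $I$ (the only part of the image of $\wm{w_2,I}$ that re-enters $I$, thanks to the orientation convention), and then straighten the tree detour through the endpoint $1$ to obtain the word path of $w_1w_2$. Your packaging via the normal form $P_w \simeq \iota\cdot\hat w$ and the change-of-basepoint homomorphism is just a more algebraic phrasing of the paper's explicit subinterval computation.
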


\begin{proof}
    First, note that both maps are supported on $I$, so once we show that they are homotopic then they are properly homotopic.
    To show they are homotopic, we keep track of the image of $I=[0,\frac14]\cup[\frac14,\frac34]\cup[\frac34,1]$ under each map. Say $v_{i,1}$ and $v_{j,1}$ are the vertices in $\G_c$ incident to the loops corresponding to the first and last letters of $w_1$. Define $v_{i,2}$ and $v_{j,2}$ similarly using $w_2$. In this proof only, for vertices $x,y$ of $\G$ we denote by $[x,y]$ a geodesic from $x$ to $y$ in $\G$. Also, denote by $\cP(w_k)$ for $k=1,2$ the word path corresponding to $w_k$, starting at $v_{k,1}$, traversing the loops representing $w_k \in \pi_1(\G,x)$ and then ending at $v_{k,2}$. Then by definition of word maps, $\wm{w_1,I} \circ \wm{w_2,I}$ maps each subinterval of $I$ as:
    \begin{align*}
        &\left[0,\frac14\right]
        &&\xrightarrow{\wm{w_2,I}}
        \quad \left[0,v_{i,2}\right]
        &&\xrightarrow{\wm{w_1,I}}
        \quad\left[0,v_{i,1}\right]\cup \cP(w_1)\cup \left[v_{j,1},1\right]\cup\left[1,v_{i,2}\right], \\
        &\left[\frac14,\frac34\right]
        &&\xrightarrow{\wm{w_2,I}}
        \quad\cP(w_2)
        &&\xrightarrow{\wm{w_1,I}}
        \quad\cP(w_2), \\
        &\left[\frac34,1\right]
        &&\xrightarrow{\wm{w_2,I}}
        \quad\left[v_{j,2},1\right]
        &&\xrightarrow{\wm{w_1,I}}
        \quad\left[v_{j,2},1\right].
    \end{align*}
    Here to get the first line, we decomposed $[0,v_{i,2}]$ into $[0,1] \cup [1,v_{i,2}]$. Since the path $[v_{j,1},1] \cup [1,v_{i,2}]$ is homotopic to $[v_{j,1},v_{i,2}]$ we can homotope $\wm{w_1,I} \circ \wm{w_2,I}$ as:
    \[
        [0,1] \xrightarrow{\wm{w_1,I} \circ \wm{w_2,I}}
        \left[0,v_{i,1}\right]\cup \cP(w_1) \cup \left[v_{j,1},v_{i,2}\right] \cup \cP(w_2) \cup [v_{j,2},1],
    \]
    which is, after reparametrization, exactly the same as $\wm{w_1w_2,I}$ because $\cP(w_1w_2) = \cP(w_1) \cup \left[v_{j,1},v_{i,2}\right] \cup \cP(w_2)$. Therefore, we conclude $\wm{w_1,I} \circ \wm{w_2,I} \simeq \wm{w_1w_2,I}$.
\end{proof}

\begin{RMK}\label{RMK:I_ConventionMatters}
    Here to have \Cref{LEM:compositionTongues} it is crucial to have the convention of the orientation on $I=[0,1]$ so that $0$ is further from $\G_c$ than $1$. Otherwise, if we had the opposite orientation of $I$ so that $1$ is further from $\G_c$ than $0$, we would have the \emph{reversed} composition rule:
    \[
    \wm{w_1,I}\circ \wm{w_2,I} \simeq \wm{w_2w_1,I},
    \]
    this is because now $[\frac34,1]$ traverses over $I$ under the map $\wm{w_2,I)}$, whereas it was $[0,\frac14]$. Hence, with this orientation of $I$ in $\wm{w_1,I} \circ \wm{w_2,I}$ the word path $\cP(w_1)$ follows \emph{after} the word path $\cP(w_2)$.
\end{RMK}

The composition rule fails when $I$ is supported on a core graph of $\G$ and $\cP(w_2)$ traverses over $I$, even though the resulting map is still a word map supported on $I$. For example, consider $\wm{a_3a_4a_1,I}$ from \Cref{fig:wordexample}. Then by similar analysis as in \Cref{LEM:compositionTongues}, one can check that
\[
    \wm{a_2,I} \circ \wm{a_3a_4a_1,I} \simeq \wm{a_2a_3a_4a_2^{-1}a_1a_2,I}.
\]
For the same reason, word maps supported on disjoint intervals do not necessarily commute. Instead we introduce the notion of a \emph{multi-word map}, which we think of as doing multiple word maps simultaneously. It is defined as follows for disjoint $I,J\subset \G$:

\begin{align*}
    \left(\varphi_{(w_1,I)\sqcup (w_2,J)}\right)(x)&=
   \begin{cases} \wm{w_1,I}(x) &\text{ if } x\in I, \\
   \wm{w_2,J}(x) &\text{ if } x\in J, \\
   x &\text{ else.}
   \end{cases}
\end{align*} 

\begin{RMK} \label{rmk:RaymapsareWordmaps}
Word maps show up more often than one might initially expect. To see this, let $e$ be an edge between two vertices, $v_0$ and $v_1$, in a locally finite graph $\G$. If $\psi\in \PHE(\G)$ fixes each of $v_0$ and $v_1$, then $\psi\vert_{e}$ must be (properly) homotopic to a word map $\wm{w,I}$ with $I\subset e$. Note also that any proper homotopy equivalence supported on a ray, $R$, can be realized as a word map with $I\subset R$. Properness implies that any proper homotopy equivalence supported on a component of $\G \setminus \G_{c}$ can be realized as product of \emph{finitely} many word maps. Further, by homotoping to fix vertices, any compactly supported proper homotopy equivalence can be realized as a multi-word map supported on the interior of finitely many edges.
\end{RMK}

Manipulation of word maps is an essential tool in \Cref{sec:CBPMCG}, so we establish the subsequent lemmas. We say that an $n$-point set $\{x_i\} \subset \G$ \emph{span} an $n$-pod if one of the connected components of $\G-\cup\{x_i\}$ is a finite $n$-pod graph $K_{n,1}$ --- the complete $(n,1)$-bipartite graph --- whose boundary points are exactly $\{x_i\}$. \Cref{LEM:push} tells us how to homotope the support of a word map past a vertex of $\G$, as shown in \Cref{fig:push}.

\begin{figure}[h]
    \centering
    \includegraphics[width=0.6\textwidth]{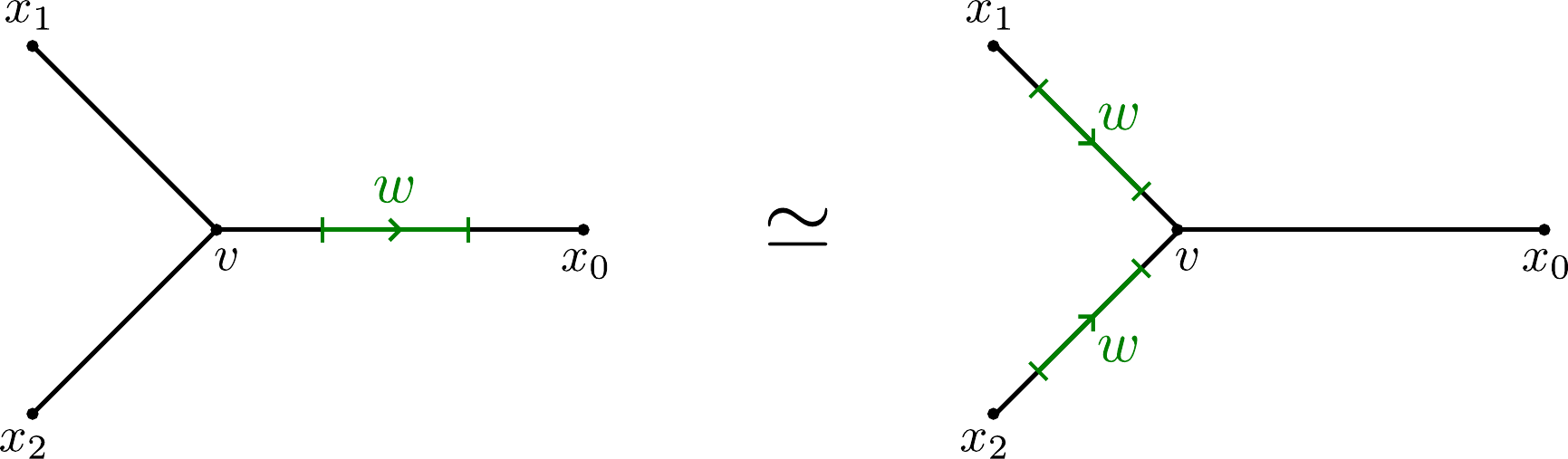}
    \caption{How to ``push'' a word map past a vertex $v$.}
    \label{fig:push}
\end{figure}

\begin{LEM}\label{LEM:push}
    Let $\{x_i\}_{i=0}^{n-1}$ span a finite $n$-pod in $\G$ and call the central vertex $v$. Let $I_0$ be an interval in $(v,x_0)$ and $I_i$ be an interval in $(x_i,v)$ for $1\leq i \leq n-1.$ Then for any $w\in \pi_1(\G)$ we have, \[\wm{w,I_0}\simeq \varphi_{(w,I_1)\sqcup (w,I_2)\sqcup \dots \sqcup (w,I_{n-1})}  \] and the homotopy is proper.
\end{LEM}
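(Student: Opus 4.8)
The plan is to show that the two maps, call them $f\defeq \wm{w,I_0}$ and $g\defeq \varphi_{(w,I_1)\sqcup\cdots\sqcup(w,I_{n-1})}$, induce the \emph{same} map on fundamental groups based at enough points, and then to upgrade this to a \emph{compactly supported} (hence proper) homotopy using \Cref{LEM:multiplebps}. Both $f$ and $g$ are the identity outside the $n$-pod together with the (compact) word path of $w$, so fix once and for all a finite, connected subgraph $N\subset\G$ of rank at least $2$ containing the $n$-pod and all the relevant word paths and connecting paths, chosen large enough that $f(N)\subseteq N$ and $g(N)\subseteq N$. The induced maps of $f|_N$ and $g|_N$ on $\pi_1(N)$ are the expected partial conjugations; these are invertible (with inverses induced by the reverse word maps, cf.\ \Cref{LEM:compositionTongues}), so $f|_N$ and $g|_N$ are homotopy equivalences of $N$. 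Let $P$ be the (finite) frontier of $N$ in $\G$, i.e.\ the points where edges of $\G$ leave $N$; both maps fix $P$ pointwise, and every point of $P$ lies outside a small central region near $v$.

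I would then reduce the entire lemma to the claim that $f|_N$ and $g|_N$ induce the same automorphism of $\pi_1(N,p)$ for every $p\in P$. Granting this, \Cref{LEM:multiplebps} (applicable since $N$ is finite, connected, of rank at least $2$) produces a homotopy $f|_N\simeq g|_N$ rel $P$; extending it by the identity on $\overline{\G\setminus N}$ gives, by the pasting lemma and the fact that $P$ is fixed throughout, a homotopy $f\simeq g$ supported in the compact set $N$, which is therefore proper. (If $w$ is trivial both maps are already the identity, so we may assume $\rk(\G)\ge 2$, as holds in all our applications.)

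To prove the claim I would use the description of a word map by \emph{crossings}: for a path $\gamma$ in general position, $\wm{u,J}\circ\gamma$ is obtained from $\gamma$ by inserting, at each transverse crossing of $J$, a based loop $\widehat u$ tracing the word path of $u$ (joined to $\gamma$ by connecting paths), read with a sign according to the co-orientation of $J$. The orientation conventions in the statement — $I_0$ oriented away from $v$ and each $I_i$ with $i\ge 1$ oriented toward $v$ — are exactly what make the signed crossing counts match: for a loop, the algebraic number of times it crosses $I_0$ outward equals the total algebraic number of times it crosses the remaining $I_i$ inward (conservation of flux across the central region).

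The heart of the argument, and the step I expect to be most delicate, is to promote this \emph{homological} equality to an honest equality in the nonabelian group $\pi_1$. For this I would let $Y$ be the component of $\G\setminus\bigcup_i \operatorname{int}(I_i)$ containing $v$; since $Y$ is a tree it is contractible, and every arc of $\gamma$ inside $Y$ is the tree geodesic between its endpoints on the $I_i$. Cutting a based loop $\gamma$ at $p\in P$ into its successive excursions through $Y$, I would compare $f\circ\gamma$ and $g\circ\gamma$ one excursion at a time. On an excursion entering via leg $a$ and leaving via leg $b$, the two maps insert copies of $\widehat w^{\pm 1}$ at the entry and exit crossings; because the connecting paths are tree geodesics in $Y$, transporting $\widehat w$ along such a geodesic is consistent, so an inserted $\widehat w^{\pm 1}$ may be slid from the entry point to the exit point across the excursion. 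A short case check ($a=0$, $b\ge 1$; then $a,b\ge 1$; then $a=b$) shows that after this sliding the insertions of $f$ and of $g$ either agree or cancel in pairs $\widehat w\,\widehat w^{-1}$, so $f\circ\gamma\simeq g\circ\gamma$ rel endpoints. Since distinct excursions are separated by portions of $\gamma$ outside $Y$ on which $f=g=\Id$, these homotopies can be performed independently, yielding $f_{*,p}=g_{*,p}$ for each $p\in P$ and completing the proof.
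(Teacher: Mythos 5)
Your proposal is correct and follows essentially the same route as the paper: both arguments reduce the lemma to checking that the two maps induce identical automorphisms of $\pi_1$ at basepoints cutting the pod off from the rest of $\G$, then invoke \Cref{LEM:multiplebps} with $n$ basepoints (the general-$n$ option the paper explicitly mentions), with properness coming from the compact support of the resulting homotopy. Where the paper writes ``observe,'' you supply the crossing/sliding verification and the finite-subgraph bookkeeping; note only that your parenthetical reduction to $\rk(\G)\ge 2$ is genuinely needed for \Cref{LEM:multiplebps} (a nontrivial $w$ alone gives rank $\ge 1$, not $\ge 2$), a restriction the paper's proof shares implicitly and which is harmless in all applications.
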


\begin{proof}
    First assume $n=3$, as in \Cref{fig:push}. Observe that $\varphi_{(w,I_1)\sqcup (w,I_2)}$ and $\wm{w,I_0}$ induce the same maps on $\pi_1(\G,x_i)$ for each of $i=0,1$ and $2$. Thus, \Cref{LEM:multiplebps} tells us $\wm{w,I_0} \simeq \varphi_{(w,I_1)\sqcup (w,I_2)}$. Because the support of the homotopy is the tripod, it is proper.  
    Now to see that the lemma holds for larger $n$, one can either blow up a valence $n$ vertex into a sequence of valence $3$ vertices, or apply \Cref{LEM:multiplebps} directly with $n$ basepoints. 
\end{proof}

\begin{LEM}\label{LEM:RaysLoops}
    Let $\G=\G_N$ with $N\in \Z_{\ge 0} \cup \{\infty\}$. If $u\in \PMap(\Gamma)$ is compactly supported, then $u$ can be homotoped to have support on the loops and rays of $\G$. That is, its support will be disjoint from the spanning tree of the core graph $\G_{c}$. Furthermore, we can write $u=u_{R_m}\circ \dots \circ u_{R_1}\circ u_{\ell}$ where $u_{\ell}$ is supported on the loops, $u_{R_i}$ is supported on the $i$th ray, and $m\leq N$ is finite.
\end{LEM}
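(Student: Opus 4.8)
The plan is to first put $u$ into a manageable normal form and then clear its support off the spanning tree of $\G_c$ one word map at a time, before finally organizing the result into the asserted composition.

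First I would invoke \Cref{rmk:RaymapsareWordmaps}: since $u$ is compactly supported, after a proper homotopy we may assume $u$ fixes all of the vertices $v_i, w_i$ (and the ray vertices) lying in a finite neighborhood of its support, and that $u$ is a multi-word map, i.e. a map of the form $\varphi_{\bigsqcup_e (w_e, I_e)}$ where $e$ ranges over the interiors of finitely many edges and each $I_e \subset e$. It therefore suffices to homotope away every factor $\wm{w_e, I_e}$ whose support $I_e$ lies on the spanning tree (the spine) of $\G_c$, while keeping those already supported on loops or rays.

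The heart of the argument is moving a spine word map onto the loops and rays. For a word map whose support sits on a spine edge incident to a ray junction $w_i$, \Cref{LEM:push} applies directly: the tripod with central vertex $w_i$ whose legs are the two spine directions and the ray $R_i$ lets us trade $\wm{w, I}$ for word maps on the adjacent spine edge and on $R_i$. The subtle case---and the step I expect to be the main obstacle---is a spine word map adjacent to a loop vertex $v_i$, because the loop $\alpha_i$ is not a leg of a tree and so \Cref{LEM:push} does not literally apply: a point on $\alpha_i$ together with $v_i$ bounds a cycle rather than a pod. Here I would fall back on \Cref{LEM:multiplebps} itself, applied on a finite subgraph $K \subset \G_c$ of rank at least two containing the loops of the support region together with basepoints at each relevant $v_i, w_i$. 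The strategy is to write down an explicit model map $u'$, supported only on the loop edges (realized as word maps whose images are allowed to traverse the pointwise-fixed spine) and on the rays, inducing the same automorphism as $u$ on every $\pi_1(K, v_i)$; \Cref{LEM:multiplebps} then yields a homotopy $u \simeq u'$ fixing all the basepoints, which is proper because the support is compact. Compactness of the support guarantees that only finitely many loops and at most $N$ rays are involved, so this process terminates.

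Finally I would assemble the factorization. Collect all of the loop-supported word maps into a single map $u_\ell$, and for each ray $R_i$ that meets the support use the composition rule \Cref{LEM:compositionTongues} to combine the word maps on $R_i$ into a single word map $u_{R_i}$; there are at most $m \le N$ such rays. To see that $u = u_{R_m} \circ \cdots \circ u_{R_1} \circ u_\ell$ rather than some other arrangement, I would use that these factors have pairwise disjoint supports together with the elementary principle that a map whose image avoids the support of a second map may be placed innermost (applied first): the image of $u_\ell$ lies in the core $\G_c$ and is therefore disjoint from every ray, so $u_\ell$ is innermost, while each $u_{R_i}$ has image in $R_i \cup \G_c$, which is disjoint from the other rays, so the ray factors commute with one another and their order is immaterial. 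This recovers the simultaneous multi-word map $u$ as the stated ordered composition.
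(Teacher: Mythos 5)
Your proposal is essentially the paper's proof: homotope $u$ to fix vertices so that it becomes a simultaneous multi-word map (\Cref{rmk:RaymapsareWordmaps}), push the spine-supported word maps off the spanning tree via \Cref{LEM:push}, and justify the ordered factorization by observing that each ray interval $I_i$ is disjoint from the word paths of all the factors---your ``innermost'' principle is exactly the paper's one-line justification, and you correctly place $u_{\ell}$ first. The one place you diverge is unnecessary: the obstacle you flag at a loop vertex $v_i$ is illusory. In the paper's definition of spanning a pod, nothing prevents deleting \emph{two} points of the loop $\alpha_i$, one on each strand adjacent to $v_i$; the component containing $v_i$ is then an honest $4$-pod ($K_{4,1}$: two spine legs and two loop-strand legs, with the middle arc of the loop falling into a separate component), so \Cref{LEM:push} applies verbatim and deposits a word map on \emph{each} strand of the loop. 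This is precisely why the paper's normal form carries pairs of intervals $J_i, J_i' \subset \alpha_i$. Your fallback via \Cref{LEM:multiplebps} with an explicit model map is sound---indeed it is how \Cref{LEM:push} is itself proved---just redundant. One small caution if you instead run the pushing argument iteratively: each push deposits a fresh copy on the adjacent spine edge, so you must push toward the finite end of the spine, where the process terminates at $v_1$ (resp.\ $w_0$) because no further spine leg remains; pushing toward the end accumulated by loops never terminates, so ``compactness of the support'' alone is not the reason the process stops, though this point is moot in your \Cref{LEM:multiplebps} variant.
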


\begin{proof}
    First homotope $u$ to fix the vertices. Because $u$ is compactly supported, this homotopy is proper.
    Let $\Delta$ be a compact subgraph of $\G_N$ on which $u$ is totally supported. If necessary, expand $\Delta$ so that it is connected, contains $[v_1,v_n]$ for some $n$, and contains a segment of every ray incident to $[v_1,v_n]$. Note that if $\G=\G_{\infty}$, then $\Delta$ will intersect at most finitely many of the rays, $R_1,\dots R_m$; otherwise, $m=N$.   
    
    Now because $u$ fixes the vertices it must be acting as simultaneous word maps on each edge of $\Delta$.  Applying \Cref{LEM:push} we can push these word maps off of the spanning tree $(v_1,v_n)$ (or $(w_0,v_n)$ if $\G=\G_N$ for some $N \in \Z_{>0}$) in $\Delta$ and supported on the ray segments and loops of $\Delta$. See \Cref{fig:PushingToRaysAndLoops} for the illustration of pushing word maps when $\G=\G_3$.
    
    \begin{figure}[ht!]
    \centering
        \includegraphics[width=.55\textwidth]{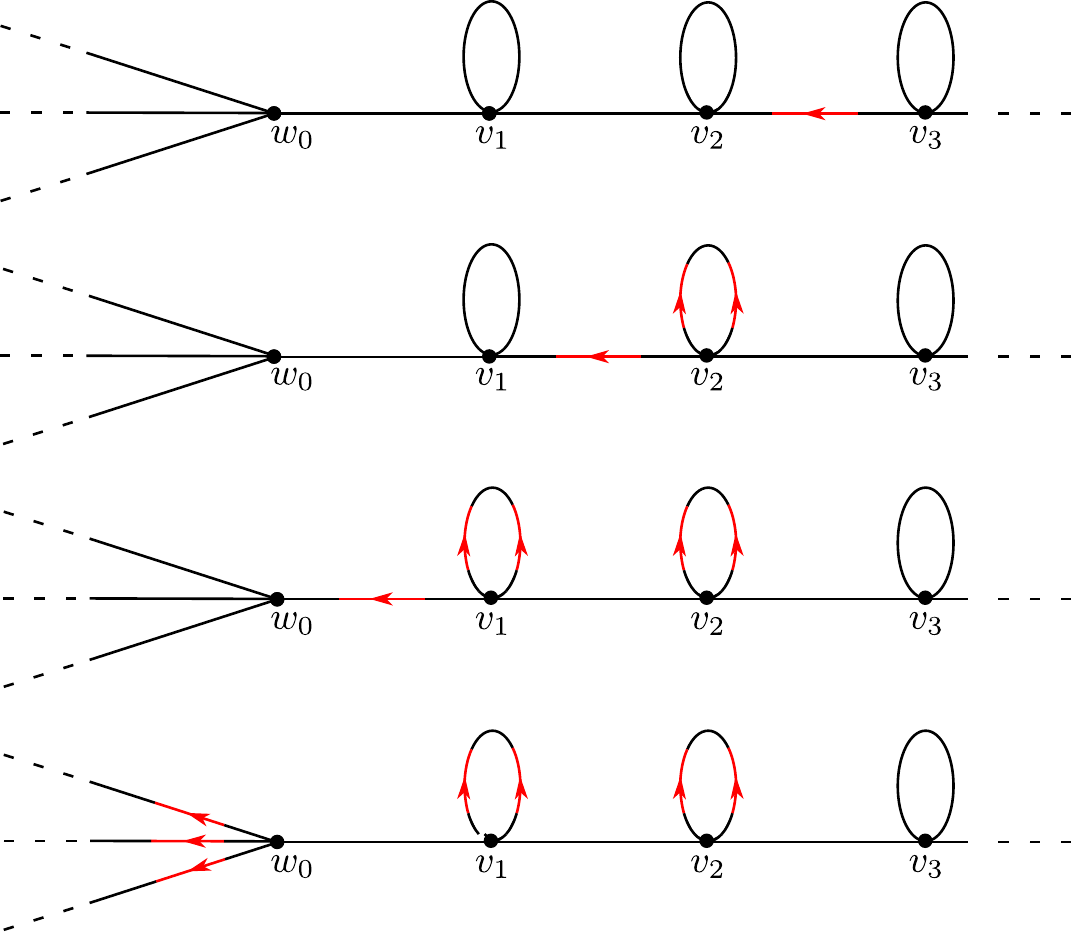}
        \caption{Pushing a word map on $(v_2,v_3)$ to be supported on the loops and rays in $\G=\G_3$.}
        \label{fig:PushingToRaysAndLoops}
    \end{figure}
    
    Thus, we can choose $I_i\subset R_i$ and pairs of intervals $J_i,J_i'\subset \a_i$ and write
    \[u=\varphi_{(w_1,I_1)\sqcup \dots \sqcup (w_m,I_m) \sqcup (\hat{w}_1,J_1)\sqcup (\hat{w}_1,J_1') \dots \sqcup (\hat{w}_n,J_n) \sqcup (\hat{w}_n,J_n')} \] for some $w_i,\hat{w}_i\in A_{1,n}$.  Let $u_{R_i}=\wm{w_i,I_i}$ and $u_{\ell}=\varphi_{(\hat{w}_1,J_1)\sqcup \dots \sqcup (\hat{w}_n,J_n')}$ . To see that $u$ splits as the desired composition, note that each interval $I_i\subset R_i$ is disjoint from every word path of $w_j$ and $\hat{w}_j$. 
\end{proof}

In particular, \Cref{LEM:RaysLoops} tells us that if $u \in \PMap(\G_{0})$ is compactly supported, then there is a homotopy representative of $u$ which is supported only on the loops. 

For any word map supported outside of the core graph of $\Gamma$, we can also compute the effect of conjugating the map by a mapping class totally supported on the core graph of $\Gamma$.

\begin{LEM}[Conjugation Rule] \label{LEM:conjugatetongue}
    Suppose $\G$ is a locally finite, infinite graph with nonempty complement of the core graph. 
    Let $\wm{w,I} \in \PMap(\G)$ be a word map supported outside the core graph $\G_c$, and $\psi \in \PMap(\G)$ be totally supported on a compact subgraph of the core graph. Then 
    \begin{align*}
        \psi \circ \wm{w,I} \circ \psi^{-1} = \wm{\psi_{*}(w),I}.
    \end{align*}
\end{LEM}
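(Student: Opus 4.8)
The plan is to exploit that $\psi$ is totally supported on a compact subgraph $K \subset \G_c$ while $I$ lies outside the core graph, and then to identify the conjugate as a word map by tracking the image of $I$. First I would record the consequences of the support hypotheses. By \Cref{def:support} we have $\psi|_{\G \setminus K} = \Id$, and since $I \subset \G\setminus\G_c \subset \G \setminus K$, the map $\psi$ fixes $I$ pointwise and fixes the endpoints $0,1$ of $I$. The mapping classes totally supported on $K$ form a subgroup of $\Map(\G)$, so the inverse class $\psi^{-1}$ also has a representative totally supported on $K$; hence $\psi^{-1}$ may likewise be taken to fix $I$ pointwise and to fix $0,1$. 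In particular $\psi$ fixes $\partial K$ and $\psi_*$ is a well-defined automorphism of $\pi_1(\G,0)$.

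Next I would compute $c := \psi \circ \wm{w,I} \circ \psi^{-1}$ by following $I$. Since $\psi^{-1}$ fixes $I$ pointwise, $c$ agrees on $I$ with $\psi \circ \wm{w,I}$. By \Cref{DEF:WordMap}, $\wm{w,I}$ carries $I = [0,\tfrac14]\cup[\tfrac14,\tfrac34]\cup[\tfrac34,1]$ to $\tau_1 \cup \cP(w) \cup \tau_2$, where $\cP(w)$ is the word path of $w$ and $\tau_1,\tau_2$ are the tree geodesics joining $0,1$ to the endpoints of $\cP(w)$. Applying $\psi$ and using that $\psi$ fixes $0$ and $1$, the image $\psi(\tau_1 \cup \cP(w) \cup \tau_2)$ is a path from $0$ to $1$ which, by the definition of the induced map $\psi_*$ on based loops, is properly homotopic rel $\{0,1\}$ to the word path construction for $\psi_*(w)$, namely $\tau_1' \cup \cP(\psi_*(w)) \cup \tau_2'$. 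Off $I$ the conjugate is properly homotopic to the identity, so $c$ is supported, up to proper homotopy, on the ray containing $I$; by \Cref{rmk:RaymapsareWordmaps} it is therefore a word map $\wm{w',I}$, and the tracking shows $w' = \psi_*(w)$.

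To make this rigorous I would confirm the identification at the level of induced automorphisms, which is where I expect the main obstacle. Because every based loop at $0$ must cross the ray interval $I$, a word map on a ray induces \emph{global} conjugation by its word, so $(\wm{w,I})_*$ is $\gamma \mapsto w\gamma w^{-1}$; functoriality then gives $c_* = \psi_* \circ (\gamma \mapsto w\gamma w^{-1}) \circ \psi_*^{-1} = (\gamma \mapsto \psi_*(w)\,\gamma\,\psi_*(w)^{-1})$, which is exactly $(\wm{\psi_*(w),I})_*$. Since both $c$ and $\wm{\psi_*(w),I}$ fix $0$ and $1$ and induce equal maps on $\pi_1$, an application of \Cref{LEM:multiplebps} on a finite subgraph carrying their supports upgrades this to a proper homotopy $c \simeq \wm{\psi_*(w),I}$, the claimed equality in $\PMap(\G)$. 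The delicate points are verifying that the conjugate is genuinely supported on a ray up to proper homotopy (so that \Cref{rmk:RaymapsareWordmaps} applies and the inverse representative may be chosen on $K$), and that conjugation by $\psi_*(w)$ determines the word uniquely, which uses that $\pi_1(\G,0)$ is free of rank $\geq 2$ and hence centerless.
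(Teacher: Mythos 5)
Your overall strategy is the same as the paper's: identify $\psi\circ\wm{w,I}\circ\psi^{-1}$ with $\wm{\psi_*(w),I}$ by comparing induced automorphisms at basepoints fixed by both maps and then invoking \Cref{LEM:multiplebps}, with the central computation being that at the far endpoint $0$ of $I$ the word map induces global conjugation by $w$, so that the conjugate induces $\gamma\mapsto\psi_*(w)\,\gamma\,\psi_*(w)^{-1}$ --- this is verbatim the paper's calculation at $x_0=0$. Your middle paragraph, which tracks the image of $I$ and appeals to \Cref{rmk:RaymapsareWordmaps} to see that the conjugate is a word map, is essentially superfluous once the $\pi_1$ comparison is in place, and (as you yourself flag) the claim that the conjugate is supported on the ray \emph{up to proper homotopy} is dangerously close to assuming the conclusion; since your third paragraph supersedes it, this is not where the problem lies.

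The genuine gap is in the final step. You apply \Cref{LEM:multiplebps} with only the two basepoints $0$ and $1$. That lemma, applied to the restrictions of the two maps to a finite subgraph $K'$ carrying their supports, produces a homotopy fixing only $0$ and $1$. To promote this to a \emph{proper} homotopy of self-maps of $\G$ you must splice it with the constant homotopy on $\G\setminus K'$, and the spliced map is continuous only if the homotopy fixes \emph{every} point of $\partial K'$ for all time. But $K'$ must contain the compact support of $\psi$ inside $\G_c$ together with $I$ and the relevant word paths, so $\partial K'$ necessarily contains points other than $0$ and $1$ (for instance where the core graph continues out of $K'$, or where other rays and trees attach), and at these points your homotopy is uncontrolled. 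This is exactly why the paper trims $K$ so that $0\in\partial K$, takes the \emph{entire} boundary $\partial K=\{x_0=0,x_1,\ldots,x_n\}$ as the basepoint set, and verifies at each $x_i$ with $i\neq 0$ that both $\psi\circ\wm{w,I}\circ\psi^{-1}$ and $\wm{\psi_*(w),I}$ induce the identity on $\pi_1(\G,x_i)$ (the geodesic from such an $x_i$ to the core misses $I$, so $(\wm{w,I})_*$ is trivial there and the conjugate induces $\psi_*\circ\psi_*^{-1}=\Id$). Adding this easy verification at the remaining boundary basepoints repairs your argument and brings it into full agreement with the paper's proof.
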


\begin{proof}
Pick a compact set $K\subset \G$ so that both $\wm{w,I}$ and $\psi$ are totally supported on $K$.
In particular, we have $I \subset K$. Since $I = [0,1]$ is outside the core graph, we may trim $K$ a bit such that the end point $0$ of $I$ lies on the boundary $\partial K:=K \cap \overline{\G \setminus K}$. Then label the boundary points $\partial K =\{x_0:=0,\ x_1,\ x_2,\ldots,x_n\}$, which will serve as base points of the fundamental group of $\G$. 

Observe that both $\psi \circ \wm{w,I} \circ \psi^{-1}$ and $\wm{\psi_*(w),I}$ fix each $x_i \in \partial K$, by choice of $K$. Hence, by \Cref{LEM:multiplebps}, it suffices to show that $\psi \circ \wm{w,I} \circ \psi^{-1}$ and $\wm{\psi_*(w),I}$ induce the same automorphisms on $\pi_1(\G,x_i)$ for each $i\in\{ 0, 1,\ldots,n\}$ to conclude the proof.

For each such $i\neq 0$, and any word $w\in \pi_1(\G)$, observe that $(\wm{w,I})_*: \pi_1(\G,x_i) \to \pi_1(\G,x_i)$ is just the identity map, as the geodesic from $x_i$ to the core graph is disjoint from $I$, and $I$ is also disjoint from the core graph. Therefore, on $\pi_1(\G,x_i)$:
\[
    (\psi \circ \wm{w,I} \circ \psi^{-1})_* = \psi_* \circ \psi_*^{-1} = \Id = \wm{\psi_*(w),I}.
\]

Now assume $i=0$. Recall the observation made earlier in this subsection that a word map induces the conjugation by the word associated to the map. Namely, we have for each generator $a_j \in \pi_1(\G,x_0)$:
\begin{align*}
    (\psi \circ \wm{w,I} \circ \psi^{-1})_*(a_j) &= \psi_*\left(w^{-1}\psi_*^{-1}(a_j)w\right) \\
    &= \psi_*(w)^{-1}a_j\psi_*(w) = \left(\wm{\psi_*(w),I}\right)_*(a_j),
\end{align*}
which concludes the proof.
\end{proof}

\subsection{Loop Shifts}
\label{ss:loopshifts}

Loop shifts are the graph equivalent of handle shifts on surfaces, which were introduced by Patel and Vlamis in \cite{PatelVlamis}. Let $\Lambda$ be the graph in standard form with exactly two ends, each of which are accumulated by loops, as in \Cref{fig:laddergraph}. The simplest example of a loop shift is the right translation of $\Lambda$ by one loop. The graph $\Lambda$ also admits loop shifts which omit some loops from their support. It takes more care to define these loops shifts, as well as loop shifts on more general graphs.

\begin{figure}[h]
	    \centering
	    \def\svgwidth{.8\textwidth}
		    \import{pics/}{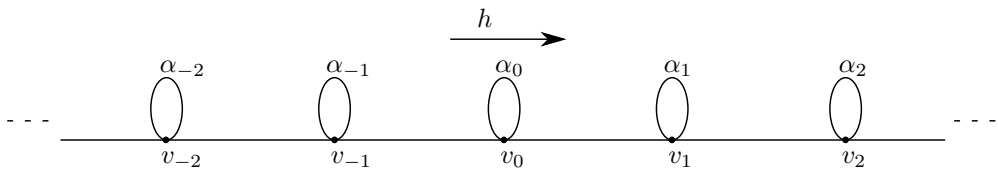}
		    \caption{The graph $\Lambda$ on which we first define a loop shift.} 
	    \label{fig:laddergraph}
\end{figure}

Let $\G$ be any graph in standard form with at least two ends accumulated by loops, endowed with the path metric. Pick two distinct ends $e_{-}, e_{+}\in E_{\ell}(\G)$. There is a unique oriented line, $L$, in the spanning tree of $\G$ between these ends, and there are countably many loops of $\Gamma$ incident to $L$.  For any infinite subset of these loops, $\mathcal{A}$, which accumulates onto each of $e_{-}$, $e_{+}$, there is a natural well-ordered bijection between the loops of $\mathcal{A}$ and $\Z$. So, enumerate the loops $\mathcal{A}=\{\alpha_i\}_{i\in \Z}$, and call the vertices to which they are incident ${v_i}$. We use parenthesis to denote the geodesic between two points in $\G$, as in \Cref{ss:stdforms}.
\begin{DEF}
    The \textbf{loop shift} $h\in \PMap(\G)$, associated to $\mathcal{A}$, is constructed piecewise as follows. \begin{enumerate}
        \item $h\vert_{L\cup \mathcal{A}}=\begin{cases}
        v_i &\mapsto v_{i+1} \\
        \alpha_i &\mapsto \alpha_{i+1} \\
        (v_i,v_{i+1}) &\mapsto (v_{i+1},v_{i+2}).
        \end{cases}$
        \item Choose $\epsilon < \frac12$, and let $N_{\epsilon}(L\cup \mathcal{A})$ be the $\epsilon$-neighborhood of $L\cup \mathcal{A}$; define \[h\vert_{\G \setminus N_{\e}(L\cup \mathcal{A})}=\Id.\]
        \item Now $N_{\e}(L\cup \A)\setminus (L\cup \A)$ is a disjoint union of $\epsilon$-intervals incident to $L$. For each such interval $I=(x,y)$ with $y\in L$, we define \[h((x,y))=(x,h(y)),\] as $h(y)$ was defined in the first step.
    \end{enumerate}
    We also refer to the two ends $e_{-}$ and $e_{+}$ as $h_{-}$ and $h_{+}$. 
\end{DEF}
Note that different choices of $\epsilon<\frac12$ result in properly homotopic loop shifts, so they are the same element of $\Map(\G)$.

\section{Graphs of Infinite Rank with CB Pure Mapping Class Groups}
\label{sec:CBPMCG}
In this section we prove the following.

\begin{THM} \label{THM:oneendcb}
    Let $\Gamma$ be a locally finite graph with exactly one end accumulated by loops. If $E(\Gamma) \setminus E_{\ell}(\Gamma)$ does not contain an accumulation point, $\PMap(\Gamma)$ is CB. 
\end{THM}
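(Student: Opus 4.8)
The plan is to verify Rosendal's criterion (\Cref{PROP:RosendalCB}(1)): for an arbitrary basic clopen subgroup $\cV_K$ I must produce a finite set $\cF \subset \PMap(\G)$ and an integer $n$ with $\PMap(\G) \subseteq (\cF\cV_K)^n$. First I would reduce to a standard form. The hypotheses $|E_\ell(\G)| = 1$ and $E(\G)\setminus E_\ell(\G)$ discrete say precisely that $(E,E_\ell)$ is the end space of one of the graphs $\G_N$, $N \in \Z_{\ge 0}\cup\{\infty\}$ of \Cref{ss:stdforms}; since $\PMap$ is an invariant of proper homotopy type (a proper homotopy equivalence conjugates mapping class groups and respects the end action), \Cref{thm:InfGraphClass} lets me assume $\G = \G_N$. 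I would enlarge $K$ to a connected subgraph containing the core loops $[v_1,v_p]$ and an initial segment of every ray meeting it, so that $\G \setminus K$ is again of the same type; this self-similarity of the complement is what makes room for the swaps below.

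Next I would reduce to compactly supported maps. Using that compactly supported pure mapping classes are dense, $\PMap(\G) = \PMapcc{\G}$, and that $\cV_K$ is an open subgroup, any $g$ can be written $g = u v$ with $u \in \PMapc(\G)$ and $v \in \cV_K$; since $\cV_K\cV_K = \cV_K$, it then suffices to show $\PMapc(\G) \subseteq (\cF\cV_K)^{n-1}$. Establishing the density statement is itself a step I would need to carry out carefully, and it is here that the pathology highlighted in the introduction intrudes: one cannot simply restrict $g$ to the complementary components of a large compact set and treat the pieces as mapping classes, because the restriction of a proper homotopy equivalence need not be one.

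For the absorption of a compactly supported $u$, I would apply \Cref{LEM:RaysLoops} to write $u = u_\ell \circ u_{R_1}\circ\cdots\circ u_{R_m}$ with $m$ finite, $u_\ell$ supported on loops and each $u_{R_i}$ a word map on a ray. Each ray word map can be pushed out along its ray so that its support lies off $K$, placing it in $\cV_K$. For the loop part I would use loop swaps $\cL$ (put into $\cF$, adapted to $K$) together with the conjugation rule \Cref{LEM:conjugatetongue}, $\psi\circ\wm{w,I}\circ\psi^{-1} = \wm{\psi_*(w),I}$: conjugating by a swap that interchanges the core region of $K$ with a disjoint reference copy in the self-similar tail carries on-$K$ support, and the words labelling it, off of $K$ and hence into $\cV_K$. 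Reassembling, $u$ becomes a product of a bounded number of factors each lying in $\cV_K$ or in $\cF$.

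The main obstacle is uniformity. A single swap of bounded support cannot displace the support of an element of arbitrarily large (but compact) support off of $K$, so no fixed finite $\cF$ absorbs all of $\PMapc(\G)$ by naive conjugation. I expect to resolve this with an infinite swindle: replacing $u$ by the convergent infinite product $U = \prod_{i\ge 0}\sigma^i(u)$ of disjointly supported shifted copies — legitimate because it lies in $\PMapcc{\G} = \PMap(\G)$ — so that the ``shifted tail'' $\prod_{i\ge 1}\sigma^i(u)$ is supported off $K$ and lies in $\cV_K$, and the two infinite products are interchanged, modulo $\cV_K$, by a single loop swap from $\cF$. Controlling this swindle so that it terminates in a product of uniformly bounded length, and reconciling it with the non-restriction phenomenon in the density step, is where I expect the real work to be; the cases $N$ finite and $N = \infty$ (the Millipede) would then be handled uniformly, the only difference being the finitely many ray factors supplied by \Cref{LEM:RaysLoops}.
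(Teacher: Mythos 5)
Your skeleton matches the paper's (Rosendal's criterion, reduction to the standard forms $\G_N$, approximation of an arbitrary mapping class by a compactly supported one, then the decomposition of \Cref{LEM:RaysLoops}), but the step where your argument breaks is the treatment of the ray factors. Pushing the support interval of $\wm{w,I}$ far out along its ray does \emph{not} place it in $\cV_K$: first, the image of $I$ is the word path of $w$, which traverses the loops named by the letters of $w$ and hence enters $K$ whenever $w$ involves $a_1,\ldots,a_n$; second, and irreparably, when the ray $R_i$ is attached inside $K$, the interval $I$ lies in a tree component of $\G\setminus K$ different from the component carrying the loops, and any element of $\cV_K$ must be the identity on each such tree component (see the remark in \Cref{ss:CB_notation_lemmas}), so a word map on $R_i$ with nontrivial word is never in $\cV_K$, no matter where its support sits and no matter which swaps you conjugate by --- conjugation by core-supported maps only changes the word (\Cref{LEM:conjugatetongue}), not the component-crossing. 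This is exactly why the paper's finite set $\cF$ contains, besides the loop swap, the fixed word maps $\wm{a_{n+1},I_i}^{\pm}$, and why \Cref{LEM:RayMap} needs its multi-step reduction: make the word a basis element via the composition rule \Cref{LEM:compositionTongues}, push it into $A_{n+1,\infty}$ by conjugating with $fg$, then conjugate by $\rho \in \cV_K$ with $\rho_*(a_{m+2}) = a_{m+2}w''$ to land on the fixed generator. For $\G_\infty$ an additional batching claim in \Cref{PROP:MillipedeCB} handles the rays attached beyond $K$, and there the constants genuinely depend on $\cV_K$ --- the paper notes uniformity fails --- contrary to your closing hope of treating $N$ finite and $N=\infty$ uniformly.

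Your infinite swindle is also not the right tool, and as stated it does not close: a loop swap in a fixed $\cF$ has support of size tied to $K$, while the blocks of $U = \prod_{i\ge 0}\sigma^i(u)$ have size tied to the support of $u$, so no single fixed element of $\cF$ can interchange $U$ and $\sigma(U)$, even modulo $\cV_K$. Once you repair this by composing a fixed swap with a $u$-dependent element of $\cV_K$, the swindle becomes superfluous --- and that repair \emph{is} the paper's resolution of the uniformity problem you identify: conjugators of the form $gf$ with $f = \cL(n,1,n+1) \in \cF$ fixed and $g = \cL(n,n+1,m+1) \in \cV_K$ chosen after seeing $u$. \Cref{lem:supp_on_loops} then shows directly, by tracking free factors and invoking \Cref{LEM:multiplebps}, that $(gf)^{-1}u_\ell(gf) \in \cV_K$, giving $u_\ell \in (\cF\cV_K)^3$ with no infinite products at all. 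Finally, the density $\PMapcc{\G} = \PMap(\G)$ that you invoke as a black box is, in the paper, a consequence of \Cref{LEM:folding} (Stallings folds applied to a proper homotopy inverse, yielding a compactly supported $u$ with $u\phi \in \cV_K$); that lemma is precisely where the restriction pathology you flag gets dealt with, so leaving it unproved leaves your first reduction unsupported.
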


We will prove this theorem in steps, beginning with the simplest such graphs and increasing in complexity at each step. First, in \Cref{ssec:lochness}, the Loch Ness Monster graph will be treated, then in \Cref{ss:hungrylochness} we consider the Hungry Loch Ness Monster graphs. Finally, when $\G$ has infinite end space, the only case where $E(\G) \setminus E_\ell(\G)$ has no accumulation point is when $E(\G)$ has countably many ends with the unique accumulation point of $E(\G)$ coinciding with the point in $E_{\ell}(\G)$. This graph, unique up to proper homotopy equivalence, is the Millipede Monster graph.

Each proof in this section emulates the proof that surfaces with self-similar end space have coarsely bounded mapping class groups, \cite[Proposition 3.1]{mann2022large} due to Mann and Rafi. The same method can be used to see that $S_{\infty}$, the group of bijections of a discrete countable set (with possibly infinite support) equipped with the compact-open topology (or equivalently the point-open topology, since the underlying set is discrete), is coarsely bounded. We present this proof here as a warm up, although this fact can also be found in \cite[Example 9.14]{roelcke1981uniform}. There the authors actually show that $S_{\infty}$ is Roelcke precompact, a stronger condition. 

\begin{PROP} \label{THM:permutationCB}
    The topological group of bijections on a countable set, $S_{\infty}$, is CB.
\end{PROP}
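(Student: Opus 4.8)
The plan is to verify Rosendal's Criterion, condition (1) of \Cref{PROP:RosendalCB}, directly. Write $X$ for the underlying countable set, so that $S_\infty = \mathrm{Sym}(X)$ carries the point-open topology, and recall that a neighborhood basis at the identity is given by the pointwise stabilizers $V_F = \{g \in S_\infty : g|_F = \Id\}$ as $F$ ranges over finite subsets of $X$. Since the requirement $A \subseteq (\mathcal F \mathcal V)^n$ only becomes easier as the neighborhood $\mathcal V$ grows, and every neighborhood contains some $V_F$, it suffices to produce, for each finite $F$, a finite set $\mathcal F \subseteq S_\infty$ and an $n \ge 1$ with $S_\infty = (\mathcal F V_F)^n$. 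The feature playing the role of Mann--Rafi's self-similarity is that $X$ is ``self-similar'' under deleting finite sets: $X \setminus F$ is still in bijection with $X$, and $V_F$ acts on $X \setminus F$ as its \emph{full} symmetric group.

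Fix $F$ with $|F| = k$ and single out a disjoint ``reservoir'' $R \subseteq X \setminus F$ with $|R| = k$. I would assemble $\mathcal F$ from two finite families: the finite group $\mathrm{Sym}(F)$ of permutations supported on $F$, and, for each subset $S \subseteq F$, a fixed involution $\gamma_S$ supported on $F \cup R$ that transposes a size-$(k-|S|)$ block of $R$ with $F \setminus S$ and fixes everything else. There are only finitely many subsets $S \subseteq F$, so this is a finite collection. The key step is then the following ``standardizing'' move: given an arbitrary $\pi \in S_\infty$, set $Y = \pi(F)$ and $S = Y \cap F$. Because $V_F$ acts as the full symmetric group on $X \setminus F$, I can choose $v \in V_F$ sending the $(k-|S|)$-element set $Y \setminus F$ onto the designated block of $R$ while fixing $F$ pointwise; then $\gamma_S$ carries that block into $F \setminus S$, so $\gamma_S v$ sends $Y$ onto $S \cup (F \setminus S) = F$. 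Thus $\beta := \gamma_S v \in \mathcal F V_F$ satisfies $\beta\pi(F) = F$ setwise.

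Once $\beta\pi$ stabilizes $F$ setwise, its restriction to $F$ lies in $\mathrm{Sym}(F) \subseteq \mathcal F$, so after multiplying by a suitable $\rho \in \mathrm{Sym}(F)$ we get $\rho\beta\pi \in V_F$. Unwinding, $\pi = \beta^{-1}\rho^{-1}(\rho\beta\pi)$ is a product of boundedly many factors, each lying in $V_F$ or in the (symmetrized) finite set $\mathcal F$; this yields $S_\infty = (\mathcal F V_F)^n$ for a uniform constant $n$, completing the verification.

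I expect the main obstacle to be conceptual rather than computational. The image $\pi(F)$ ranges over \emph{all} $k$-element subsets of $X$, an infinite family, so no single finite $\mathcal F$ can move each such set back to $F$ by itself. The crux is to split this ``moving-back'' into an unbounded part, absorbed entirely by the infinite subgroup $V_F$ (which, by self-similarity, acts transitively on $k$-subsets of the complement $X \setminus F$), and a genuinely finite part, namely the finitely many combinatorial types recording how $Y$ meets $F$, handled by the fixed correctors $\gamma_S$ together with $\mathrm{Sym}(F)$. Arranging this bookkeeping so that every conjugating element is drawn from one \emph{fixed} finite set together with $V_F$ is the heart of the argument.
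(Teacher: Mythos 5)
Your proof is correct, and it takes a genuinely different route from the paper's. The paper also verifies Rosendal's criterion (\Cref{PROP:RosendalCB}) with $n=3$, but its mechanism is: first approximate the inverse of $\phi$ on $K=\{1,\ldots,k\}$ by a \emph{finitely supported} permutation $u$ (so $u\phi \in \cV_K$), and then kill $u$ by a conjugation--displacement trick --- a single fixed involution $f$ swapping $K$ with an adjacent block, together with a variable $g \in \cV_K$ swapping that block past $\supp(u)$, so that $fgugf \in \cV_K$ and $\cF=\{f\}$ is a singleton. You never form an approximate inverse or conjugate anything: instead you standardize the image $\pi(F)$ directly, stratifying its interaction with $F$ into the $2^{|F|}$ combinatorial types $S = \pi(F)\cap F$, absorbing all unbounded displacement into $V_F$ (via its transitivity on finite subsets of $X\setminus F$), and correcting the finite data with the fixed involutions $\gamma_S$ and $\mathrm{Sym}(F)$. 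Both decompositions yield $\pi \in (\cF V_F)^3$ (in your case $\pi = v^{-1}\gamma_S\rho^{-1}(\rho\beta\pi)$, which lies in $(\cF V_F)^3$ once you note $\Id \in \mathrm{Sym}(F) \subseteq \cF$ and $\Id \in V_F$ --- worth stating explicitly, as is the harmless possibility that $\pi(F)\setminus F$ already meets the reservoir). What each buys: your argument is self-contained combinatorics and arguably more transparent, at the cost of an exponentially larger (still finite) $\cF$ depending on $|F|$; the paper's version keeps $\cF$ a single element and, more importantly, is deliberately shaped as a template for the graph case --- its approximate-inverse step becomes the Stallings-folding lemma (\Cref{LEM:folding}) and its displacement trick becomes the loop-swap conjugation (\Cref{lem:supp_on_loops}), which is why it is presented as the warm-up.
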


\begin{proof}
    We fix $\Z^+$ as our countable set. We first note that the compact-open topology on $S_{\infty}$ has a neighborhood basis about the identity given by sets of the form $\cV_{K} = \{\sigma \vert \sigma(i) = i \text{ for all } i \in K\}$ where $K$ is a finite subset of $\Z^+$. We will use Rosendal's criterion to see that $S_\infty$ is CB. 
    
    Let $\cU$ be a neighborhood of the identity. We can find a basis element $\cV_{K} \subset \cU$ where $K$ has the form $K=\{1,\ldots, n\}$. Let 
    \begin{align*}
        f = (1, n+1) (2,n+2) \cdots (n,2n),
    \end{align*}
    and set $\cF = \{f\}$. We will show that $S_{\infty} = (\cF\cV_{K})^{3}$, which implies $S_{\infty}=(\cF\cU)^3$. 
    
    For any $\phi \in S_{\infty}$, let $u$ be a finite permutation such that 
    \begin{align*}
        u(\phi(i)) = i, \;\; \text{ for all }1 \leq i \leq n.
    \end{align*}
    Then $u\phi \in \cV_{K}$. Let $m = \max\{\supp(u) \cup \{2n\}\}$ and define 
    \begin{align*}
        g = (n+1,m+1)(n+2,m+2)\cdots (2n,m+n).
    \end{align*}
    By our choice of $m$, we have that $g \in \cV_{K}$. We can also check that $fgugf \in \cV_{K}$. Indeed, 
    \begin{align*}
        fgugf(i) = fgu(i+n+m) = fg(i+n+m) = i,
    \end{align*}
    for all $i \in \{1,\ldots,n\}$. Rearranging these inclusions, we conclude that $\phi \in (\cF\cV_{K})^{3}$. 
\end{proof}

\subsection{Notation and a few Lemmas}
\label{ss:CB_notation_lemmas}

Throughout this section we consider the graphs $\Gamma_N$ with $N\in \Z_{\ge 0} \cup \{\infty\}$. We will refer to the graphs and labels from \Cref{fig:lochness} and use the notation established in \Cref{SEC:Elements}. Each time we are given a compact set $K\subset \Gamma$ we first expand it to contain $[v_1,v_n]$ and we set $f$ to be the loop swap $\mathcal{L}(n,1,n+1)$ on the appropriate graph $\G_N$. 
\begin{RMK}
   For any set $K=[v_1,v_n]$, any element in $\cV_K$ must be the identity on each tree component of $\G_N \setminus K$. So, expanding $K$ to include any of portion of the rays that it disconnects from $\G_c$ does not change the set $\cV_K$. This also simplifies the definition of the sets $\cV_{K}$. Namely, for such $\Gamma$ and $K$, any $\phi \in \PMap(\G)$ is contained in $\cV_{K}$ if and only if $\phi$ is homotopic to some $\phi'$ such that $\phi'\vert_{K} =\Id_{K}$ and $\phi'(\G\setminus K) \cap K = \emptyset$. Note that this is doable exactly when $\phi \in \PMap(\G).$
\end{RMK}

We next prove two lemmas that will be used throughout this section. Note that these lemmas hold for any general graph with only a single end accumulated by loops, not just the examples mentioned above. The first lemma shows that we can ``finitely approximate'' an inverse, as in the warm-up above.

\begin{LEM}[Folding to Approximate] \label{LEM:folding}
    Let $\G$ be a graph with $|E_{\ell}(\G)| =1$ and $K = [v_{1},v_{n}] \subset \Gamma$. For $\phi \in \PMap(\Gamma)$, there exists some $u \in \PMap(\Gamma)$ and $K' \subset \Gamma$ with $K' \supset K$ such that the following holds. 
    \begin{enumerate}[(a)]
        \item
            $u$ is totally supported on $K'$, i.e., $u(K') = K'$ and $u|_{\Gamma \setminus K'} = \Id_{\Gamma \setminus K'}$,
        \item
            $u\phi \in \cV_{K}$, i.e., $u\phi$ is properly homotopic to a map $v$ such that 
            \begin{enumerate}[(i)]
            \item
                $v\vert_{K} = \Id_{K}$, 
            \item
                $v(\Gamma \setminus K) \cap K = \emptyset$.
            \end{enumerate}
    \end{enumerate}
\end{LEM}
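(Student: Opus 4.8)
The strategy mirrors the warm-up computation for $S_{\infty}$ in \Cref{THM:permutationCB}: just as we produced a finite permutation $u$ with $u\phi$ fixing $\{1,\dots,n\}$, here I will produce an element $u$ that is \emph{totally supported on a compact subgraph} $K'$ and that ``undoes'' $\phi$ on $K$. Since $u$ will be totally supported on a compact set, its inverse is also compactly supported and hence proper, so $u$ is automatically a proper homotopy equivalence (cf.\ \Cref{RMK:notPHE}) fixing every end, i.e.\ $u \in \PMap(\G)$. To begin, I would homotope $\phi$ to a morphism fixing the vertices of $K$; properness of $\phi$ guarantees that $\phi(K)$ lies in a finite connected subgraph $\Delta$, and I would take $K'$ to contain $K \cup \Delta$.

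The core of the argument is a folding step, which is why the lemma is named ``Folding to Approximate.'' The restriction $\phi|_{K}$ is a morphism from the finite rank-$n$ graph $K$ into $\G$, and since $\phi_{*}$ is an isomorphism of $\pi_{1}(\G)$, the composite $K \hookrightarrow \G \to \G$ is $\pi_{1}$-injective. I would then apply Stallings folding to $\phi|_{K}$ to factor it as a sequence of folds followed by an immersion; because the map is $\pi_{1}$-injective, only Type 1 folds arise (Type 2 folds fail to be $\pi_{1}$-injective), and these are proper homotopy equivalences. The upshot is an immersed rank-$n$ subgraph $J \subset \G$, lying in a compact region, whose fundamental group is the free factor $\phi_{*}(A_{1,n}) = \langle \phi_{*}(a_{1}), \dots, \phi_{*}(a_{n}) \rangle$.

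Next I would exploit the hypothesis $|E_{\ell}(\G)| = 1$, which is the graph analog of a self-similar end space: the end accumulated by loops supplies infinitely many loops, so there is always ``room'' disjoint from any given compact set. Using loop swaps and word maps supported in a large compact $K' \supset K \cup J$, I would build a totally supported $u \in \PMap(\G)$ carrying $J$ onto $K$ and sending the based loops of $J$ to $a_{1}, \dots, a_{n}$, so that $u_{*}\phi_{*}(a_{i}) = a_{i}$ for $1 \le i \le n$. After arranging that $u\phi$ fixes the vertices of $K$, \Cref{LEM:multiplebps} gives $u\phi|_{K} \simeq \Id_{K}$ rel those vertices, which is condition (b)(i). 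Since $u\phi$ can be taken to preserve $K'$ and fix $K$, a further homotopy rel $K$ makes it preserve the complementary components of $K$, yielding condition (b)(ii) and hence $u\phi \in \cV_{K}$ by the remark preceding the lemma.

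The main obstacle is the construction of $u$ in the third step: one must realize the change of free-factor basis $\phi_{*}(a_{i}) \mapsto a_{i}$ by a map that is genuinely \emph{totally} supported on a compact $K'$ (so $u(K') = K'$), even though the immersed image $J$ may be badly entangled with $K$ itself. This is precisely where the single end accumulated by loops is essential---it lets me first swap $J$ out to a standard rank-$n$ piece $[v_{m+1}, v_{m+n}]$ disjoint from $K$, perform the basis change there, and swap back onto $K$, with every move supported on a compact set. Care is also needed to confirm that the folding of the infinite graph $\G$ genuinely reduces to finitely many Type 1 folds, which is ensured because $\phi|_{K}$ is a morphism of the \emph{finite} graph $K$.
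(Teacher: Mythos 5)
There is a genuine gap, and it is exactly at condition (b)(ii). Your construction only controls the \emph{image} of $K$: folding $\phi\vert_{K}$ and correcting by loop swaps arranges $u_{*}\phi_{*}(a_{i})=a_{i}$ for $i\le n$, but membership in $\cV_{K}$ also constrains what the rest of $\Gamma$ does relative to $K$, and nothing in your argument addresses the parts of $\Gamma\setminus K$ that $\phi$ maps \emph{over} $K$. Concretely, take $\G=\G_{0}$, $n\ge 2$, and $\phi=\wm{a_{1},I}$ with $I\subset (v_{n},v_{n+1})$. Here $\phi\vert_{K}$ is already the inclusion (the support $I$ is disjoint from $K$), so your procedure terminates with $u=\Id$ and $K'=K$; yet $u\phi=\phi\notin\cV_{K}$. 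Indeed, if $v\simeq\phi$ satisfied $v\vert_{K}=\Id_{K}$ and $v(\G\setminus K)\cap K=\emptyset$, then with basepoint $v_{n}$ one gets $v_{*}\vert_{A_{1,n}}=\Id$ and $v_{*}(A_{n+1,\infty})\subset A_{n+1,\infty}$; comparing with $\phi_{*}$ forces the basepoint-change element to commute with $a_{1},\dots,a_{n}$, hence to be trivial, whereas $\phi_{*}(a_{j})=a_{1}a_{j}a_{1}^{-1}\notin A_{n+1,\infty}$ for $j>n$. So the final step of your proposal---``a further homotopy rel $K$ makes it preserve the complementary components''---is false in general; this is precisely why the paper's CB proofs must conjugate by the loop swap $f\notin\cV_{K}$ rather than absorb everything into $\cV_{K}$.

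The paper's proof avoids this by folding a proper homotopy \emph{inverse} $\psi$ of $\phi$, and, crucially, by folding not only the vertices and edges of $K$ but also all those that $\psi$ maps \emph{into} $K\cup[v_{n},w_{n}]$ (a finite collection, by properness of $\psi$). This produces a factorization $\psi=h\circ F$ where $F$ is a finite product of folds supported in a compact $K'$ and $h$ is an immersion restricting to a graph isomorphism $F(K)\to K$ with $h(\G_{S}'\setminus F(K'))\cap K=\emptyset$; setting $u=\sigma\circ F$ then gives $u\phi\simeq\sigma\bar{h}$, which is the identity on $K$ \emph{and} maps $\G\setminus K$ off of $K$, i.e., both (b)(i) and (b)(ii). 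Two smaller points: the image of an immersion of a rank-$n$ graph can have rank strictly larger than $n$ (an immersed arc can cover several loops), so your description of $J$ as a rank-$n$ subgraph with $\pi_{1}(J)=\phi_{*}(A_{1,n})$ is not accurate; and \Cref{LEM:multiplebps} applies to homotopy equivalences of \emph{finite} graphs, so invoking it for the self-map $u\phi$ of the infinite graph $\G$ requires the kind of basepoint bookkeeping done in \Cref{lem:supp_on_loops}---where, notably, one needs the extra hypothesis $\nu_{*}(A_{n+1,m+n})\subset A_{n+1,m+n}$, which is exactly the condition your construction fails to secure.
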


\begin{proof}
    Let $\psi$ be a proper homotopy inverse of $\phi$. We will make use of Stallings folds to see that we can approximate $\psi$ on $K$. After a proper homotopy, we may assume $\psi$ maps vertices to vertices. We next modify $\psi$ via a proper homotopy so that $\psi^{-1}(x)$ is a totally disconnected set for every $x \in \Gamma$. To do so, subdivide every edge that is collapsed and modify $\psi$ (via a proper homotopy) to send this new midpoint to a vertex adjacent to the original image of the edge. Note that the two endpoints of the edge will still have the same image, but the edge itself will traverse over an entire edge in the target twice. If there is a subinterval of an edge that is collapsed (as opposed to an entire edge) we can perform the same proper homotopy on the subinterval. Note that this modification does not change the fact that $\psi$ maps vertices to vertices.
    
    The complete pre-images of vertices under $\psi$ give us a subdivision $\Gamma_S$ of $\Gamma$. Note that $\G_{S}$ is locally finite since $\psi$ is proper. We will show that we can factor $\psi$ through finitely many folds so that the resulting map is injective on the image of $K$.

    \begin{CLAIM}\label{claim:vertex_id}
        If $\psi(v)=\psi(w)$ for some vertices $v,w$ of $\Gamma_{S}$, then $v$ and $w$ can be identified after finitely many folds.
    \end{CLAIM}
    \begin{proof}[Proof of \Cref{claim:vertex_id}]
Consider a path $\gamma$ from $v$ to $w$ in $\Gamma_S$. Since $\psi(v)=\psi(w)$, the image $\psi(\gamma)$ represents an element in $\pi_1(\G_S,\psi(v))$. Since $\psi_*$ is a $\pi_1$-isomorphism, there exists $[\alpha] \in \pi_1(\G_S,v)$ such that $\psi_*([\alpha]) = [\psi(\gamma)]$. Thus $[\psi(\alpha^{-1} * \gamma)]=1$, where here $\alpha^{-1} * \gamma$ is the concatenated path from $v$ to $w$, first following $\alpha^{-1}$ and then $\gamma$. The existence of a nullhomotopy of $\psi(\alpha^{-1}*\gamma)$ in $\Gamma_S$ suggests that we can fold the path $\gamma$ to wrap around $\alpha$ to identify $w$ with $v$.
    \renewcommand{\qedsymbol}{$\triangle$}
    \end{proof}

    \begin{CLAIM}\label{claim:edge_id}
        If $\psi(e)=\psi(e')$ for distinct edges $e,e'$ of $\Gamma_S$, then $e$ and $e'$ can be identified after finitely many folds.
    \end{CLAIM}
    \begin{proof}[Proof of \Cref{claim:edge_id}]
    By the previous claim, we may assume the two edges $e$, $e'$ share a vertex in $\Gamma_S$. Since $\psi$ induces a $\pi_1$-isomorphism, the two edges cannot share \textit{both} vertices, otherwise $\psi$ collapses the nontrivial loop bounded by $e,e'$. Hence, there can be only one vertex that $e,e'$ share, from which we can perform a Type $1$ fold to identify $e=e'$.
    \renewcommand{\qedsymbol}{$\triangle$}
    \end{proof}

    Apply these two claims to every edge and vertex in $K$ as well as those mapped into $K\cup [v_n,w_n]$ via $\psi$. Let $F:\Gamma_{S} \rightarrow \Gamma_{S}'$ be the product of all of these folds, where $\Gamma_{S}'$ is the resulting graph. Note that any word maps on trees which are adjacent to $K$ will be completely folded in this procedure. Because each fold is only defined on two edges, $\Gamma_{S}$ contains some connected compact subgraph $K'$ containing $K$ such that $F\vert_{\Gamma_{S} \setminus K'}$ is the identity map. In other words, $K'$ is the part where the folds happen. Now $\psi$ factors as $\psi = h \circ F$ where $h:\Gamma_{S}' \rightarrow \G_{S}$ is injective on $F(K)$. Since $K'$ witnesses all the folds of edges mapped into $K$, we have that $h(\Gamma_{S}' \setminus F(K')) \cap K = \emptyset$. Our original map $\psi$ was a (surjective) proper homotopy equivalence, so $h$ is also surjective onto $K$. Thus, $h$ restricts to a graph isomorphism from $F(K)$ to $K$.
    
    Next, we define a homotopy equivalence $\sigma: \G_{S}' \rightarrow \G_{S}$. First define $\sigma$ on $F(K) \subset \G_{S}'$ to be $h\vert_{F(K)}$, which is a graph isomorphism onto $K$. At most one connected component of $\overline{F(K') \setminus F(K)}$ has positive rank (the component containing $w_n$), while the others are compact sub-trees of the trees incident to $K$. Define $\sigma$ on the tree components to be the identity.  Define $\sigma$ on the positive rank component to be any homotopy equivalence onto the ``next'' subgraph of equal rank to the right of $K$ in $\G_S$. That is, if $K = [v_{1},v_{n}]$ then $\sigma$ maps the positive rank component to $[v_{n+1},v_{n+m}]$ as a homotopy equivalence, where $m$ is the rank of $\overline{F(K') \setminus F(K)}$. Finally, define $\sigma$ on $\G'_{S} \setminus F(K')$ to be the identity map.
    
    Let $u = \sigma \circ F$. We have the commutative diagram in \Cref{fig:folding} which commutes up to homotopy.
    
    \begin{figure}[ht!]
    \makebox[\textwidth][c]{%
    \begin{overpic}[percent,width=.9\textwidth]{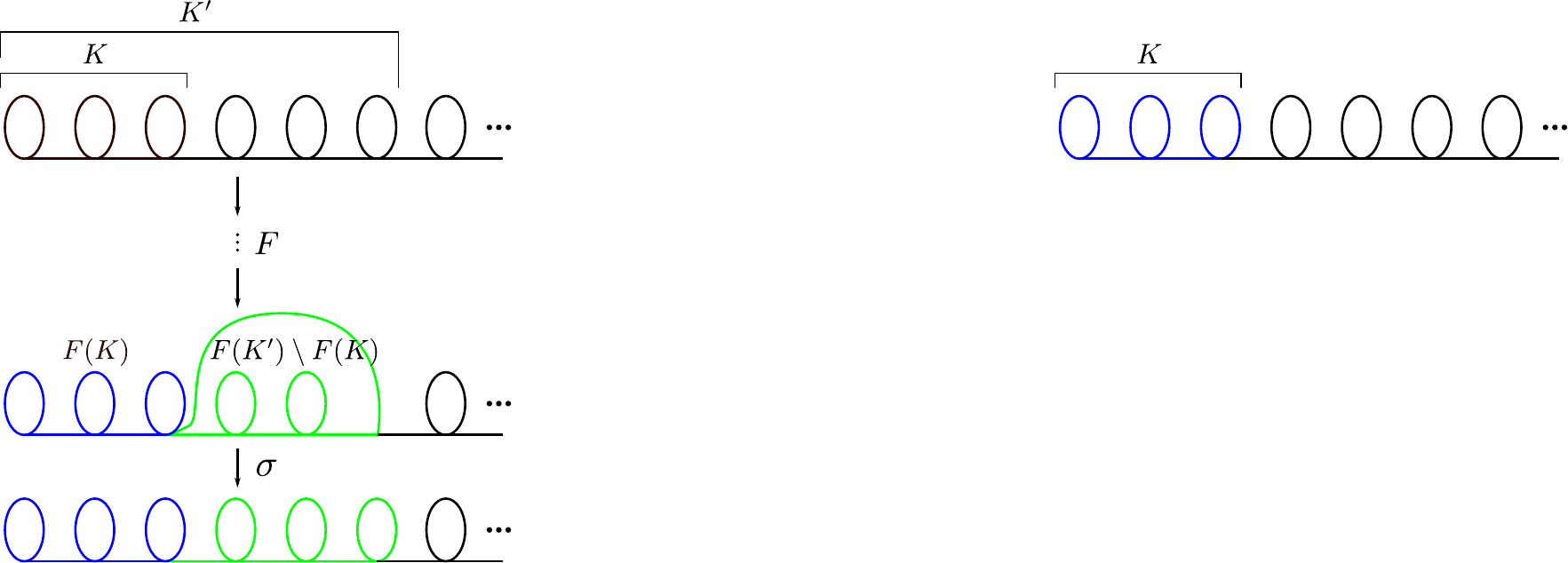}
    \put(-3,15){\parbox{.9\textwidth}{%
    \[
    \begin{tikzcd}[ampersand replacement=\&]
	{\G_S} \& \& {\G_S} \\
	\vdots \\
	{\G'_S} \\
	{\G_S}
	\arrow[from=2-1, to=3-1]
	\arrow["\sigma", from=3-1, to=4-1]
	\arrow["h", from=3-1, to=1-3]
	\arrow["\bar{h}", curve={height=-15pt},from=1-3, to=3-1]
	\arrow["{\psi}"', from=1-1, to=1-3]
	\arrow["{\phi}"', curve={height=10pt},from=1-3, to=1-1]
	\arrow[from=1-1, to=2-1]
	\arrow["F", shift left=1, draw=none, from=1-1, to=3-1]
	\arrow[curve={height=18pt}, dashed, from=1-1, to=4-1]
	\arrow["{u:=\sigma F}"'{pos=0.3}, curve={height=24pt}, draw=none, from=1-1, to=4-1]
    \end{tikzcd}
    \]
    }}
    \end{overpic}
    }
    \caption{Commutative diagram for the folds $F$ from $\psi$ and the homotopy equivalence $\sigma$.}
    \label{fig:folding}
    \end{figure}
    We can now check that $u$ has the desired properties. By construction, $u\vert_{\Gamma_{S} \setminus K'} = \Id_{\Gamma_{S}\setminus K'}$ and $u(K') = K'$, so (a) is satisfied. Now $u \phi \simeq \sigma \bar{h}$, where $\bar{h}$ is a homotopy inverse of $h$. Since $h$ does not map anything from $\G_{S}' \setminus F(K')$ into $K$, we can pick $\bar{h}$ to map nothing from $\G_{S} \setminus K$ into $F(K')$. Because $h\vert_{F(K)}$ is a graph isomorphism onto $K$, we can choose $\bar{h}$ to be the inverse graph isomorphism from $K$ onto $F(K)$. Now we see that $\sigma \circ \bar{h}$ is exactly the identity on $K$, so that (b-i) is satisfied. Finally, $\bar{h}$ maps $\G_{S} \setminus K$ into $\G'_{S} \setminus F(K')$, and $\sigma$ maps this set back into $\G_{S} \setminus K' \subset \G_S \setminus K$, so that (b-ii) is also satisfied. 
\end{proof}

The previous lemma says that we can realize $\PMap(\Gamma)$ as the closure of the compactly supported mapping classes of $\G$. That is, letting $\PMapc(\G)$ be the subgroup of $\PMap(\G)$ consisting of all proper homotopy classes of proper homotopy equivalences having a compactly supported representative, then the previous lemma gives the following.

\begin{COR} \label{COR:closurecompactsup}
    Let $\G$ be a graph with $\lvert E_{\ell}\rvert=1$. Then $\overline{\PMapc(\G)} = \PMap(\G)$.
\end{COR}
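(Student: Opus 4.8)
The plan is to deduce the corollary directly from \Cref{LEM:folding}, by exhibiting a compactly supported mapping class inside every basic open neighborhood of an arbitrary $\phi \in \PMap(\G)$. Recall from \Cref{ss:BMCGofGraphs} that the clopen subgroups $\cV_K$, as $K$ ranges over finite subgraphs of $\G$, form a neighborhood basis of the identity, so the left cosets $\phi\cV_K$ form a neighborhood basis of $\phi$. Since enlarging $K$ only shrinks $\cV_K$, and by the Remark opening \Cref{ss:CB_notation_lemmas} the intervals $K=[v_1,v_n]$ are cofinal among finite subgraphs in the relevant sense---every $\cV_{K_0}$ contains some $\cV_{[v_1,v_n]}$, because for $n$ large $\cV_{[v_1,v_n]}$ already forces the identity on all tree and ray components that a given finite $K_0$ can meet---it suffices to find, for each $n$, an element of $\PMapc(\G)$ lying in $\phi\cV_{[v_1,v_n]}$.

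First I would fix $\phi \in \PMap(\G)$ and $K=[v_1,v_n]$ and apply \Cref{LEM:folding} to obtain $u \in \PMap(\G)$ that is totally supported on some compact $K' \supset K$ and satisfies $u\phi \in \cV_K$. Total support on $K'$ means $u(K')=K'$ and $u|_{\G\setminus K'}=\Id$; in particular $u$ is compactly supported, and its inverse $u^{-1}$ is again totally supported on $K'$, so $u^{-1}\in\PMapc(\G)$. The coset computation then finishes the step: since $\cV_K$ is a subgroup and $u\phi\in\cV_K$, the classes $\phi$ and $u^{-1}$ lie in the same left coset of $\cV_K$, that is $\phi = u^{-1}(u\phi) \in u^{-1}\cV_K = \phi\cV_K$, whence $u^{-1}\in\phi\cV_K$. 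Thus $u^{-1}$ is a compactly supported mapping class inside the prescribed neighborhood of $\phi$.

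Since $\phi$ and the neighborhood were arbitrary, this shows $\phi \in \overline{\PMapc(\G)}$, giving $\PMap(\G) \subseteq \overline{\PMapc(\G)}$; the reverse inclusion is immediate from $\PMapc(\G)\subseteq\PMap(\G)$ together with the fact that $\PMap(\G)$ is closed in itself, and the two inclusions yield the claimed equality. I expect the only genuine subtlety to be the cofinality reduction to neighborhoods of the form $\cV_{[v_1,v_n]}$, but this is precisely the content of the Remark at the start of \Cref{ss:CB_notation_lemmas}; everything else is the standard fact that a subgroup is dense as soon as each basic coset neighborhood meets it.
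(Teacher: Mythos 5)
Your proof is correct and is essentially the paper's own argument: the paper likewise applies \Cref{LEM:folding} to a family of compact sets to produce a sequence $\{u_i^{-1}\}\subset\PMapc(\G)$ converging to $\phi$. The only cosmetic difference is that the paper takes a compact exhaustion by $[v_1,v_i]$ \emph{together with} growing portions of the complementary trees, whereas you instead shrink the neighborhood basis to the sets $\cV_{[v_1,v_n]}$ via the cofinality remark opening \Cref{ss:CB_notation_lemmas}---two equivalent ways of handling the same bookkeeping.
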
 

\begin{proof}
    Let $\phi\in \PMap(\G)$. Take a compact exhaustion $\{K_{i}\}$ of $\G$ with $K_{i}$  consisting of the subgraph $[v_{1},v_{i}]$ together with larger and larger portions of the trees extending to the other ends. We thus obtain via \Cref{LEM:folding} a sequence $\{u_{i}^{-1}\}$ of elements in $\PMapc(\G)$ which converges to $\phi$ in $\PMap(\G)$.
\end{proof}

Recall that we use the term \emph{loop} to mean a single edge whose end points are the same.

\begin{LEM}
    \label{lem:supp_on_loops}
    Let $\G$ be a graph with $\lvert E_{\ell}(\G)\rvert =1$, and let $K=[v_1,v_n]$. If $u \in \PMapc(\Gamma)$ is supported only on the loops of $\Gamma$, then $u \in (\cF \cV_{K})^{3}$, where $\cF = \{f\}$ with $f=\cL(n,1,n+1)$.
\end{LEM}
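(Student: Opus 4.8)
The plan is to run the graph analogue of the warm-up computation for $S_\infty$ in \Cref{THM:permutationCB}, with the block transposition replaced by the loop swap $f = \cL(n,1,n+1)$ and the condition ``fixes $\{1,\dots,n\}$'' replaced by membership in $\cV_K$. Since $u \in \PMapc(\G)$ is supported only on loops, after a proper homotopy it is supported on finitely many of them, say $\alpha_1,\dots,\alpha_M$; enlarging $M$ if necessary I will assume $M \ge 2n$. The essential idea is that $f$ carries the loops $\alpha_1,\dots,\alpha_n$ sitting inside $K=[v_1,v_n]$ out to the loops $\alpha_{n+1},\dots,\alpha_{2n}$, and a second loop swap can then push these past the entire support of $u$, so that conjugating $u$ lands in $\cV_K$.

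Concretely, I would set $g=\cL(n,n+1,M+1)$, the loop swap interchanging $\alpha_{n+1},\dots,\alpha_{2n}$ with $\alpha_{M+1},\dots,\alpha_{M+n}$. Since $g$ is the identity on $K$, we have $g \in \cV_K$. Both $f$ and $g$ are honest homeomorphisms of $\G$, hence so is $fg$, and I would consider the conjugate
\[
w := (fg)\,u\,(fg)^{-1} = f g u g f,
\]
using that loop swaps have order two, so that $f^{-1}=f$ and $g^{-1}=g$ in $\Map(\G)$. Because $fg$ is a homeomorphism, the support of $w$ is exactly $(fg)(\supp u)$. I would then check that $fg$ carries each loop $\alpha_j$ with $j \le M$ to a loop disjoint from $K$: the loops $\alpha_1,\dots,\alpha_n$ go to $\alpha_{n+1},\dots,\alpha_{2n}$, the loops $\alpha_{n+1},\dots,\alpha_{2n}$ go to $\alpha_{M+1},\dots,\alpha_{M+n}$, and the remaining loops are fixed. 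Thus $\supp w \subseteq \G \setminus K$, which gives $w|_K = \Id_K$; that $w$ also maps $\G\setminus K$ off $K$ follows because $u$ sends the loop region $[v_1,v_M]$ into itself (its word paths only involve $a_1,\dots,a_M$), so $u(\supp u)$ stays at indices $\le M$, away from $(fg)^{-1}(K)=[v_{M+1},v_{M+n}]$. By the characterization of $\cV_K$ recalled above, this yields $w \in \cV_K$.

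Finally, I would rearrange the defining relation: $u = (fg)^{-1} w (fg) = g f w f g = (g)(fw)(fg)$, with $g,w\in\cV_K$, so that $u \in (\cF\cV_K)^3$, exactly as in the warm-up (allowing, as there, the identity in $\cF$).

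The step I expect to require the most care is the verification that $w \in \cV_K$, and in particular that $w$ sends $\G\setminus K$ off of $K$. The cheap move ``$\supp w \subseteq \G\setminus K$'' is clean precisely because $fg$ is a homeomorphism, but since $u$ is only a homotopy equivalence (its word paths are not injective) I cannot immediately conclude that $w$ preserves the complementary components of $K$; that is exactly where the hypothesis $M\ge 2n$ together with $g$ reaching past index $M$ is used, guaranteeing that $u$ never interacts with the far block $[v_{M+1},v_{M+n}]$ which $fg$ folds back into $K$. If one prefers to avoid reasoning about supports of conjugated non-injective maps, the same conclusion can be reached by fixing the boundary vertices of $K$ as basepoints and applying \Cref{LEM:multiplebps}, comparing the induced maps of $w$ on the relevant fundamental groups with those of $\Id_K$ via the conjugation rule \Cref{LEM:conjugatetongue}.
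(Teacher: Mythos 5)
Your overall skeleton---conjugate $u$ by the pair of loop swaps, with $g=\cL(n,n+1,M+1)\in\cV_K$, then rearrange $u=gf\,w\,fg$---is exactly the paper's, but the verification that $w=fgugf\in\cV_K$ rests on a false claim: loop swaps are \emph{not} homeomorphisms. By construction, $\cL(n,m_1,m_2)$ stretches the four edges adjacent to the swapped blocks over long paths while fixing the intermediate region pointwise, so it is not injective (the image of a stretched edge runs over points that are also fixed by the identity part). Worse, no representative of its mapping class is a homeomorphism: any self-homeomorphism of $\G_N$ preserves the underlying spanning tree setwise, hence the linear order of the loop-attachment vertices along it, and therefore cannot permute loops at all. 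Consequently the step ``$\supp w = (fg)(\supp u)$'' fails, and with it both $w|_K=\Id_K$ and the preservation of complementary components. These conclusions are not even available pointwise for the honest composition $fgugf$, since the stretched edges do not literally return to themselves; one only recovers the identity on $K$ up to proper homotopy, which is precisely what must be proved rather than read off from supports.

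Your fallback is the correct repair and is what the paper actually does, but as sketched it mis-cites \Cref{LEM:conjugatetongue}: that conjugation rule requires the word map to be supported \emph{outside} the core graph, whereas your $u$ is supported on the loops, inside $\G_c$, so it does not apply (the paper explicitly notes that the composition/conjugation calculus breaks down on the core, where word paths can traverse the supporting intervals). The needed argument is the explicit computation on a free basis: $(gf)_*$ cyclically permutes the free factors $A_{1,n}\to A_{M+1,M+n}\to A_{n+1,2n}\to A_{1,n}$ and fixes all other generators, while $u_*$ fixes $a_i$ for $i>M$ and maps $A_{1,M}$ into itself; hence $w_*$ restricts to the identity on $A_{1,n}$ and sends each $a_i$ with $i>n$ into $A_{n+1,\infty}$. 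Choosing basepoints in each complementary component of $[v_1,v_{M+n}]$ and applying \Cref{LEM:multiplebps} then yields $w\in\cV_K$, after which your final rearrangement into $(\cF\cV_K)^3$ goes through as written.
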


\begin{proof}
     Without loss of generality we can assume that $u$ is totally supported on $K' = [v_{1},v_{m}]$ where $m \geq 2n$. Let $g = \cL(n,n+1,m+1)$. Note that $g \in \cV_{K}$ and $(gf)(K) \cap K = \emptyset$. See \Cref{fig:lochnesssetup} for a schematic of the setup in the case of the Loch Ness Monster.
    
    \begin{figure}[ht!]
	    \centering
	    \def\svgwidth{.8\textwidth}
		    \import{pics/}{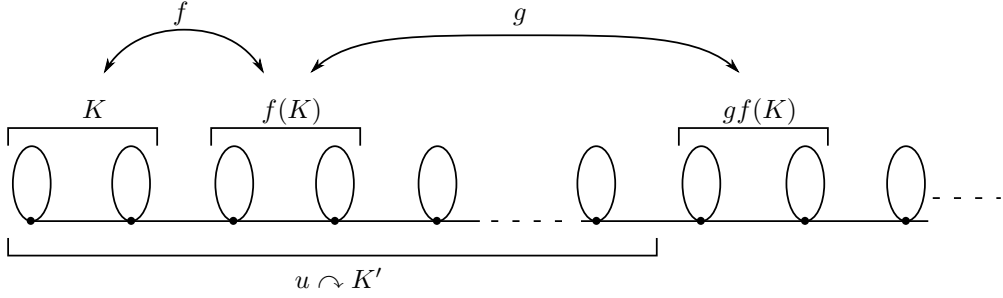}
		    \caption{Setup of the loop swap maps for the Loch Ness Monster graph} 
	    \label{fig:lochnesssetup}
\end{figure}
    
    We claim that $\nu=(gf)^{-1}u(gf) \in \cV_{K}$. First note that $\nu$ is totally supported on $[v_{1},v_{m+n}]$. We seek to apply \Cref{LEM:multiplebps}. In order to apply this lemma we will need to consider fundamental groups with basepoints in each of the complementary components of $[v_{1},v_{m+n}]$. Pick a such a finite collection of basepoints and note that a basis for the fundamental group based at each one is given by pre- and post-concatenating each loop $\a_{i}$ with the unique geodesic from $v_{i}$ to the respective basepoint. Now, since $u$ is supported on the loops and $g$ and $f$ are loop swaps we see that $\nu$ symbolically induces the same map on the fundamental groups from the perspective of each of these basepoints. Therefore, we will slightly abuse notation and write $\pi_{1}(\Gamma)$ to refer to the fundamental group with any of these basepoints and $\{a_{i}\}_{i=1}^{\infty}$ to denote a free basis. 
    
    As in \Cref{ss:stdforms}, $A_{i,j}$ for $i\leq j$ denotes the free factor of $\pi_{1}(\Gamma)$ generated by the basis elements $\{a_{k}\}_{k=i}^{j}$. We claim that $\nu_{*}\vert_{A_{1,n}} = \Id\vert_{A_{1,n}}$ and $\nu_{*}(A_{n+1,m+n}) \subset A_{n+1,m+n}$. Note that $gf$ induces the following map on $\pi_{1}(\G)$.
    \begin{align*}
        (gf)_{*}: a_{i} \mapsto \begin{cases} a_{m+i} &\text{ if } 1\leq i \leq n, \\
        a_{i-n} &\text{ if } n+1\leq i \leq 2n,\\
        a_{i-(m-n)} &\text{ if } m+1 \leq i \leq m+n. \\ 
        a_{i} &\text{ otherwise.} 
        \end{cases}
    \end{align*} 
    In particular, $(gf)_{*}$ acts as the permutation on the free factors $A_{1,n} \rightarrow A_{m+1,m+n} \rightarrow A_{n+1,2n} \rightarrow A_{1,n}$, by sending ordered sets of generators to ordered sets of generators. Similarly, $(gf)_{*}^{-1}$ acts as the permutation $A_{1,n} \rightarrow A_{n+1,2n} \rightarrow A_{m+1,m+n} \rightarrow A_{1,n}$. 
    
    We also have that $u_{*}(a_{i}) = a_{i}$ for all $i> m$, and that $u_{*}(a_{j}) \in A_{1,m}$ for all $j\leq m$. Putting everything together, we have the following equalities for $j\leq n$. 
    \begin{align*}
        \nu_{*}(a_{j}) &= (gf)_{*}^{-1}u_{*}(gf)_{*}(a_{j}) \\
        &= (gf)_{*}^{-1}u_{*}(a_{m+j}) \\
        &= (gf)_{*}^{-1}(a_{m+j}) \\
        &= a_{j}.
    \end{align*}
    This shows that $\nu_{*}\vert_{A_{1,n}} = \Id\vert_{A_{1,n}}$ as desired. 
    
    Next we check that $\nu_{*}(a_i)\in A_{n+1,\infty}$ for $i>n$, which is equivalent to checking that $(u gf)_*(a_i) \in A_{1,m}*A_{m+n+1,\infty}$. The only generators that are mapped into $A_{m+1,m+n}$ by $u_{*}$ are exactly $a_{m+1},\dots,a_{m+n}$. Since we assumed that $i>n$ we have that $(gf)_{*}(a_{i}) \in A_{1,m}*A_{m+n+1,\infty}$ and we conclude that $\nu_{*}(a_{i}) \in A_{n+1,\infty}$ for $i>n$. Thus, we can apply \Cref{LEM:multiplebps} to see that $\nu\in \cV_{K}$. Then by rearranging and noting that $f^{-1}=f$ we obtain $u = gf\nu fg \in (\cF\cV_{K})^{3}$.
\end{proof}

\subsection{The Loch Ness Monster Graph}
\label{ssec:lochness} 

The Loch Ness Monster graph has exactly one end, so $\PMap(\G_0)=\Map(\G_0)$. The lemmas we have prepared in \Cref{ss:CB_notation_lemmas} and \Cref{SEC:Elements} are sufficient to prove that this group is coarsely bounded.

\begin{PROP} \label{THM:lochnessCB}
    $\Map(\G_0)$ is CB. 
\end{PROP}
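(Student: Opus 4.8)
The plan is to verify Rosendal's criterion (\Cref{PROP:RosendalCB}(1)) directly, exploiting the three lemmas already established in \Cref{ss:CB_notation_lemmas}. Since $\G_0$ has a single end, the action on the end space is trivial and $\PMap(\G_0)=\Map(\G_0)$, so it suffices to bound an arbitrary $\phi \in \PMap(\G_0)$. Given an identity neighborhood $\cU$, I would first shrink it to a basic clopen subgroup $\cV_K \subset \cU$ with $K=[v_1,v_n]$, and set $\cF=\{f\}$ where $f=\cL(n,1,n+1)$ is the loop swap used throughout this section. The goal is then to show the single equality $\Map(\G_0)=(\cF\cV_K)^3$.

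The argument proceeds in three moves. First, apply the Folding Lemma (\Cref{LEM:folding}) to $\phi$: it produces a $u\in\PMap(\G_0)$ totally supported on some compact $K'\supset K$ with $u\phi\in\cV_K$, so that $\phi=u^{-1}(u\phi)$ and it remains only to control $u^{-1}$. Second, since $u$ is compactly supported and $\PMapc(\G_0)$ is a group, the class $u^{-1}$ is compactly supported as well; because $\G_0=\G_N$ with $N=0$ has no rays, \Cref{LEM:RaysLoops} lets me homotope $u^{-1}$ to be supported only on the loops of $\G_0$. Third, \Cref{lem:supp_on_loops} applies verbatim to this loop-supported map and gives $u^{-1}\in(\cF\cV_K)^3$.

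Combining these, $\phi=u^{-1}(u\phi)\in(\cF\cV_K)^3\cV_K=(\cF\cV_K)^3$, where the final equality uses that $\cV_K$ is a subgroup, so that $\cV_K\cV_K=\cV_K$ absorbs the extra factor. Hence $\Map(\G_0)=(\cF\cV_K)^3\subset(\cF\cU)^3$, which is precisely Rosendal's criterion, and $\Map(\G_0)$ is CB.

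I expect no serious obstacle: essentially all of the genuine work has been front-loaded into \Cref{LEM:folding}, \Cref{LEM:RaysLoops}, and \Cref{lem:supp_on_loops}, so this proof is mostly a matter of chaining them together. The two points requiring a moment's care are bookkeeping ones: checking that the homotopy inverse $u^{-1}$ is again compactly supported (so that \Cref{LEM:RaysLoops} is even applicable), and verifying that the trailing $\cV_K$ factor gets absorbed so the word length stays at $3$ rather than inflating. This is exactly why it is cleaner to feed $u^{-1}$ into the loop-support lemma directly; applying \Cref{lem:supp_on_loops} to $u$ instead would force one to track $f^{-1}=f$ and reorder the product, obscuring the clean bound.
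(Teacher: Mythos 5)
Your proposal is correct and follows the paper's proof essentially verbatim: the paper likewise fixes $\cV_K\subset\cU$ with $K=[v_1,v_n]$ and $\cF=\{f\}$ for $f=\cL(n,1,n+1)$, then chains \Cref{LEM:folding}, \Cref{LEM:RaysLoops}, and \Cref{lem:supp_on_loops} to land in $(\cF\cV_K)^3$. The only difference is cosmetic: the paper applies \Cref{lem:supp_on_loops} to $u$ and then inverts the resulting word $u=gfhfg$ using $f^{-1}=f$ and $g^{-1},h^{-1}\in\cV_K$, whereas you feed $u^{-1}$ into the lemma directly, which is the same computation with slightly tidier bookkeeping.
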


\begin{proof}
    We will make use of Rosendal's criterion. Given any neighborhood of the identity in $\PMap(\G_{0})$, we pass to a basis element $\cV_{K}$ for some compact $K$, and then expand $K$ so that $K=[v_{1},v_{n}]$ for some $n\in \Z^+$. Let $\phi \in \Map(\G_0)$. Find a map $u$ using \Cref{LEM:folding}, such that $u\phi \in \cV_K$. Apply \Cref{LEM:RaysLoops} to write $u=u_{\ell}$ with $u_{\ell}$ supported on the loops of $\G_{0}$. Now we apply  \Cref{lem:supp_on_loops} to show that $fgugf \in \cV_K$. Then $u = gfhfg$ for some $h \in \cV_K$. Hence, $\phi \in (gfh^{-1}fg)\cV_K \subset (\cF\cV_K)^3$ for $\cF=\{f\}$, as $g \in \cV_K$.
\end{proof}

\subsection{The Hungry Loch Ness Monster Graphs}
\label{ss:hungrylochness}

Next we show that for $N\in \Z^+$, the group $\PMap(\G_N)$ is coarsely bounded. Recall that $\G_N$ denotes a Hungry Loch Ness Monster graph, as in \Cref{fig:lochness}. Unlike elements of $\PMap(\G_0)$, compactly supported elements of $\PMap(\G_N)$ may have support on the rays, so we start by developing a method to fit these maps into Rosendal's criterion. Recall that maps supported on rays can be homotoped to word maps as pointed out in \Cref{rmk:RaymapsareWordmaps}. Thus, on its own, \Cref{LEM:RayMap} says that the subgroup of $\PMap(\G_N)$ which consists of elements supported on rays, is coarsely bounded. 

\begin{LEM}\label{LEM:RayMap}
    Let $\G=\G_N$ for some $N\in \Z^+ \cup \{\infty\}$, and let $K=[v_1,v_n]$. Let  $\wm{w,I}$ be a word map with $I\subset R$ for any ray $R\subset \G$. Then we can realize $\wm{w,I}\in \left(\cF\cV_K\right)^5$, where $\cF=\{f, \wm{a_{n+1},I}^{\pm}\}$ and $f=\mathcal{L}(n,1,n+1)$.
\end{LEM}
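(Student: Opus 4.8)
The plan is to \emph{build} $\wm{w,I}$ out of the single atomic word map $\wm{a_{n+1},I}$ using the composition and conjugation rules, rather than to conjugate $\wm{w,I}$ into $\cV_K$. The reason the latter cannot work is geometric: $I$ lies on a ray $R$ while every loop of $\G$ sits on the far side of $K=[v_1,v_n]$, so the image of $I$ under \emph{any} word map must cross $K$. Conjugating by a loop swap or by $f$ (both supported on the core graph) only changes the underlying word, by the Conjugation Rule \Cref{LEM:conjugatetongue}, and leaves the support on $R$; hence no such conjugate of $\wm{w,I}$ lands in $\cV_K$, and the strategy of \Cref{lem:supp_on_loops} is unavailable. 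This is precisely why $\wm{a_{n+1},I}$ is put into $\cF$: it is the atomic crossing from $R$ past $K$ to the first exterior loop.

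By the composition rule \Cref{LEM:compositionTongues}, $w\mapsto\wm{w,I}$ is a homomorphism $\pi_1(\G)\to\PMap(\G)$, so one could write $\wm{w,I}$ as a product of word maps for the individual letters of $w$; but that product has length $\approx|w|$, which is not uniformly bounded. The key idea for a bound independent of $|w|$ is that every element of a free group of infinite rank is a product of \emph{two} primitives: choosing a loop $a_m$ of large index not appearing in $w$, the transvection $a_m\mapsto wa_m$ is an automorphism, so $wa_m$ is primitive and $w=(wa_m)(a_m^{-1})$. Thus $\wm{w,I}=\wm{wa_m,I}\circ\wm{a_m^{-1},I}$, and it suffices to realize each primitive word map as a single conjugate of $\wm{a_{n+1},I}^{\pm1}$ by a short product of elements of $\cF$ and $\cV_K$.

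For a primitive $P$, realizing $\wm{P,I}=\sigma\,\wm{a_{n+1},I}^{\pm1}\,\sigma^{-1}$ amounts, again by \Cref{LEM:conjugatetongue}, to finding $\sigma$ totally supported on a compact subgraph of $\G_c$ with $\sigma_*(a_{n+1})=P$. If $P$ is a word in the exterior generators $a_{n+1},a_{n+2},\dots$, this $\sigma$ can be taken inside $\cV_K$ (a loop swap realizing the required automorphism to the right of $K$). To produce generators $a_1,\dots,a_n$ sitting inside $K$, I would use that $f=\cL(n,1,n+1)$ induces the exchange $A_{1,n}\leftrightarrow A_{n+1,2n}$, so conjugation by $f$ turns interior loops into exterior ones; a preliminary loop swap in $\cV_K$ pushes any occurrences of $A_{n+1,2n}$ out to very high index, so that applying $f_*$ creates no new interior generators. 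Then $\sigma$ is a product of $f$ with one or two $\cV_K$-loop swaps, and the same recipe handles $\wm{a_m^{-1},I}$.

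Finally I would assemble the two conjugates and collect terms, using that $f^{2}$ and the square of any loop swap are trivial in $\Map(\G)$ and that $\cV_K$ is a subgroup, to merge adjacent $\cV_K$-factors and rewrite the product in the form $(\cF\cV_K)^5$. The hard part is exactly this endgame bookkeeping: each primitive costs a conjugation $\sigma(\cdot)\sigma^{-1}$, and one must arrange the two-primitive splitting and the interior-generator routing through $f$ so that the total number of $\cF\cV_K$-blocks is at most five—in particular choosing the decomposition (e.g.\ peeling off $a_{n+1}^{\pm1}$ first) so that the outermost factor is an element of $\cF$ rather than of $\cV_K$, since a leading $\cV_K$-term would otherwise force an extra block. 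Verifying that every $\sigma$ is genuinely totally supported on a compact subgraph of $\G_c$, so that \Cref{LEM:conjugatetongue} truly applies, is the remaining point needing care.
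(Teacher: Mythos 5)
Your proposal is correct and takes essentially the same route as the paper's proof: both rest on the transvection trick (multiplying $w$ by a generator not occurring in it to produce a primitive word), on realizing the resulting primitive word map as a conjugate of the atomic map $\wm{a_{n+1},I}$ by a transvection-type element of $\cV_K$ together with $f$ (after a preliminary clearing loop swap so that applying $f_{*}$ introduces no interior generators), and on the composition and conjugation rules (\Cref{LEM:compositionTongues}, \Cref{LEM:conjugatetongue}), including your correct observation that no conjugate of $\wm{w,I}$ can land in $\cV_K$, which matches the paper's remark that $\phi''\notin\cV_K$. The only divergence is bookkeeping: the paper first conjugates by $h=\cL(1,n+1,m+1)$ to clear $a_{n+1}$ from $w$ and then prepends $a_{n+1}$ itself, producing a single primitive factor $\wm{a_{n+1}w',I}$ with the peeled letter already atomic, whereas you append a fresh high-index $a_m$ and handle two primitive conjugates (the second needing one extra $\cV_K$-swap to convert $\wm{a_m,I}^{-1}$ into a conjugate of $\wm{a_{n+1},I}^{-1}$) --- both assemblies yield the same five $\cF\cV_K$-blocks.
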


\begin{proof}
    We first modify $\wm{w,I}$ to ensure that the word $w$ is a basis element in $F_{\infty} = \pi_{1}(\Gamma_{N})$. Freely reduce $w$ and let $m = \max\left\{\{i \vert a_{i} \text{ appears in } w\} \cup \{2n+1\} \right\}$. Set $h = \cL(1,n+1,m+1) \in \cV_{K}$. Then we define
    \begin{align*}
        \phi' = \wm{a_{n+1},I} h  \wm{w,I}  h = \wm{a_{n+1}w',I}
    \end{align*}
    where $w' = h_{*}(w)$. The final equality follows from \Cref{LEM:compositionTongues} and \Cref{LEM:conjugatetongue}. Note that $w'$ does not contain any instances of $a_{n+1}$. So, $a_{n+1}w'$, the word defining $\phi'$, only contains a single instance of $a_{n+1}$, and is thus a basis element for $F_{\infty}$. 
    
    Next we modify $\phi'$ again to get a word map $\phi''$ whose defining word is a basis element completely contained in $A_{n+1,\infty}$. That is, $\phi''$ does not hit any of the loops in $K$. Let $g = \cL(n,n+1,m+2) \in \cV_{K}$ and set
    \begin{align*}
        \phi'' &= f  g  \phi'  g  f \\ 
        &= \wm{(fg)_{*}(a_{n+1}w'),I} \\
        &= \wm{a_{m+2}w'',I}.
    \end{align*}
    where $w'' = (fg)_{*}(w') = (fgh)_{*}(w)$. Note that $(fg)_{*}$ only maps the basis elements $a_{m+2},\ldots,a_{m+n+1}$ into $A_{1,n}$ so that $w'' \in A_{n+1,\infty}$, by the choice of $m$. Notice this does not show $\phi'' \in \cV_K$ because the complementary component of $K$ may not be preserved by the word map $\phi''=\wm{a_{m+2}w'',I}$.
    
    Next we choose $\rho \in \PMap(\Gamma_{N})$ such that 
    \[
        \rho_{*} =
        \begin{cases}
        a_{m+2} \mapsto a_{m+2}w'' & \phantom{} \\
        a_{i} \mapsto a_{i} & \text{ for all } i \neq m+2. 
        \end{cases}
    \]
    Note that such a homotopy equivalence exists since $a_{m+2}w''$ is a basis element for $F_{\infty}$ and it can be taken to be proper since $\rho_{*}$ is the identity outside of the finite-rank free factor $A_{n+1,m+n+1}$ of $F_{\infty}$. This also shows that $\rho \in \cV_{K}$. 
    
    Finally, we conjugate $\phi''$ to get $\wm{a_{n+1},I}$, the word map in $\cF$:
    \begin{align*}
        g \rho^{-1} \phi''  \rho  g &= g \rho^{-1} \wm{a_{m+2}w'',I}  \rho  g \\
        &= g  \wm{a_{m+2},I}  g \\
        &= \wm{a_{n+1},I}.
    \end{align*}
    Therefore, after substituting and rearranging we have
    \begin{align*}
        \wm{w,I} =
        \underbrace{\vphantom{\wm{a_{n+1},I}^{-1}} h}_{\cV_{K}} 
        \underbrace{\vphantom{\wm{a_{n+1},I}^{-1}} \wm{a_{n+1},I}^{-1}}_{\cF} 
        \underbrace{\vphantom{\wm{a_{n+1},I}^{-1}} g}_{\cV_{K}} 
        \underbrace{\vphantom{\wm{a_{n+1},I}^{-1}} f}_{\cF} 
        \underbrace{\vphantom{\wm{a_{n+1},I}^{-1}} \rho g}_{\cV_{K}}
        \underbrace{\vphantom{\wm{a_{n+1},I}^{-1}} \wm{a_{n+1},I}}_{\cF}
        \underbrace{\vphantom{\wm{a_{n+1},I}^{-1}} g \rho^{-1}}_{\cV_{K}}
        \underbrace{\vphantom{\wm{a_{n+1},I}^{-1}} f}_{\cF}
        \underbrace{\vphantom{\wm{a_{n+1},I}^{-1}} g h}_{\cV_{K}}
        \in (\cF\cV_{K})^{5}. \qquad \qedhere
    \end{align*}
\end{proof}

\begin{PROP}
  \label{thm:HLNCB}
    For any $N\in \Z^+$, the group $\PMap(\G_{N})$ is CB.
\end{PROP}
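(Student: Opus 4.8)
The plan is to verify Rosendal's criterion (\Cref{PROP:RosendalCB}) directly, following the template of the Loch Ness Monster case (\Cref{THM:lochnessCB}) but now accounting for the finitely many rays of $\G_N$. Given an arbitrary identity neighborhood, I would first pass to a basic clopen subgroup $\cV_K$ with $K=[v_1,v_n]$, and fix once and for all the loop swap $f=\cL(n,1,n+1)$ together with, for each of the $N$ rays $R_i$, a reference interval $I_i\subset R_i$ and the associated word map $\wm{a_{n+1},I_i}$. The candidate finite set would be $\cF=\{f\}\cup\{\wm{a_{n+1},I_i}^{\pm1}:1\le i\le N\}$, which is symmetric and---crucially---independent of the element being expressed; the goal is then to show $\PMap(\G_N)\subseteq(\cF\cV_K)^{M}$ for a bound $M$ depending only on $N$.

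For a given $\phi\in\PMap(\G_N)$, the first step is \Cref{LEM:folding}, which produces a compactly supported $u$ with $u\phi\in\cV_K$; since $\phi\in u^{-1}\cV_K$ and $\cF$ is symmetric, it suffices to place $u$ in a bounded power of $\cF\cV_K$. Because $u$ is compactly supported, \Cref{LEM:RaysLoops} splits it as $u=u_{R_m}\circ\cdots\circ u_{R_1}\circ u_\ell$ with $m\le N$, where $u_\ell$ is supported on the loops and each $u_{R_i}$ on the $i$-th ray. The loop factor is handled by \Cref{lem:supp_on_loops}, giving $u_\ell\in(\{f\}\cV_K)^3\subseteq(\cF\cV_K)^3$. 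For each ray factor, \Cref{rmk:RaymapsareWordmaps} lets me realize $u_{R_i}$ as a word map on $R_i$, and since a word map on a ray is determined as a mapping class by its word I may normalize its support to the fixed interval $I_i$; then \Cref{LEM:RayMap} gives $u_{R_i}\in(\{f,\wm{a_{n+1},I_i}^{\pm}\}\cV_K)^5\subseteq(\cF\cV_K)^5$. Multiplying the at most $N$ ray factors and the single loop factor yields $u\in(\cF\cV_K)^{5N+3}$, hence $\phi=u^{-1}(u\phi)$ lies in a bounded power of $\cF\cV_K$, and Rosendal's criterion gives that $\PMap(\G_N)$ is CB.

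The main obstacle is not any single computation---each of the pieces is already isolated in a lemma---but rather ensuring that the finite set $\cF$ can be chosen uniformly, before $\phi$ is specified. This is where finiteness of $N$ is essential: there are only finitely many rays, so fixing one reference interval per ray produces a finite $\cF$, whereas on a graph with infinitely many rays (the Millipede case, treated separately) this bookkeeping must be arranged differently. The remaining care is routine: justifying that sliding a word map's support along its ray does not change its mapping class (so the normalization to $I_i$ is legitimate), and tracking that the symmetry of $\cF$ together with $\cV_K$ being a subgroup lets me pass from a bound on $u$ to a bound on $u^{-1}$ and hence on $\phi$, at the cost of increasing the power $M$ by a harmless additive constant.
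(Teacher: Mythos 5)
Your proposal is correct and follows essentially the same route as the paper's proof of \Cref{thm:HLNCB}: the same finite set $\cF=\{f\}\cup\{\wm{a_{n+1},I_i}^{\pm}\}$, then \Cref{LEM:folding}, the decomposition of \Cref{LEM:RaysLoops}, \Cref{lem:supp_on_loops} for the loop factor, and \Cref{LEM:RayMap} for each ray factor, yielding the same bound $(\cF\cV_K)^{4+5N}$ up to bookkeeping. The two points you flag as needing care (normalizing a ray word map's support to the fixed interval $I_i$, and inverting $u$ using the symmetry of $\cF$) are exactly the steps the paper also treats implicitly, and both are routine as you say.
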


\begin{proof}
    We will again use Rosendal's criterion. Given any open set $\cU$ first set $K=[v_1,v_n]$ so that $\cV_K \subset \cU$.  Let $f=\mathcal{L}(n,1,n+1)$, and choose intervals $I_i$ in each ray $R_i$. Now set $\cF=\{f, \wm{a_{n+1},I_1}^{\pm}, \ldots, \wm{a_{n+1},I_N}^{\pm} \}$, we will show that any $\phi\in \PMap(\G_N)$ is in $\left( \mathcal{F}\cV_K\right)^{4+5N}$. 
    
    First apply \Cref{LEM:folding} to get an element $u\in \PMap(\G_N)$ such that $u\phi \in \cV_K$. Now use \Cref{LEM:RaysLoops} to write $u=u_{R_N}\circ \dots \circ u_{R_{1}}\circ u_{\ell}$ where $u_{R_i}$ is supported on $R_i$. By \Cref{lem:supp_on_loops} we know $u_\ell\in \left(\cF\cV_K\right)^3$.
    On the other hand, each $u_{R_i}$ has a homotopy representative as a word map $\wm{w_i,I_i}$, to which we will apply \Cref{LEM:RayMap}. That is, $u_{R_i}\in \left(\mathcal{F}\cV_K\right)^5$.
    All in all, we can now write $u\in \left( \cF \cV_K \right)^{3+5N}$.  Combining this with the expression $u\phi \in \cV_K$ we get that $\phi \in \left(\cF \cV_K \right)^{4+5N}$. 
\end{proof}

Since $\G_N$ with $N \in \Z^+$ has finitely many ends, \Cref{COR:finiteEndCB} implies:
\begin{COR}
    \label{cor:HLNCB}
    For any $N \in \Z^+$, the full mapping class group $\Map(\G_N)$ is coarsely bounded.
\end{COR}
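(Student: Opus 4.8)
The plan is to deduce this directly from the coarse boundedness of the pure mapping class group, \Cref{thm:HLNCB}, together with the finite-end-space transfer result, \Cref{COR:finiteEndCB}. First I would record that for $N \in \Z^+$ the graph $\G_N$ has $|E(\G_N)| = N+1 < \infty$ by the very definition of the Hungry Loch Ness Monster graphs; in particular its end space is finite, so the hypothesis of \Cref{COR:finiteEndCB} is met. Since \Cref{thm:HLNCB} already establishes that $\PMap(\G_N)$ is CB, the relevant direction of \Cref{COR:finiteEndCB} immediately yields that $\Map(\G_N)$ is CB, completing the argument.

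To make transparent why the transfer is legitimate, I would unpack the mechanism behind \Cref{COR:finiteEndCB}. Because $E_\ell(\G_N)$ is a single point and the complement $E(\G_N) \setminus E_\ell(\G_N)$ consists of $N$ isolated ends, the group $\Homeo(E(\G_N), E_\ell(\G_N))$ is just the symmetric group $S_N$, which is finite. The short exact sequence
\[
1 \longrightarrow \PMap(\G_N) \longrightarrow \Map(\G_N) \longrightarrow \Homeo(E(\G_N), E_\ell(\G_N)) \longrightarrow 1
\]
from \Cref{ss:PMAPandHomeo} then exhibits $\PMap(\G_N)$ as a closed finite-index, hence open, subgroup of $\Map(\G_N)$. \Cref{PROP:finiteindexCB} guarantees that coarse boundedness passes between an open finite-index subgroup and the ambient group, so CB of $\PMap(\G_N)$ forces CB of $\Map(\G_N)$.

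Since the entire content is bundled into results proved earlier in the section, there is no genuine obstacle here: all the real work already lives in \Cref{thm:HLNCB}. The only point requiring care is verifying that the hypothesis of \Cref{COR:finiteEndCB} holds, namely the finiteness of $E(\G_N)$, which is immediate, and keeping track of the fact that the transfer step invokes \Cref{PROP:finiteindexCB} through the finiteness of the quotient $\Homeo(E(\G_N), E_\ell(\G_N)) \cong S_N$.
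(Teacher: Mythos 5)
Your proposal is correct and is essentially the paper's own argument: the paper derives \Cref{cor:HLNCB} in one line from \Cref{thm:HLNCB} together with \Cref{COR:finiteEndCB}, exactly as you do. Your unpacking of the transfer mechanism (the quotient $\Homeo(E(\G_N),E_\ell(\G_N)) \cong S_N$ being finite, so $\PMap(\G_N)$ is a closed, hence open, finite-index subgroup, and \Cref{PROP:finiteindexCB} applies) is also precisely how the paper justifies \Cref{COR:finiteEndCB} itself, so nothing is missing.
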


\subsection{The Millipede Monster Graph} \label{ssec:millipede}

Let $\G_\infty$ be the graph with infinite rank whose end space is homeomorphic to $\{\frac{1}{2^n}: n\in \Z^+\}\cup\{0\}$ with $E_{\ell}(\G_{\infty})=\{0\}$, as shown in \Cref{fig:lochness}. The next proof again uses Rosendal's criterion, but we note that this is the only case where we don't show uniformity in the size of $\mathcal{F}$ and $n$ across different open neighborhoods of the identity. Uniformity of these constants is always present in the surface case \cite{mann2022large}.

\begin{PROP}
  \label{PROP:MillipedeCB}
    $\PMap(\G_\infty)$ is coarsely bounded.
\end{PROP}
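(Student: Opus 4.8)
The plan is to verify Rosendal's criterion (\Cref{PROP:RosendalCB}) following the same skeleton as the proof of \Cref{thm:HLNCB}, but with a new device for handling the \emph{infinitely} many rays of $\G_\infty$. Fix an identity neighborhood and pass to a basic subgroup $\cV_K$ with $K = [v_1,v_n]$. Let $p$ denote the finite number of rays of $\G_\infty$ whose attaching vertex lies in $K$; call these the \emph{interior rays} $R_1,\dots,R_p$ and call every other ray \emph{exterior}. Set $f = \cL(n,1,n+1)$, choose a tip interval $I_i\subset R_i$ for each interior ray, and declare
\[
    \cF = \{f\}\cup\{\wm{a_{n+1},I_i}^{\pm 1}:1\le i\le p\}.
\]
Given $\phi\in\PMap(\G_\infty)$, apply \Cref{LEM:folding} to produce a compactly supported $u$ with $u\phi\in\cV_K$, and use \Cref{LEM:RaysLoops} to factor $u = u_\ell\cdot u^{\mathrm{int}}\cdot u^{\mathrm{ext}}$, where $u_\ell$ is supported on the loops, $u^{\mathrm{int}}$ on the interior rays, and $u^{\mathrm{ext}}$ on the finitely many exterior rays that $u$ meets (finite because $u$ is compactly supported); these commute, as their supports are disjoint.

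The loop factor and interior rays are handled exactly as before. \Cref{lem:supp_on_loops} places $u_\ell\in(\{f\}\cV_K)^3$, and since there are at most $p$ interior rays, each summand of $u^{\mathrm{int}}$ being a word map on a single ray, \Cref{LEM:RayMap} places each in $(\cF\cV_K)^5$, so $u^{\mathrm{int}}\in(\cF\cV_K)^{5p}$.

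The new step disposes of $u^{\mathrm{ext}}$ \emph{all at once}. Write $u^{\mathrm{ext}} = \varphi_{\bigsqcup_j (w_j,I_j)}$ as a multi-word map on exterior rays, choose $M$ larger than every loop index occurring in any $w_j$, and set $g = \cL(n,n+1,M+1)\in\cV_K$. Conjugating by $gf$ and arguing as in \Cref{LEM:RayMap} (the tip intervals $I_j$ lie beyond the portion of each ray moved by $f$ and $g$) gives
\[
    (gf)^{-1}u^{\mathrm{ext}}(gf) = \varphi_{\bigsqcup_j \left((gf)_*^{-1}w_j,\, I_j\right)}.
\]
Because $M$ exceeds every index appearing in the words $w_j$, the permutation $(gf)_*^{-1}$ carries each $w_j$ into $A_{n+1,\infty}$. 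As each exterior ray $R_j$ and each resulting word path then lies entirely in the complementary component of $K$ containing the loop-accumulated end, the conjugated multi-word map is the identity on $K$ and preserves every complementary component of $K$; hence it lies in $\cV_K$. Therefore $u^{\mathrm{ext}} = gf\,\nu\,fg^{-1}$ with $\nu\in\cV_K$, so $u^{\mathrm{ext}}\in(\{f\}\cV_K)^3$. Combining the three pieces gives $u\in(\cF\cV_K)^{6+5p}$, and hence $\phi = u^{-1}(u\phi)\in(\cF\cV_K)^{6+5p}$, as $\cF$ is symmetric and $u\phi\in\cV_K$. Since $\cF$ is finite, this establishes Rosendal's criterion.

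The main obstacle is precisely the passage from finitely to infinitely many rays: the number of exterior rays met by $u$ is unbounded as $\phi$ varies, so they cannot each receive a dedicated generator as in \Cref{thm:HLNCB}. The device above circumvents this by observing that an exterior word map already lives in the loop-end component, so a \emph{single} adaptive loop-swap conjugation sweeps all of them simultaneously into $\cV_K$; only the interior rays, whose number $p$ is determined by $K$ alone, need their own generators. The delicate point to check is exactly this component-preservation claim for the conjugated exterior map. Finally, this is the source of the non-uniformity flagged above: since $p\to\infty$ as $K$ exhausts $\G_\infty$, both $\lvert\cF\rvert = 2p+1$ and the exponent $6+5p$ grow with the neighborhood, in contrast to $\Gamma_N$, where $p\le N$ is bounded by the graph itself.
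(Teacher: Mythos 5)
Your proposal is correct and takes essentially the same route as the paper's proof: the paper likewise fixes $K=[v_1,v_n]$, uses \Cref{LEM:folding} and \Cref{LEM:RaysLoops} to split $u$ into a loop part, word maps on the $n$ rays meeting $K$ (handled by \Cref{lem:supp_on_loops} and \Cref{LEM:RayMap}), and the remaining finitely many exterior ray maps, which it disposes of all at once via a single adaptive conjugation by $fg$ with $g=\cL(n,n+1,M+1)$ and $M$ chosen from the words appearing in $u$ — exactly your ``new device,'' including your observation about non-uniformity of $\lvert\cF\rvert$ and the exponent. The only discrepancies are cosmetic: the paper keeps $u_\ell$ as the rightmost factor rather than asserting the factors commute (word maps on disjoint intervals need not commute when a word path crosses another map's support, cf.\ the discussion after \Cref{LEM:compositionTongues}), and its final exponent is $7+5n$ rather than your $6+5p$, reflecting the extra step $\phi=u^{-1}(u\phi)$.
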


\begin{proof}
    We will again use Rosendal's criterion. Given any open neighborhood of the identity $\mathcal{V}$, choose $K=[v_1,v_n]$ so that $\cV_K\subset \cU$. Let $f = \cL(n,1,n+1)$ and choose intervals $I_{i}$ in each ray $R_{i}$ for $i=1,\ldots,n$. Set $\cF = \{f,\wm{a_{n+1},I_{1}}^{\pm},\ldots,\wm{a_{n+1},I_{n}}^{\pm}\}$. We will show that any $\phi \in \PMap(\Gamma_{\infty})$ is in $(\cF \cV_{K})^{7+5n}$. 
    
    Once again we apply \Cref{LEM:folding} to get an element $u \in \PMap(\Gamma_{\infty})$ such that $u\phi \in \cV_{K}$ and $u$ is totally supported on $K'$, such that $K'$ is a compact neighborhood of $[v_1,v_m]$ in $ [v_{1},v_{m}] \cup R_1\cup \ldots\cup R_m$,
 for some $m \geq n$. Use \Cref{LEM:RaysLoops} to write $u =\left( u_{R_{m}}\circ \cdots \circ u_{R_{n+1}} \right) \circ u_{R_{n}} \circ \cdots \circ u_{R_{1}} \circ u_{\ell}$ where each $u_{R_{i}}$ is supported on $R_{i}$ and $u_{\ell}$ has support only on the loops of $\Gamma_{\infty}$. Just as before, we apply \Cref{lem:supp_on_loops} to see that $u_{\ell} \in (\cF_{0}\cV_{K})^{3}$, for $\cF_{0} = \{f\}$, and \Cref{LEM:RayMap} to see that $u_{R_{i}} \in (\cF_{i}\cV_{K})^{5}$, for $\cF_{i} = \{f,\wm{a_{n+1},I_{i}}^{\pm}\}$ and $i\in \{1,\ldots, n\}$. Note that we chose $\cF = \bigcup_{i=1}^{n} \cF_{i}$ so that it only remains to check the following claim.
    
    \begin{CLAIME}
         $u_{R_{m}}\circ \cdots \circ u_{R_{n+1}} \in (\cF\cV_{K})^{3}$. 
    \end{CLAIME}
    
    \begin{proof}
        Each $u_{R_{i}}$ is homotopic to a word map $\wm{w_{i},I_{i}}$ for $I_{i}$ an interval in each $R_{i}$. Let $M=\max\{\{j \vert a_{j} \text{ appears in } w_{i}\}_{i=n+1}^{m} \cup \{n\}\}$ and let $g = \cL(n,n+1,M+1)$.
        
        Then using Lemma \ref{LEM:conjugatetongue} we have
        \begin{align*}
            fgu_{R_{i}}gf &= fg \wm{w_{i},I_{i}} gf \\
            &= \wm{(fg)_{*}(w_{i}),I_{i}}
        \end{align*}
        for all $i \in\{ n+1,\ldots,m\}$. Since $g$ was chosen so that $(fg)_{*}(w_{i}) \in A_{n+1,\infty}$, we have that $fgu_{R_{i}}gf \in \cV_{K}$ for all $i$. Finally, we note that \[(fgu_{R_{m}}gf)\circ \cdots \circ (fgu_{R_{n+1}}gf) = fg \circ \left(u_{R_{m}} \circ \cdots \circ u_{R_{n+1}}\right) \circ gf,\] and rearrange to get the claim.
        \renewcommand{\qedsymbol}{$\triangle$}
    \end{proof}
    
    Combining each of these allows us to conclude that $u \in (\cF\cV_{K})^{6+5n}$, and as $u\phi \in \cV_{K}$ we obtain that $\phi \in (\cF\cV_{K})^{7+5n}$. 
\end{proof}

We included \Cref{THM:permutationCB} as a warm up proof earlier in the section, but it is also the other key ingredient to proving the following corollary.

\begin{COR}\label{cor:millipedeCB}
    $\Map(\G_{\infty})$ is coarsely bounded. 
\end{COR}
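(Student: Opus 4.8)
The plan is to deduce the corollary from \Cref{PROP:MillipedeCB} (which gives that $\PMap(\G_{\infty})$ is CB) together with the warm-up \Cref{THM:permutationCB} (that $S_{\infty}$ is CB), by feeding them into the short exact sequence
\[
    1 \longrightarrow \PMap(\G_{\infty}) \longrightarrow \Map(\G_{\infty}) \longrightarrow \Homeo(E,E_{\ell}) \longrightarrow 1
\]
from \Cref{ss:PMAPandHomeo}. Since $\PMap(\G_{\infty})$ is CB in itself, it is CB in $\Map(\G_{\infty})$ by \Cref{COR:CBinLargerGroups}, so \Cref{COR:OutCEHomeo} applies and tells us that $\Map(\G_{\infty})$ is coarsely equivalent to $\Homeo(E,E_{\ell})$. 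By \Cref{PROP:CBisCEI}, coarse boundedness is a coarse-equivalence invariant, so it then suffices to show that $\Homeo(E(\G_{\infty}),E_{\ell}(\G_{\infty}))$ is CB, and for this I would identify it with $S_{\infty}$.

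The crux of the argument, and the step I expect to require the most care, is the topological identification $\Homeo(E(\G_{\infty}),E_{\ell}(\G_{\infty})) \cong S_{\infty}$. Recall that $E(\G_{\infty})$ is homeomorphic to the convergent sequence $\{1/2^{n}:n\in\Z^{+}\}\cup\{0\}$ with $E_{\ell}(\G_{\infty})=\{0\}$. The point $0$ is the unique non-isolated point, so every self-homeomorphism of $E(\G_{\infty})$ automatically fixes it; hence preserving the pair imposes no extra condition and $\Homeo(E,E_{\ell})=\Homeo(E)$. I would then check that a self-map is a homeomorphism exactly when it restricts to a bijection of the isolated points (fixing $0$): one direction is clear, and for the converse one verifies continuity at $0$ using that the neighborhoods of $0$ are precisely the cofinite sets, so any permutation of the isolated points extends to a homeomorphism. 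This gives a group isomorphism $\Homeo(E)\cong S_{\infty}$.

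Finally I would confirm that this isomorphism is a homeomorphism, so that \Cref{THM:permutationCB} transfers. Using the description in \Cref{ss:PMAPandHomeo}, a neighborhood basis of the identity in $\Homeo(E,E_{\ell})$ is given by the subgroups $\cU_{\cP}$ preserving each piece of a finite clopen partition $\cP$. For the convergent sequence, refining the finite pieces into singletons shows that it is enough to use the partitions whose small pieces are the singletons $\{1/2^{i}\}$ for $i\le N$ together with one cofinite piece containing $0$; the corresponding $\cU_{\cP}$ is exactly the stabilizer of the first $N$ isolated points, matching the basic neighborhood $\cV_{K}=\{\sigma:\sigma(i)=i \text{ for } i\le N\}$ in the permutation topology on $S_{\infty}$. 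Thus the topologies agree, $\Homeo(E,E_{\ell})\cong S_{\infty}$ as topological groups, and $S_{\infty}$ is CB by \Cref{THM:permutationCB}. Combined with the coarse equivalence $\Map(\G_{\infty})\simeq \Homeo(E,E_{\ell})$ above, \Cref{PROP:CBisCEI} yields that $\Map(\G_{\infty})$ is CB, completing the proof.
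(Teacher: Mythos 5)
Your proposal is correct and follows essentially the same route as the paper: the paper's proof likewise combines \Cref{PROP:SES} (equivalently, \Cref{COR:OutCEHomeo}) with \Cref{PROP:MillipedeCB} to get that $\Map(\G_{\infty})$ is coarsely equivalent to $\Homeo(E,E_{\ell}) \cong S_{\infty}$, and then invokes \Cref{THM:permutationCB}. The only difference is that you spell out the topological identification $\Homeo(E(\G_{\infty}),E_{\ell}(\G_{\infty})) \cong S_{\infty}$, which the paper asserts without proof; your verification of it is accurate.
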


\begin{proof}
    By \Cref{PROP:SES}, because $\PMap(\G_{\infty})$ is coarsely bounded, $\Map(\G_{\infty})$ is coarsely equivalent to $\Homeo\left(E(\G_{\infty}),E_{\ell}(\G_{\infty})\right) \cong S_{\infty}$. By \Cref{THM:permutationCB}, $S_{\infty}$ is coarsely bounded.
\end{proof}

\section{Graphs of Finite Positive Rank}
\label{sec:finiteposrk}
In this section we will see that $\PMap(\Gamma)$ is not CB for graphs $\Gamma$ of finite positive rank except when $\Gamma = \lasso$. Recall when $\G$ has rank 0, \Cref{prop:rank0} shows that $\PMap(\G)$ is trivial, hence CB.

Note any locally finite graph $\Gamma$ is an Eilenberg-Maclane space, $K(\pi_1(\Gamma),1)$, so there is a natural homomorphism 
\begin{align*}
    \Psi: \Map(\Gamma) \rightarrow \Out(\pi_{1}(\Gamma))
\end{align*}
that associates $g \in \Map(\Gamma)$ with the corresponding outer automorphism class of $g_{*}:\pi_{1}(\Gamma) \rightarrow \pi_{1}(\Gamma)$. We refer the reader to \cite[Chapter 3]{AB2021} for an in-depth discussion of the map $\Psi$ and its kernel. Note $\Psi$ is surjective when $\Gamma$ has finite rank $n$ and that $\pi_{1}(\Gamma) \cong F_{n}$, the free group of rank $n$.
The restriction $\Psi|_{\PMap(\Gamma)}$ to $\PMap(\Gamma)$ still surjects onto $\Out(\pi_1(\Gamma))$ when $\Gamma$ has finite rank. This is because the fundamental group, $\pi_1(\Gamma)\cong \pi_1(\Gamma_c)$, only captures the finite core graph $\Gamma_c$ of $\Gamma$, so we can choose the extension of the map $\Gamma_{c} \rightarrow \Gamma_{c}$ corresponding to a given outer automorphism to fix the ends of $\Gamma$.

In general when $\G$ has infinite rank, however, $\Psi$ is not surjective as there are automorphisms not realized by proper homotopy equivalences, such as the automorphism induced by the inverse homotopy equivalence in the example in \Cref{RMK:notPHE}.

\subsection{Graphs of Rank $>1$}
We first deal with the generic case, when a graph has rank larger than 1.

\begin{LEM}
 If $\Gamma$ is a locally finite graph of finite rank $n>1$, then $\Psi: \Map(\Gamma) \rightarrow \Out(F_{n})$ is continuous.
\end{LEM}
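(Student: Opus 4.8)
The plan is to exploit the fact that $\Out(F_n)$ is countable and carries the discrete topology, so that for the group homomorphism $\Psi$, continuity is equivalent to the kernel $\ker\Psi$ being open. Since $\ker\Psi$ is a subgroup of the topological group $\Map(\Gamma)$, it is open as soon as it contains a single open neighborhood of the identity. Thus the entire proof reduces to exhibiting one basic open set $\cV_K$ with $\cV_K \subseteq \ker\Psi$.

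First I would choose $K$ to be a connected finite subgraph of $\Gamma$ containing the core graph $\Gamma_{c}$; such a $K$ exists because $\Gamma$ has finite rank, so $\Gamma_{c}$ is a finite subgraph, and $\Gamma$ is connected, so $\Gamma_{c}$ can be enclosed in a connected finite subgraph. Because $\Gamma_{c} \subseteq K$ carries all of the rank of $\Gamma$, the inclusion $K \hookrightarrow \Gamma$ induces an isomorphism $\pi_1(K,x_0) \to \pi_1(\Gamma,x_0) \cong F_n$ for any basepoint $x_0 \in \Gamma_{c}$, and in particular every class in $\pi_1(\Gamma,x_0)$ is represented by a based loop lying in $K$.

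Next I would verify $\cV_K \subseteq \ker\Psi$. Given $[f] \in \cV_K$, take the representative $f'$ with $f'\vert_K = \Id_K$; in particular $f'$ fixes $x_0$, so $f'_{*}\colon \pi_1(\Gamma,x_0) \to \pi_1(\Gamma,x_0)$ is a genuine automorphism representing the outer class $\Psi([f])$. Since $f'$ fixes $K$ pointwise and every element of $\pi_1(\Gamma,x_0)$ is represented by a loop in $K$, the map $f'_{*}$ fixes every such class, so $f'_{*} = \Id$ and $\Psi([f])$ is the trivial outer automorphism. As $\cV_K$ is an open neighborhood of the identity contained in $\ker\Psi$, the kernel is open and $\Psi$ is continuous.

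The only genuine subtlety — and the step I would treat most carefully — is the claim that fixing $f'$ on $K$ forces $f'_{*} = \Id$ on all of $\pi_1(\Gamma)$. This rests entirely on $K$ carrying the full fundamental group, that is, on $\Gamma_{c} \subseteq K$ together with the finiteness of the rank; one must take $K$ connected precisely so that $\pi_1(K) \to \pi_1(\Gamma)$ really is an isomorphism and each element of $\pi_1(\Gamma,x_0)$ can be realized inside $K$. The independence of $\Psi([f])$ from the chosen proper homotopy representative is the standard fact that properly homotopic maps induce the same outer automorphism, so no additional argument is needed on that point.
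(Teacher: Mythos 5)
Your proposal is correct and follows essentially the same route as the paper, which simply observes that $\cV_{\Gamma_{c}}$ is an open subgroup of $\ker\Psi$ (the core graph $\Gamma_c$ being finite and connected since $\rk(\Gamma)=n$ is finite and positive, so your enlargement to a connected $K \supseteq \Gamma_c$ is harmless but unnecessary). Your fleshed-out verification that fixing $K \supseteq \Gamma_c$ pointwise kills the induced outer automorphism is exactly the content the paper leaves implicit.
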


\begin{proof}
 Since $\Map(\Gamma)$ is a topological group and $\Out(F_{n})$ is a discrete group, it is sufficient to check that $\ker{\Psi}=\Psi^{-1}(\{[id]\})$ contains an open subgroup. 
 Observe that $\cV_{\Gamma_{c}}$ is an open subgroup of $ \ker{\Psi}$.
\end{proof}

\begin{COR}
\label{cor:finiterank}
  If $\Gamma$ is a locally finite graph of finite rank $n>1$ then $\Map(\Gamma)$ is not CB. In particular, $\PMap(\Gamma)$ is not CB.
\end{COR}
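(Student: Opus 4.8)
The plan is to push coarse boundedness forward along the continuous surjection $\Psi$ onto $\Out(F_{n})$ and to observe that $\Out(F_{n})$ is simply too big to be CB once $n>1$. The engine is a general principle about continuous homomorphisms: if $\phi\colon G\to H$ is a continuous homomorphism of Polish groups and $G$ is CB, then the image $\phi(G)$ is CB in $H$; in particular, if $\phi$ is surjective and $G$ is CB, then $H$ is CB in itself. First I would record this principle, which is immediate from part $(2)$ of \Cref{PROP:RosendalCB}: given any continuous action of $H$ on a metric space $X$ and any $x\in X$, precomposing with $\phi$ produces a continuous action of $G$ on $X$, and since $G$ is CB we get $\diam(G\cdot x)<\infty$; by surjectivity $G\cdot x=\phi(G)\cdot x=H\cdot x$, so $\diam(H\cdot x)<\infty$, whence $H$ is CB.

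Next I would verify that $\Out(F_{n})$ is not CB for $n>1$. Here $\Out(F_{n})$ carries the discrete topology (as in the preceding Lemma), so $\{\mathrm{id}\}$ is an identity neighborhood, and Rosendal's criterion (part $(1)$ of \Cref{PROP:RosendalCB}) applied with $\cV=\{\mathrm{id}\}$ would force $\Out(F_{n})=(\cF\{\mathrm{id}\})^{m}=\cF^{m}$ for some finite $\cF$ and some $m\ge 1$. But $\cF^{m}$ is a finite set, while $\Out(F_{n})$ is infinite for $n>1$ (for instance it surjects onto $GL_{n}(\Z)$ via the induced action on $H_{1}(F_{n})\cong\Z^{n}$, and $GL_{n}(\Z)$ is infinite for $n\ge 2$). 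Thus no infinite discrete group, and in particular $\Out(F_{n})$, can be CB.

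Finally I would assemble the two ingredients. By the preceding Lemma, $\Psi\colon\Map(\G)\to\Out(F_{n})$ is continuous, and as noted at the start of this section it is surjective; its restriction $\Psi|_{\PMap(\G)}$ is continuous (being a restriction to the subspace $\PMap(\G)$) and also surjective onto $\Out(F_{n})$. If $\Map(\G)$ were CB, then by the pushforward principle its continuous image $\Out(F_{n})$ would be CB, contradicting the previous paragraph; hence $\Map(\G)$ is not CB. Running the identical argument with $\Psi|_{\PMap(\G)}$ in place of $\Psi$ shows $\PMap(\G)$ is not CB. I expect the only genuinely substantive point to be the surjectivity of $\Psi|_{\PMap(\G)}$ onto $\Out(F_{n})$ --- that pure mapping classes already realize every outer automorphism --- but this was justified earlier in the section using that $\pi_{1}(\G)\cong\pi_{1}(\G_{c})$ depends only on the finite core graph, so the corresponding extensions can be chosen to fix the ends of $\G$; with that in hand the remainder of the proof is pure assembly.
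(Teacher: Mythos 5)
Your proof is correct and takes essentially the same route as the paper: both push the failure of coarse boundedness of $\Out(F_n)$ back through the continuous surjection $\Psi$ (and its restriction to $\PMap(\G)$, whose surjectivity is indeed the one substantive input, justified earlier in the section). The only cosmetic difference is how $\Out(F_n)$ is certified non-CB --- you apply Rosendal's criterion with $\cV=\{\mathrm{id}\}$ to observe that an infinite discrete group can never be CB, while the paper invokes unbounded continuous actions (on the Cayley graph or the free factor complex) via condition (2) of \Cref{PROP:RosendalCB}; both are valid, and yours is if anything more self-contained.
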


\begin{proof}
 Note that $\Out(F_{n})$ is not CB by Condition (2) of \Cref{PROP:RosendalCB}. Indeed, $\Out(F_{n})$ has many unbounded continuous actions on metric spaces, e.g., on its Cayley graph or on the free factor complex. We can thus precompose this action with the continuous surjective map $\Psi$ to obtain an unbounded continuous action of $\Map(\Gamma)$.
 Further precompose with the inclusion $\PMap(\Gamma) \hookrightarrow \Map(\Gamma)$ to deduce the latter assertion.
\end{proof}

\subsection{Graphs of Rank 1}
\label{ss:rank1}

The technique used above fails when $\Gamma$ has rank one because the target group of $\Psi$ is now $\Out(\Z) \cong \Z/2\Z$, a finite (hence CB) group.
Instead we will make use of the semi-direct product description of $\PMap(\Gamma)$ from \cite{AB2021} to see that $\PMap(\Gamma)$ will also not be CB, other than one exceptional case $\G=\lasso$.

For the sake of brevity we state definitions and results from \cite[Section 3]{AB2021} in the specific case that $\Gamma$ has finite positive rank; however, it is important to note that these definitions and results can be stated more generally. 

Let $\Gamma$ have finite positive rank and choose some $e_{0} \in E(\Gamma)$.  Let $\pi_{1}(\Gamma,e_0)$ be the group of proper homotopy classes of lines $\sigma:\R \rightarrow \Gamma$ with $\lim_{t\rightarrow \infty}\sigma(t) = \lim_{t\rightarrow - \infty}\sigma(t) = e_0$, with group operation given by concatenation.
In \cite{AB2021} it is noted that given any $x_{0} \in \Gamma$ there is an isomorphism $\pi_{1}(\Gamma,x_{0}) \rightarrow \pi_{1}(\Gamma,e_0)$. Let $\Gamma_{c}^{*} \subset \Gamma$ be the subgraph consisting of the core graph $\Gamma_{c}$ together with a choice of ray in $\Gamma$ that limits to $e_0$ and intersects $\Gamma_{c}$ in exactly one point.
\begin{DEF}[{\cite[Definition 3.3]{AB2021}}]
    The group $\cR$ as a set is the collection of maps $h:E(\Gamma) \rightarrow \pi_{1}(\Gamma_{c}^{*},e_{0})$ satisfying
    \begin{enumerate}
        \item[(R0)]
            $h(e_{0})=1$, and 
        \item[(R1)]
            $h$ is locally constant. 
    \end{enumerate}
    The group operation in $\cR$ is given by pointwise multiplication in $\pi_{1}(\Gamma,e_{0})$.
\end{DEF}

Algom-Kfir and Bestvina use this group to give a description of $\PMap(\Gamma)$ as a semidirect product. 

\begin{THM}[{\cite[Corollary 3.9]{AB2021}}] \label{THM:ABsemidirect}
  If $\Gamma$ has finite positive rank then
  \begin{align*}
      \PMap(\G) \cong \cR \rtimes \PMap(\G_c^*),
  \end{align*}
  Moreover, the natural homomorphism $\PMap(\G_c^*) \to \Out(\pi_1(\G_c^*))$ is a surjection when $\rk(\G) \ge 2$ and is an isomorphism when $\rk(\G) =1$.
\end{THM}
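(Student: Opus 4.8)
The plan is to realize $\PMap(\G)$ as an \emph{internal} semidirect product of two explicit subgroups and then to treat the natural map to $\Out$ separately. Since $\G$ has finite rank we have $E_\ell(\G)=\emptyset$, so every end of $\G$ is a \emph{tree end} (it has a neighborhood that is a tree), and all of $\pi_1(\G)\cong F_n$ is carried by the finite subgraph $\G_c^*$. I will build two subgroups. First, extending a proper homotopy equivalence of $\G_c^*$ by the identity on the complementary trees gives an injective homomorphism $s\colon \PMap(\G_c^*)\hookrightarrow\PMap(\G)$, using that $\G_c^*$ sits inside $\G$ as the core together with the chosen ray to $e_0$. Second, given $h\in\cR$, local constancy of $h$ on the compact space $E(\G)$ produces a finite clopen partition on whose pieces $h$ is constant; realizing the assigned element of $\pi_1(\G_c^*,e_0)$ by a word map inserted along the neck leading to each piece yields a compactly-supported $\Theta(h)\in\PMap(\G)$. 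The composition rule for word maps (\Cref{LEM:compositionTongues}, with the orientation convention of \Cref{RMK:I_ConventionMatters}) shows $\Theta$ is a homomorphism, and the condition $h(e_0)=1$ together with the independence of the distinct necks shows $\Theta$ is injective, so $\cR\cong\Theta(\cR)\le\PMap(\G)$.

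Next I would check normality and compute the action. For $\psi\in\PMap(\G_c^*)$ and $h\in\cR$, applying the Conjugation Rule (\Cref{LEM:conjugatetongue}) neck-by-neck gives $s(\psi)\,\Theta(h)\,s(\psi)^{-1}=\Theta(\psi_*\circ h)$, so $\Theta(\cR)$ is normalized by $s(\PMap(\G_c^*))$ and the induced action is exactly the post-composition $h\mapsto\psi_*\circ h$, matching the claimed semidirect product. The two subgroups meet trivially: an element of $s(\PMap(\G_c^*))$ is supported on $\G_c^*$, while a nontrivial $\Theta(h)$ inserts nontrivial loops along necks leading away from $e_0$, so matching the two forces $h\equiv 1$ and the $\PMap(\G_c^*)$-factor to be trivial (formally via \Cref{LEM:multiplebps} on the relevant fundamental groups).

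The crux is surjectivity, i.e.\ writing an arbitrary $\phi\in\PMap(\G)$ as $\Theta(h)\,s(\psi)$. The strategy is to first correct the core action: choose $\psi\in\PMap(\G_c^*)$ realizing the automorphism $\phi$ induces on $\pi_1(\G,e_0)\cong\pi_1(\G_c^*,e_0)$ (such a realization exists because $\G_c^*$ is a $K(F_n,1)$, by \Cref{PROP:Hatcher}), so that $s(\psi)^{-1}\phi$ induces the identity on $\pi_1$ and fixes $e_0$. After a proper homotopy fixing the core and the chosen ray, $s(\psi)^{-1}\phi$ is supported on the complementary trees; since $\phi$ is pure it fixes every end, and properness forces its restriction near each end to be a finite product of loop-inserting word maps (\Cref{rmk:RaymapsareWordmaps}), i.e.\ an element $\Theta(h)$ of $\cR$. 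This is exactly the subtlety flagged in the introduction and is the \textbf{main obstacle}: the restriction of a proper homotopy equivalence to $\G_c^*$ is \emph{not} a homotopy equivalence, so the projection $\phi\mapsto\psi$ cannot be defined by naive restriction; it must be built from the induced automorphism of $\pi_1(\G,e_0)$ and shown to be well-defined up to proper homotopy using \Cref{PROP:Hatcher} together with the classification theorem (\Cref{thm:InfGraphClass}) on the tree ends. Getting this bookkeeping and the well-definedness correct is the heart of the argument.

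Finally, for the ``moreover'' I would argue as follows. Surjectivity of $\PMap(\G_c^*)\to\Out(\pi_1(\G_c^*))$ holds in every rank: any outer automorphism of $F_n$ is induced by a homotopy equivalence of the finite core graph $\G_c$ (as $\Map$ of a finite graph is $\Out(F_n)$), which extends by the identity over the ray to a proper homotopy equivalence of $\G_c^*$ realizing that class. When $\rk(\G)=1$, $\G_c^*$ is a lollipop with $\pi_1\cong\Z$ and $\Out(\Z)\cong\Z/2\Z$, and the flip of the loop realizes the nontrivial class; for injectivity, any kernel element induces the identity on $\pi_1$, hence (after homotoping to fix the loop) is supported on the ray, so by \Cref{rmk:RaymapsareWordmaps} is a product of ray word maps, each properly homotopic to the identity by sliding the inserted loop out to the unique end $e_0$. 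Thus the kernel is trivial and the map is an isomorphism, completing the proof.
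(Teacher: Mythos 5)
First, a point of comparison: the paper contains no proof of this statement at all --- \Cref{THM:ABsemidirect} is imported verbatim from \cite[Corollary 3.9]{AB2021}, and the paper only ever uses it as a black box (in \Cref{ss:rank1}). So your proposal can only be judged on its own merits, and in outline it does track the Algom-Kfir--Bestvina strategy: the end-based group $\pi_1(\G,e_0)$, realizing $\cR$ by word maps inserted along the necks, and the composition/conjugation rules (\Cref{LEM:compositionTongues}, \Cref{LEM:conjugatetongue}) to verify the semidirect structure. You also correctly identify that the projection to $\PMap(\G_c^*)$ cannot be defined by restriction and must come from the induced automorphism of $\pi_1(\G,e_0)$; note, though, that you flag this step as ``the heart'' without carrying it out, and the same unaddressed subtlety infects your section $s$: extension by the identity depends on a representative fixing the attaching points of the complementary trees, and two such representatives that are properly homotopic only through homotopies moving those points yield extensions differing by elements of $\Theta(\cR)$ (the tracks of the attaching points get inserted as word maps), so $s$ is not well defined without additional based data.

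The genuinely false step is in your rank-one injectivity argument: ``each properly homotopic to the identity by sliding the inserted loop out to the unique end $e_0$'' is not a proper homotopy. If $H(x,t)=\wm{w,I_t}(x)$ with the intervals $I_t$ exiting toward $e_0$, then for the core loop $\alpha$ the preimage $H^{-1}(\alpha)$ contains a subinterval of $I_t\times\{t\}$ for every $t$, an unbounded closed subset of $\G\times I$, so $H$ is not proper. Worse, this move proves too much: if legitimate, it would trivialize every ray-supported word map in every graph, contradicting \Cref{prop:rank1} of this very paper (non-CB-ness there comes precisely from $\cR\neq 1$, i.e.\ from such insertions being nontrivial mapping classes) and the length functions of \Cref{prop:lengthfunction}. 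The correct rank-one mechanism is compactly supported: slide $I$ down to the junction vertex and then ``spin'' the loop $k$ times while dragging the ray attachment along; this compactly supported (hence proper) homotopy exhibits $\Id\simeq\wm{a^{-k},I'}$, whence $\wm{a^{k},I}\simeq\Id$ by \Cref{LEM:compositionTongues}. This argument is special to rank one because conjugation on $\Z$ is trivial; for $\rk(\G_c^*)\ge 2$ the same ray word map acts on $\pi_1(\G_c^*,e_0)$ by conjugation by $w\neq 1$, a nontrivial automorphism since $F_n$ is centerless for $n\ge 2$ --- which is exactly why the theorem asserts only a surjection, not an isomorphism, in that range. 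Your proposal, taken at face value, would ``prove'' injectivity in all ranks, a sign that the sliding step had to be wrong.
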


We can now prove the following. 

\begin{PROP}
\label{prop:rank1}
    Let $\Gamma$ be a locally finite, infinite graph of rank 1. Then $\PMap(\G)$ is CB if and only if $|E(\Gamma)|=1$, that is, if $\G = \lasso$.
\end{PROP}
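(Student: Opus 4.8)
The plan is to reduce the statement to understanding the abelian group $\cR$ via the semidirect product decomposition of \Cref{THM:ABsemidirect}, namely $\PMap(\G) \cong \cR \rtimes \PMap(\Gamma_c^*)$. Since $\rk(\G)=1$, the same theorem gives $\PMap(\Gamma_c^*) \cong \Out(\pi_1(\Gamma_c^*)) \cong \Out(\Z) \cong \Z/2\Z$, which is finite and hence CB. The subgroup $\cR$ is the kernel of the projection $\PMap(\G) \to \PMap(\Gamma_c^*)$ onto a discrete group, so it is closed of index two and therefore open. By \Cref{PROP:finiteindexCB}, $\PMap(\G)$ is CB if and only if $\cR$ is CB, so the whole proposition reduces to deciding when $\cR$ is CB. Here, because $\rk(\G)=1$, the group $\cR$ is exactly the group of locally constant maps $h:E(\G)\to \pi_1(\Gamma_c^*,e_0)\cong\Z$ satisfying $h(e_0)=1$ (condition (R0)), under pointwise multiplication.

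The easy direction is when $|E(\G)|=1$: then $E(\G)=\{e_0\}$ and (R0) forces $\cR$ to be trivial, so $\PMap(\G)\cong\Z/2\Z$ is CB. By \Cref{thm:InfGraphClass} a rank-one graph with a single end is exactly $\lasso$, as claimed. For the converse, assume $|E(\G)|\geq 2$ and fix an end $e_1\neq e_0$. I would produce an unbounded continuous isometric action of $\cR$ and invoke condition (2) of \Cref{PROP:RosendalCB}. Concretely, consider the evaluation homomorphism $\mathrm{ev}_{e_1}:\cR\to\Z$, $h\mapsto h(e_1)$ (written additively), and let $\cR$ act on $\Z$ by $h\cdot n = n + \mathrm{ev}_{e_1}(h)$. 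This is an action by isometries, so it suffices to check that it is continuous with an unbounded orbit; equivalently, that $\mathrm{ev}_{e_1}$ is continuous into discrete $\Z$ (open kernel) and surjective.

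These two facts about $\mathrm{ev}_{e_1}$ are the crux. For surjectivity, I would use total disconnectedness of $E(\G)$ to choose a clopen $U\subseteq E(\G)$ with $e_1\in U$ and $e_0\notin U$; the proper homotopy equivalence that winds the finitely many complementary components accumulating to $U$ around the unique loop $k$ times, performed at their gates onto $\Gamma_c$, is compactly supported and realizes $h\in\cR$ with $h\equiv k$ on $U$ and $h\equiv 0$ off $U$, so $\mathrm{ev}_{e_1}(h)=k$. For the open kernel, I claim $\cV_K\cap\cR\subseteq\ker(\mathrm{ev}_{e_1})$ for every connected $K\supseteq\Gamma_c$: a representative that is the identity on $K$ and preserves the complementary components of $K$ restricts to the component $C$ whose closure contains $e_1$, and since the only loop lies in $\Gamma_c\subseteq K$, this $C$ is a tree; a map fixing $K$ and preserving the loop-free tree $C$ cannot wrap the ray to $e_1$ around the loop, forcing $h(e_1)=0$. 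Thus $\ker(\mathrm{ev}_{e_1})$ is open, the orbit $\cR\cdot 0 = \mathrm{ev}_{e_1}(\cR)=\Z$ has infinite diameter, and so $\cR$ is not CB; by the reduction above neither is $\PMap(\G)$.

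The main obstacle is precisely the open-kernel (continuity) verification. It rests on the rank-one feature that the single loop is contained in the core, so once $K$ contains the core every complementary component is loop-free and no winding around the loop can be created inside it without moving $K$; this is what pins $h(e_1)$ to $0$ on a neighborhood of the identity. Carrying this out cleanly requires being careful with the precise definition of $\cR$ and of the basic neighborhoods $\cV_K$, and with how the ``winding'' element $h(e_1)\in\Z$ is read off from a mapping class.
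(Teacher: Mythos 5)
Your proposal is correct and structurally identical to the paper's proof: the same reduction via \Cref{THM:ABsemidirect} to the index-two clopen subgroup $\cR$, the same appeal to \Cref{PROP:finiteindexCB}, the same observation that $(R0)$ forces $\cR = 1$ when $|E(\G)|=1$, and the same evaluation homomorphism at a second end $e_1 \neq e_0$ for the non-CB direction. The one genuine divergence is the step you flag as the main obstacle. The paper sidesteps the continuity verification entirely by citing Dudley's theorem \cite{dudley1961}, that \emph{every} homomorphism from a Polish group to $\Z$ is automatically continuous, applied to $\cR$ (closed in $\PMap(\G)$, hence Polish); so your open-kernel argument, while sound in outline, is unnecessary, and its cost --- unwinding the Algom-Kfir--Bestvina identification of the $\cR$-coordinate of a mapping class with winding data in order to justify $\cV_K \cap \cR \subseteq \ker(\mathrm{ev}_{e_1})$ --- is exactly what automatic continuity lets one avoid. (The same Dudley argument recurs in the paper's flux-map sections, so it is the intended tool.) Conversely, your explicit surjectivity construction is slightly more than the paper records: the paper only asserts nontriviality of the evaluation, which suffices since a nontrivial subgroup of $\Z$ is unbounded, but your clopen-set element $h$ (constant $k$ on a clopen $U \ni e_1$ with $e_0 \notin U$, trivial elsewhere) is precisely the witness that makes that assertion airtight for the chosen end $e$.
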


\begin{proof}
    First note that if $|E(\G)|=1$, then $\cR=1$ so it follows that $\PMap(\G) \cong \PMap(\G_c^*) \cong \Out(F_1) \cong \Z/2\Z$, which is CB.
    
    Now conversely, assume $|E(\G)|>1$ and
    we claim that $\PMap(\G)$ is not CB.
    By \Cref{THM:ABsemidirect}, and that $\PMap(\G_c^*) \cong \Z/2\Z$, we have that $\cR$ is a clopen index 2 subgroup of $\PMap(\Gamma)$. Therefore, $\cR$ is Polish, and by \Cref{PROP:finiteindexCB} it suffices to show that $\cR$ is not CB.
    
    Let $e \in E(\Gamma)$ with $e \neq e_0$ and $\phi_{e}: \cR \rightarrow \pi_{1}(\Gamma_{c}^{*},e)$ be the evaluation homomorphism at $e$. Since $\cR$ is nontrivial, $\phi_e$ is a nontrivial homomorphism from $\cR$ to $\Z$. A classical result of Dudley \cite{dudley1961} states that any homomorphism from a Polish group to $\Z$ is continuous. Thus $\phi_{e}$ defines a continuous nontrivial homomorphism from $\cR$ to $\Z$ showing that $\cR$ is not CB. 
\end{proof}

\section{Length Functions: Graphs with Infinite Combs and Trees} \label{SEC:Length}

In this section we consider graphs, $\Gamma$, of rank at least one and with $E(\Gamma) \setminus E_{\ell}(\Gamma)$ having an accumulation point. Note that this includes the class of graphs with one end accumulated by loops and infinite end space with the one exception of the Millipede Monster graph, $\G_{\infty}$, described in \Cref{ssec:millipede}. We show that for any such graph $\G$, its pure mapping class group is neither coarsely bounded nor generated by a coarsely bounded set.

\begin{THM}\label{thm:oneendnotCB}
    Let $\G$ be a locally finite, infinite graph with $\rk(\G) > 0$ and $E(\Gamma) \setminus E_\ell(\Gamma)$ containing an accumulation point. Then $\PMap(\G)$ is not CB and is neither algebraically nor topologically CB-generated.
\end{THM}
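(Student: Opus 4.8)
The plan is to build, out of the accumulation point in $E(\G)\setminus E_\ell(\G)$, a continuous length function $\ell\colon\PMap(\G)\to\Z_{\ge 0}$ that is \emph{unbounded} and \emph{ultrametric}, meaning $\ell(\phi\psi)\le\max(\ell(\phi),\ell(\psi))$ and $\ell(\phi^{-1})=\ell(\phi)$. Equivalently, I would exhibit $\PMap(\G)$ as a strictly increasing union $\bigcup_{n}H_n$ of \emph{proper} open subgroups, taking $H_n=\{\phi:\ell(\phi)\le n\}$; these are open (hence clopen) subgroups precisely because $\ell$ is continuous and ultrametric. All three conclusions follow formally from such a function, so the real work is its construction.

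Granting the length function, here is how the three claims fall out. For ``not CB'', the left-translation action of $\PMap(\G)$ on the metric space $(\PMap(\G),d_\ell)$, where $d_\ell(\phi,\psi)=\ell(\phi^{-1}\psi)$, is by isometries and is continuous (continuity of the action map uses only that $\ell$ is continuous, via $u\mapsto \ell(x^{-1}ux)$), and the orbit of the identity is all of $\PMap(\G)$, of infinite diameter; by condition (2) of \Cref{PROP:RosendalCB} this is incompatible with coarse boundedness. For ``not CB-generated'', suppose $S$ were a coarsely bounded generating set (algebraic or topological). Applying condition (2) of \Cref{PROP:RosendalCB} to the same action shows $S$ has finite $d_\ell$-diameter, hence $S\subseteq H_n$ for some $n$. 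Since $H_n$ is a subgroup, $\langle S\rangle\subseteq H_n$, and since $H_n$ is clopen, $\overline{\langle S\rangle}\subseteq H_n\subsetneq\PMap(\G)$; this contradicts $S$ generating $\PMap(\G)$ either algebraically or topologically.

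For the construction I would first use \Cref{thm:InfGraphClass} to put $\G$ in standard form exhibiting an infinite comb (or tree) limiting to the accumulation point $\zeta\in E\setminus E_\ell$: a sequence of ends $e_i\to\zeta$ in $E\setminus E_\ell$, a spine ray toward $\zeta$, and a cofinal family of separating edges $c_1,c_2,\dots$ with $c_n$ cutting off a sub-comb neighborhood of $\zeta$ from the rest of $\G$ (in particular from all of $E_\ell$, which is nonempty since $\rk(\G)>0$). Because $\zeta\notin E_\ell$, the teeth near $\zeta$ carry no loops, so the only way $\PMap(\G)$ interacts with this region is by dragging based loops outward along the comb; by \Cref{rmk:RaymapsareWordmaps} such interactions are recorded by word maps, and the composition rule \Cref{LEM:compositionTongues} makes their bookkeeping additive. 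I would define $\ell(\phi)$ to be the deepest level $n$ (measured by the $c_n$) to which $\phi$ genuinely drags loops, and verify the length-function axioms together with continuity; the ultrametric inequality reflects that the supports of a product lie in the union of the supports, so the relevant depth of $\phi\psi$ is at most the larger of the two depths. Unboundedness is witnessed by the single word maps $\wm{a,I_n}$ that push a fixed based loop $a$ into the $n$-th tooth, which have $\ell=n$. This also explains why the Millipede Monster of \Cref{ssec:millipede} is excluded: there $E\setminus E_\ell$ is discrete, there is no sub-comb to drag into, and these subgroups collapse.

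The step I expect to be the main obstacle is showing that $\ell$ is simultaneously \emph{finite on all of} $\PMap(\G)$ \emph{and continuous with open sublevel sets} --- two requirements that pull in opposite directions. Finiteness is exactly where \emph{properness} is indispensable: a proper homotopy equivalence cannot carry loops arbitrarily deep into the comb in cofinally many teeth at once, for that would make the preimage of a neighborhood of a loop fail to be compact, precisely the pathology isolated in \Cref{RMK:notPHE}; thus every $\phi\in\PMap(\G)$ has a well-defined finite depth. Arranging that the sublevel sets are genuinely open subgroups is the delicate part, and I would handle it by normalizing representatives using \Cref{LEM:multiplebps}-type homotopy arguments together with the conjugation rule, so that ``depth $\le n$'' becomes a condition depending only on a representative's behaviour on a fixed finite piece of the comb. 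The resulting ultrametric --- equivalently, the action on the simplicial tree dual to the chain $\{H_n\}$ --- is the same structure I would reuse to compute $\asdim\PMap(\G)$ in \Cref{THM:asympdim}.
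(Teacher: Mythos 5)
Your proposal takes essentially the same route as the paper: its \Cref{prop:lengthfunction} constructs precisely your depth function, setting $\ell(g)$ to be the least $k$ such that $g$ preserves, up to proper homotopy, every geodesic line $L$ with $d(L,\G_c)\ge k$ in a tree component $T$ of $\G\setminus\G_c$ with infinite end space, and proves it is a continuous, unbounded \emph{ultranorm} via word maps and the composition rule; the theorem then follows exactly as you argue, with non-CB from \Cref{rmk:unbddlengthftn} and non-generation (algebraic and topological) from the ultranorm inequality plus continuity, which is the same content as your clopen sublevel subgroups $H_n$. One incidental slip worth noting: $\rk(\G)>0$ does \emph{not} force $E_\ell(\G)\neq\emptyset$ (that requires infinite rank, and the theorem genuinely covers finite-rank graphs), but your construction never uses this --- it only needs a nontrivial word in $\pi_1(\G)$ to push into the comb.
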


We will also show that these groups act on simplicial trees. This will be used again in \Cref{ss:asdim0}. We first prove $\PMap(\G)$ is not CB by showing that these mapping class groups admit an unbounded length function. 

\begin{DEF}
    A \textbf{length function} on a topological group $G$ is a continuous function $\ell: G \arr [0, \infty)$ satisfying  \begin{enumerate}[(a)]
        \item $\ell(\Id)=0$,
        \item $\ell(g) = \ell(g\inv)$,
        \item $\ell(gh)\leq \ell(g)+\ell(h)$ for all $g,h\in G$.
    \end{enumerate}
\end{DEF}

\begin{RMK}
\label{rmk:unbddlengthftn}
Having an unbounded length function is in direct contradiction with Rosendal's criterion, \Cref{PROP:RosendalCB}. This is because a length function on $G$ induces a left-invariant pseudo metric $d(g,h):= \ell(g^{-1}h)$ on $G$. Note $G$ continuously acts on $(G,d)$ by left multiplication. Hence if $H \le G$ is unbounded under $\ell$, then the orbit $H \cdot \Id_H=H$ is unbounded, so $H$ is not CB in $G$.
\end{RMK}

In order to define the length function on $\PMap(\G)$ we consider the geometric realization of $\G$ so that each edge is identified with $[0,1]$ and put the path metric on $\G$. 
Consider the connected components $\{T_i\}$ of $\G \setminus \G_c$. Because there exists an accumulation point in $E(\G)\setminus E_\ell(\G) = \bigsqcup E(T_i)$, some $T_i$ has infinite end space. For each such $T_i$ we can define an unbounded length function on $\PMap(\G)$. 

\begin{PROP}
\label{prop:lengthfunction}
Let $\G$ be a locally finite, infinite graph with $\rk(\G)\geq 1$ and $E\setminus E_{\ell}$ having an accumulation point.
Let $T$ be a connected component of $\G \setminus \G_c$ with infinite end space. Then the map $\ell$ defined on $\PMap(\G)$ as:
\begin{align*}
    \ell:&\PMap(\G) \arr \Z_{\ge 0},  \\
    &g \mapsto \min\{k \in \Z_{\ge 0}\ \vert\ g(L)\text{ is properly homotopy equivalent to }L, \text{ $\forall\;$geodesic lines $L\subset T$ with } d(L,\G_{c}) \geq k\}
\end{align*}
is a well-defined unbounded length function on $\PMap(\G)$. Moreover, $\ell$ is an \textit{ultranorm}, namely $\ell(gh) \le \max\{\ell(g),\ell(h)\}$,
for all $g,h \in \PMap(\G)$. 
\end{PROP}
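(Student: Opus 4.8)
The plan is to verify in turn that the minimum defining $\ell$ is attained, that $\ell$ satisfies (a)--(c) and is continuous, that $\ell$ is unbounded, and finally the ultrametric refinement. Throughout I would work with \emph{proper homotopy classes} of lines rather than with honest images, exploiting two facts: any representative of an element of $\PMap(\G)$ is a proper map, so it sends properly homotopic lines to properly homotopic lines; and every element of $\PMap(\G)$ fixes the ends of $\G$, so it preserves the unordered pair of endpoints of each line $L\subset T$. A structural input I would establish first is that $T$ meets $\G_c$ in a \emph{finite} frontier $\partial T$: if infinitely many edges joined $T$ to $\G_c$ they would accumulate to an end $\eta$ whose neighborhoods contain infinitely many circuits (as the $\G_c$-endpoints lie on circuits), forcing $\eta\in E_\ell$, while $\eta$ would simultaneously be an end of the component $T$, contradicting $E(T)\subseteq E\setminus E_{\ell}$. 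Granting this, pick a proper representative $\hat g$ of $g$; since $\partial T$ is compact, $\hat g^{-1}(\partial T)\cap T$ is compact, hence contained in $\{x\in T: d(x,\G_c)\le r\}$ for some $r$. For any line $L\subset T$ with $d(L,\G_c)>r$ we then have $\hat g(L)\cap\partial T=\emptyset$, so $\hat g(L)$ lies in a single component of $\G\setminus\partial T$; as it limits to the ends of $L$, which lie in $E(T)$, it lies in $T$, which is a tree, and hence $\hat g(L)\simeq L$ by the uniqueness clause of \Cref{thm:InfGraphClass}. Thus $r+1$ is admissible and the minimum exists.

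\textbf{The length-function axioms.} Axiom (a) is immediate since $\Id(L)=L$ for all $L$, giving $\ell(\Id)=0$. For (b), suppose $d(L,\G_c)\ge\ell(g)$; then $g(L)\simeq L$, and applying the proper map $g^{-1}$ to a proper homotopy realizing this gives $g^{-1}(L)\simeq g^{-1}(g(L))\simeq L$. Hence every line deep enough is also fixed up to proper homotopy by $g^{-1}$, so $\ell(g^{-1})\le\ell(g)$; the reverse inequality is the same statement applied to $g^{-1}$, giving $\ell(g)=\ell(g^{-1})$. The ultranorm inequality, which contains (c) since $\max\{a,b\}\le a+b$, is the crux of the algebraic part and is where working with classes pays off: let $m=\max\{\ell(g),\ell(h)\}$ and let $L$ satisfy $d(L,\G_c)\ge m$. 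Then $d(L,\G_c)\ge\ell(h)$ gives $h(L)\simeq L$, and since $g$ preserves proper homotopy we get $g(h(L))\simeq g(L)$; finally $d(L,\G_c)\ge\ell(g)$ gives $g(L)\simeq L$. Chaining these, $(gh)(L)=g(h(L))\simeq L$, so $\ell(gh)\le m$. Note that here the actual image $h(L)$ need not be deep in $T$; only $L$ itself must be, which is exactly why the argument goes through.

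\textbf{Continuity.} Since $\ell$ is $\Z_{\ge 0}$-valued, continuity means local constancy, and I would deduce it from the ultranorm together with a single observation at the identity: if $K\supseteq\partial T$ is a finite subgraph, then $\cV_K\subseteq\ell^{-1}(0)$. Indeed, for $v\in\cV_K$ choose a representative $v'$ with $v'|_K=\Id_K$ and $v'(\G\setminus K)\cap K=\emptyset$ preserving each complementary component. Because $K\supseteq\partial T$, any component $C$ of $\G\setminus K$ meeting $T$ is contained in $T$ and hence loop-free; thus for any line $L\subset T$, the portion $L\cap K$ is fixed and each excursion of $L$ into such a $C$ is carried by $v'$ to a path of $C$ with the same endpoints, hence properly homotopic to it in the tree $C$. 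Therefore $v'(L)\simeq L$ for \emph{all} $L\subset T$, i.e. $\ell(v)=0$. Given this, for arbitrary $g$ and $v\in\cV_K$ the ultranorm gives $\ell(gv)\le\max\{\ell(g),0\}=\ell(g)$ and $\ell(g)=\ell((gv)v^{-1})\le\max\{\ell(gv),\ell(v)\}=\ell(gv)$, so $\ell$ is constant on the neighborhood $g\,\cV_K$.

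\textbf{Unboundedness, and the main obstacle.} To see $\ell$ is unbounded I would use word maps (which lie in $\PMap(\G)$, being compactly supported proper homotopy equivalences, cf. \Cref{rmk:RaymapsareWordmaps}). Fix a nontrivial loop $\alpha$ of $\G_c$. Given $k$, use that $E(T)$ has an accumulation point $\xi$ to find two ends of $T$ branching off the ray toward $\xi$ below depth $k$; the geodesic line $L$ between them satisfies $d(L,\G_c)\ge k$. Choose an interval $I\subset L$ inside a single edge and set $g=\wm{\alpha,I}$. Then $g(L)$ follows $L$ except that over $I$ it inserts the word path of $\alpha$, so the algebraic crossing number of $g(L)$ with the edge $\alpha$ is $\pm 1$, whereas for the tree line $L$ it is $0$; since this crossing number is a proper homotopy invariant of lines, $g(L)\not\simeq L$. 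As $d(L,\G_c)\ge k$, this forces $\ell(g)>k$, so $\ell$ is unbounded (and by \Cref{rmk:unbddlengthftn} this already shows $\PMap(\G)$ is not CB). I expect the genuinely delicate step to be the structural input that $\partial T$ is finite, equivalently that one can separate the deep part of $T$ from all loops of $\G$ with a \emph{finite} set: this is what powers both the existence of the minimum and the local-constancy argument, and it is the place where the hypothesis $E(T)\subseteq E\setminus E_{\ell}$ is really used. The remaining care is the standard verification that signed crossing number is invariant under proper homotopy of lines.
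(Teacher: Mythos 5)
Your proposal follows the same overall strategy as the paper's proof --- finiteness of $\ell$ via properness of a representative, unboundedness via word maps $\wm{w,I}$ on intervals deep in $T$ (located using the accumulation point of $E(T)$), and continuity by showing $\cV_K \subset \ell^{-1}(0)$ and that $\ell$ is constant on cosets $g\cV_K$ --- and in two places your implementation is cleaner than the paper's. You prove the ultranorm by the chain $(gh)(L) = g(h(L)) \simeq g(L) \simeq L$ for any $L$ with $d(L,\G_c) \geq \max\{\ell(g),\ell(h)\}$, correctly noting that only $L$, not $h(L)$, needs to be deep; the paper instead invokes the decomposition of a pure mapping class restricted to $T$ into finitely many word maps (cf.\ \Cref{rmk:RaymapsareWordmaps}) and the fact that compositions of word maps on an edge are word maps. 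Likewise, your two-sided estimate $\ell(g) \le \ell(gv) \le \ell(g)$ via the ultranorm makes precise the paper's informal remark that an element of $\cV_K$ ``cannot undo'' any part of $g$. You also supply a justification (signed crossing number with the loop edge, a PL transversality argument) for the step $\wm{w,I}(L) \not\simeq L$, which the paper asserts without proof.

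The one genuine flaw is your justification of the structural lemma that $\partial T = \overline{T}\cap\G_c$ is finite. The claimed implication --- that an end $\eta$ accumulated by points of $\G_c$ must lie in $E_\ell$ --- is false: a core point near $\eta$ lies on an immersed circuit, but that circuit need not be contained in a given neighborhood of $\eta$, so every neighborhood of $\eta$ can still have finite rank. (For instance, attach to a circle a ray and, from each ray vertex, a long arc returning to the circle: the core is the whole graph and accumulates at the ray's end, yet every sufficiently deep neighborhood of that end is a forest once the returning arcs are cut.) Fortunately the fact you need holds for a much simpler reason, and in stronger form: $\partial T$ is a \emph{single} vertex. If $\overline{T}$ met $\G_c$ in two distinct points $x \neq y$, an embedded arc from $x$ to $y$ through $T$ concatenated with an arc in the (connected) core from $y$ to $x$ would be an immersed circuit not contained in $\G_c$, contradicting the definition of the core graph; note this uses nothing about $E(T) \subset E \setminus E_\ell$. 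The paper takes this for granted by working in standard form, where $\overline{T}\cap\G_c = \{v\}$ is immediate, and indeed this is the vertex $v$ appearing in its continuity argument. With that one-line substitution, the rest of your proof goes through as written.
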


\begin{proof}

First, for any properly homotopic $g,g' \in \PHE(\G)$ and geodesic line $L \subset T$, we have $g(L) \simeq L$ if and only if $g'(L) \simeq L$. Hence $\ell$ is well-defined on $\PMap(\G)$. Also, as discussed in \Cref{rmk:RaymapsareWordmaps} any proper homotopy equivalence on $\G$ cannot have infinite support on $T$. This means that for any mapping class $g \in \PMap(\G)$, there is a representative of $g$ that when restricted to $T$ is supported on a finite collection of intervals. In particular, there is some geodesic line $L$ that is fixed by $g$, up to proper homotopy equivalence.

One can immediately see that $\ell(\Id) = 0$ and $\ell(g^{-1})=\ell(g)$. Any pure proper homotopy equivalence on $\G$ can be homotoped so that on $T$ it is a finite collection of disjointly supported word maps.
The composition of two word maps on the same edge is also a word map on that edge, so $\ell(gh)\leq \max\{\ell(g),\ell(h)\}\le \ell(g)+\ell(h)$ for all $g,h \in \PMap(\G)$. Next we show that $\ell:\PMap(\G) \to \Z_{\ge 0}$ is continuous.

Let $v$ be the vertex in $\G_c$ that connects $T$ to $\G_c$; that is, $\overline{T}\cap \G_c=\{v\}$. Let $K$ be any finite, positive rank subgraph of $\G_c$ that contains $v$. Since $K$ disconnects $T$ from the rest of $\G$, any element $u\in\cV_K$ is totally supported in $\G \setminus T$, so $\ell(u)=0$. Now to show that $\ell$ is continuous we will show that for any $n\in \Z_{\geq 0}$ the preimage of $\{n\}$ is open in $\PMap(\G)$.
Let $g\in \ell\inv(\{n\})$. Then $g\cV_K\subset \ell\inv(\{n\})$ because for any $u\in \cV_K$, we have $\ell(gu)=\ell(g)=n$. Here we have equality because $u$ is totally supported on $\G\setminus T$, so cannot ``undo'' any part of $g$ and decrease its length.

Finally, because $T$ has an infinite end space, $E(T)$ contains an accumulation point. Let $I_{i}$ be a sequence of intervals in $T$ converging to this end. Then $\ell(\wm{w,I_{i}}) \rightarrow \infty$ for any nontrivial word $w \in \pi_1(\G) \neq 1$.
\end{proof}

\begin{proof}[Proof of \Cref{thm:oneendnotCB}]
    By \Cref{prop:lengthfunction}, $\PMap(\G)$ admits an unbounded length function $\ell$, and \Cref{rmk:unbddlengthftn} implies that $\PMap(\G)$ is not CB. Now let $A$ be a CB set in $\PMap(\G)$. Then by \Cref{rmk:unbddlengthftn} again, there exists $L$ such that $\ell(a)\le L$ for each $a \in A$. Because $\ell$ is an ultranorm, products of elements in $A$ also have length at most $L$. However, $\PMap(\G)$ has elements of arbitrarily long length so cannot be generated by $A$. This proves $\PMap(\G)$ is not CB-generated. Moreover, as $\ell$ is continuous, a CB set $A$ cannot topologically generate $\PMap(\G)$ either. 
\end{proof}

Now we would like to show that $\PMap(\Gamma)$ acts on a tree.
Define a pseudo-metric $\hat{d}$ on $\PMap(\Gamma)$ as $\hat{d}(g,h):= \ell(g^{-1}h)$. Then $\PMap(\Gamma)$ acts on itself by left multiplication isometrically: for $g,h,k \in \PMap(\Gamma)$,
\[
    \hat{d}(kg,kh) = \ell(g^{-1}k^{-1} \cdot kh) = \ell (g^{-1}h) = \hat{d}(g,h).
\]

Now we consider the quotient space $\cP = \PMap(\Gamma)/{\sim}$, where $g \sim h$ for $g,h \in \PMap(\G)$ iff $\hat{d}(g,h)=0$, namely, identify all the elements in $\PMap(\Gamma)$ that are at distance 0 from each other. Then the pseudo-metric $\hat{d}$ induces a metric $d$ on $\cP$, and $\PMap(\Gamma)$ isometrically acts on $(\cP,d)$ by the left multiplication.

To show $(\cP,d)$ forms the vertex set of some tree, we first claim that $(\cP,d)$ is 0-hyperbolic.
From the ``ultranorm'' nature of $\ell$ as in \Cref{prop:lengthfunction}, we observe that $d$ is an \textit{ultrametric} on $\cP$, i.e., $d(g,h) \le \max\{d(g,k),d(h,k)\}$ for any $g,h,k \in \cP$. Indeed,
\begin{align*}
    d(g,h) &= \ell(g^{-1}h) = \ell(g^{-1}k \cdot k^{-1}h) \le \max\{\ell(g^{-1}k),\ell(k^{-1}h)\} \\
    &= \max\{\ell(g^{-1}k),\ell(h^{-1}k)\}
    = \max\{d(g,k), d(h,k)\}.
\end{align*}

Next we apply the following well known lemma \cite[Exercise 1]{Duchin2017} to see that our metric space is $0$-hyperbolic. As of yet we have not found a written proof of this lemma and so we include a proof in the appendix.

\begin{LEM}[\Cref{lem:ultram0hyp2}]
\label{lem:ultram0hyp}
Every ultrametric space is Gromov 0-hyperbolic.
\end{LEM}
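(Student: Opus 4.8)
The plan is to work with the Gromov product formulation of $0$-hyperbolicity. Recall that for a basepoint $w$ the Gromov product of $x,y$ is $(x|y)_w := \tfrac12\big(d(x,w)+d(y,w)-d(x,y)\big)$, and that a metric space is Gromov $0$-hyperbolic precisely when $(x|z)_w \ge \min\{(x|y)_w,(y|z)_w\}$ for all points $x,y,z$ and every basepoint $w$. I would first record the elementary \emph{isoceles} property of ultrametrics: for any three points, the two largest of their pairwise distances are equal. This is immediate from the strong triangle inequality, since if $d(a,b)$ is a maximal side then $d(a,b)\le\max\{d(a,c),d(c,b)\}$ forces one of the other two sides to equal $d(a,b)$.

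Next I would fix $w$, abbreviate $|u|:=d(u,w)$, and compute the Gromov product explicitly by applying the isoceles property to the triangle $\{u,v,w\}$. The key computation is that $(u|v)_w=\tfrac12\min\{|u|,|v|\}$ whenever $|u|\ne|v|$ (because then the isoceles property forces $d(u,v)=\max\{|u|,|v|\}$), while $(u|v)_w=|u|-\tfrac12 d(u,v)\ge \tfrac12|u|$ when $|u|=|v|$. With these formulas in hand, the hyperbolicity inequality reduces to the symmetric claim that among the three products $(x|y)_w,(y|z)_w,(x|z)_w$ the two smallest always coincide; this symmetric statement implies the desired inequality for every labeling, since then $(x|z)_w$ can never be the strict unique minimum.

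To prove the symmetric claim I would relabel the three points as $a,b,c$ with $|a|\le|b|\le|c|$ and split into cases according to how many of these norms coincide. When $|a|<|b|$, so $|a|$ is strictly smallest, the two products $(a|b)_w$ and $(a|c)_w$ both equal $\tfrac12|a|$ and are the two smallest. When $|a|=|b|<|c|$, the products $(a|c)_w$ and $(b|c)_w$ both equal $\tfrac12|a|$. The remaining case $|a|=|b|=|c|=\rho$ is where the argument genuinely reuses the isoceles property for the triangle $\{a,b,c\}$ not involving $w$: there each product has the form $\rho-\tfrac12(\text{opposite side})$, so the two smallest products correspond to the two largest sides of that triangle, which are equal.

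I expect the main obstacle to be organizational rather than conceptual: carefully setting up the reduction to the ``two smallest products are equal'' formulation and checking that it really is equivalent to the four-point/Gromov inequality for \emph{every} labeling of $x,y,z$, and then not losing track of the sub-cases in the equidistant configuration, where the isoceles property must be invoked a second time. Once the explicit formula for $(u|v)_w$ is established, each case is a one-line comparison, so no genuine estimation is involved.
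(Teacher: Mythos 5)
Your proposal is correct, and it takes a genuinely different route from the paper's argument. The paper first paraphrases $0$-hyperbolicity as a four-point condition (at least one of $d(x,z)+d(y,w)\ge d(x,y)+d(z,w)$ and $d(y,z)+d(x,w)\ge d(x,y)+d(z,w)$ must hold), assumes both inequalities fail, and derives a contradiction through a case analysis that invokes the isosceles property separately in the four triangles $\triangle xyz$, $\triangle yzw$, $\triangle xzw$, $\triangle xyw$, eventually producing the impossible cycle $A_1>B_1>A_2>B_2>A_1$. You instead fix the basepoint $w$, set $|u|:=d(u,w)$, and compute the Gromov products in closed form: $(u|v)_w=\tfrac12\min\{|u|,|v|\}$ when $|u|\ne|v|$ (since the isosceles property forces $d(u,v)=\max\{|u|,|v|\}$), and $(u|v)_w=|u|-\tfrac12 d(u,v)\ge\tfrac12|u|$ when $|u|=|v|$; you then verify directly that among the three products the minimum is always attained at least twice, which is exactly the labeling-free form of the $\delta=0$ Gromov inequality. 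Both proofs rest on the same core fact (ultrametric triangles are isosceles with the two longest sides equal), but your direct computation buys more transparency: it is constructive rather than by contradiction, it actually identifies the value of the smallest products ($\tfrac12$ of the smallest norm, except in the fully equidistant case where the isosceles property is reused for the triangle $\{a,b,c\}$ itself), and it in effect shows that the based Gromov product inherits an ultrametric-type inequality, a stronger structural statement than mere $0$-hyperbolicity. The paper's four-point formulation, by contrast, treats all four points symmetrically and avoids any explicit formula, at the cost of a longer case analysis. Your two reductions are sound as stated: the equivalence between the Gromov inequality for every labeling and the claim that the two smallest products coincide is immediate, and your three cases ($|a|<|b|\le|c|$, $|a|=|b|<|c|$, $|a|=|b|=|c|$) are exhaustive, with the needed comparisons following from the displayed formulas exactly as you indicate.
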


\begin{COR} \label{cor:actionOnTree}
    $\PMap(\Gamma)$ acts on a simplicial tree $\cT$.
\end{COR}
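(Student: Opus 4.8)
The plan is to build $\cT$ explicitly as the \emph{tree of balls} of the integer-valued ultrametric $(\cP,d)$. First I would record two structural facts. Since $\ell$ takes values in $\Z_{\ge 0}$, the induced metric $d$ on $\cP$ is integer-valued, and by the computation preceding this corollary it is an ultrametric. For each $n \in \Z_{\ge 0}$ define $x \sim_n y \iff d(x,y)\le n$; the ultrametric inequality makes each $\sim_n$ an equivalence relation (transitivity is exactly $d(x,z)\le\max\{d(x,y),d(y,z)\}\le n$), whose classes are the closed balls of radius $n$. Because $\sim_n$ refines $\sim_{n+1}$, every radius-$n$ ball is contained in a \emph{unique} radius-$(n+1)$ ball.

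Next I would set the vertex set to be $V(\cT)=\bigsqcup_{n\ge 0}\cP/{\sim_n}$, so that the level-$0$ vertices are precisely the points of $\cP$, and join each $\sim_n$-class $B$ by an edge to the unique $\sim_{n+1}$-class containing it. Thus every vertex $[x]_n$ has a unique neighbor one level up, its parent $[x]_{n+1}$, and hence a strictly increasing ancestral ray $[x]_n,[x]_{n+1},[x]_{n+2},\dots$.

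To see $\cT$ is a simplicial tree, I would argue via these ancestral rays. Given $x,y\in\cP$, finiteness of $m:=d(x,y)$ gives $[x]_n=[y]_n$ for all $n\ge m$ while $[x]_{m-1}\ne[y]_{m-1}$, so the two ancestral rays first coincide at level exactly $m$; the reduced edge-path from $x$ to $y$ runs up one ray to this meeting vertex and back down the other, and it is the unique reduced path because each vertex has a unique ancestral ray. As every vertex is a ball containing some point of $\cP$ and so connects downward to a level-$0$ vertex, $\cT$ is connected, hence a tree. In particular $d_{\cT}(x,y)=2\,d(x,y)$ on level-$0$ vertices, so $\cP$ embeds as the leaf set, realizing the $0$-hyperbolic picture guaranteed by \Cref{lem:ultram0hyp}. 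Finally, since $\PMap(\Gamma)$ acts on $(\cP,d)$ by isometries (left multiplication, as verified above), each $g$ preserves every relation $\sim_n$ and every containment $[x]_n\subseteq[x]_{n+1}$; hence $g$ permutes the vertices of each level and carries edges to edges, giving a simplicial action of $\PMap(\Gamma)$ on $\cT$, as required.

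The step I expect to require the most care is confirming that $\cT$ is genuinely a tree rather than merely a connected graph. Because $d$ is unbounded there is no global root vertex, so the usual collapse-to-the-top argument is unavailable; instead acyclicity and connectivity must be established level by level, using the uniqueness of parents (equivalently, of ancestral rays) together with the finiteness of $d(x,y)$ to force any two rays to merge. Once this is in place the isometric action upgrades to a simplicial action essentially for free.
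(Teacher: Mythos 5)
Your proof is correct, but it takes a genuinely different route from the paper. The paper proves \Cref{cor:actionOnTree} by first invoking \Cref{lem:ultram0hyp} (ultrametric implies Gromov $0$-hyperbolic, proved in the appendix), then applying the ``Connecting the Dots'' lemma to embed $(\cP,d)$ isometrically into an $\R$-tree built from the arcs $[[\Id],x]$ of length $\ell(x)$ glued along Gromov products, and finally observing that all distances in that $\R$-tree are half-integers, so the vertex set is discrete and the tree is simplicial. You instead construct the tree directly as the tree of closed balls of the integer-valued ultrametric: the relations $x \sim_n y \iff d(x,y) \le n$ are equivalence relations precisely by the ultrametric inequality, each ball has a unique parent, and finiteness of $d(x,y)$ forces any two ancestral rays to merge at level exactly $d(x,y)$, which yields connectivity and acyclicity (a cycle would give a minimal-level vertex with two distinct parents). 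This is a standard and fully self-contained construction for ultrametric spaces; it bypasses both the $0$-hyperbolicity lemma and the $\R$-tree machinery, and it makes two facts immediate that the paper proves separately: the points of $\cP$ sit at level $0$ with no children and hence are leaves (the content of \Cref{prop:PformsLeaves}), and the action preserves the height function and each level set. The trade-offs are mild: your embedding $\cP \hookrightarrow \cT$ satisfies $d_{\cT}(x,y) = 2\,d(x,y)$ rather than being isometric, which is harmless since the corollary only asserts an action and all subsequent uses (e.g.\ the asymptotic dimension computation) are coarse; and the paper's Gromov-product construction matches the combinatorial model of the following subsection more literally, with bifurcation vertices realized at Gromov-product heights, whereas your ball tree inserts valence-two vertices along chains where consecutive balls coincide as sets --- again immaterial for the statement. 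One small point worth noting: your parenthetical that unboundedness of $d$ precludes a root is not actually needed; even for a bounded ultrametric your construction produces a tree (with an infinite ascending ray of full balls), so the level-by-level argument works unconditionally.
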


\begin{proof}
We have seen that $\PMap(\Gamma)$ acts on $(\cP,d)$, which is an ultrametric space. By \Cref{lem:ultram0hyp}, it follows that $(\cP,d)$ is 0-hyperbolic. 
Note that every 0-hyperbolic space isometrically embeds into an $\R$-tree and any group action extends to an action on this $\R$-tree via the ``Connecting the Dots'' Lemma (See e.g.\ \cite[Lemma 2.13]{bestvina2002rtrees}). More precisely, to build the $\R$-tree, we start with the collection of based arcs $I_x=[[\Id],x]$ of length $\ell(x)$, and then identify $I_x$ and $I_y$ along $[[\Id],z]$, where $z \in \cP$ has $\ell(z)=(x,y)_{\Id}$, the Gromov product.
Then the distance between the vertices of this $\R$-tree is a half-integer, so the vertex set is discrete. All in all, we obtain a simplicial tree $\cT$.
\end{proof}

\begin{PROP}
    The $\PMap(\G)$-action on $\cP$ is continuous.
\end{PROP}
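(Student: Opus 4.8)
The plan is to exploit that $\cP$ is a \emph{discrete} topological space. Since $\ell$ takes values in $\Z_{\ge 0}$, the induced metric $d$ on $\cP$ is $\Z_{\ge 0}$-valued, so the open ball of radius $\tfrac12$ about any point is a singleton; hence every point of $\cP$ is isolated and $\cP$ carries the discrete topology.

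With $\cP$ discrete, the product decomposes as a clopen partition $\PMap(\G) \times \cP = \bigsqcup_{[h] \in \cP} \bigl(\PMap(\G) \times \{[h]\}\bigr)$, so the action map $\PMap(\G) \times \cP \to \cP$ is continuous if and only if its restriction to each slice is, i.e. if and only if every orbit map $\Phi_{[h]} \colon g \mapsto g \cdot [h] = [gh]$ is continuous. (The map $\Phi_{[h]}$ is well defined because the action is by isometries, as established above.) I would then observe that $\Phi_{[h]}$ factors as $\pi \circ R_h$, where $R_h(g) = gh$ is right multiplication---continuous since $\PMap(\G)$ is a topological group---and $\pi \colon \PMap(\G) \to \cP$ is the quotient projection. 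Thus everything reduces to the continuity of $\pi$.

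To see that $\pi$ is continuous, since $\cP$ is discrete it suffices to check that $\pi^{-1}(\pi(g_0))$ is open for each $g_0 \in \PMap(\G)$. Unwinding the definition of $\sim$, one has $[g] = [g_0]$ precisely when $\ell(g_0^{-1}g) = 0$, so $\pi^{-1}(\pi(g_0)) = g_0 \cdot \ell^{-1}(\{0\})$. The set $\ell^{-1}(\{0\})$ is open by the continuity of $\ell$ (indeed, \Cref{prop:lengthfunction} shows each level set $\ell^{-1}(\{n\})$ is open), and left translation by $g_0$ is a homeomorphism of $\PMap(\G)$, so the translate is open as well. This gives the continuity of $\pi$, and hence of the action.

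There is no real obstacle here beyond recognizing the role of integrality: the sole inputs are the already-established continuity of $\ell$ and the fact that an integer-valued ultranorm forces $\cP$ to be discrete, which collapses the joint-continuity question to the continuity of a single map $\pi$. The only point demanding a little care is the reduction to orbit maps, which is legitimate exactly because the discreteness of $\cP$ makes the slices $\PMap(\G)\times\{[h]\}$ open in the product.
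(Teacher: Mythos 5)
Your proof is correct, but it is organized along a genuinely different (and slightly cleaner) route than the paper's. Both arguments hinge on the discreteness of $(\cP,d)$, which you correctly extract from the integrality of $\ell$. The paper, however, then works directly on the product: given $(f,[h])$ in the preimage of $[g]$ under the action map, it exhibits the explicit neighborhood $g\cV_K h^{-1} \times \{[h]\}$, where $K$ is a compact set separating $T$ from the core graph, and verifies by hand that elements of $\cV_K$ have length zero and hence cannot move $[g]$. You instead use discreteness of $\cP$ to split the product into clopen slices, reduce joint continuity to the orbit maps $g \mapsto [gh]$, factor each orbit map through right multiplication and the quotient projection $\pi$, and then observe that the fiber $\pi^{-1}(\pi(g_0)) = g_0\,\ell^{-1}(\{0\})$ is open because $\ell$ is continuous --- a fact already established in \Cref{prop:lengthfunction}. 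What your approach buys: it recycles the continuity of $\ell$ rather than re-deriving the relevant $\cV_K$ facts, and it identifies the fibers of the orbit maps exactly as translates $g\,\ell^{-1}(\{0\})\,h^{-1}$. This is in fact more careful than the paper's corresponding step, which asserts that $[fh]=[g]$ forces $g^{-1}fh \in \cV_K$; strictly speaking $[fh]=[g]$ only yields $\ell(g^{-1}fh)=0$, and $\ell^{-1}(\{0\})$ may be strictly larger than $\cV_K$ (a mapping class of length zero need not restrict to the identity on $K$). Since your argument uses only the openness of $\ell^{-1}(\{0\})$, it sidesteps this imprecision entirely. The trade-off is that the paper's computation is more concrete and self-contained in the specific neighborhoods $\cV_K$, while yours requires the (easy but necessary) observation that the slices $\PMap(\G)\times\{[h]\}$ are open, which is exactly where discreteness of $\cP$ enters.
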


\begin{proof}
    Let $F:\PMap(\G) \times (\cP,d) \rightarrow (\cP,d)$ be the action. Note that $(\cP,d)$ is discrete, so we will show that $F^{-1}([g])$ is open. Let $K \subset \G$ be compact so that $K$ separates the $T$ used to define our length function from the core graph, $\G_{c}$. Now let $(f,[h])$ be an arbitrary point in $F^{-1}([g])$. Thus we have
    \begin{align*}
        F(f,[h])=[fh]=[g],
    \end{align*}
    showing that $g^{-1}fh \in \cV_K$, so $f \in g\cV_Kh^{-1}$. Note that since both left and right multiplication are continuous, $g\cV_Kh^{-1} \times [h]$ is an open neighborhood of $(f,[h])$ in $\PMap(\G) \times (\cP,d)$. To conclude,
    we now claim that $g\cV_{K}h^{-1}\times[h] \subset F^{-1}([g])$.  Let $a\in \cV_{K}$, then $a$ fixes $T$ and $\ell(a)=0$. Thus, we have 
    \begin{align*}
       F(gah^{-1},h)=[ga]=[g], \qquad \text{since $[a]=[\Id]$,}
    \end{align*}
    proving $g\cV_Kh^{-1} \times [h] \subset F^{-1}([g])$.
    Therefore, $F^{-1}([g])$ is open, and $F$ is continuous.
    
    We note that since the action on $(\cP,d)$ is continuous, as the natural extension, the action on the tree is also continuous.
\end{proof}

\begin{PROP}
    \label{prop:PformsLeaves}
    Each vertex from $\mathcal{P}$ is a leaf in $\mathcal{T}$.
\end{PROP}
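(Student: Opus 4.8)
The plan is to show that every point of $\cP$ has valence one in $\cT$, and the mechanism is that the ultrametric together with the integrality of $\ell$ forces all geodesics emanating from a given point of $\cP$ to share a common initial germ. Recall that in the $\R$-tree underlying $\cT$ the Gromov product $(a,b)_{p}$ based at $p$ equals the length of the common initial segment of the geodesics $[p,a]$ and $[p,b]$; so it suffices to check that this Gromov product is strictly positive for every pair of distinct points $a,b \ne p$ of $\cP$.

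Concretely, I would fix a vertex $p \in \cP$ and take two distinct classes $a,b \in \cP \setminus \{p\}$. Since the metric $d$ on $\cP$ is the descent of $\hat d(g,h)=\ell(g^{-1}h)$ and $\ell$ takes values in $\Z_{\ge 0}$, distinct classes are at distance at least $1$; in particular $d(p,a),d(p,b)\ge 1$. The ultrametric inequality established for $(\cP,d)$ then gives $d(a,b)\le \max\{d(p,a),d(p,b)\} < d(p,a)+d(p,b)$, where the strict inequality uses that the smaller of $d(p,a),d(p,b)$ is positive. Therefore
\[
(a,b)_{p} = \tfrac12\big(d(p,a)+d(p,b)-d(a,b)\big) > 0,
\]
so the geodesics $[p,a]$ and $[p,b]$ in $\cT$ agree on an initial segment of positive length and hence leave $p$ in the same direction. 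As this holds for every pair, all edges of $\cT$ incident to $p$ have the same germ at $p$, so $p$ has valence one; since $\ell$ is unbounded, $\cP$ is infinite and $p$ does have an incident edge, so $p$ is genuinely a leaf. (Alternatively, one could verify this only at the root $[\Id]$, where the computation is transparent because $d([\Id],x)=\ell(x)$, and then invoke the transitive isometric action of $\PMap(\G)$ on $\cP$ by left multiplication—which extends to automorphisms of $\cT$—to move any vertex to $[\Id]$; transitivity forces all vertices of $\cP$ to share the same valence.)

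The step requiring the most care is bookkeeping across the three objects in play: the ultrametric space $\cP$, the auxiliary $\R$-tree produced by the Connecting-the-Dots construction, and the simplicial tree $\cT$. I would confirm that the points of $\cP$, which sit at integer distance from one another (as $\ell$ is integer valued), are actually vertices of $\cT$ and not interior points of edges, and that "valence one in the $\R$-tree" translates to "leaf of $\cT$." Once this identification is pinned down, the Gromov-product computation above is exactly what is needed, and no further analysis of the internal (branching) vertices of $\cT$ is required.
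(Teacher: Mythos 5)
Your proposal is correct and takes essentially the same route as the paper: both rest on the Connecting-the-Dots construction together with the ultrametric/ultranorm bound $(a,b)_{p} \ge \tfrac12\min\{d(p,a),d(p,b)\} > 0$, which forces all geodesics emanating from a vertex of $\cP$ to share an initial germ. The only difference is bookkeeping---you run the Gromov-product computation at an arbitrary basepoint using the ultrametric inequality for $d$, whereas the paper computes only at $[\Id]$ and then invokes transitivity of the left action, a variant you yourself record parenthetically.
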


\begin{proof}
    Note the $\PMap(\G)$-action on $\cP$ is transitive since $\PMap(\G)$ acts on itself transitively.
    In effect, it suffices to show the vertex corresponding to the identity element has valence one, as then the vertices corresponding to the elements in $\cP$ must have valence one by transitivity.
    
    We used the connect the dots lemma to construct $\mathcal{T}$, so the identity element has valence greater than one only if there are non-identity elements $g,h\in \mathcal{P}$ for which $(g,h)_{\Id}=0$. However, our length function is an ultranorm, so
    \[ (g,h)_{\Id}= \frac12 \left(\ell(g) +\ell(h) -\ell(g\inv h) \right) \geq \frac12 \left(\ell(g) +\ell(h) -\max\{\ell(g),\ell(h)\} \right). \]
    Thus, $(g,h)_{\Id} \ge \frac{1}{2} \min\{\ell(g),\ell(h)\}$ and is zero only if one of $g$ and $h$ is in the same equivalence class as the identity in $\mathcal{P}$. 
\end{proof}

\begin{PROP} \label{prop:1-endedTree}
    The tree $\mathcal{T}$ is one-ended. In particular, the action of $\PMap(\G)$ on $\cT$ fixes a point at infinity.
\end{PROP}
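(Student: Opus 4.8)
The plan is to prove the two assertions separately: that $\cT$ has exactly one end, and that any automorphism of $\cT$ must fix it. Since $\cT$ is an $\R$-tree (in fact simplicial), its ends correspond bijectively to the geodesic rays emanating from the basepoint $o := [\Id]$, so it suffices to show there is exactly one such ray escaping to infinity. The single tool I would use throughout is the inequality already extracted in the proof of \Cref{prop:PformsLeaves}: for leaves $x,y \in \cP$ the Gromov product satisfies $(x,y)_{o} = \frac12(\ell(x)+\ell(y)-\ell(x^{-1}y)) \ge \frac12\min\{\ell(x),\ell(y)\}$, which is immediate from the ultranorm bound $\ell(x^{-1}y)\le\max\{\ell(x),\ell(y)\}$. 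Geometrically this says that the branch point $m(o,x,y) = [o,x]\cap[o,y]$ of two leaves of large length must sit far from $o$; this is the crucial point where I use that $\ell$ is an \emph{ultranorm} and not merely a norm.

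First I would produce at least one ray. Since $\ell$ is unbounded by \Cref{prop:lengthfunction}, I choose leaves $x_n\in\cP$ with $\ell(x_n)\to\infty$. For $m,n$ large the segments $[o,x_n]$ and $[o,x_m]$ agree on an initial subsegment of length $(x_n,x_m)_o \ge \frac12\min\{\ell(x_n),\ell(x_m)\}\to\infty$, so for each $k$ the initial length-$k$ subsegment of $[o,x_n]$ stabilizes as $n$ grows; the nested union of these subsegments is a genuine geodesic ray $\rho$ from $o$. I would emphasize that this step cannot be skipped, since an unbounded tree need not have an end at all (e.g.\ a star whose edges have lengths $1,2,3,\dots$), so one must actually exhibit the ray.

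Next I would show there is at most one ray. Suppose toward a contradiction that $\rho_1\neq\rho_2$ are two geodesic rays from $o$ going to infinity, and let $w$ be their last common point, at distance $t_0 := d(o,w)<\infty$, after which they diverge into distinct components of $\cT\setminus\{w\}$. Because $\cT$ is the union of the based arcs $I_x=[o,x]$, every point of $\cT$ lies on some $[o,x]$ with $x\in\cP$; using that each $\rho_i$ passes through points arbitrarily far from $o$, I can select leaves $x_1,x_2$ with $w\in[o,x_i]$, with $x_i$ on the $\rho_i$-side of $w$, and with $\ell(x_1),\ell(x_2)>2t_0$. Then $x_1$ and $x_2$ lie in distinct components of $\cT\setminus\{w\}$, so $m(o,x_1,x_2)=w$ and hence $(x_1,x_2)_o=t_0$; but the key inequality forces $(x_1,x_2)_o \ge \frac12\min\{\ell(x_1),\ell(x_2)\}>t_0$, a contradiction. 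Thus the ray to infinity is unique and $\cT$ is one-ended.

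Finally, $\PMap(\G)$ acts on $\cT$ by isometries, as this action is the extension of the isometric left-multiplication action on $(\cP,d)$ established above, and isometries permute the ends of $\cT$; since there is a unique end, it is fixed, yielding the asserted fixed point at infinity. I expect the main obstacle to be the at-most-one-ray step, whose real content is converting the topological statement of one-endedness into the metric statement that two leaves of large length cannot branch close to $o$ — precisely the place where the strict ultrametric inequality is indispensable.
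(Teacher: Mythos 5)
Your proposal is correct and takes essentially the same route as the paper: both arguments turn on the ultranorm inequality $(x,y)_{\Id} \ge \frac12\min\{\ell(x),\ell(y)\}$ for leaves $x,y \in \cP$, and both reduce arbitrary diverging rays (resp.\ vertex sequences with bounded Gromov product) to leaves by using that every point of $\cT$ lies on an arc $[\Id,x]$ with $x \in \cP$, your branch point $w$ playing the role of the paper's bifurcation vertices $z_i$. One small point in your favor: you explicitly construct a geodesic ray, establishing that an end exists, whereas the paper's contradiction argument only rules out having two or more ends and leaves existence implicit.
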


\begin{proof}
    If $\mathcal{T}$ has more than one end, then we can find sequences of vertices $\{x_i\}$ and $\{y_i\}$ such that \begin{enumerate}[(1)]
        \item $\{d(\Id, x_i)\} \arr \infty$ and $\{d(\Id,y_i)\} \arr \infty$, and
        \item $\{(x_i,y_i)_{\Id}\}$ is bounded.
    \end{enumerate}
    We first deal with the special case where we have sequences $\{g_i\}$ and $\{h_i\}$ corresponding to elements of $\mathcal{P}$ satisfying (1). Our length function is an ultranorm, so we get the following contradiction with (2).
    \begin{align*}
        (g_i,h_i)_{\Id} &= \frac12 \left( \ell(g_i) +\ell(h_i) -\ell(g_i^{-1}h_i) \right) \\ 
        &\geq \frac12 \left( \ell(g_i) +\ell(h_i) -\max \{\ell(g_i),\ell(h_i)\} \right) \\
        &=\frac12 \min\{\ell(g_i),\ell(h_i)\} \arr \infty.
    \end{align*}

Now observe that by construction, any non-leaf vertex $z$ in $\mathcal{T}\setminus \mathcal{P}$ appears as a bifurcation point for some two different geodesics $[\Id,g]$ and $[\Id,h]$, with distinct $g,h\in \mathcal{P}$; namely, $[\Id,g] \cap [\Id,h] = [\Id,z]$. In particular, $z$ has valence at least 3.
Then by the above observation, we can find leaves $g_i,h_i \in \cP$, such that $[\Id,g_i] \cap [\Id,h_i] =[\Id,z_i]=[\Id,x_i] \cap [\Id,y_i]$ for some vertex $z_i$ of $\cT$.
We may choose $g_i,h_i$ such that $[\Id,g_i] \supset [\Id,x_i]$ and $[\Id,h_i] \supset [\Id,y_i]$. Therefore, we have sequence of leaves $g_i,h_i$ such that 
\[
    d(\Id,g_i) \ge d(\Id,x_i) \to \infty, \qquad d(\Id,h_i) \ge d(\Id,y_i) \to \infty
\]
but
$(x_i,y_i)_{\Id} = (g_i,h_i)_{\Id}$ is not bounded by the special case, contradiction.
\end{proof}

\subsection{Combinatorial Model}

In this section we will give a combinatorial description of the tree constructed above for a specific graph $\G$. While we will not use this description to obtain any new results, we hope it helps to give the reader some intuition about what these trees look like.

Let $\Gamma$ be the graph with infinite rank and end space homeomorphic to $\left(\{0,2\}\cup\{\frac{1}{n}: n \in \Z^+\}, \{2\}\right)$. That is, $\Gamma$ is a Loch Ness monster graph with an ``infinite comb'' attached at the first loop as in \Cref{fig:lochnessandcomb}.

\begin{figure}[h]
	    \centering
	    \def\svgwidth{.8\textwidth}
\begingroup%
  \makeatletter%
  \providecommand\color[2][]{%
    \errmessage{(Inkscape) Color is used for the text in Inkscape, but the package 'color.sty' is not loaded}%
    \renewcommand\color[2][]{}%
  }%
  \providecommand\transparent[1]{%
    \errmessage{(Inkscape) Transparency is used (non-zero) for the text in Inkscape, but the package 'transparent.sty' is not loaded}%
    \renewcommand\transparent[1]{}%
  }%
  \providecommand\rotatebox[2]{#2}%
  \newcommand*\fsize{\dimexpr\f@size pt\relax}%
  \newcommand*\lineheight[1]{\fontsize{\fsize}{#1\fsize}\selectfont}%
  \ifx\svgwidth\undefined%
    \setlength{\unitlength}{468.47622898bp}%
    \ifx\svgscale\undefined%
      \relax%
    \else%
      \setlength{\unitlength}{\unitlength * \real{\svgscale}}%
    \fi%
  \else%
    \setlength{\unitlength}{\svgwidth}%
  \fi%
  \global\let\svgwidth\undefined%
  \global\let\svgscale\undefined%
  \makeatother%
  \begin{picture}(1,0.10966108)%
    \lineheight{1}%
    \setlength\tabcolsep{0pt}%
    \put(0,0){\includegraphics[width=\unitlength,page=1]{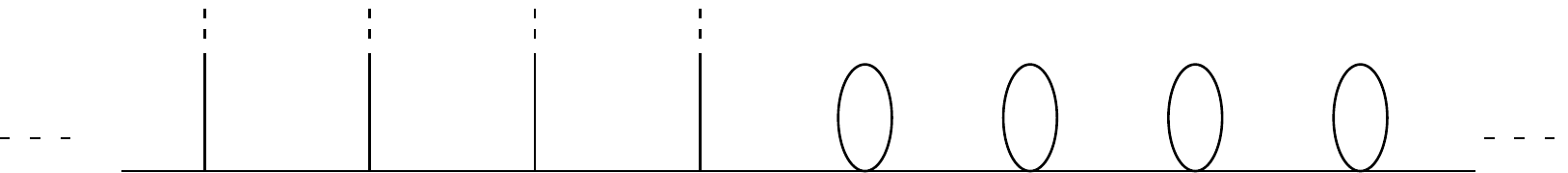}}%
    \put(0.40032422,0.06346862){\color[rgb]{0,0,0}\makebox(0,0)[lt]{\lineheight{1.25}\smash{\begin{tabular}[t]{l}$R_1$\end{tabular}}}}%
    \put(0.29463125,0.06346862){\color[rgb]{0,0,0}\makebox(0,0)[lt]{\lineheight{1.25}\smash{\begin{tabular}[t]{l}$R_2$\end{tabular}}}}%
    \put(0.18893823,0.06346862){\color[rgb]{0,0,0}\makebox(0,0)[lt]{\lineheight{1.25}\smash{\begin{tabular}[t]{l}$R_3$\end{tabular}}}}%
    \put(0.08324519,0.06346862){\color[rgb]{0,0,0}\makebox(0,0)[lt]{\lineheight{1.25}\smash{\begin{tabular}[t]{l}$R_4$\end{tabular}}}}%
  \end{picture}%
\endgroup%

		    \caption{The Loch Ness Monster graph with an ``infinite comb" attached.} 
	    \label{fig:lochnessandcomb}
\end{figure}

We define $\mathcal{T}$ to be the graph with the following vertex and edge sets.
\begin{itemize}
    \item Vertex Set: Pairs of the form $(n,f)$ where $n \in \Z^+$ and $f:\Z^+ \cap [n,\infty) \rightarrow F_{\infty}$ is a ``prescribing'' function which maps only finitely many points to non-identity elements. 
    \item Edge Set: Two vertices, $(n,f)$ and $(m,g)$, are joined by an edge if and only if $|m-n| = 1$ and $f$ and $g$ agree on the overlap of their domains. 
\end{itemize}

The intuition behind these definitions is that the vertices are similar to ``cylinder sets''. That is, there is a natural height function on the vertices (given by the $\Z^+$ coordinate) and vertices at height $1$ are exactly prescribing a word map on each of the rays on the comb, vertices at height 2 are prescribing word maps starting on the second ray while allowing for anything on the first ray, and vertices at height $n$ are prescribing word maps starting on the $n$th ray. 

Observe that $\mathcal{T}$ is a connected tree. For connectedness note that given any vertex $(n,f)$ there is a path from $(n,f)$ to some $(m,\Id)$ with $m>n$ since $f$ has finite ``support'' and all of the $(m,\Id)$ lie on a connected ray. For treeness we note that the descending link of any vertex is a single edge. See \Cref{fig:combmodel} for a schematic picture of the tree. After endowing the edges with appropriate lengths, this $\cT$ is \emph{isometric} to the tree $\cT$ constructed in \Cref{cor:actionOnTree}.
To achieve such an isometry, one defines a map from the height 1 vertices to $(\cP,d)$ and verifies that the Gromov products on each space are equivalent. 

\begin{figure}[ht!]
	    \centering
	    \def\svgwidth{.8\textwidth}
		    \import{pics/}{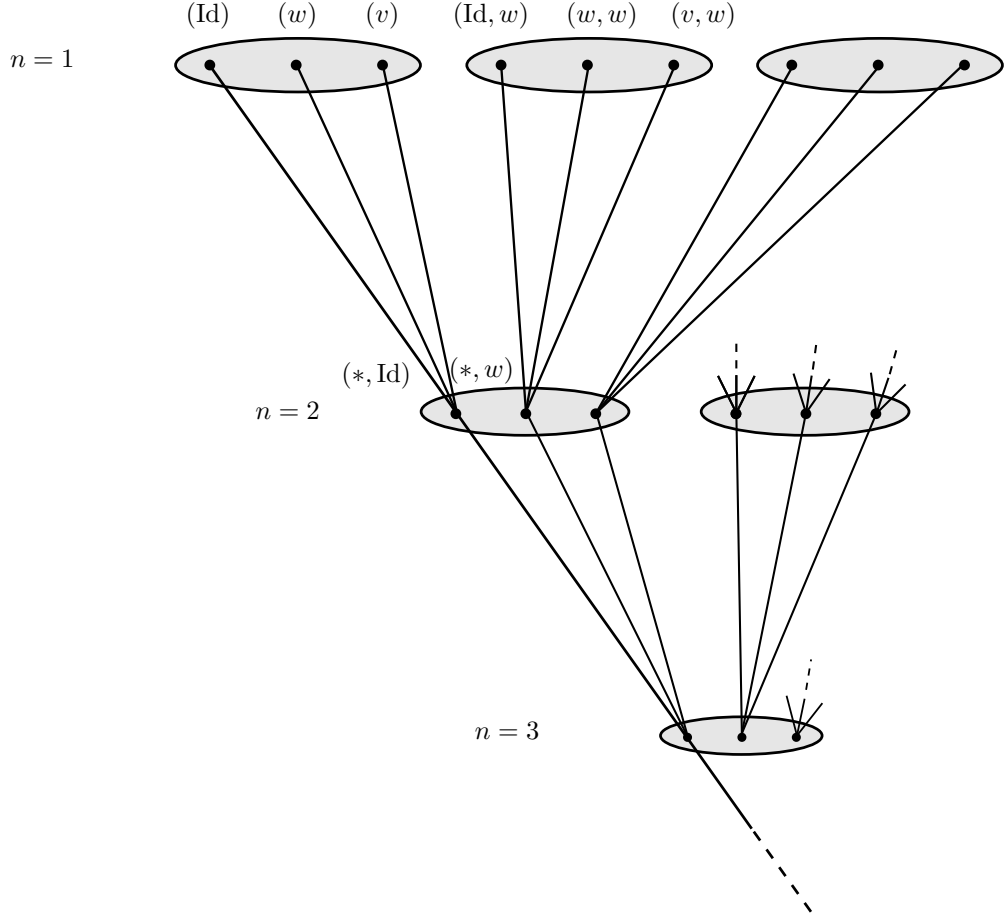}
		    \caption{A portion of the simplicial tree we get from $\G$. Each cone above a vertex is indexed by $F_{\infty}$. The vertex labels only show the support of the prescribing functions.} 
	    \label{fig:combmodel}
\end{figure}

We next describe the action of $\PMap(\G)$ on $\mathcal{T}$. First given any $\phi \in \PMap(\G)$ we can write $\phi = \wm{w_{1},I_{1}}\circ \cdots \circ \wm{w_{m},I_{m}} \circ \phi_{c}$ for some $m$ where $\phi_{c}$ is the restriction of $\phi$ to the core graph of $\Gamma$, $I_{i}$ is an interval in the ray $R_{i}$, and $w_{i} \in F_{\infty}$. Note that in this notation we allow some of the $w_{i}$ to be the identity element. Set $w_i = \Id$ for $i>m$. Now given any function $f: \Z^+ \cap [n,\infty) \to F_\infty$ we define
\begin{align*}
    (\phi \cdot f)(i) = w_{i}\cdot\phi_{c*}(f(i)),
\end{align*}
for all $i \in \Z_{\ge n}$. Now we define the action of $\PMap(\G)$ on the vertex set of $\mathcal{T}$ by $\phi \cdot (n,f) = (n,\phi \cdot f)$. In order to verify that this defines a group action we need the following 1-ended and possibly non-compactly supported version of Lemma \ref{LEM:conjugatetongue}.

\begin{LEM}[General Conjugation Rule] \label{LEM:conjtongueextension}
    Suppose $\G$ is a locally finite, infinite graph with $|E_{\ell}(\G)| = 1$. Let $\wm{w,I} \in \PMap(\G)$ be a word map supported outside of the core graph, $\G_{c}$, and $\psi \in \PMap(\G)$ be (not necessarily compactly supported and) totally supported on $\G_{c}$. Then 
    \begin{align*}
        \psi \circ \wm{w,I} \circ \psi^{-1} = \wm{\psi_{*}(w),I}.
    \end{align*}
\end{LEM}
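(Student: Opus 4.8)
The plan is to bootstrap from the compactly supported Conjugation Rule (\Cref{LEM:conjugatetongue}) by a continuity-and-density argument. Let $H \le \PMap(\G)$ be the subgroup of classes totally supported on $\G_c$, and let $H_c \le H$ be those that are moreover compactly supported. Consider the two maps $F_1, F_2 \colon H \to \Map(\G)$ given by $F_1(\psi) = \psi \circ \wm{w,I} \circ \psi^{-1}$ and $F_2(\psi) = \wm{\psi_*(w),I}$; the goal is $F_1 = F_2$ on $H$. Because $I$ lies outside $\G_c$ and every $\psi \in H$ is the identity off $\G_c$, each such $\psi$ fixes the far endpoint $0$ of $I$, so I may base all fundamental groups at $x_0 = 0$ and regard $\psi \mapsto \psi_*(w) \in \pi_1(\G,x_0)$ as genuinely defined. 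By \Cref{LEM:conjugatetongue} the maps $F_1$ and $F_2$ already agree on $H_c$. Since $\Map(\G)$ is Polish, hence Hausdorff, it therefore suffices to prove (i) that $F_1$ and $F_2$ are continuous on $H$, and (ii) that $H_c$ is dense in $H$; then $F_1$ and $F_2$ agree on $\overline{H_c} \supseteq H$.

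For (i), continuity of $F_1$ is immediate, since inversion, composition, and left/right translation by the fixed class $\wm{w,I}$ are continuous in the topological group $\Map(\G)$. For $F_2$, I would factor it as $\psi \mapsto \psi_*(w) \mapsto \wm{\psi_*(w),I}$. The first map is locally constant: choose a finite subgraph $K \subset \G$ containing $x_0$ together with the word path of $w$ (in particular all loops appearing in $w$). Any $u \in \cV_K$ fixes $K$ pointwise, so $u_*(w) = w$ and hence $(\psi u)_*(w) = \psi_*(u_*(w)) = \psi_*(w)$; thus $\psi \mapsto \psi_*(w)$ is constant on $\psi\cV_K$ and so continuous into the discrete set $\pi_1(\G,x_0)$. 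The second map is automatically continuous because its domain is discrete, so $F_2$ is continuous.

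For (ii), I would use that restriction to $\G_c$ and extension by the identity identify $H$ topologically with $\PMap(\G_c)$, where $\G_c$ has a single end accumulated by loops. Applying \Cref{COR:closurecompactsup} to $\G_c$ shows that compactly supported classes are dense in $\PMap(\G_c)$, and transporting this density along the identification shows $H_c$ is dense in $H$. Concretely, given $\psi \in H$ and a basic neighborhood $\psi\cV_K$ with $K$ compact, the folding approximation underlying \Cref{COR:closurecompactsup} produces a class $\psi'$ supported on a compact subgraph of $\G_c$ with $\psi' \in \psi\cV_K$; the constraints imposed by $\cV_K$ on the components of $\G \setminus \G_c$ are vacuous, since $\psi$ and $\psi'$ are both the identity there.

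The main obstacle is step (ii): one must verify that ``totally supported on $\G_c$'' really does embed topologically as $\PMap(\G_c)$ and that the approximants of \Cref{COR:closurecompactsup} can be chosen with support inside $\G_c$, rather than spilling onto the rays or trees of $\G \setminus \G_c$; this is exactly where the hypothesis $|E_\ell(\G)| = 1$, which pins $\G_c$ to a single end accumulated by loops, is used. By contrast, the continuity of $F_2$ is the conceptual heart of why the possibly non-compact support of $\psi$ causes no difficulty: $\psi_*(w)$ is a single fixed word involving only finitely many loops, hence insensitive to how $\psi$ behaves far out in $\G_c$, and computing this finite piece is precisely what \Cref{LEM:conjugatetongue} already accomplishes.
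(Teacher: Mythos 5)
Your proposal is correct and is essentially the paper's argument recast in topological language: the paper approximates $\psi$ by a sequence $\{\psi_n\}$ totally supported on compact subgraphs of $\G_c$ (via \Cref{COR:closurecompactsup}), applies \Cref{LEM:conjugatetongue} to each, notes that $(\psi_n)_*(w)$ is eventually constant equal to $\psi_*(w)$ --- which is exactly your local constancy of $\psi \mapsto \psi_*(w)$ --- and takes limits using continuity of multiplication, which is your continuity of $F_1$ combined with density of $H_c$. Your explicit handling of the point that the approximants can be taken with support inside $\G_c$ (via the identification of $H$ with $\PMap(\G_c)$) is a detail the paper passes over silently, and is a welcome addition rather than a divergence.
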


\begin{proof}
    As $|E_\ell(\G)|=1$, by Corollary \ref{COR:closurecompactsup} we can approximate $\psi$ by a sequence $\{\psi_{n}\}$ with $\lim_{n\rightarrow \infty} \psi_{n} = \psi$ and each of the $\psi_{n}$ being totally supported on a compact subgraph of $\G_{c}$. Thus we can apply Lemma \ref{LEM:conjugatetongue} to each of these $\psi_{n}$ so that 
    \begin{align*}
        \psi_{n} \circ \wm{w,I} \circ \psi_{n}^{-1} = \wm{(\psi_{n})_{*}(w),I}
    \end{align*}
    for each $n$. Now note that the sequence $(\psi_{n})_{*}(w)$ must be eventually constant so that for large enough $n$, $(\psi_{n})_{*}(w) = \psi_{*}(w)$. So, we have 
    \begin{align*}
        \psi_{n} \circ \wm{w,I} \circ \psi_{n}^{-1} = \wm{\psi_{*}(w),I}
    \end{align*}
    for large enough $n$. Finally, taking limits on both sides and using the continuity of group multiplication gives the desired result. 
\end{proof}

This lemma is exactly what allows us to verify that the action defined above is actually a group action on the vertex set. Indeed, given any two $\phi,\psi \in \PMap(\G)$ we can write for large enough $n$,
\begin{align*}
    \phi &= \wm{w_{1},I_{1}} \circ\cdots \circ  \wm{w_{n},I_{n}} \circ \phi_{c} \\
    \psi &= \wm{w'_{1},I_{1}} \circ\cdots \circ  \wm{w'_{n},I_{n}} \circ \psi_{c}
\end{align*}
again with some $w_{i}$ and $w'_{i}$ potentially the identity. Then by \Cref{LEM:conjtongueextension} we have
\begin{align*}
    \psi \circ \phi &= \wm{w'_{1},I_{1}} \circ\cdots \circ  \wm{w'_{n},I_{n}} \circ \wm{\psi_{c*}(w_{1}),I_{1}} \circ\cdots \circ  \wm{\psi_{c*}(w_{n}),I_{n}} \circ \psi_{c} \circ \phi_{c} \\
    &= \wm{w_1'\psi_{c*}(w_1),I_1} \circ \ldots \circ \wm{w_n'\psi_{c*}(w_n),I_n} \circ (\psi_c \circ \phi_c),
\end{align*}
where the second equality is from Composition rule (\Cref{LEM:compositionTongues}).

Now we can verify that this gives a group action on $V(\mathcal{T})$:
\begin{align*}
    (\psi \cdot (\phi \cdot f))(i) &= w'_{i}\cdot\psi_{c*}(w_{i} \cdot \phi_{c*}(f(i))) \\
    &= w_{i}'\psi_{c*}(w_{i}) \cdot (\psi_{c}\circ \phi_{c})_{*}(f(i)) \\
    &= ((\psi \circ \phi) \cdot f)(i).
\end{align*}

Similarly, one can also check that this action preserves adjacency in $\mathcal{T}$ and so acts by graph isomorphisms. 

Now we prove that $\PMap(\G)$ acts on $\cT$ parabolically, where the fixed point is the unique end point at $\infty$ in \Cref{prop:1-endedTree}.

\begin{PROP}\label{prop:parabolic}
    $\PMap(\G)$ acts transitively on the level sets of $\mathcal{T}$. 
\end{PROP}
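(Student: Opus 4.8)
The plan is to exhibit, for any two vertices $(n,f)$ and $(n,g)$ lying in the same level set (i.e. with the same height coordinate $n$), an element $\phi \in \PMap(\G)$ with $\phi\cdot(n,f) = (n,g)$. First I would record that the action defined above never changes the height coordinate, since $\phi\cdot(n,f) = (n,\phi\cdot f)$; thus each level set $\{(n,f) : f\colon \Z^+\cap[n,\infty)\to F_\infty \text{ finitely supported}\}$ is genuinely preserved by the action, and transitivity becomes a statement internal to a fixed height $n$.

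The construction is direct: I would take $\phi$ to have trivial core part, $\phi_c=\Id$, so that $\phi_{c*}$ is the identity on $F_\infty$, and I would choose the ray-words to absorb the difference between $f$ and $g$. Concretely, for each $i\geq n$ set $w_i := g(i)\,f(i)^{-1}\in F_\infty$. Because $f$ and $g$ are finitely supported, $w_i=\Id$ for all but finitely many $i$; say $w_i=\Id$ for $i>M$. I would then let $\phi := \varphi_{(w_n,I_n)\sqcup\cdots\sqcup(w_M,I_M)}$ be the corresponding finite multi-word map, with each $I_i$ an interval in the tooth $R_i$ as in the definition of the action. Evaluating the action gives
\[
    (\phi\cdot f)(i) = w_i\,\phi_{c*}(f(i)) = g(i)\,f(i)^{-1}\,f(i) = g(i)
\]
for every $i\geq n$, so $\phi\cdot(n,f)=(n,g)$, as desired.

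The only points needing care---and these are routine rather than genuine obstacles---are verifying that $\phi$ really is an element of $\PMap(\G)$ and that the product defining it is legitimate. Since the teeth $R_i$ are pairwise disjoint away from the spine, the word maps $\wm{w_i,I_i}$ have disjoint supports and assemble into a single multi-word map; as only finitely many $w_i$ are nontrivial, this is compactly supported, hence a proper homotopy equivalence fixing every end, so $\phi\in\PMapc(\G)\subseteq\PMap(\G)$. Thus the main content is simply the observation that the ray coordinates of the action can be prescribed arbitrarily (up to finite support) by compactly supported elements, which is exactly the finite-support hypothesis built into the vertex set of $\mathcal{T}$. I do not expect any serious difficulty here: once the explicit form of the action is in hand, transitivity on each level set is essentially immediate.
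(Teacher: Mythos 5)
Your proof is correct and is essentially the paper's argument: the paper moves the basepoint $(n,\Id)$ to an arbitrary $(n,f)$ via $\phi=\prod_{i}\wm{f(i),I_i}$ (which, with trivial core part, is exactly your construction specialized to $f=\Id$, $g=f$), relying on the same finite-support observation to get a well-defined compactly supported element of $\PMap(\G)$. Your direct two-vertex version with $w_i=g(i)f(i)^{-1}$ is a cosmetic variant, since transitivity through a basepoint already follows from the group action axioms.
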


\begin{proof}
    We check that given any $(n,f)$ there exists some $\phi \in \PMap(\G)$ such that $\phi \cdot (n,\Id) = (n,f)$. Indeed, taking $\phi = \prod_{i=1}^{\infty}\wm{f(i),I_{i}}$ will work. Note that the fact that $f$ has finite support guarantees that this is a well-defined mapping class as all but finitely many of the elements in the product will be identity word maps.
\end{proof}

\section{Flux Maps: Graphs with $|E_{\ell}| \ge 2$}\label{sec:TwoEnds} 

Let $\Gamma$ be an infinite, locally finite graph with at least two ends accumulated by loops. We show every such graph has non-CB pure mapping class groups, inspired by the work of Aramayona-Patel-Vlamis \cite{APV2020} in the infinite-type surface case. Similar maps also appeared in Durham-Fanoni-Vlamis \cite{DFV2018} where they were used to show that the mapping class group of the two-ended ladder surface is not CB. We first need some background on free factors.

\subsection{Free Factors}
\label{ss:FreeFactors}

\begin{DEF}
    Let $G$ be a group. Then $A<G$ is a \textbf{free factor} of $G$ if there exists some $P<G$ such that $G = A * P$. 
\end{DEF}

\begin{LEM} \label{LEM:subfreefact}
    Let $C$ be a free group and $A<B<C$ with $A$ a free factor of $C$. Then $A$ is also a free factor of $B$.  
\end{LEM}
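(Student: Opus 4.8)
The plan is to deduce this directly from the Kurosh subgroup theorem, which is the right algebraic tool since every group in sight is free and free factors are by definition free-product decompositions. Since $A$ is a free factor of $C$, I first write $C = A * P$ for some subgroup $P \leq C$ witnessing this. The whole statement is trivial if $A = 1$ (take the complement to be all of $B$) or if $A = C$ (then $B = A$), so I may assume $A$ is a nontrivial proper free factor.

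Now $B$ is a subgroup of the free product $A * P$, so the Kurosh subgroup theorem expresses $B$ as a free product
\[
B \;=\; F \,\ast\, \Big(\ast_{\alpha}\, (B \cap g_\alpha A g_\alpha^{-1})\Big) \,\ast\, \Big(\ast_{\beta}\, (B \cap h_\beta P h_\beta^{-1})\Big),
\]
where $F$ is free and the elements $g_\alpha$ (resp. $h_\beta$) range over representatives of those double cosets in $B \backslash C / A$ (resp. $B \backslash C / P$) for which the corresponding intersection is nontrivial, one representative per double coset. The key step is then to locate $A$ itself among these factors: because $A \leq B$, the identity $1 \in C$ represents the double coset $B \cdot 1 \cdot A = BA = B$, and the factor attached to it is exactly $B \cap 1 \cdot A \cdot 1 = B \cap A = A$, which is nontrivial. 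Hence $A$ occurs as the Kurosh factor $B \cap g_\alpha A g_\alpha^{-1}$ corresponding to $g_\alpha = 1$. Collapsing all the remaining factors into a single subgroup $Q \leq B$, I obtain $B = A \ast Q$, which is precisely the claim that $A$ is a free factor of $B$.

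The one point that requires care — and what I regard as the crux — is verifying that $A$ appears as a single, \emph{undivided} Kurosh factor rather than being spread across several of them or merged into the free part $F$. This is exactly what the hypothesis $A \leq B$ buys: it forces $B \cap A = A$, so the lone double coset of the identity already recovers all of $A$ intact. Without the containment one would only see the various conjugate intersections $B \cap gAg^{-1}$, none of which need equal $A$, and the conclusion would fail.

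As a cross-check and alternative viewpoint, the decomposition $C = A * P$ furnishes a retraction $r \colon C \to A$ (the identity on $A$, trivial on $P$), and since $A \leq B$ its restriction $r|_B \colon B \to A$ is again a retraction, exhibiting $B$ as a semidirect product $\ker(r|_B) \rtimes A$. I would \emph{not} build the proof on this, however: upgrading a retraction to a free-factor complement is not formal, since $\ker(r|_B)$ is the normal closure of the desired complementary factor rather than the factor itself. The Kurosh route is preferable precisely because it produces the factor $A$ — and a genuine free complement $Q$ — directly.
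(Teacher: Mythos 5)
Your proof is correct, but it takes a genuinely different route from the paper. The paper's argument is topological: it realizes the free factor inclusion $A < C$ as a pair of graphs $\Delta \subset \G$, passes to the cover $\G_B \to \G$ corresponding to $B$, and uses $A < B$ to lift the inclusion $\Delta \hookrightarrow \G$ to an embedding $\Delta \hookrightarrow \G_B$; the embedded copy of $\Delta$ then exhibits $A$ as a free factor of $B$ (implicitly via the standard fact that the fundamental group of a connected subgraph is a free factor, by extending a spanning tree). Your argument is purely algebraic via the Kurosh subgroup theorem, and the crux you identify is exactly right: since $A \le B$, the double coset $BA = B$ contains the identity, and choosing $g = 1$ as its representative makes $B \cap A = A$ appear as a single undivided Kurosh factor, whence $B = A * Q$. (Even if one's preferred statement of Kurosh does not allow choosing representatives freely, the factor attached to the double coset $BA$ is $B \cap gAg^{-1} = bAb^{-1}$ for some $b \in B$, and a $B$-conjugate of a free factor of $B$ is again a free factor, so the argument is robust.) It is worth noting that the paper itself later proves the more general \Cref{lem:freefactorint} --- that $K \cap A$ is a free factor of $K$ for \emph{any} subgroup $K \le C$ --- by precisely your Kurosh argument, and that lemma subsumes \Cref{LEM:subfreefact} by taking $K = B$; so your approach would let the two lemmas be unified, at the cost of invoking Kurosh where the paper's covering-space proof stays within the elementary graph-theoretic toolkit the rest of the paper uses. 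Your side remark about the retraction $r\colon C \to A$ is also well judged: a retraction alone only yields $\ker(r|_B) \rtimes A$, and declining to build on it avoids a genuine trap.
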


\begin{proof}
Because $C$ contains $A$ as a free factor, we can realize $A$ and $C$ as a pair of graphs $\Delta \subset \Gamma$, where $\pi_1(\G,p) \cong C$ for some $p \in \Delta$, and the isomorphism restricts to $\pi_1(\Delta,p) \cong A$.
Consider the cover corresponding to the subgroup $B$, denoted $\rho: (\G_B,\tilde{p}) \to (\G,p)$. Let $i:\Delta \arr \G$ denote the inclusion map, we have that \[ i_*(\pi_1(\Delta,p))=A<B=\rho_*(\pi_1(\G_B,\tilde{p})),\] so the inclusion lifts to $\tilde{i}: (\Delta,p) \arr (\G_B,\tilde{p})$. As $\rho \circ \tilde{i}=i$ and $i$ is injective, $\tilde{i}$ is also injective. Similarly, $\tilde{i}_*:\pi_1(\Delta,p) \to \pi_1(\G_B,\tilde{p})$ is injective so it follows that $\pi_1(\tilde{i}(\Delta),\tilde{p}) = \tilde{i}_*(\pi_1(\Delta,p)).$ Therefore, $\G_B$ contains $\tilde{i}(\Delta)$, which is a homeomorphic copy of $\Delta$, and the isomorphism $\rho_*: \pi_1(\G_B,\tilde{p}) \cong B$ restricts to the isomorphism $\rho_*: \pi_1(\tilde{i}(\Delta),\tilde{p})=\tilde{i}_*(\pi_1(\Delta,p)) \cong A$. We conclude $A$ is a free factor of $B$.
\end{proof}

\begin{DEF}
    Let $B$ be a free group and $A < B$ a free factor. Define the \textbf{corank} of $A$ in $B$, $\cork(B,A)$, to be 
    \begin{align*}
        \cork(B,A) \defeq \rk(B/\la\la A \ra\ra),
    \end{align*}
    where $\la\la A\ra\ra$ is the normal closure of $A$ in $B$.
    Equivalently, if we write $B = A * P$, then $\cork(B,A) = \rk(P)$.
\end{DEF}

\begin{LEM} \label{LEM:corkadditive}
    The function $\cork$ is additive. I.e., if $A<B<C$ with $A$ and $B$ both free factors of a free group $C$ then 
    \begin{align*}
        \cork(C,A) = \cork(C,B) + \cork (B,A).
    \end{align*}
\end{LEM}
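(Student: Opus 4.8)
The plan is to reduce the identity to the associativity of the free product together with the additivity of rank over free products of free groups. First I would check that all three coranks in the statement are well-defined: $A$ and $B$ are free factors of $C$ by hypothesis, and $A$ is a free factor of $B$ by \Cref{LEM:subfreefact} applied to $A < B < C$. Hence each of $\cork(C,A)$, $\cork(C,B)$, and $\cork(B,A)$ makes sense.

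Next I would fix compatible free product decompositions. Using the equivalent characterization of corank from the definition, write $C = B * Q$ with $\rk(Q) = \cork(C,B)$ and $B = A * P$ with $\rk(P) = \cork(B,A)$. Substituting the second decomposition into the first and using associativity of the free product gives
\[
C = (A * P) * Q = A * (P * Q),
\]
so that $P * Q$ serves as a complement to $A$ in $C$.

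Then I would compute $\cork(C,A)$ from this decomposition. Since $C = A * (P * Q)$, the normal closure $\la\la A \ra\ra$ is precisely the kernel of the retraction $C \to P * Q$ that kills $A$, so $C/\la\la A \ra\ra \cong P * Q$ and thus $\cork(C,A) = \rk(P * Q)$. Because $P$ and $Q$ are free, passing to abelianizations shows $\rk(P * Q) = \rk(P) + \rk(Q)$, and therefore
\[
\cork(C,A) = \rk(P) + \rk(Q) = \cork(B,A) + \cork(C,B).
\]

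The computation is essentially formal once the pieces are assembled; the one point I would treat with care — and the main potential obstacle — is justifying that the two chosen complements combine into a single complement for $A$ in $C$, i.e.\ that the free-factor decompositions are genuinely compatible under substitution. This rests on the associativity of the free product together with the fact that the intrinsic definition $\cork(B,A) = \rk(B/\la\la A \ra\ra)$ agrees with the rank of any complement, which is exactly the equivalent characterization recorded in the definition of corank.
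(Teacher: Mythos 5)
Your proposal is correct and follows essentially the same route as the paper: well-definedness via \Cref{LEM:subfreefact}, the decompositions $C = B * Q$ and $B = A * P$, associativity giving $C = A * (P * Q)$, and then rank additivity of the free product. The only cosmetic difference is in the last step, where the paper invokes Grushko's theorem (handling the infinite-rank case separately) while you pass to abelianizations — both are valid, and in fact since $P$ and $Q$ are free the additivity already follows from taking the disjoint union of bases.
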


\begin{proof}
    Firstly, note that $A$ is a free factor of $B$ by \Cref{LEM:subfreefact} so $\cork(B,A)$ is well-defined.
    The equality follows from the fact that the free product operation on groups is associative. Indeed, we have some $P_{B},P_{A}$ such that 
    \begin{align*}
        C &= B * P_{B}, \text{ and} \\
        B &= A * P_{A}.
    \end{align*}
    Thus, 
    \begin{align*}
        C = (A*P_{A})*P_{B} = A*(P_{A}*P_{B}).
    \end{align*}
    This implies that $\cork(C,A) = \rk(P_{A}*P_{B})$. If either $P_{A}$ or $P_{B}$ has infinite rank then so does $P_{A}*P_{B}$. If $P_{A}$ and $P_{B}$ both have finite rank we can apply Grushko's Theorem \cite{Grushko1940} to see that 
    \[
        \rk(P_{A}*P_{B}) = \rk(P_{A}) + \rk(P_{B}) = \cork(B,A)+\cork(C,B). \qedhere
    \]
\end{proof}

\subsection{Constructing Flux Maps}
\label{ssec:fluxMap}

\begin{THM}\label{thm:twoendnotCB}
    Let $\Gamma$ be a graph with at least two ends accumulated by loops. Then $\PMap(\Gamma)$ is not CB.
\end{THM}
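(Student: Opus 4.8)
The plan is to produce a nontrivial \emph{continuous} homomorphism (a \emph{flux map}) $\Phi\colon \PMap(\Gamma)\to\Z$; once this is in hand, non-CB-ness is immediate. Indeed, pulling back the absolute value on $\Z$ gives a continuous length function $\ell(\phi)=|\Phi(\phi)|$ on $\PMap(\Gamma)$, and this is unbounded as soon as $\Phi$ is nontrivial, since the image is then an infinite subgroup of $\Z$. By \Cref{rmk:unbddlengthftn} an unbounded continuous length function contradicts Rosendal's criterion (\Cref{PROP:RosendalCB}), so $\PMap(\Gamma)$ cannot be CB. Equivalently, composing $\Phi$ with the translation action of $\Z$ on itself yields a continuous $\PMap(\Gamma)$-action with an infinite-diameter orbit, which is forbidden by condition (2) of \Cref{PROP:RosendalCB} for a CB group.

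To build $\Phi$, I would fix two distinct ends $e_-,e_+\in E_\ell(\Gamma)$ and choose a separating edge (or finite separating tree-cut) of $\Gamma$ dividing the ends so that $e_+$ lies in a subgraph $\Gamma_+$ of infinite rank while $e_-$ lies on the other side. Cutting along a bridge realizes $\pi_1(\Gamma)=C$ as a free product, so $B:=\pi_1(\Gamma_+)$ is a free factor of $C$. Any $\phi\in\PMap(\Gamma)$ induces an automorphism $\phi_*$ of $C$, well-defined up to inner automorphism, and $\phi_*(B)$ is again a free factor of $C$. The key structural point is that, because $\phi$ is proper and fixes every end, $B$ and $\phi_*(B)$ are \emph{commensurable} free factors, sharing a common sub-free-factor of finite corank in each. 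Granting this, I set $J:=\langle B,\phi_*(B)\rangle$ and define the flux as the signed relative rank
\[
  \Phi(\phi):=\cork(J,\phi_*(B))-\cork(J,B).
\]
For the loop shift $h$ running from $e_-$ to $e_+$, one has $h_*(B)\subsetneq B$ with $\cork(B,h_*(B))=1$ and $J=B$, so $\Phi(h)=1$ (and $\Phi(h^{-1})=-1$); this yields nontriviality.

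It remains to verify that $\Phi$ is a well-defined continuous homomorphism. Conjugation-invariance of corank handles the inner-automorphism ambiguity in $\phi_*$, and replacing the chosen cut by a deeper one changes both coranks by the same compact contribution, which cancels by the additivity of corank (\Cref{LEM:corkadditive}, together with the free-factor transitivity of \Cref{LEM:subfreefact}). The homomorphism property $\Phi(\psi\phi)=\Phi(\psi)+\Phi(\phi)$ is the heart of the argument: writing $\Phi(\phi)=r(\phi_*B,B)$ for a signed relative-rank function $r$ on commensurable free factors, I would check that $r$ is a cocycle, $r(A_1,A_3)=r(A_1,A_2)+r(A_2,A_3)$, and is invariant under applying a single automorphism to both arguments, both facts reducing to the additivity of corank; then $\Phi(\psi\phi)=r(\psi_*\phi_*B,B)=r(\psi_*\phi_*B,\psi_*B)+r(\psi_*B,B)=r(\phi_*B,B)+r(\psi_*B,B)$. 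Continuity follows by showing $\Phi$ is locally constant: if $K$ is a finite subgraph containing the chosen cut, every element of the basic subgroup $\cV_K$ is supported off the cut and hence fixes $B$ up to conjugacy, so $\Phi$ is constant on each coset $\phi\cV_K$. The main obstacle is the commensurability/finiteness claim—that properness and end-fixing force $\cork(J,B)$ and $\cork(J,\phi_*B)$ to be finite—since a priori $\phi_*$ could shear $B$ arbitrarily deep into the $e_+$ end; establishing this carefully, by producing a deep neighborhood of $e_+$ whose fundamental group $\phi_*$ maps back inside $B$ with only a finite corank defect, is where the real work lies.
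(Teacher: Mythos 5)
Your overall strategy is the paper's own: both you and the authors build a flux homomorphism $\Phi\colon\PMap(\G)\to\Z$ from corank differences of free factors attached to a cut separating two ends of $E_{\ell}(\G)$, show $\Phi=\pm 1$ on a loop shift crossing the cut, and conclude non-CB from Rosendal's criterion via an unbounded continuous action on $\Z$. (Your continuity argument, locally constant on cosets $\phi\cV_K$ since $\Phi$ vanishes on $\cV_K$, is a fine and even more elementary substitute for the paper's appeal to Dudley's automatic continuity theorem.) However, there is a genuine gap in how you set up the invariant. Your ``key structural point''---that $B$ and $\phi_*(B)$ are commensurable \emph{from below}, sharing a common sub-free-factor of finite corank in each, so that $J=\langle B,\phi_*(B)\rangle$ carries finite coranks---is false, and with it the claim that ``conjugation-invariance of corank handles the inner-automorphism ambiguity.'' The induced automorphism $\phi_*$ is only defined up to conjugation by an \emph{arbitrary} $g\in C$ (a proper homotopy supported near a compact set can drag the basepoint around any loop), and this ambiguity conjugates $\phi_*(B)$ while leaving $B$ fixed, so it does not act on both arguments of your coranks. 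Concretely, write $C=B\ast D$ and take $g=b\in D$ nontrivial: by normal form $B\cap bBb^{-1}=1$, so $J=\langle B,bBb^{-1}\rangle=B\ast bBb^{-1}$ and $\cork(J,B)=\cork(J,bBb^{-1})=\rk(B)=\infty$. Thus on certain representatives of the \emph{identity} mapping class your formula reads $\infty-\infty$; $\Phi$ as written is neither finite nor representative-independent.

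The paper avoids exactly this by never forming $\langle B,\phi_*(B)\rangle$: it measures coranks inside a growing, geometrically defined ambient free factor $A_m=\pi_1(\G_m,x_0)$ chosen so that both $A_n$ and $f_*(A_n)$ sit inside it as finite-corank free factors (admissible pairs, \Cref{COR:admisexist}), proves independence of the pair by additivity (\Cref{LEM:ind}), and proves proper-homotopy invariance (\Cref{lem:sameflux_homotopic}) by absorbing the basepoint track $\beta$ into $\G_m$, so that $c_\beta$ preserves $A_m$ and corank is invariant under an isomorphism applied to \emph{both} arguments. This is commensurability ``from above,'' and the distinction matters: my example above shows that even when $B$ and $f_*(B)$ are both finite-corank free factors of a common $A_m$, one can still have $\cork(\langle B,f_*(B)\rangle,B)=\infty$. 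Finally, the finiteness you defer as ``where the real work lies'' is precisely \Cref{COR:admisexist}(ii); the paper proves it by contradiction, extracting infinitely many distinct basis elements $a_{i_j}\notin f_*(A_n)$ whose images under the homotopy inverse must all meet the compact set $\overline{\G_m\setminus\G_n}$, violating properness. Your sketch points in the right direction, but between the deferred finiteness and the well-definedness failure of the $J$-based formula, the proposal needs to be rebuilt along the paper's admissible-pair formalism to go through.
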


We prove this by finding a \emph{flux} map given any two distinct ends accumulated by loops.
Let $\PPHE(\Gamma)$ be the group of proper homotopy equivalences of $\Gamma$ that induce the identity map on the end space of $\Gamma$.
For any partition of the ends of $\Gamma$ into two clopen sets, each of which contains an end accumulated by loops, we will define a flux map $\Phi: \PMap(\G)\arr \Z$. We again will always be using standard forms of our graphs. In particular, note that any edge that is not a loop in a standard form graph is separating.

Fix a line $\gamma$ in the maximal tree of $\Gamma$ whose ends correspond to two distinct ends accumulated by loops. Pick any edge of $\gamma$, and let $x_0$ be the midpoint. Then $\Gamma \setminus \{x_0\}$ will induce our desired partition $C_L \cup C_R$ of the end space of $\Gamma$. More precisely, $\Gamma \setminus \{x_{0}\}$ has two components, $\Gamma_{L}$ and $\Gamma_{R}$ each of which has end spaces $C_{L}$ and $C_{R}$, respectively. Let $T_{L} \subset T$ be the maximal tree of $\overline{\Gamma_{L}}$. Define for each $n \in \Z$:

\begin{align*}
    \Gamma_{n} &\defeq \begin{cases} 
    \overline{\Gamma_{L} \cup B_{n}(x_{0})} &\text{ if }n \geq 0 \\
    \left(\Gamma_{L} \setminus B_{n}(x_{0})\right) \cup T_{L} &\text{ if }n < 0 
    \end{cases},
\end{align*}
where $B_{n}(x_{0})$ is the open metric ball of radius $n$ about $x_{0}$. Note that $\Gamma_{0} = \Gamma_{L}$. See \Cref{fig:fluxsetup} for an example of these sets.

\begin{figure}[ht!]
	    \centering
	    \includegraphics[width=\textwidth]{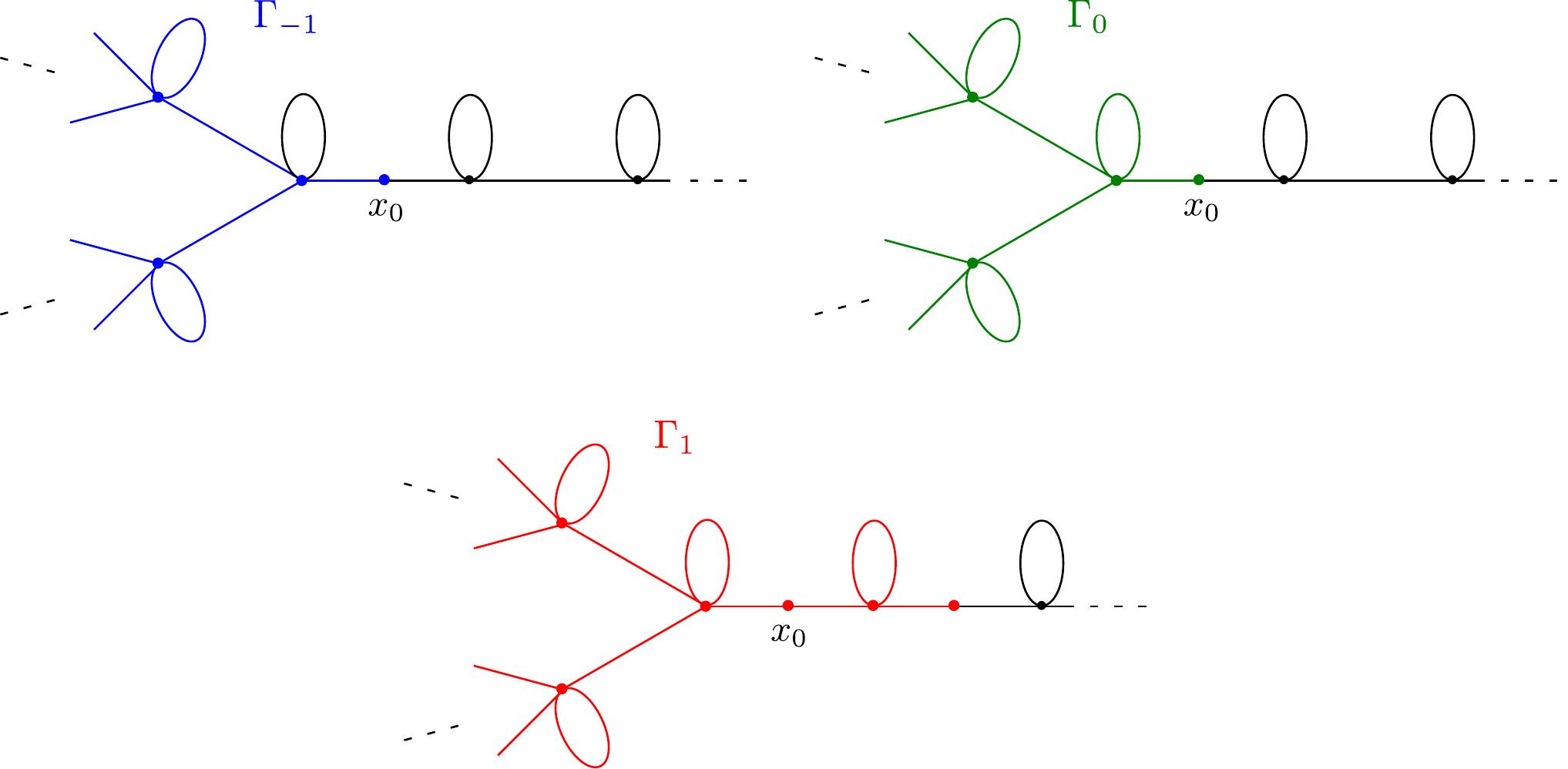}
		\caption{Examples of $\G_{-1}$, $\G_0$ and $\G_{1}$ in blue, green, and red respectively.} 
	    \label{fig:fluxsetup}
\end{figure}

Now for each $n \in \Z$, $\Gamma_{n}$ determines a free factor $A_{n} = \pi_{1}(\Gamma_{n},x_{0})$ of the infinite-rank free group $F=\pi_{1}(\Gamma,x_{0})$. Observe that $A_{n}$ has infinite rank and corank within $F$. We also note that these subgraphs and corresponding free factors are totally ordered. That is, if $n,m \in \Z$ with $n<m$ then $\Gamma_{n} \subset \Gamma_{m}$ and $A_{n} \le A_{m}$, which further implies that $A_n$ is a free factor of $A_m$ by \Cref{LEM:subfreefact}.

\begin{LEM} \label{LEM:vertexistence}
    Let $f\in \PPHE(\G)$. Then for any $n \in \Z$, there exists some $m \in \Z$ such that $\Gamma_{m}$ contains both $\Gamma_{n}$ and $f(\Gamma_{n})$.
\end{LEM}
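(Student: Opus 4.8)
The plan is to use only two facts: every $f \in \PPHE(\G)$ extends to a continuous self-map $\hat f$ of the end compactification $\hat{\G} = \G \cup E(\G)$ that fixes each end pointwise, and each subgraph $\G_n$ has all of its ends in $C_L$ (it is obtained from $\G_L$, whose end space is $C_L$, by adding or deleting only a bounded neighborhood of $x_0$, which cannot change the behavior at infinity). The containment $\G_n \subseteq \G_m$ is automatic whenever $m \ge n$ by the nesting established just before the statement, so the only real task is to produce an $m$ with $f(\G_n) \subseteq \G_m$. The point to keep in mind is that $\G_n$ is \emph{not} compact---it contains the infinite-rank subgraph $\G_L$---so one cannot simply say that $f(\G_n)$ is compact and hence sits in a ball; the entire difficulty is to bound how far $f(\G_n)$ reaches into $\G_R$.

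The key step I would prove is that $\overline{f(\G_n)} \cap E(\G) \subseteq C_L$, with the closure taken in $\hat{\G}$. Given such an end $e$, choose $q_k \in \G_n$ with $f(q_k) \arr e$; by compactness of $\hat{\G}$ a subsequence satisfies $q_k \arr q \in \overline{\G_n}^{\hat{\G}}$, and continuity of $\hat f$ gives $\hat f(q) = e \in E(\G)$. If $q$ were to lie in $\G$, then $\hat f(q) = f(q) \in \G$, contradicting $e \in E(\G)$; hence $q \in E(\G)$, so $q \in E(\G_n) \subseteq C_L$, and as $\hat f$ is the identity on ends, $e = \hat f(q) = q \in C_L$.

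With this in hand I would show that $f(\G_n) \cap \G_R$ is bounded. If it were not, a sequence in $f(\G_n) \cap \G_R$ leaving every ball about $x_0$ would subconverge in $\hat{\G}$ to an end; this end lies in $C_R$ because it is approached entirely through $\G_R$, yet it also lies in $\overline{f(\G_n)} \cap E(\G) \subseteq C_L$, which is impossible since $C_L \cap C_R = \emptyset$. Fixing $m_0$ with $f(\G_n) \cap \G_R \subseteq B_{m_0}(x_0)$ and setting $m = \max\{n, m_0, 1\}$, the partition $\G = \G_L \sqcup \{x_0\} \sqcup \G_R$ sends every point of $f(\G_n)$ either into $\G_L \subseteq \G_m$, into $\{x_0\} \subseteq B_m(x_0) \subseteq \G_m$, or into $f(\G_n) \cap \G_R \subseteq B_m(x_0) \subseteq \G_m$; thus $f(\G_n) \subseteq \G_m$ while also $\G_n \subseteq \G_m$, as required. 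The main obstacle is exactly the non-compactness of $\G_n$, and it is overcome entirely through the end-preservation property of $f$, which confines the accumulation of $f(\G_n)$ to $C_L$ and so keeps its $\G_R$-part inside a ball.
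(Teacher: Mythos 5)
Your proof is correct and takes essentially the same route as the paper: both arguments use that $f$ fixes the ends pointwise to confine the accumulation set of $f(\Gamma_n)$ in $\hat{\Gamma}$ to $C_L$, deduce that only a bounded portion of $f(\Gamma_n)$ protrudes toward $C_R$, and then absorb that bounded part into some $\Gamma_m$ using the exhaustion $\{\Gamma_m\}$. Your write-up simply verifies via sequential compactness in $\hat{\Gamma}$ what the paper asserts in one line (that $f(\Gamma_n)$ has end space $C_L$, and that its complement in a common neighborhood $M$ of $C_L$ is bounded), trading the paper's neighborhood $M$ for the equivalent direct bound on $f(\Gamma_n)\cap\Gamma_R$.
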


\begin{proof}
    It suffices to find $m>n$ such that $\Gamma_m \supset f(\Gamma_n)$. Note first that $f(\Gamma_n)$ has end space equal to $C_{L}$ since $f$ is pure. Then there exists a common neighborhood $M$ of $C_L$ contained in both $\Gamma_n$ and $f(\Gamma_n)$. Since $f(\Gamma_n) \setminus M$ is bounded and the collection $\{\Gamma_m \setminus M\}_{m>n}$ exhausts $\Gamma \setminus M$, it follows that there exists some $m>n$ such that $f(\Gamma_n) \setminus M \subset \Gamma_m \setminus M$. By construction we also have $M \subset \Gamma_n \subset \Gamma_m$, and it follows that $f(\Gamma_n) \subset \Gamma_m$.
\end{proof}

\begin{RMK}
    From here on we write $\cork(A_{m},A_{n})$ and $\cork(A_{m},f_{*}(A_{n}))$ to implicitly include a choice of basepoint that realizes these free factors as based fundamental groups. That is, we can take $x_{0}$ as above, and then we have $\cork(A_{m},A_{n}) = \cork(\pi_{1}(\Gamma_{m},x_{0}),\pi_{1}(\Gamma_{n},x_{0}))$ and $\cork(A_{m},f_{*}(A_{n})) = \cork(\pi_{1}(\Gamma_{m},f(x_{0})),f_{*}(\pi_{1}(\Gamma_{n},x_{0})))$.
\end{RMK}

\begin{COR} \label{COR:admisexist}
    Let $f \in \PPHE(\G)$. Then for any $n \in \Z$, there exists some $m>n$ so that 
    \begin{enumerate}[(i)]
        \item
            $A_{n}$ and $f_{*}(A_{n})$ are free factors of $A_{m}$, and
        \item
            both $\cork(A_{m},A_{n})$ and $\cork(A_{m},f_{*}(A_{n}))$ are finite.
    \end{enumerate}
\end{COR}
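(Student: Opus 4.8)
The plan is to read off the index $m$ directly from \Cref{LEM:vertexistence} applied to $f$, then establish (i) and (ii) with the free-factor machinery of \Cref{LEM:subfreefact} and \Cref{LEM:corkadditive}, using a sandwiching trick through a proper homotopy inverse of $f$ to handle the one genuinely delicate corank estimate.

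First I would fix, via \Cref{LEM:vertexistence}, an index $m > n$ with $\Gamma_m \supseteq \Gamma_n \cup f(\Gamma_n)$. For (i): since $A_n < A_m < F$ and $A_n$ is a free factor of $F$, \Cref{LEM:subfreefact} gives that $A_n$ is a free factor of $A_m$. For $f_*(A_n)$, note that $f_*$ is an automorphism of $F$, so $f_*(A_n)$ is again a free factor of $F$; moreover $f(\Gamma_n) \subseteq \Gamma_m$ forces $f_*(A_n) \le A_m$ (after the implicit change of basepoint along a path in $\Gamma_m$ from $x_0$ to $f(x_0)$, as in the preceding remark). Then $f_*(A_n) < A_m < F$ with $f_*(A_n)$ a free factor of $F$, and \Cref{LEM:subfreefact} again yields that $f_*(A_n)$ is a free factor of $A_m$.

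For the first half of (ii) I would argue geometrically that $\cork(A_m, A_n)$ is finite: $\Gamma_n \subseteq \Gamma_m$ are connected subgraphs whose difference $\overline{\Gamma_m \setminus \Gamma_n}$ lies in a bounded metric region, hence is a finite graph by local finiteness. Choosing a spanning tree of $\Gamma_n$ and extending it to one of $\Gamma_m$ exhibits a free basis of $A_n$ that extends to a free basis of $A_m$ by only finitely many additional generators, so $\cork(A_m, A_n) < \infty$; the same reasoning applies to any pair $\Gamma_i \subseteq \Gamma_j$ with $i < j$.

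The main obstacle is the finiteness of $\cork(A_m, f_*(A_n))$, since neither additivity against $F$ nor the geometry of $f(\Gamma_n)$ is directly usable ($f$ need not be injective, and $\cork(F, A_m)$ is itself infinite). To circumvent this I would invoke a proper homotopy inverse $g$ of $f$: by \Cref{LEM:vertexistence} applied to $g$ at index $m$, pick $\ell \ge m$ with $\Gamma_\ell \supseteq \Gamma_m \cup g(\Gamma_m)$, so that $g_*(A_m) \le A_\ell$ and hence $A_m = f_* g_*(A_m) \le f_*(A_\ell)$. This yields the sandwich $f_*(A_n) \le A_m \le f_*(A_\ell)$. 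Here $f_*(A_n)$ and $f_*(A_\ell)$ are free factors of $F$, $A_m$ is a free factor of $f_*(A_\ell)$ by \Cref{LEM:subfreefact}, and $f_*(A_n)$ is a free factor of both $A_m$ and $f_*(A_\ell)$ (the latter as the $f_*$-image of the free-factor pair $A_n < A_\ell$). Applying additivity (\Cref{LEM:corkadditive}) to $f_*(A_n) < A_m < f_*(A_\ell)$ gives
\[
    \cork(f_*(A_\ell), f_*(A_n)) = \cork(f_*(A_\ell), A_m) + \cork(A_m, f_*(A_n)).
\]
Since corank is invariant under the automorphism $f_*$, the left-hand side equals $\cork(A_\ell, A_n)$, which is finite by the first half of (ii). As both summands on the right are non-negative, each is finite; in particular $\cork(A_m, f_*(A_n)) < \infty$, completing (ii).
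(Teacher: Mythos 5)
Your proposal is correct, and for the substantive half of (ii) it takes a genuinely different route from the paper. Part (i) and the finiteness of $\cork(A_m,A_n)$ match the paper's argument (the paper dismisses the latter as finite ``by definition,'' where you spell out the spanning-tree count; both are fine). For the finiteness of $\cork(A_m,f_*(A_n))$, however, the paper argues by contradiction using properness of the homotopy inverse $g$ pointwise: if the corank were infinite, then $A_i \not\le f_*(A_n)$ for every $i<n$, and one extracts infinitely many \emph{distinct} basis elements $a_{i_j}$ with $a_{i_j}\notin f_*(A_n)$, so that the representative loops $\alpha_{i_j}$ escape to an end while every $g(\alpha_{i_j})$ must cross the compact frontier $\overline{\Gamma_m\setminus\Gamma_n}$, producing a point with infinite $g$-preimage. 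Your sandwich $f_*(A_n)\le A_m\le f_*(A_\ell)$, combined with additivity of corank (\Cref{LEM:corkadditive}) and its invariance under isomorphism, replaces this point-set argument with pure algebra: properness enters only through the second application of \Cref{LEM:vertexistence} (to $g$). This buys a cleaner proof that reuses exactly the machinery already needed for \Cref{LEM:ind}, whereas the paper's version is more hands-on and stays inside the graph picture.

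One caveat worth flagging: your step $A_m=f_*g_*(A_m)$ is not literally an equality, since $(fg)_*$ is only the identity up to the inner automorphism $c_\beta$ given by the track $\beta$ of the basepoint under the proper homotopy $fg\simeq \Id$; a priori you only obtain $A_m\le c_\beta\left(f_*(A_\ell)\right)$. This is repaired by the same enlargement trick the paper uses in \Cref{lem:sameflux_homotopic}: the graphs $\Gamma_\ell$ exhaust $\Gamma$, so after increasing $\ell$ you may assume the relevant conjugating class lies in $f_*(A_\ell)$, whence $c_\beta(f_*(A_\ell))=f_*(A_\ell)$ and your containment---and with it the additivity computation---goes through. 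Since the paper's remark preceding the corollary declares such basepoint choices implicit, this is bookkeeping rather than a gap, but your writeup should acknowledge it.
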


\begin{proof} 
    For (i) we apply  \Cref{LEM:vertexistence} to find an $m \in \Z$ such that both $\Gamma_n$ and $f(\Gamma_n)$ are contained in $\Gamma_m$. Since $A_n = \pi_1(\Gamma_n)$ and $f_*(A_n) < \pi_1(f(\Gamma_n))$, both $A_n$ and $f_*(A_n)$ are subgroups of $A_m$. Moreover, as $f_*$ is a $\pi_1$-isomorphism, both $A_n$ and $f_*(A_n)$ are free factors of $\pi_1(\Gamma)$. Hence, by  \Cref{LEM:subfreefact}, we conclude $A_n$ and $f_*(A_n)$ are free factors of $A_m$.
    In particular, the quantities $\cork(A_m,A_n)$ and $\cork(A_m,f_*(A_n))$ are well-defined.
    
    For (ii) we first let $g$ be a proper homotopy inverse for $f$. By  \Cref{LEM:vertexistence} we can find some $m$ such that $\Gamma_{n},f(\Gamma_{n})$ and $g(\Gamma_{n})$ are contained in $\Gamma_{m}$. Note that $\cork(A_{m},A_{n})$ is finite by definition. 
    
    Suppose, for the sake of contradiction, that $\cork(A_{m},f_{*}(A_{n}))$ is infinite. Then we have that for every integer $i < n$, $A_{i}$ is not a subgroup of $f_{*}(A_{n})$. Otherwise, $\cork(A_{m},f_{*}(A_{n})) < \cork(A_{m},A_{i}) < \infty$. For each $i<n$, there exists a basis element $a_{i_{j}}$ of $A_{i}$ such that $a_{i_{j}} \not\in f_*(A_n)$. Since $\bigcap_{i < n} A_{i} = \emptyset$, we can pass to an infinite sequence of \textit{distinct} basis elements $\{a_{i_{j}}\}$ such that $a_{i_{j}} \notin f_{*}(A_{n})$. 
    Therefore $g_{*}(a_{i_{j}}) \notin A_{n}$ for all $i_{j}$. Let $\alpha_{i_{j}}$ denote a loop representing $a_{i_{j}}$ in $\Gamma_{m}$. Note that $g_{*}(a_{i_{j}}) \notin A_{n}$ implies that $g(\alpha_{i_{j}}) \not\subset \Gamma_{n}$ so that $g(\alpha_{i_{j}}) \cap \overline{\Gamma_{m}\setminus \Gamma_{n}} \neq \emptyset$. However, $\overline{\Gamma_{m}\setminus \Gamma_{n}}$ is compact and $\overline{\G_m \setminus \G_n} \cap \G_n$ is finite, so there must exist some point $x \in \overline{\Gamma_{m}\setminus \Gamma_{n}}$ such that $g^{-1}(x)$ is infinite, contradicting the fact that $g$ was a proper map. Thus we conclude that $\cork(A_{m},f_{*}(A_{n}))$ is finite. 
\end{proof}

\begin{DEF}
    Given $f \in \PPHE(\Gamma)$ we say that a pair of integers, $(m,n)$, with $n<m$, is \textbf{admissible} for $f$ if 
    \begin{enumerate}[(i)]
        \item
            $A_{n}$ and $f_{*}(A_{n})$ are free factors of $A_{m}$, and
        \item
            both $\cork(A_{m},A_{n})$ and $\cork(A_{m},f_{*}(A_{n}))$ are finite.
    \end{enumerate}
\end{DEF}

 \Cref{COR:admisexist} shows that for any $f \in \PPHE(\G)$ and $n \in \Z$, there exist $m \in \Z$ such that $(m,n)$ is admissible for $f$. For a map $f \in \PPHE(\Gamma)$ and an admissible pair $(m,n)$ for $f$, we let
\begin{align*}
    \phi_{m,n}(f) := \cork(A_{m},A_{n}) - \cork(A_{m},f_{*}(A_{n})). 
\end{align*}

\begin{LEM} \label{LEM:ind}
    This quantity is independent of the choice of admissible pair $(m,n)$. That is, if $(m,n)$ and $(m',n')$ are two admissable pairs for the map $f \in \PPHE(\Gamma)$ then $\phi_{m,n}(f) = \phi_{m',n'}(f)$. 
\end{LEM}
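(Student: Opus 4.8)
The plan is to reduce the entire statement to the additivity of corank (\Cref{LEM:corkadditive}), together with the fact that the $\pi_1$-isomorphism $f_*$ carries free-factor decompositions to free-factor decompositions and hence preserves corank. I would prove independence in the two coordinates separately and then chain two arbitrary admissible pairs through a common one.

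First I would fix $n$ and show $\phi_{m,n}(f)$ does not depend on the admissible choice of $m$. Suppose $(m,n)$ and $(m',n)$ are admissible with $m<m'$. Since each $A_k$ is a free factor of $F$, \Cref{LEM:subfreefact} applied to $A_n<A_m<A_{m'}$ shows $A_n$ and $A_m$ are free factors of $A_{m'}$; likewise $f_*(A_n)$, being a free factor of $A_m$ by admissibility of $(m,n)$, is a free factor of $A_{m'}$. Applying \Cref{LEM:corkadditive} to the chains $A_n<A_m<A_{m'}$ and $f_*(A_n)<A_m<A_{m'}$ gives
\[
\cork(A_{m'},A_n)=\cork(A_{m'},A_m)+\cork(A_m,A_n),
\]
\[
\cork(A_{m'},f_*(A_n))=\cork(A_{m'},A_m)+\cork(A_m,f_*(A_n)).
\]
Subtracting, the common term $\cork(A_{m'},A_m)$ cancels and $\phi_{m',n}(f)=\phi_{m,n}(f)$.

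Next I would fix $m$ and show independence of $n$. Suppose $(m,n)$ and $(m,n')$ are admissible with $n<n'<m$. Here I use that $f_*$ is an isomorphism of free groups: writing $A_{n'}=A_n*P$, we get $f_*(A_{n'})=f_*(A_n)*f_*(P)$, so $f_*(A_n)$ is a free factor of $f_*(A_{n'})$ and, since $f_*$ restricts to an isomorphism on $P$,
\[
\cork\bigl(f_*(A_{n'}),f_*(A_n)\bigr)=\rk(f_*(P))=\rk(P)=\cork(A_{n'},A_n).
\]
By admissibility of $(m,n)$ and $(m,n')$, both $f_*(A_n)$ and $f_*(A_{n'})$ are free factors of $A_m$, so \Cref{LEM:corkadditive} applies to $A_n<A_{n'}<A_m$ and to $f_*(A_n)<f_*(A_{n'})<A_m$, giving $\cork(A_m,A_n)=\cork(A_m,A_{n'})+\cork(A_{n'},A_n)$ and $\cork(A_m,f_*(A_n))=\cork(A_m,f_*(A_{n'}))+\cork(A_{n'},A_n)$. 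Subtracting these, the identical terms $\cork(A_{n'},A_n)$ cancel and $\phi_{m,n}(f)=\phi_{m,n'}(f)$.

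Finally I would combine the two reductions. Admissibility in the first coordinate is upward closed (if $(m,n)$ is admissible and $M\ge m$, then $A_n,f_*(A_n)$ remain free factors of $A_M$ by \Cref{LEM:subfreefact} and the coranks stay finite by \Cref{LEM:corkadditive}), so using \Cref{COR:admisexist} I can choose a single $M\ge\max(m,m')$ with both $(M,n)$ and $(M,n')$ admissible. The $m$-independence step gives $\phi_{m,n}(f)=\phi_{M,n}(f)$ and $\phi_{m',n'}(f)=\phi_{M,n'}(f)$, and the $n$-independence step (comparing $n$ and $n'$ against the common $M$) gives $\phi_{M,n}(f)=\phi_{M,n'}(f)$; chaining these yields $\phi_{m,n}(f)=\phi_{m',n'}(f)$. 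I expect the only delicate point to be the bookkeeping needed to verify the hypotheses of \Cref{LEM:corkadditive} at each step — that the subgroups in question are genuinely free factors of the ambient $A_k$ — which is exactly what admissibility plus \Cref{LEM:subfreefact} supplies; the genuinely new ingredient that makes the $n$-step work is the corank identity $\cork(f_*(A_{n'}),f_*(A_n))=\cork(A_{n'},A_n)$ coming from $f_*$ being an isomorphism.
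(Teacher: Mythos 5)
Your proof is correct and follows essentially the same route as the paper's: independence in the $m$-coordinate via additivity of $\cork$ (\Cref{LEM:corkadditive}), independence in the $n$-coordinate via additivity plus the isomorphism-invariance identity $\cork(f_*(A_{n'}),f_*(A_n))=\cork(A_{n'},A_n)$, and then chaining two arbitrary admissible pairs through a common one using upward-closedness of admissibility in the first coordinate (which the paper asserts ``by definition'' and you verify in slightly more detail).
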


\begin{proof} 
    Let $f \in \PPHE(\Gamma)$. We first consider the case with admissible pairs $(m,n)$ and $(m',n)$ for $f$ with $m<m'$. Then by the additivity of $\cork$ we have 
    \begin{align*}
        \cork(A_{m'},A_{m}) &= \cork(A_{m'},A_{n}) - \cork(A_{m},A_{n}), \text{ and} \\
        \cork(A_{m'},A_{m}) &= \cork(A_{m'},f_{*}(A_{n})) - \cork(A_{m},f_{*}(A_{n})).
    \end{align*}
    Now
    \begin{align*}
        &\phantom{==}\phi_{m',n}(f) - \phi_{m,n}(f) \\
        &= \cork(A_{m'},A_{n})-\cork(A_{m},A_{n}) - \cork(A_{m'},f_{*}(A_{n})) - \cork(A_{m},f_{*}(A_{n})) \\
        &= \cork(A_{m'},A_{m}) - \cork(A_{m'},A_{m}) = 0.
    \end{align*}
    
    Next, let $(m,n)$ and $(m',n')$ be any two admissible pairs for $f$, without loss of generality we assume $m<m'$. By definition, $(m',n)$ must also be admissible for $f$. Then we can apply the above argument to reduce to considering just the pairs $(m',n)$ and $(m',n')$. Suppose that $n<n'$. Then we have $A_{n} < A_{n'}$ and $f_{*}(A_{n}) < f_{*}(A_{n'})$. Once again by additivity we have
    \begin{align*}
        \cork(A_{n'},A_{n}) &= \cork(A_{m'},A_{n}) - \cork(A_{m'},A_{n'}), \text{ and} \\
        \cork(f_{*}(A_{n'}),f_{*}(A_{n})) &= \cork(A_{m'},f_{*}(A_{n})) - \cork(A_{m'},f_{*}(A_{n'})).
    \end{align*}
    Note also that the function $\cork$ is invariant with respect to group isomorphisms so that $\cork(f_{*}(A_{n'}),f_{*}(A_{n})) = \cork(A_{n'},A_{n})$. Thus as before we have
    \begin{align*}
        &\phantom{==}\phi_{m',n}(f) - \phi_{m',n'}(f) \\
        &= \cork(A_{m'},A_{n})-\cork(A_{m'},A_{n'}) - \cork(A_{m'},f_{*}(A_{n})) - \cork(A_{m'},f_{*}(A_{n'})) \\
        &= \cork(A_{n'},A_{n}) - \cork(f_{*}(A_{n'}),f_{*}(A_{n})) = 0 \qedhere
    \end{align*}
\end{proof}

This allows us to define a function
\begin{align*}
    \phi:\PPHE(\Gamma) \rightarrow \Z \\
    f \mapsto \phi_{m,n}(f)
\end{align*}
where $(m,n)$ is any admissible pair for $f$. 

\begin{PROP}
\label{prop:flux}
    The map $\phi$ is a homomorphism. 
\end{PROP}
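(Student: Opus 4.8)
The plan is to prove the multiplicativity $\phi(f\circ g) = \phi(f) + \phi(g)$ by exploiting the independence of $\phi$ from the choice of admissible pair (\Cref{LEM:ind}): the idea is to evaluate $\phi(f)$, $\phi(g)$, and $\phi(f\circ g)$ on one common nested chain of free factors, and then collapse the resulting corank differences via additivity (\Cref{LEM:corkadditive}).

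Concretely, fix any $n \in \Z$. First I would apply \Cref{COR:admisexist} to $g$ to produce $m_1 > n$ with $(m_1,n)$ admissible for $g$, so that $A_n$ and $g_*(A_n)$ are finite-corank free factors of $A_{m_1}$. Then I would apply \Cref{COR:admisexist} again, this time to $f$ with the integer $m_1$ playing the role of $n$, to produce $m_2 > m_1$ with $(m_2,m_1)$ admissible for $f$, so that $A_{m_1}$ and $f_*(A_{m_1})$ are finite-corank free factors of $A_{m_2}$. This yields the chains $A_n < A_{m_1} < A_{m_2}$, together with $g_*(A_n) < A_{m_1}$ and $f_*(A_{m_1}) < A_{m_2}$, all as free factors.

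The key step---and the one I expect to require the most care---is verifying that $(m_2,n)$ is admissible for the composite $f\circ g$, i.e. that $(f\circ g)_*(A_n) = f_*(g_*(A_n))$ is a finite-corank free factor of $A_{m_2}$. Since $g_*(A_n)$ is a free factor of $A_{m_1}$ and $f_*$ is a $\pi_1$-isomorphism, $f_*(g_*(A_n))$ is a free factor of $f_*(A_{m_1})$; as $f_*(A_{m_1})$ is in turn a free factor of $A_{m_2}$, transitivity of the free-factor relation (\Cref{LEM:subfreefact}) gives that $f_*(g_*(A_n))$ is a free factor of $A_{m_2}$, and finiteness of the relevant corank follows from additivity along the chain. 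Admissibility of $(m_2,n)$ for $f\circ g$ then lets me compute $\phi(f\circ g) = \cork(A_{m_2},A_n) - \cork(A_{m_2}, f_*(g_*(A_n)))$.

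Finally I would expand both coranks using additivity (\Cref{LEM:corkadditive}) along the chains, writing
\[
\cork(A_{m_2},A_n) = \cork(A_{m_2},A_{m_1}) + \cork(A_{m_1},A_n)
\]
and
\[
\cork(A_{m_2}, f_*(g_*(A_n))) = \cork(A_{m_2}, f_*(A_{m_1})) + \cork(f_*(A_{m_1}), f_*(g_*(A_n))),
\]
and then invoke the isomorphism-invariance of $\cork$ (as already used in the proof of \Cref{LEM:ind}) to replace $\cork(f_*(A_{m_1}), f_*(g_*(A_n)))$ by $\cork(A_{m_1}, g_*(A_n))$. Regrouping the four terms separates $\phi(f\circ g)$ into $\phi_{m_2,m_1}(f) + \phi_{m_1,n}(g)$, which equals $\phi(f) + \phi(g)$ by the independence statement of \Cref{LEM:ind}. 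It then remains only to note that $\phi(\Id) = 0$, which is immediate since $\Id_*(A_n) = A_n$ forces $\cork(A_m,\Id_*(A_n)) = \cork(A_m,A_n)$; hence $\phi$ is a genuine homomorphism.
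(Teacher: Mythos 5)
Your proof is correct, and while it runs on the same engine as the paper's (additivity of $\cork$ from \Cref{LEM:corkadditive}, isomorphism-invariance of $\cork$, and the independence statement \Cref{LEM:ind}), your decomposition is genuinely different and in one respect cleaner. The paper fixes a single $m$ with $(m,n)$ simultaneously admissible for $f$, $g$, and $fg$, inserts the cross term $\cork(A_{m},f_{*}(A_{n}))$, and then applies $f_{*}^{-1}$ to rewrite the leftover difference as $\cork(f_{*}^{-1}(A_{m}),A_{n})-\cork(f_{*}^{-1}(A_{m}),g_{*}(A_{n}))$; since $f_{*}^{-1}(A_{m})$ is not one of the standard factors $A_{m'}$, the paper cannot quote \Cref{LEM:ind} directly and must instead pick $m'$ with $A_{m'}$ containing $f_{*}^{-1}(A_{m})$, $A_{n}$, and $g_{*}(A_{n})$ as free factors and rerun the argument of \Cref{LEM:ind}. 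Your nested chain $A_{n} < A_{m_{1}} < A_{m_{2}}$, with $(m_{1},n)$ admissible for $g$ and $(m_{2},m_{1})$ admissible for $f$, lands after telescoping and the identity $\cork(f_{*}(A_{m_{1}}),f_{*}g_{*}(A_{n}))=\cork(A_{m_{1}},g_{*}(A_{n}))$ exactly on $\phi_{m_{2},m_{1}}(f)+\phi_{m_{1},n}(g)$, both evaluated at honest admissible pairs, so \Cref{LEM:ind} applies verbatim; the price is the extra (and correctly executed) check that $(m_{2},n)$ is admissible for $f\circ g$. One small citation slip: the transitivity you invoke --- a free factor of a free factor is a free factor --- is the elementary direction noted in the proof of \Cref{LEM:freefactorint}, not the content of \Cref{LEM:subfreefact}, which goes the other way (a free factor of $C$ contained in an intermediate subgroup $B$ is a free factor of $B$). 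That fact is true and easy, so this does not affect the validity of your argument.
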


\begin{proof}
    First note that $\phi(\Id)=0$. Let $f,g \in \PPHE(\Gamma)$ and let $n \in \Z$.  By  \Cref{COR:admisexist} we can find some $m$ so that $(m,n)$ is simultaneously admissible for all three maps $f,g$, and $fg$. Now we have 
    \begin{align*}
        \phi_{m,n}(fg) &= \cork(A_{m},A_{n}) - \cork(A_{m},(fg)_{*}(A_{n})) \\
        &= \cork(A_{m},A_{n}) - \cork(A_{m},f_{*}(A_{n})) + \cork(A_{m},f_{*}(A_{n})) - \cork(A_{m},(fg)_{*}(A_{n})) \\
        &= \phi_{m,n}(f) + \cork(f_{*}^{-1}(A_{m}),A_{n}) - \cork(f_{*}^{-1}(A_{m}),g_{*}(A_{n}))\\
        &= \phi(f) + \phi(g).
    \end{align*}
    Note that the last step follows by picking some $m'$ such that $A_{m'}$ contains $f_{*}^{-1}(A_{m})$, $A_n$ and $g_*(A_n)$ as free factors and applying the same argument used to prove  \Cref{LEM:ind}. 
\end{proof}

\begin{LEM}
    \label{lem:sameflux_homotopic}
    If $f,g \in \PPHE(\Gamma)$ are properly homotopic, then $\phi(f)=\phi(g)$. 
\end{LEM}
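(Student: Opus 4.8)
The plan is to exploit the fact that $\phi_{m,n}(f)$ is assembled purely from coranks of free factors, and that corank is invariant under isomorphisms of free groups (precisely the property already used in \Cref{LEM:ind}). A proper homotopy from $f$ to $g$ alters $f_*$ into $g_*$ only through a change-of-basepoint isomorphism, under which the free factors $f_*(A_n)$ and $g_*(A_n)$ correspond; since such an isomorphism preserves corank, the two flux values must coincide.

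Concretely, I would first fix $n \ge 0$ and choose a proper homotopy $H\colon \Gamma \times I \to \Gamma$ with $H_0 = f$ and $H_1 = g$. Let $\beta(t) = H(x_0,t)$ be the path traced by the basepoint, running from $f(x_0)$ to $g(x_0)$; its image is compact. Because the subgraphs $\Gamma_m$ exhaust $\Gamma$, I can combine \Cref{COR:admisexist} with the fact (noted in the proof of \Cref{LEM:ind}) that admissibility persists after enlarging $m$, to select a single $m > n$ for which $(m,n)$ is admissible for both $f$ and $g$ and, in addition, $\beta([0,1]) \subset \Gamma_m$; in particular $f(x_0), g(x_0) \in \Gamma_m$. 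By the change-of-basepoint formula for homotopic maps (\cite[Lemma 1.19]{Hatcher2002}) we have $f_* = \beta_{\#} \circ g_*$, where $\beta_{\#}\colon \pi_1(\Gamma,g(x_0)) \to \pi_1(\Gamma,f(x_0))$ is the change-of-basepoint map along $\beta$. Since $\beta$ lies entirely inside $\Gamma_m$, the restriction of $\beta_{\#}$ to $\pi_1(\Gamma_m,g(x_0))$ is exactly the change-of-basepoint isomorphism $\pi_1(\Gamma_m,g(x_0)) \xrightarrow{\cong} \pi_1(\Gamma_m,f(x_0))$, i.e.\ an isomorphism $A_m \to A_m$ carrying the free factor $g_*(A_n)$ onto the free factor $f_*(A_n)$.

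Invariance of corank under group isomorphism then gives
\[
\cork(A_m, f_*(A_n)) = \cork(A_m, g_*(A_n)),
\]
and subtracting each side from $\cork(A_m,A_n)$ yields $\phi_{m,n}(f) = \phi_{m,n}(g)$; by \Cref{LEM:ind} these equal $\phi(f)$ and $\phi(g)$, completing the argument. The only genuinely delicate point is the uniform choice of $m$: the basepoint path $\beta$ may wander far from $\Gamma_n$, so $m$ must be enlarged until $\Gamma_m$ engulfs $\beta$ together with $f(\Gamma_n)$ and $g(\Gamma_n)$. This is exactly where compactness of $\beta$ (hence of the homotopy on the basepoint) and the exhaustion property $\bigcup_m \Gamma_m = \Gamma$ are essential; once this $m$ is fixed, everything else is formal and rests on the isomorphism-invariance of corank.
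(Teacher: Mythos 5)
Your proof is correct, and while its computational heart is the same mechanism the paper uses, you deploy it with a different logical structure. The paper first proves only the special case that $\phi(h)=0$ whenever $h$ is properly homotopic to the identity, via exactly your argument: let $\beta(t)=H_t(x_0)$, enlarge $\Gamma_m$ to contain the image of $\beta$, and use conjugation $c_\beta$ along $\beta$ together with the isomorphism-invariance of corank to get $\cork(A_m,h_*(A_n))=\cork(A_m,A_n)$. It then deduces the general statement from \Cref{prop:flux} (that $\phi$ is a homomorphism on $\PPHE(\Gamma)$): choosing a proper homotopy inverse $\overline{g}$ of $g$ with $f\overline{g}\simeq \Id$, it computes $\phi(f)=-\phi(\overline{g})=\phi(g)$. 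You instead run the basepoint-path argument directly on a homotopy from $f$ to $g$, writing $f_*=\beta_{\#}\circ g_*$ via \cite[Lemma 1.19]{Hatcher2002} and observing that $\beta_{\#}$ restricts to an isomorphism of $A_m$ carrying $g_*(A_n)$ onto $f_*(A_n)$. This makes the lemma self-contained: it requires neither the homomorphism property nor any manipulation of proper homotopy inverses. The price is the bookkeeping you correctly flag --- a single $m$ that is simultaneously admissible for $f$ and $g$ and engulfs $\beta([0,1])$ --- which is justified as you say by compactness of $\beta$, the exhaustion $\bigcup_m \Gamma_m=\Gamma$, and the persistence of admissibility under enlarging $m$ (the fact already used in the proof of \Cref{LEM:ind}). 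In fact you could streamline further: you need not invoke \Cref{COR:admisexist} for $g$ separately, since $\beta_{\#}$ being an isomorphism of $A_m$ taking $g_*(A_n)$ to $f_*(A_n)$ means admissibility of $(m,n)$ for $g$ follows automatically from admissibility for $f$.
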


\begin{proof} 
    We first claim that if a map $h \in \PPHE(\Gamma)$ is properly homotopic to the identity, then $\phi(h) = 0$. Let $(m,n)$ be an admissible pair for $h$ and $H_t$ be a proper homotopy of $\G$ so that $H_0 = \Id$ and $H_1 = h$, and define $\beta(t) = H_t(x_0)$. We will prove $\cork(A_m, h_* (A_n)) = \cork(A_m,A_n)$.
    If necessary, enlarge $\G_m$ so that it contains the image of $\beta$. The induced map $h_*:\pi_1(\G,x_0) \arr \pi_1(\G,h(x_0))$ satisfies 
    \[c_{\beta}\circ h_*=\Id: \pi_1(\G,x_0) \arr \pi_1(\G,x_0)\]
    where $c_{\beta}:\pi_1(\G,h(x_0))\arr \pi_1(\G,x_0)$ is conjugation by the path $\beta$. Then
    \begin{align*}
        \cork(A_m,h_*(A_n)) &=\cork\left(\pi_1(\G_m,h(x_0)), h_*(\pi_1(\G_n,x_0)) \right) \\
        &=  \cork\left(c_{\beta}\left(\pi_1(\G_m,h(x_0))\right), c_{\beta}h_*(\pi_1(\G_n,x_0)) \right) \\
        &=\cork(\pi_1(\G_m,x_0),\pi_1(\G_n,x_0)) \\
        &=\cork(A_m,A_n),
    \end{align*}
    as desired. Note that we required the path $\beta$ to be contained in $\G_m$ in order to write $c_{\beta}(\pi_1(\G_m,h(x_0)))=\pi_1(\G_m,x_0)$. 
    
    Now suppose $f$ and $g$ are properly homotopic. Then there exists a proper homotopy inverse $\overline{g}$ of $g$ such that $f\overline{g} \simeq id$. By the first assertion and  \Cref{prop:flux}, we have $\phi(f) + \phi(\overline{g}) = \phi(f\overline{g})= 0$, so $\phi(f) = -\phi(\overline{g})$. Also, by definition $g\overline{g} \simeq id$, so $\phi(g) + \phi(\overline{g}) = \phi(g\overline{g})=0$. Hence, $\phi(f) = -\phi(\overline{g}) = \phi(g)$, concluding the proof.
\end{proof}

Thus we obtain a well-defined homomorphism, which we call a \textbf{flux map}:
\begin{align*}
    \Phi:\PMap(\Gamma) \rightarrow \Z \\
    [f] \mapsto \phi(f).
\end{align*}

Finally, to see that flux maps are nontrivial, we use the loop shifts defined in \Cref{ss:loopshifts}. 
We say that a loop shift, $h$, \textbf{crosses} a partition $\cP = C_{L} \sqcup C_{R}$ of $E(\Gamma)$ if $h^{+}$ and $h^{-}$ are contained in different partition sets.

\begin{PROP}
    The homomorphism $\Phi$ satisfies:
    \begin{enumerate}[(i)]
        \item  
            $\Phi(f) = 0$ for all $f \in \PMapcc{\Gamma}$,
        \item
            $\Phi([h]) = \pm 1$ where $h$ is a loop shift which crosses the partition used to define $\Phi$. 
    \end{enumerate}
\end{PROP}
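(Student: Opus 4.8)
The plan is to prove the two statements separately, using the corank bookkeeping already set up together with the fact (\Cref{LEM:ind}) that $\phi_{m,n}(f)$ is independent of the admissible pair.

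For (i), I would first observe that $\Phi$ is a homomorphism from the Polish group $\PMap(\Gamma)$ to $\Z$, hence continuous by Dudley's theorem \cite{dudley1961}. Since $\PMapcc{\Gamma} = \overline{\PMapc(\Gamma)}$ and $\ker\Phi = \Phi^{-1}(\{0\})$ is closed, it suffices to show that $\Phi$ vanishes on compactly supported classes. So let $f$ be totally supported on a compact subgraph $K$. Fix any $n<0$; by \Cref{COR:admisexist} there is an admissible pair $(m,n)$, and enlarging $m$ (which preserves admissibility, by \Cref{LEM:subfreefact} and additivity of $\cork$) I may assume $K\cup f(K)\subseteq \Gamma_m$. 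Then $f(\Gamma_m)\subseteq \Gamma_m$ and $f$ is the identity on $\Gamma\setminus\Gamma_m$, so $f$ restricts to a homotopy equivalence of $\Gamma_m$ and $f_*$ is an isomorphism $\pi_1(\Gamma_m,x_0)\to\pi_1(\Gamma_m,f(x_0))$ carrying the pair $(A_m,A_n)$ to $(A_m,f_*(A_n))$. Because $\cork$ is an invariant of the isomorphism type of a pair (ambient group, free factor), this gives $\cork(A_m,f_*(A_n))=\cork(A_m,A_n)$, so $\phi_{m,n}(f)=0$ and thus $\Phi([f])=0$.

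For (ii), let $h$ be a loop shift crossing the partition; after possibly replacing $h$ by $h^{-1}$ (which negates $\Phi$) I may assume $h_+\in C_R$ and $h_-\in C_L$. Index the shifted loops $\{\alpha_i\}_{i\in\Z}$ along the crossing line $L$ so that $h(\alpha_i)=\alpha_{i+1}$, with $\alpha_i\subset\Gamma_L$ exactly for $i\le 0$ (this is possible since $L$ meets the separating edge through $x_0$ in a single point, cleanly splitting the loops of $\mathcal{A}$ across the partition). Take $n=0$, so $A_0=\pi_1(\Gamma_L,x_0)$. I would then verify the set-level inclusion $\Gamma_L\subseteq h(\Gamma_L)$, noting that $h$ is the identity off a neighborhood of $L\cup\mathcal{A}$, and that $h(\Gamma_L)$ differs from $\Gamma_L$ only by the single extra loop $\alpha_1$ together with its connecting tree segment. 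Hence $A_0$ is a free factor of $h_*(A_0)$ with $\cork(h_*(A_0),A_0)=1$. Choosing $m$ large so that $(m,0)$ is admissible and applying additivity of corank (\Cref{LEM:corkadditive}) to $A_0\le h_*(A_0)\le A_m$ yields $\cork(A_m,A_0)=\cork(A_m,h_*(A_0))+1$, so $\phi_{m,0}(h)=1$ and $\Phi([h])=1$; the reversed orientation gives $-1$, so in all cases $\Phi([h])=\pm 1$.

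I expect the main obstacle to be the careful bookkeeping underlying the two corank identities. In (i) the delicate points are justifying that $f$ genuinely restricts to a self-homotopy-equivalence of $\Gamma_m$ and handling the basepoint shift $x_0\mapsto f(x_0)$, so that ``$f_*$ preserves corank'' is literally an instance of the isomorphism-invariance of $\cork$. In (ii) the work is in verifying the inclusion $\Gamma_L\subseteq h(\Gamma_L)$ at the level of subgraphs and confirming that the complementary free factor has rank exactly one; both reduce to tracking the action of the shift on the loops of $\mathcal{A}$ lying on $L$ versus the loops fixed away from $L$, using \Cref{LEM:subfreefact} to ensure all the relative coranks in play are well-defined.
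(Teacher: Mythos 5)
Your proposal is correct, and both parts follow the paper's skeleton (Dudley's automatic continuity plus the closure $\PMapcc{\Gamma}=\overline{\PMapc(\Gamma)}$ to reduce (i) to compactly supported classes, and an explicit corank computation on a crossing loop shift for (ii)), but your mechanism in (i) genuinely differs from the paper's. The paper works \emph{below} the support: it chooses $n$ so negative that $\Gamma_n\cap K$ is a (possibly empty) tree, whence $f_*$ is literally the identity on $A_n$ and the two coranks agree for trivial reasons. You instead work \emph{above} the support, enlarging $m$ until $K\subseteq\Gamma_m$ and invoking isomorphism-invariance of $\cork$ for the map induced by $f$ on $A_m$. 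Your route works, but it leans on exactly the claim you flag as delicate: that $f$ restricts to a $\pi_1$-isomorphism $\pi_1(\Gamma_m,x_0)\to\pi_1(\Gamma_m,f(x_0))$ --- and recall the paper explicitly warns that restrictions of (proper) homotopy equivalences need not be homotopy equivalences. For a map totally supported on $K\subseteq\Gamma_m$ the claim is true and provable (e.g., fix a basepoint $p\in\Gamma_m\setminus K$ fixed by $f$, note $f_*$ maps the free factor $\pi_1(\Gamma_m,p)$ into itself, and run a normal-form syllable argument in the splitting $\pi_1(\Gamma,p)=\pi_1(\Gamma_m,p)\ast Q$ using injectivity of $f_*$; alternatively, use that $\PMapc(\Gamma)$ is a subgroup to get a compactly supported inverse representative and enlarge $m$ to engulf both supports), but this is an extra lemma that the paper's choice of very negative $n$ sidesteps entirely, so the paper's version is shorter while yours is more self-contained about \emph{why} corank is undisturbed. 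In (ii) you and the paper compute the same quantity; your packaging via $\cork(h_*(A_0),A_0)=1$ together with additivity (\Cref{LEM:corkadditive}) is in fact slightly more robust than the paper's direct ``$\cork(A_m,A_0)-\cork(A_m,h_*(A_0))=1-0$,'' which tacitly assumes $\Gamma_m\setminus\Gamma_0$ contains no loops besides the one extra shift loop; your version needs no such assumption, since any additional loops contribute equally to both coranks.
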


\begin{proof}
    (i) We first let $g \in \PMapc(\Gamma)$. Then, after potentially modifying $g$ by a proper homotopy, $g$ is totally supported on some compact subset $K \subset \Gamma$. We can then find some $n$ such that $(n,n)$ is an admissable pair for $g$ and $\Gamma_{n} \cap K$ is a (possibly empty) tree. Thus $g_{*}$ is the identity map on $A_{n}$. 
    
    Next we apply a theorem of Dudley \cite{dudley1961}
    which states that any homomorphism from a Polish group to $\Z$ is continuous to conclude that $\Phi(f) = 0$ for any $f \in \PMapcc{\Gamma}$. Note that $\PMapcc{\Gamma}$ is a closed subgroup of $\Map(\Gamma)$ and thus Polish. 
    
    Property (ii) follows from the definition of the loop shift. Assume that $h^{-} \in C_{L}$ and $h^{+} \in C_{R}$. Let $m >0$ be such that $\Gamma_{m}$ contains one more loop of $\rho(\Lambda)$ than $\Gamma_{0}$. This is possible because $h$ crosses the partition $C_{L} \sqcup C_{R}$. Then we have 
    \[
    \Phi([h]) = \phi(h) = \cork(A_m,A_0)-\cork(A_m,h_*(A_0)) = 1-0 = 1.
    \]
    Note that if instead $h^{-} \in C_{R}$ and $h^{+} \in C_{L}$ the same argument would show that $\Phi([h]) = -1$. 
\end{proof}

\begin{proof}[Proof of \Cref{thm:twoendnotCB}]
    By Dudley's automatic continuity property \cite{dudley1961} the map $\Phi$ is continuous. Thus, we get a continuous action of $\PMap(\Gamma)$ on the metric space $\Z$ with unbounded orbits. 
\end{proof}
\begin{RMK}
\label{rmk:fluxonlarger}
    We could construct the flux map on any subgroup $H$ of $\Map(\G)$ that fixes the two ends accumulated by the loops but not necessarily fixes the other ends. Following the same argument, we can show that $H$ is not CB.
\end{RMK}

The proof of \Cref{thm:twoendnotCB} shows that we have nontrivial homomorphisms to $\Z$ so that $H^{1}(\PMap(\Gamma);\Z)\neq 0$. However, with a more delicate choice of flux maps, we can get a better lower bound on the rank of $H^{1}(\PMap(\Gamma);\Z)$.

\begin{PROP}
    \label{prop:cohomology}
    If $n=|E_\ell(\G)|\ge 2$ and finite, then
    $\rk(H^1(\PMap(\G);\Z)) \ge n-1$. If $|E_{\ell}(\G)| = \infty$ then $H^{1}(\PMap(\G);\Z) = \bigoplus_{i=1}^{\infty} \Z$.
\end{PROP}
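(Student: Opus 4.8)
The plan is to identify $H^1(\PMap(\G);\Z)$ with $\mathrm{Hom}(\PMap(\G),\Z)$ (group cohomology with trivial coefficients), and then to analyze this group entirely through the flux maps of \Cref{ssec:fluxMap} together with the loop shifts of \Cref{ss:loopshifts}. The lower bounds come from exhibiting a family of flux maps that is ``dual'' to a family of loop shifts; the upper bound in the infinite case will come from Dudley's automatic continuity together with the finiteness of the complementary components of a finite subgraph. Recall from the previous proof that fluxes vanish on $\PMapcc{\G}$ and take the value $\pm 1$ on loop shifts crossing the defining partition.

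\emph{Lower bounds.} Enumerate the ends accumulated by loops as $e_1,\dots,e_n$ (or $e_1,e_2,\dots$ in the infinite case). For each $i$, since the $e_j$ are distinct points of the end space of the spanning tree, I can choose a single separating edge of the tree whose removal puts $e_i$ on one side $C_L^i$ and leaves all other $e_j$ (in particular $e_n$, or $e_1$ in the infinite case) on the other side $C_R^i$; let $\Phi_i$ be the resulting flux map. For each $j$ let $h_j$ be a loop shift with $h_j^- = e_n$ and $h_j^+ = e_j$ (respectively $h_j^- = e_1$, $h_j^+ = e_{j+1}$). By the flux computation on loop shifts, $h_j$ crosses the partition defining $\Phi_i$ precisely when its two ends lie on opposite sides, i.e.\ exactly when $j=i$. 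When $j\neq i$, both ends of $h_j$ lie in $C_R^i$, so $h_j$ may be realized with support in the $C_R^i$-component; then $h_j$ fixes the free factors $A_k^i$ lying on the $C_L^i$ side, so its flux is $0$. Hence
\[
    \Phi_i(h_j) = \pm\,\delta_{ij},
\]
so the matrix $\bigl(\Phi_i(h_j)\bigr)$ is diagonal with $\pm 1$ entries and the $\Phi_i$ are $\Z$-linearly independent in $\mathrm{Hom}(\PMap(\G),\Z)$. In the finite case this gives the $n-1$ maps $\Phi_1,\dots,\Phi_{n-1}$, whence $\rk\!\left(H^1(\PMap(\G);\Z)\right)\ge n-1$; in the infinite case it gives countably many, so $\bigoplus_{i=1}^{\infty}\Z$ embeds into $H^1(\PMap(\G);\Z)$.

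\emph{Upper bound in the infinite case.} Let $f\in\mathrm{Hom}(\PMap(\G),\Z)$. By Dudley's theorem $f$ is continuous, and since $\Z$ is discrete $\ker f$ is open, hence contains a basic clopen subgroup $\cV_K$ for some finite subgraph $K$, which I enlarge to be connected. Then $\G\setminus K$ has finitely many components $U_1,\dots,U_r$, and only finitely many of them, say $U_1,\dots,U_p$, meet $E_\ell(\G)$. Any loop shift whose two ends lie in a single $U_s$ can be supported inside $U_s$, hence lies in $\cV_K\subset\ker f$. Choosing a reference end $\varepsilon_s\in E_\ell(\G)\cap E(U_s)$ and letting $\Psi_s$ be a flux map separating $U_s$ from the remaining components, the plan is to show that $f$ equals the finite combination $\sum_{s=2}^{p} f(g_s)\,\Psi_s$, where $g_s$ is a loop shift from $\varepsilon_1$ to $\varepsilon_s$. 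Using that fluxes are additive under refining the defining clopen partition, each $\Psi_s$ lies in the $\Z$-span of the elementary fluxes $\Phi_i$ from the lower bound, so $f$ is a finite $\Z$-combination of the $\Phi_i$. Together with their independence this yields $\mathrm{Hom}(\PMap(\G),\Z)=\bigoplus_{i=1}^{\infty}\Z\,\Phi_i\cong\bigoplus_{i=1}^{\infty}\Z$.

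The main obstacle is the last spanning claim: that modulo $\cV_K$ only the finitely many ``inter-component'' flux directions survive, i.e.\ that $f-\sum_{s} f(g_s)\Psi_s$ is identically zero. This reduces to a topological generation statement — that $\PMap(\G)$ is topologically generated by $\cV_K$ together with the finitely many loop shifts $g_2,\dots,g_p$ — which is the graph analogue of the Aramayona--Patel--Vlamis generation results and must be argued using the relationship between $\PMap(\G)$, its closure of compactly supported maps $\PMapcc{\G}$, and loop shifts. I would prove it by showing that any mapping class, after composing with an element of $\cV_K$ and suitable powers of the $g_s$, can be pushed to lie in $\PMapcc{\G}$, on which every flux (and hence $f$, once the inter-component values are matched) vanishes. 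Care is needed because a directed union of finite-rank free abelian groups need not be free, so the conclusion must be obtained from explicit spanning by the $\Phi_i$ rather than from abstract finiteness alone.
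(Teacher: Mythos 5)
Your lower bound is essentially the paper's own argument: flux maps defined by partitions that isolate one end accumulated by loops, paired diagonally against loop shifts so that $\Phi_i(h_j)=\pm\delta_{ij}$. One slip in the infinite case: you enumerate \emph{all} ends accumulated by loops and propose, for each $e_i$, a single separating edge putting $e_i$ alone on one side. This fails when $e_i$ is an accumulation point of the other $e_j$ (no clopen set separates it from all of them), and when $E_\ell(\G)$ is uncountable (e.g.\ a Cantor set) the enumeration itself is impossible. The paper's version avoids this by choosing only a countable \emph{subcollection} of ends in $E_\ell(\G)$ admitting pairwise disjoint neighborhoods $U_i$ — which suffices, since the lower bound only needs countably many independent fluxes. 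This is an easy repair to your construction.

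The genuine gap is your upper bound in the infinite case. Your plan rests on two unproved claims: (1) that $\PMap(\G)$ is topologically generated by $\cV_K$ together with finitely many loop shifts (modulo $\PMapcc{\G}$), and (2) that an arbitrary continuous homomorphism $f:\PMap(\G)\to\Z$ vanishes on $\PMapcc{\G}$ — the paper only shows that the \emph{flux} maps vanish there, and for general $f$ one would need a perfectness-type argument for compactly supported classes followed by a continuity argument for the closure. Claim (1) is precisely the graph analogue of the Aramayona--Patel--Vlamis generation theorem, which is a substantial result in its own right and is nowhere established in this paper; as written, your proof of $H^1(\PMap(\G);\Z)=\bigoplus_{i=1}^\infty\Z$ is therefore incomplete. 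The paper sidesteps all of this with a short cardinality argument: $\PMap(\G)$ is separable, and by Dudley every homomorphism to $\Z$ is continuous, hence determined by its values on a countable dense subset; so $\mathrm{Hom}(\PMap(\G),\Z)$ is countable and embeds (by restriction) into a countable power $\Z^{\mathbb{N}}$, and countable subgroups of the Baer--Specker group are free, so a countable free abelian group of infinite rank containing the independent fluxes must be $\bigoplus_{i=1}^\infty\Z$. Your closing caution — that one cannot deduce freeness from abstract finiteness — is well taken, but the resolution is this freeness of countable subgroups of $\Z^{\mathbb{N}}$, not the explicit spanning you propose; pursuing your route honestly would require proving the full generation theorem first.
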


To prove this, we refine the notation for flux map as follows.
If $\cP = C_{L} \sqcup C_{R}$ is a partition of the ends of $E(\Gamma)$ into two sets such that $C_{L} \cap E_{\ell}(\Gamma) \neq \emptyset$ and $C_{R} \cap E_{\ell}(\G) \neq \emptyset$ then we denote the resulting flux map as $\Phi_{\cP}$. 

\begin{proof}[Proof of \Cref{prop:cohomology}]
    Let $n=|E_\ell(\G)|$ be finite. Identify $E_\ell(\G)$ with the $n$-set $\{0,1,\ldots,n-1\}$. 
    Then for $i=1,\ldots,n-1$, there exists pairwise disjoint neighborhoods $U_{i}$ of each of the $i$ in $E(\G)$. Define the partition $\cP_i=U_{i} \sqcup \left(E_\ell(\G)-U_{i}\right)$ and denote by $h_i$ a loop shift associated to a line joining the ends $\{0,i\}$. Then by construction each $h_i$ crosses $\cP_i$ and it follows that:
    \[
     \Phi_{\cP_i}(h_j) = \begin{cases}
        1, & \text{if } i = j\\
        0, & \text{if } i\neq j.
    \end{cases}
    \]
    This implies $\Phi_{\cP_1},\ldots,\Phi_{\cP_{n-1}}$ are linearly independent in $H^1(\PMap(\G);\Z)$, so we conclude $\rk(H^1(\PMap(\G);\Z)) \ge n-1$.
    
    If $\lvert E_{\ell}(\G) \rvert = \infty$ we similarly enumerate a collection of ends in $E_{\ell}(\Gamma)$ as $\{0,1,\ldots\}$ and can find pairwise disjoint neighborhoods $U_{i}$ in $E(\Gamma)$. This is possible since end spaces are totally disconnected. We then similarly define our partitions and see that the associated flux maps are linearly independent. This gives a lower bound on the rank of $H^{1}(\PMap(\Gamma);\Z)$. To see that the cohomology cannot be larger than countably infinite, note that $\PMap(\G)$ has a countable dense subset. Then since any homomorphism to $\Z$ must be continuous there are only countably many unique homomorphisms to $\Z$. 
\end{proof}
    
\section{Locally CB Classification}\label{ssec:LocalCB}

In this section we use the tools developed above to also give a full classification of which graphs have \emph{locally coarsely bounded (locally CB)} pure mapping class groups. Recall that because $\PMap(\G)$ are topological groups, they are locally CB if a neighborhood of the identity is CB. 

\begin{PROP} \label{prop:finitelocCB}
    Let $\G$ be a locally finite, infinite graph of \emph{finite} rank, then $\{\Id\}$ is an open set in $\PMap(\G)$. In particular, $\PMap(\G)$ is discrete and locally CB.
\end{PROP}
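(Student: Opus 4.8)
The plan is to exhibit an explicit open neighborhood of the identity in $\PMap(\G)$ that consists of the identity alone. Since $\G$ has finite rank, its core graph $\G_c$ is a finite (hence compact) subgraph, so $K := \G_c$ is an admissible finite subgraph and $\cV_{\G_c}$ is a basic clopen neighborhood of the identity in $\Map(\G)$; intersecting with $\PMap(\G)$ gives an open neighborhood $\cV_{\G_c}\cap\PMap(\G)$ of the identity. I claim this neighborhood equals $\{\Id\}$, which immediately forces $\{\Id\}$ to be open in $\PMap(\G)$.

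To prove the claim, fix $\phi\in\cV_{\G_c}\cap\PMap(\G)$ and choose a representative $f'$ with $f'|_{\G_c}=\Id_{\G_c}$ that preserves each complementary component of $\G_c$. Because $\G_c$ contains every immersed loop, each component of $\G\setminus\G_c$ is a tree; and because $\G$ is locally finite while $\G_c$ is finite, there are only finitely many such components $T_1,\dots,T_k$, each attached to $\G_c$ at a single vertex $v_i$. The restriction $f'|_{\overline{T_i}}$ is then a proper self-map of the tree $\overline{T_i}=T_i\cup\{v_i\}$ fixing $v_i$, and since $\phi$ is pure it induces the identity on $E(\overline{T_i})$.

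The crux is to upgrade each restriction $f'|_{\overline{T_i}}$ to an element of $\PMap(\overline{T_i})$ and then conclude it is properly homotopic to $\Id_{\overline{T_i}}$ rel $v_i$. Here I would use that $\cV_{\G_c}$ is a subgroup, so $\phi^{-1}$ admits a representative $g'$ with the same properties; restricting $g'$ to $\overline{T_i}$ supplies a proper homotopy inverse, making $f'|_{\overline{T_i}}$ a proper homotopy equivalence of a tree that induces the identity on ends. By \Cref{prop:rank0} (equivalently the uniqueness clause of \Cref{thm:InfGraphClass}) it is then properly homotopic to the identity, and a standard adjustment---pushing the nullhomotopic track of $v_i$ back to $v_i$ as in the proof of \Cref{LEM:multiplebps}---upgrades this to a homotopy rel $v_i$. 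Performing these finitely many homotopies simultaneously, supported in the disjoint trees and rel $\G_c$, yields a proper homotopy from $f'$ to $\Id_\G$; hence $\phi=\Id$ and the claim follows.

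Finally, openness of $\{\Id\}$ in the topological group $\PMap(\G)$ makes every singleton open, so $\PMap(\G)$ is discrete; and since a one-point set is trivially coarsely bounded (\Cref{ex:finiteCB}), $\{\Id\}$ is a CB neighborhood of the identity, so $\PMap(\G)$ is locally CB. I expect the verification that each tree-restriction is genuinely a proper homotopy equivalence---so that the rank-zero classification applies---to be the only real obstacle, the remaining steps being bookkeeping across the finitely many complementary trees.
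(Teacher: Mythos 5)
Your overall route is exactly the paper's: its entire proof is to take $K=\G_c$ (finite, since $\rk(\G)<\infty$) and observe that $\cV_{\G_c}=\{\Id\}$ because the complementary components of $\G_c$ are trees and every pure mapping class totally supported on a tree is trivial by \Cref{prop:rank0}. The bookkeeping you add is mostly correct, but the one step you yourself single out as the crux---upgrading $f'|_{\overline{T_i}}$ to a proper homotopy equivalence of $\overline{T_i}$ so that \Cref{prop:rank0} applies---is not settled by your argument. Restricting a representative $g'$ of $\phi^{-1}$ does give a proper self-map of $\overline{T_i}$, but the proper homotopy $g'f'\simeq\Id$ that you actually possess lives in $\G$, and nothing forces its tracks to stay inside $\overline{T_i}$; nor can you push them back in, since the retraction of $\G$ onto $\overline{T_i}$ collapsing everything else to $v_i$ is not proper (the noncompact set $\G\setminus T_i$ maps to a point). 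So as written, $g'|_{\overline{T_i}}$ has not been shown to be a proper homotopy inverse \emph{within} the tree, and the citation of \Cref{prop:rank0} (or the uniqueness clause of \Cref{thm:InfGraphClass}, which is stated for proper homotopy equivalences) is premature.

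The gap is fillable, and more cheaply than via inverses: prove directly that any proper self-map $h$ of a locally finite tree $T$ inducing the identity on $E(T)$ is properly homotopic to $\Id_T$, by sliding each $h(x)$ to $x$ along the geodesic $[h(x),x]$. Properness of this homotopy follows from compactness of the end compactification $T\cup E(T)$: if $x_n$ leaves every compact set then, after passing to a subsequence, $x_n\to e\in E(T)$, hence $h(x_n)\to e$ as well (the extension of $h$ fixes $e$), and since complementary components of balls in a tree are connected subtrees, the geodesics $[h(x_n),x_n]$ eventually lie in any prescribed neighborhood of $e$. This applies at once to $f'|_{\overline{T_i}}$ (no $g'$ needed), shows in particular that it is a proper homotopy equivalence, and hands you the homotopy; your rel-$v_i$ upgrade via the homotopy extension property (the track of $v_i$ is a loop in a tree, hence nullhomotopic via a compactly supported nullhomotopy) and the patching over the finitely many complementary trees then go through as you describe. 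For what it is worth, the paper's two-line proof glosses precisely the same point when it invokes \Cref{prop:rank0}.
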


\begin{proof}
    Take $K=\G_c$, the core graph of $\G$.
    Then $\cV_K=\{\Id\}$ because the complementary components of $K$ are each trees, and every pure mapping class totally supported on a tree is properly homotopic to the identity (\Cref{prop:rank0}).
\end{proof}

\begin{PROP} \label{prop:infendloopslocCB}
    Let $\G$ be a locally finite, infinite graph with \emph{infinitely many} ends accumulated by loops, then $\PMap(\G)$ is not locally CB.
\end{PROP}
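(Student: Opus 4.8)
The plan is to show that \emph{no} basic identity neighborhood $\cV_K$ is CB; since the clopen subgroups $\{\cV_K\}$ (over finite subgraphs $K\subset\G$) form a neighborhood basis at the identity and any subset of a CB set is CB, this is equivalent to $\PMap(\G)$ not being locally CB. Indeed, if some identity neighborhood were CB it would contain some $\cV_K$, forcing $\cV_K$ to be CB as well.

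So fix an arbitrary finite subgraph $K\subset\G$. Because $\G$ is locally finite and $K$ is finite, $\G\setminus K$ has only finitely many connected components, while $E_\ell(\G)$ is infinite and every end lies in exactly one such component. By the pigeonhole principle, some component $C$ of $\G\setminus K$ contains infinitely many ends accumulated by loops; in particular it contains two distinct ends $e_-,e_+\in E_\ell(\G)$. Working inside $\overline{C}$, I would build a loop shift $h$ as in \Cref{ss:loopshifts} associated to $e_-,e_+$, choosing the connecting line and the shifted loops $\mathcal{A}$ to lie entirely in $C$ (possible since these loops accumulate onto $e_-$ and $e_+$ arbitrarily deep in $C$, away from the finite set $K$) and extending $h$ by the identity on $\G\setminus\overline{C}$. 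By construction $h$ is supported off of $K$ and preserves each complementary component of $K$, so $h\in\cV_K$; and since loop shifts fix the end space pointwise, $h\in\PMap(\G)$.

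Finally, let $\Phi$ be the flux map of \Cref{ssec:fluxMap} for a partition $E(\G)=C_L\sqcup C_R$ separating $e_-$ from $e_+$; this partition is admissible because each side contains one of $e_\pm\in E_\ell(\G)$. Since $h$ crosses this partition, the flux computation at the end of \Cref{ssec:fluxMap} gives $\Phi(h)=\pm1$, and as $\Phi$ is a homomorphism (\Cref{prop:flux}) we get $\Phi(h^n)=\pm n$ with $h^n\in\cV_K$ for all $n\in\Z$. Thus $\PMap(\G)$ acts continuously on the metric space $\Z$ by $g\cdot k=\Phi(g)+k$, and the orbit $\cV_K\cdot 0\supseteq\Phi(\langle h\rangle)=\Z$ has infinite diameter; by condition (2) of \Cref{PROP:RosendalCB}, $\cV_K$ is not CB. As $K$ was arbitrary, $\PMap(\G)$ is not locally CB.

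I expect the main obstacle to be the second step: verifying that the loop shift can genuinely be realized with support disjoint from $K$ while still crossing the chosen partition. The delicate point is that the tree geodesic between $e_-$ and $e_+$ need not a priori avoid $K$, so I would either argue directly that, because both ends are ends of the single component $C$, the connecting line together with a cofinal family of incident loops can be chosen inside $C$, or else treat $\overline{C}$ as a graph in its own right, define a loop shift there, and transport it back to $\G$ by extending via the identity.
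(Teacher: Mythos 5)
Your proposal is correct and takes essentially the same route as the paper: the paper likewise fixes an arbitrary compact $K$, finds a complementary component containing two ends of $E_{\ell}(\G)$, takes a loop shift $h$ totally supported on $\G\setminus K$ (so $h\in\cV_K$), and uses a flux map $\Phi_{\cP}$ with $\Phi_{\cP}(h)=\pm 1$ to conclude $\cV_K$ is not CB. The only step you leave implicit is the continuity of $\Phi_{\cP}$ (needed for your continuous action on $\Z$), which, exactly as in the paper, follows from Dudley's automatic continuity since $\cV_K$ is a clopen, hence Polish, subgroup; your extra care in realizing the loop shift inside the closure of the chosen component is a correct handling of a point the paper passes over silently.
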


\begin{proof}
    Let $K$ be any compact set in $\G$. Because $\G$ has infinitely many ends accumulated by loops there is at least one component of $\G\setminus K$ with two or more ends accumulated by loops, call these ends $e_{-}$ and $e_{+}$. Let $\cP = C_{L} \sqcup C_{R}$ be any partition of $E(\G)$ that separates $e_{-}$ and $e_{+}$ and let $h$ be a loop shift totally supported on $\G\setminus K$ with $h^{-} = e_{-}$ and $h^{+} = e_{+}$. Then $h \in \cV_{K}$ and $\Phi_{\cP}(h) = 1$ so that $\Phi_{\cP}\vert_{\cV_{K}}$ is a nontrivial homomorphism on $\cV_K$ to $\Z$. Furthermore, this restriction is continuous again by Dudley's automatic continuity \cite{dudley1961} as $\cV_{K}$ is a clopen subgroup of $\PMap(\G)$ (hence Polish). Therefore $\cV_K$ is not CB.
\end{proof}

\begin{PROP}\label{prop:finiteendloopsloccb}
    Let $\Gamma$ be a locally finite, infinite graph with infinite rank and $\lvert E_{\ell}(\G) \rvert<\infty$. Then $\PMap(\G)$ is locally CB if and only if $\G\setminus \G_{c}$ has only \emph{finitely many} components $T_{1},\ldots,T_{m}$ such that $|E(T_{i})| = \infty$.
\end{PROP}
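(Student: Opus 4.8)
The plan is to work throughout with the clopen subgroups $\cV_K$ indexed by finite subgraphs $K$, which form a neighborhood basis of the identity in $\PMap(\G)$. Since any subset of a CB set is CB, $\PMap(\G)$ is locally CB if and only if $\cV_K$ is CB for \emph{some} $K$. I would prove the two implications separately: when there are finitely many infinite-end-space components I exhibit a single good $K$, and when there are infinitely many I show that \emph{every} $\cV_K$ fails to be CB.

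For the forward direction, suppose $T_1,\dots,T_m$ are the only components of $\G\setminus\G_c$ with infinite end space, attached to $\G_c$ at vertices $v_1,\dots,v_m$. Because loops accumulate only at the finitely many ends of $E_\ell(\G)$, I would first fix small pairwise disjoint neighborhoods of these ends; the loops lying outside them form a set with no accumulation end, hence a finite set. I then take $K$ to be a finite connected subgraph containing this finite collection of loops, the vertices $v_1,\dots,v_m$, and enough of $\G$ to separate the ends of $E_\ell(\G)$ into distinct complementary components. With this choice every complementary component of $K$ is either a tree---on which pure mapping classes are trivial by \Cref{prop:rank0}---or one of the $k=|E_\ell(\G)|$ components $C_{e}$ carrying a single end accumulated by loops together with only finite-end-space trees (the $T_i$ having been severed at the $v_i\in K$). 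Each $C_e$ determines a graph with one end accumulated by loops and with $E\setminus E_\ell$ free of accumulation points, so its pure mapping class group is CB by (the proof of) \Cref{THM:oneendcb}. Restriction to complementary components identifies $\cV_K$ with the finite direct product of the groups of mapping classes supported on each component, and a finite product of CB groups is CB by repeated application of \Cref{PROP:SES}; hence $\cV_K$ is CB.

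For the reverse direction, suppose infinitely many components of $\G\setminus\G_c$ have infinite end space, and let $K$ be an arbitrary finite subgraph. Their attachment vertices lie on $\G_c$ and, being infinite in number, accumulate to some end $\xi$; since each such vertex lies on an immersed loop, every neighborhood of $\xi$ contains infinitely many loops, so $\xi\in E_\ell(\G)$. Thus infinitely many infinite-end-space trees accumulate to a single end $e\in E_\ell(\G)$, and the complementary component $C_e$ of $K$ containing $e$ therefore contains both infinitely many loops and, in its entirety, at least one infinite-end-space tree $T$. I would then invoke the length function $\ell=\ell_T$ from \Cref{prop:lengthfunction}: choosing a nontrivial loop $w$ inside $C_e$ and intervals $I_i\subset T$ marching out to an accumulation point of $E(T)$, the word maps $\wm{w,I_i}$ have word paths contained in $C_e$, so they fix $K$ and preserve complementary components, i.e.\ lie in $\cV_K$, while $\ell(\wm{w,I_i})\to\infty$. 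An unbounded continuous length function on the Polish group $\cV_K$ forces $\cV_K$ to be non-CB (\Cref{rmk:unbddlengthftn}). As $K$ was arbitrary, $\PMap(\G)$ is not locally CB.

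The crux---and the step I expect to be most delicate---is the interaction between infinite-end-space trees and loops inside a single complementary component. A word map supported on a tree $T$ is trivial in $\cV_K$ unless the loops its word path traverses lie in the same component of $\G\setminus K$ as $T$; this is precisely why a lone infinite-end-space tree severed from the loops contributes nothing, and why one accompanied by loops produces unbounded length. Making this rigorous rests on the two geometric facts that (i) all loops lying outside fixed neighborhoods of $E_\ell(\G)$ are finite in number, so the infinite-end-space trees can be cut away from the loop-rich ends when there are finitely many of them, and (ii) the attachment vertices of infinitely many such trees necessarily accumulate at an end of $E_\ell(\G)$, forcing coexistence with loops in some $C_e$ otherwise. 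I would also verify that the chosen $K$ leaves no complementary component of finite positive rank, since such a piece would have non-CB pure mapping class group by \Cref{cor:finiterank}; the neighborhood-of-ends argument in (i) secures this by placing every loop either inside $K$ or inside one of the $C_e$.
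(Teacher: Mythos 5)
Your proposal is correct and takes essentially the same route as the paper's proof: for the CB direction the paper likewise builds $K$ from the attachment points $x_i=\overline{T_i}\cap\G_c$ (enlarged to separate the ends of $E_\ell(\G)$ and to absorb all stray loops into $K$), identifies $\cV_K\cong\prod_{i=1}^n\PMap(\G_i)$ with each $\G_i$ a one-ended monster handled by \Cref{THM:oneendcb}, and for the non-CB direction it finds, for arbitrary compact $K$, a complementary component of infinite rank containing an infinite-end-space tree and applies the length function of \Cref{prop:lengthfunction} exactly as you do. The only point to tighten is your claim that a neighborhood of the accumulation end $\xi$ contains infinitely many loops because each attachment vertex lies on an immersed loop---such a loop need not be contained in the neighborhood, though the conclusion $\xi\in E_\ell(\G)$ does hold (e.g.\ a rank-zero tree neighborhood of $\xi$ would force infinitely many immersed loops to cross its finite frontier along the unique, hence boundedly many, geodesic arcs of the tree, a contradiction), a level of detail the paper's own proof also elides.
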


\begin{proof}
    Suppose first $\Gamma \setminus \Gamma_c$ has infinitely many components with infinite end spaces. Then given any compact set $K$, there is a component $A$ of $\G \setminus K$ that has infinite rank and $A \setminus \Gamma_{c}$ having infinite end space. We can apply \Cref{prop:lengthfunction} with $T=A \setminus \G_c \subset \G \setminus K$ to build a length function on $\PMap(\G)$ that is unbounded on $\cV_{K}$. Thus no identity neighborhood is CB.

    Conversely, suppose $\Gamma \setminus \Gamma_c$ has finitely many components $T_1,\ldots,T_m$ such that $|E(T_i)|=\infty$ for $i=1,\ldots,m$.
    Also, for $i=1,\ldots,m$ let $x_i:= \overline{T_i} \cap \G_c$ (some of them might be the same). Then $\G \setminus \{x_1,\ldots,x_m\}$ will contain
    $T_1,\ldots T_m$ as disjoint components. Let $K$ be the minimal spanning tree of $\{x_1,\ldots,x_m\}$.
    Note
    \[
        E(\G)\setminus E_\ell(\G) = E(\G \setminus \G_c)
    \]
    and $T_1,\ldots,T_m \subset \G \setminus \G_c$, so it follows that $E(T_i) \subset E(\G) \setminus E_\ell(\G)$ for all $i =1,\ldots,m$.
    
    Moreover, as $n:=|E_\ell(\G_c)|=|E(\G_c)|<\infty$, we may enlarge $K$ so that we can ensure $\G \setminus K$ decomposes as
    \[
        \G \setminus K = \left(T_1 \sqcup \ldots \sqcup T_m\right) \sqcup \left(\G_{1} \sqcup \ldots \sqcup \G_{n}\right) \sqcup \G'
    \] where $\G_{1},\ldots,\G_{n}$ are graphs with only a single end accumulated by loops, and $\G'$ is some (possibly not connected) subgraph of $\G$. Since $K$ is compact $\G'$ has only finitely many components, and by construction of $K$, it follows that $|E(\G')|<\infty$, and $|E_\ell(\G')|=0$. Enlarging $K$ more to include all the loops in $\G'$, we may even assume $\rk(\G') = 0$, so it is a forest. See \Cref{fig:locCB}.
    \begin{figure}[ht!]
        \centering
        \includegraphics[width=.8\textwidth]{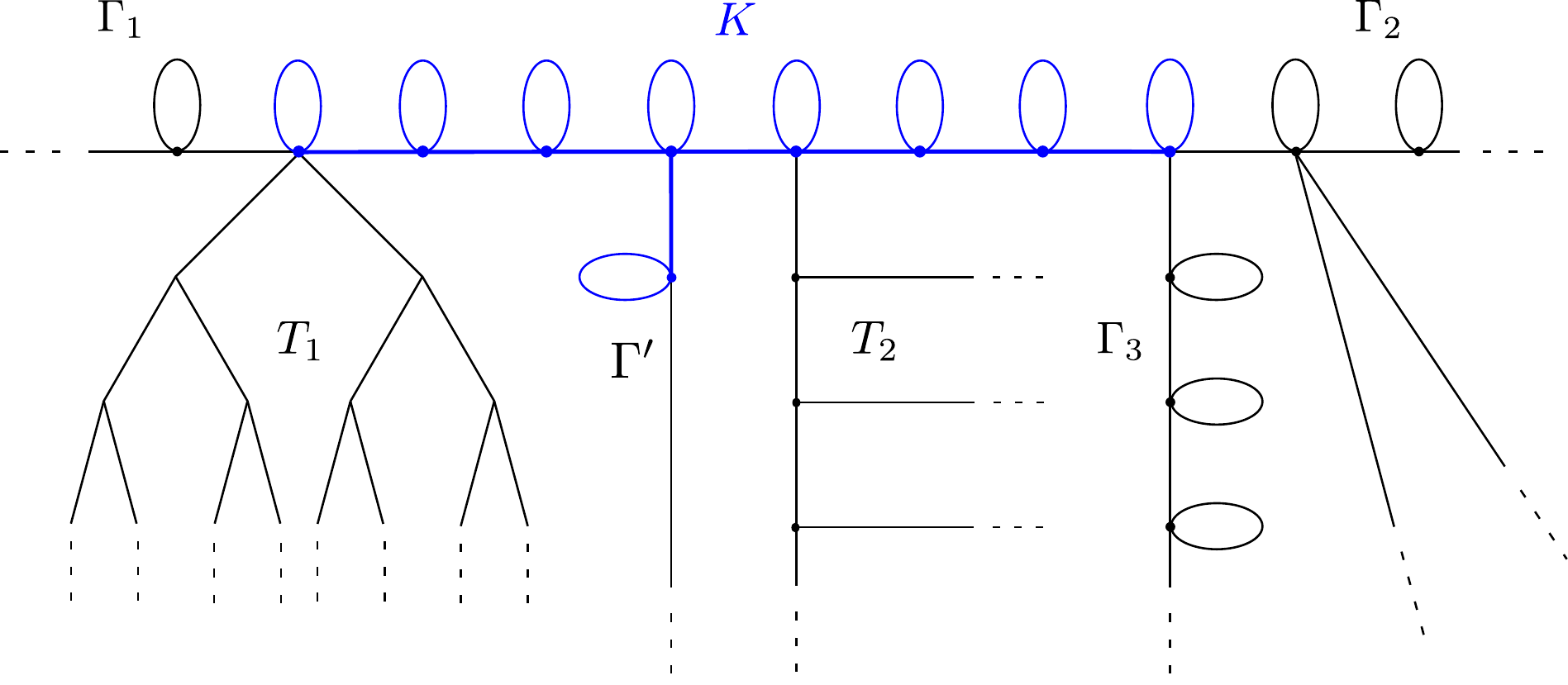}
        \caption{The complementary components of $K$ are subgraphs with infinite end space $T_1,\ldots,T_m$, monster graphs $\G_{1},\ldots,\G_{n}$, or a forest $\G'$.}
        \label{fig:locCB}
    \end{figure}

    Finally, with this choice of $K$ it follows that every element $g \in \cV_{K}$ has to fix the complementary components setwise.
    Note the pure mapping class group of each of trees $T_1,\ldots,T_m$ or $\G'$ is trivial.
    Hence we have that $\cV_K \cong \prod_{i=1}^n\PMap(\G_i)$.
    On the other hand, $\PMap(\G_{i})$ is CB by \Cref{THM:oneendcb}. Since the product of CB groups is CB (cf. \cite[Lemma 3.36]{rosendal2022}), it follows that $\cV_K$ is CB, concluding that $\PMap(\G)$ is locally CB.
\end{proof}

\section{Asymptotic Dimension} \label{sec:asymdim}  
  \begin{DEF}[Asymptotic Dimension]
    \label{def:asdim}
        A pseudo-metric space $(X,d)$ has \textbf{asymptotic dimension $\le n$}, denoted by $\asdim(X,d)\le n$, if for every $R>0$ there exist a uniformly bounded cover $\cV$ of $X$  such that every $R$-ball in $X$ intersects at most $n+1$ elements of $\cV$. We say $\asdim(X,d)=n$ if $\asdim(X,d) \le n$ but $\asdim(X,d) \not\le n-1$.
    \end{DEF}
    
    The asymptotic dimension is well-defined over coarse equivalence classes. 
    
    \begin{LEM}[{\cite[Proposition 3.B.19]{cornulier2016}}]
       \label{lem:asdimCE}
         If $f:X \to Y$ is a coarse embedding between pseudo-metric spaces, then $\asdim X \le \asdim Y$. In particular, if $f$ is a coarse equivalence, then $\asdim X = \asdim Y$.
         Hence, the asymptotic dimension is invariant under a coarse equivalence.
     \end{LEM}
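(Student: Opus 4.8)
The plan is to prove the main inequality $\asdim X \le \asdim Y$ directly from \Cref{def:asdim} by pulling back uniformly bounded covers along $f$, and then to deduce the ``coarse equivalence'' statement by applying this inequality to a coarse inverse of $f$. I may assume $\asdim Y = n < \infty$, since otherwise there is nothing to prove. Let $\Phi_{+}$ and $\Phi_{-}$ be upper and lower control functions for the coarse embedding $f$, as in \Cref{def:metricCE}.

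First I would fix $R > 0$ and produce a uniformly bounded cover of $X$ witnessing $\asdim X \le n$ at scale $R$. Set $R' := \Phi_{+}(R)$. Since $f$ is coarsely Lipschitz, $d_Y(f(x),f(x')) \le R'$ whenever $d_X(x,x') \le R$, so the image $f(B_R(x))$ of any $R$-ball in $X$ lies in the $R'$-ball $B_{R'}(f(x))$ in $Y$. Because $\asdim Y \le n$, I may choose a cover $\cU$ of $Y$, uniformly bounded by some $D \ge 0$, such that every $R'$-ball in $Y$ meets at most $n+1$ elements of $\cU$. I would then take $\cV := \{\, f^{-1}(U) : U \in \cU \,\}$, which covers $X$ because $\cU$ covers $Y$.

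The two properties to check are uniform boundedness and multiplicity. For boundedness, if $x,x' \in f^{-1}(U)$ then coarse expansion gives $\Phi_{-}(d_X(x,x')) \le d_Y(f(x),f(x')) \le \diam(U) \le D$; since $\Phi_{-}(r) \to \infty$, the set of $r$ with $\Phi_{-}(r) \le D$ is bounded, yielding a uniform bound $D'$ on $\diam(f^{-1}(U))$ independent of $U$. For multiplicity, observe that $B_R(x) \cap f^{-1}(U) \ne \emptyset$ precisely when $f(B_R(x)) \cap U \ne \emptyset$, and $f(B_R(x)) \subseteq B_{R'}(f(x))$; hence the number of elements of $\cV$ meeting $B_R(x)$ is at most the number of $U \in \cU$ meeting $B_{R'}(f(x))$, which is at most $n+1$. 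This establishes $\asdim X \le n = \asdim Y$.

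For the final assertion, when $f$ is a coarse equivalence I would build a coarse inverse $g : Y \to X$ using coarse surjectivity: choosing for each $y$ a point $g(y)$ with $d_Y(f(g(y)),y) \le C$, one checks using the control functions of $f$ that $g$ is itself a coarse embedding. Applying the inequality already proved to $g$ then gives $\asdim Y \le \asdim X$, and together with $\asdim X \le \asdim Y$ this yields equality. The only genuinely delicate point is the uniform boundedness of the pulled-back cover, which is exactly where the hypothesis $\lim_{r\to\infty}\Phi_{-}(r) = \infty$ (rather than mere injectivity) is needed; the passage to \emph{pseudo}-metrics introduces no difficulty, since the argument never requires points at distance $0$ to be distinguished.
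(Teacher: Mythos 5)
Your proof is correct: the pull-back of a uniformly bounded cover along a coarse embedding, with the sublevel-set bound coming from $\lim_{r\to\infty}\Phi_-(r)=\infty$, is exactly the standard argument. The paper itself gives no proof of \Cref{lem:asdimCE}, citing \cite{cornulier2016} instead, and your argument is essentially the one found in that reference, so there is nothing further to compare.
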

    
     For a locally coarsely bounded $\PMap(\G)$, we know $\PMap(\G)$ has a well-defined coarse equivalence type by \Cref{prop:WDCEpmap}. Hence, along with \Cref{lem:asdimCE}, the asymptotic dimension of a locally coarsely bounded $\PMap(\G)$ is well-defined. 
     
\subsection{Pure Mapping Class Groups of Zero Asymptotic Dimension}
\label{ss:asdim0}
     As an application of \Cref{prop:PformsLeaves}, we show:
     
     \begin{THM}
        \label{thm:asdim0}
         Let $\G$ be a locally finite, infinite graph with $\PMap(\G)$ locally CB. If $|E_\ell|=1$ then, the asymptotic dimension of $\PMap(\G)$ is $0$.
     \end{THM}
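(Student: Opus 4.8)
The plan is to split on whether $\PMap(\G)$ is coarsely bounded. If it is CB then it is coarsely equivalent to a point, so $\asdim \PMap(\G)=0$ by \Cref{lem:asdimCE}; this settles the case in which $E(\G)\setminus E_{\ell}(\G)$ is discrete, via \Cref{THM:oneendcb}. So assume $E(\G)\setminus E_{\ell}(\G)$ has an accumulation point. Since $\PMap(\G)$ is locally CB and $|E_{\ell}(\G)|=1$, \Cref{prop:finiteendloopsloccb} guarantees that only finitely many components $T_1,\ldots,T_m$ of $\G\setminus\G_c$ have infinite end space, with $m\ge 1$. For each $T_i$ let $\ell_i$ be the ultranorm length function of \Cref{prop:lengthfunction}, and set $\ell\defeq\max_{1\le i\le m}\ell_i$. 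A finite maximum of continuous functions is continuous, and a finite maximum of ultranorms is again an ultranorm, so $\ell$ is a continuous ultranorm; hence the left-invariant pseudo-metric $\hat d(g,h)\defeq \ell(g^{-1}h)$ is an \emph{ultrametric}.

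The key reduction rests on the clean fact that any ultrametric pseudo-metric space has asymptotic dimension $0$: for fixed $R$ the relation $\hat d(x,y)\le R$ is an equivalence relation (transitivity is exactly the ultrametric inequality), so its classes form a disjoint cover by sets of diameter $\le R$, and each $R$-ball sits inside one class. I must, however, transport this to the coarsely proper metric $\rho$ produced by \Cref{prop:WDCEpmap}, which determines the coarse type and hence $\asdim$. Crucially I do \emph{not} need $\hat d$ and $\rho$ to be coarsely equivalent; it suffices, for each $R>0$, to exhibit a uniformly $\rho$-bounded cover of $\PMap(\G)$ met at most once by every $\rho$-ball of radius $R$. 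Because $\ell$ is an ultranorm, each sublevel set $B_N\defeq\{g:\ell(g)\le N\}$ is a clopen subgroup, so the cosets $\{gB_N\}$ partition $\PMap(\G)$. By the general half of \Cref{prop:WDCEtype}, the identity $(\PMap(\G),\rho)\to(\PMap(\G),\hat d)$ is coarsely Lipschitz, giving $N=N(R)$ with $\rho(g,h)\le 2R \Rightarrow \ell(g^{-1}h)\le N$; this forces every $\rho$-ball of radius $R$ into a single coset $gB_N$. Thus, once each $B_N$ is shown to be $\rho$-bounded (i.e.\ CB), the coset partition is the required cover and $\asdim\PMap(\G)=0$.

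The main obstacle is therefore the single claim that each $B_N$ is CB, which I would prove using the folding and Rosendal-criterion machinery of \Cref{sec:CBPMCG}. The condition $\ell(g)\le N$ says precisely that a representative of $g$ wraps no line of any $T_i$ lying beyond distance $N$ from $\G_c$, so all of its essential behavior is confined to $\G_c$ together with the \emph{finite} radius-$N$ truncations of the $T_i$; collapsing each far subtree to a single ray identifies $B_N$ with the pure mapping class group of a graph with one end accumulated by loops and discrete remaining ends, which is CB by \Cref{THM:oneendcb}. Concretely one mimics the proofs of \Cref{thm:HLNCB} and \Cref{PROP:MillipedeCB}: \Cref{LEM:folding} approximates a given element of $B_N$ by a compactly supported one, \Cref{LEM:RaysLoops} splits it into a loop part and finitely many word maps on the truncated branches, and \Cref{lem:supp_on_loops} together with \Cref{LEM:RayMap} absorb these into $(\cF\cV_K)^c$ for a fixed finite $\cF$ and constant $c$. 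I expect the one delicate point to be checking that this identification respects proper homotopy — that the forest lying beyond radius $N$ contributes nothing, which follows from $\PMap$ of a tree being trivial (\Cref{prop:rank0}).
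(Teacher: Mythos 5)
Your proposal is correct, and its engine is the same as the paper's: reduce to the non-CB case, use the ultranorm length function of \Cref{prop:lengthfunction} on the (finitely many, by \Cref{prop:finiteendloopsloccb}) infinite-ended components of $\G\setminus\G_c$, show its sublevel sets are CB by reduction to \Cref{THM:oneendcb}, and conclude $\asdim=0$ from ultrametricity. Your ``$B_N$ is CB'' step is exactly the paper's \Cref{lem:fullCnCB} (there: every class in $H_n=\ell^{-1}([0,n])$ is homotoped into the $n$-neighborhood $\Delta_n$ of $\G_c$, which is a monster graph $\G_N$, then \Cref{THM:oneendcb} applies), and your sketch of it via the folding machinery is at the same level of rigor as the paper's; note only that, like the paper, you are implicitly normalizing $\G$ up to proper homotopy equivalence so that the finite-ended components of $\G\setminus\G_c$ do not carry branching arbitrarily far from $\G_c$. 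You genuinely diverge in two places. First, where the paper's \Cref{lem:fulllength} modifies $\G$ by a proper homotopy equivalence to merge all infinite-ended trees into a single $T_0$ and uses one length function, you take $\ell=\max_i\ell_i$; this is legitimate (a finite maximum of continuous ultranorms is a continuous ultranorm, and the sublevel set of the max is the intersection of sublevels) and avoids the normalization. Second, and more substantively, your endgame differs: the paper passes to the quotient metric space $(\cP,d)$, realizes it as the leaf set of a simplicial tree (\Cref{prop:PformsLeaves}, resting on the appendix that ultrametric spaces are $0$-hyperbolic), whereas you read off $\asdim=0$ directly from the partition into cosets of the clopen subgroups $B_{N(R)}$, using only the coarsely Lipschitz half of \Cref{prop:WDCEtype} to trap each $\rho$-ball of radius $R$ in a single coset. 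Your route is shorter and avoids the tree construction entirely (the paper builds the tree anyway for the action in \Cref{SEC:Length}); the only point to make explicit is that CB subsets are $\rho$-bounded --- immediate from condition (2) of \Cref{PROP:RosendalCB} applied to the left multiplication action on $(\PMap(\G),\rho)$ --- so that left-invariance makes the cosets $gB_N$ uniformly bounded, and that $B_N$ is indeed a clopen subgroup because $\ell$ is a continuous, $\Z_{\ge 0}$-valued ultranorm.
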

     
    \begin{RMK}
    \label{rmk:asdim}
    As we have seen in \Cref{SEC:Length}, a pseudo-metric space $(\hat{X},\hat{d})$ induces a metric space $(X,d)$ by collapsing all the pairs of points of distance zero. The induced metric space $(X,d)$ is canonical in the sense that it is the smallest metric space that preserves positive distances in $\hat{d}$.
    
    We show that in fact $\asdim (\hat{X},\hat{d}) = \asdim(X,d)$. Let $q:(\hat{X},\hat{d}) \to (X,d)$ be the quotient map. Pick $\hat{x} \in \hat{X}$, and $x \in X$ with $q(\hat{x})=x$, and consider $R$-balls 
    $B_R(\hat{x}) \subset (\hat{X},\hat{d})$ and $B_R(x) \subset (X,d)$. For any open set $\hat{U} \subset \hat{X}$, we have:
    \[
        B_R(\hat{x}) \cap \hat{U} \neq \emptyset \quad \text{ if and only if } \quad B_R(x) \cap U \neq \emptyset,
    \]
    because any positive distance in $\hat{d}$ is preserved under $q$.
    Hence, considering \Cref{def:asdim}, $B_R(\hat{x})$ and $B_R(x)$ each intersects the same number of  uniformly bounded sets from the open covers $\{\hat{U}_\alpha\}$ and $\{U_\alpha\}$ respectively. Therefore, we can conclude that 
    \[
        \asdim (\hat{X},\hat{d}) = \asdim (X,d).
    \]
    \end{RMK}
    
    Before we begin with the proof of \Cref{thm:asdim0} we need to fix an appropriate length function on our groups. 
    
    \begin{DEF}
        A length function on a group, $\ell: G \rightarrow [0,\infty)$ is \textbf{full} if $\ell^{-1}(\{0\})$ is CB in $G$. 
    \end{DEF}
    
    We call these length functions ``full" because they will be used to capture the ``full" geometry of our groups. 
    
    \begin{LEM}\label{lem:fulllength}
        Let $\G$ be a locally finite, infinite graph with $\PMap(\G)$ locally CB, but not globally CB. If $|E_{\ell}|=1$ then $\PMap(\G)$ admits a full length function. In fact, the length function defined in \Cref{prop:lengthfunction} can be taken to be full. 
    \end{LEM}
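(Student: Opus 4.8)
The plan is to build the full length function by combining the length functions of \Cref{prop:lengthfunction} over all the relevant tree components, and then to identify its zero set with a coarsely bounded subgroup. First I would record the structural consequences of the hypotheses. Since $\PMap(\G)$ is locally CB and $|E_\ell(\G)|=1<\infty$, \Cref{prop:finiteendloopsloccb} guarantees that $\G\setminus\G_c$ has only finitely many components $T_1,\dots,T_m$ with $|E(T_i)|=\infty$; since $\PMap(\G)$ is not CB while $|E_\ell(\G)|=1$, \Cref{THM:oneendcb} forces $E(\G)\setminus E_\ell(\G)$ to contain an accumulation point, so $m\ge 1$. For each $i$ let $\ell_i$ be the length function attached to $T_i$ by \Cref{prop:lengthfunction}, and set $\ell:=\max_{1\le i\le m}\ell_i$. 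Each $\ell_i$ is a continuous ultranorm, and the maximum of finitely many ultranorms is again a continuous ultranorm (the inequality $\ell_i(gh)\le\max\{\ell_i(g),\ell_i(h)\}$ passes to the maximum), so $\ell$ is a length function of exactly the type produced in \Cref{prop:lengthfunction}, coinciding with it when $m=1$. It then remains to show that $\ell^{-1}(\{0\})$ is CB in $\PMap(\G)$.

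Next I would pin down $\ell^{-1}(\{0\})=\bigcap_i\ell_i^{-1}(\{0\})$ as the subgroup of mapping classes fixing, up to proper homotopy, every geodesic line lying in any $T_i$; concretely, these are the classes admitting a representative with no nontrivial word-map support deep in the trees $T_i$. Choosing the compact set $K$ exactly as in the proof of \Cref{prop:finiteendloopsloccb} — so that $\cV_K\cong\PMap(\G_1)$ is CB, where $\G_1$ is the single monster component carrying the unique end accumulated by loops — I would first check $\cV_K\subseteq\ell^{-1}(\{0\})$: an element of $\cV_K$ fixes $K$ and restricts on each complementary tree component to a pure mapping class of a tree, hence to the identity by \Cref{prop:rank0}, so it fixes every deep line in each $T_i$ and has $\ell=0$. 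Thus $\cV_K$ is an open CB subgroup of $\ell^{-1}(\{0\})$.

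Finally I would verify Rosendal's criterion (\Cref{PROP:RosendalCB}) for $\ell^{-1}(\{0\})$ relative to the neighborhood $\cV_K$. Given $g\in\ell^{-1}(\{0\})$, \Cref{LEM:folding} yields a compactly supported $u$ with $ug\in\cV_K$; since $ug\in\cV_K\subseteq\ell^{-1}(\{0\})$ and $\ell^{-1}(\{0\})$ is a group, $u\in\ell^{-1}(\{0\})$ as well, so $u$ is a compactly supported class whose support is confined to $\G_c$, the finite-end-space trees, and the shallow parts of the $T_i$, never reaching deep into any $T_i$. I would then decompose $u$ via \Cref{LEM:RaysLoops} into a loop part and ray parts supported in this controlled region, and push them into the monster $\G_1$ using the loop swaps of \Cref{sec:CBPMCG}, applying \Cref{lem:supp_on_loops} and \Cref{LEM:RayMap} to place $u$, and hence $g$, inside $(\cF\cV_K)^N$ for a fixed finite $\cF$ and fixed $N$. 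This shows $\ell^{-1}(\{0\})$ is CB in itself, hence CB in $\PMap(\G)$ by \Cref{COR:CBinLargerGroups}, so $\ell$ is full.

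The main obstacle is this last step: exactly as in the Millipede Monster case (\Cref{PROP:MillipedeCB}), the support of $u$ may meet unboundedly many rays, so one must absorb all the tail word maps simultaneously with a single auxiliary loop swap in order to keep $\cF$ and $N$ independent of $g$ (uniformity over $g\in\ell^{-1}(\{0\})$, though not across different $K$, is what Rosendal's criterion demands). The one new wrinkle relative to the monster arguments is verifying that the shallow tree support contributed by the finite-end-space components can be normalized into $\G_1$ in the same way without reintroducing depth in any $T_i$, so that the resulting expression still witnesses membership in $\ell^{-1}(\{0\})$.
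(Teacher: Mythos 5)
Your setup is fine and close to the paper's: you invoke \Cref{prop:finiteendloopsloccb} to get the finitely many infinite-end-space components $T_1,\dots,T_m$ of $\G\setminus\G_c$, note $m\ge 1$ since $\PMap(\G)$ is not CB, and build an unbounded continuous ultranorm from them. (The paper streamlines this by first replacing $\G$, via a proper homotopy equivalence permitted by \Cref{thm:InfGraphClass}, with a graph in which a \emph{single} component $T_0$ carries all the infinite end spaces, so that the length function is literally the one from \Cref{prop:lengthfunction}; your $\max$ of finitely many ultranorms is a harmless variant of this.) You also correctly identify $\ell^{-1}(\{0\})$ as the classes fixing every geodesic line in each $T_i$ up to proper homotopy.

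The genuine gap is your final step, and it is avoidable. Having identified $\ell^{-1}(\{0\})$, there is no need to re-run Rosendal's criterion at all: an element of the zero set can be represented so that its support meets each $T_i$ only in the finite stem below the first branch point (every other edge of $T_i$ lies on a geodesic line of $T_i$), so $\ell^{-1}(\{0\})$ is the subgroup of classes fixing the $T_i$ up to proper homotopy, which is the pure mapping class group of a graph with one end accumulated by loops and \emph{discrete} remaining end space --- CB by a direct application of \Cref{THM:oneendcb}, hence CB in $\PMap(\G)$ by \Cref{COR:CBinLargerGroups}. This is exactly the paper's two-line conclusion. Your route instead tries to re-prove this instance of \Cref{THM:oneendcb} by hand via \Cref{LEM:folding}, loop swaps, and ray maps, and it is incomplete precisely there: (a) \Cref{LEM:RaysLoops} and \Cref{LEM:RayMap} are stated only for the monster graphs $\G_N$, so neither applies literally to $\G$ with the trees $T_i$ attached, and word maps supported on the stems of the $T_i$ or on finite-end-space tree components are not covered by those lemmas as written; (b) coarse boundedness of the subgroup $\ell^{-1}(\{0\})$ requires verifying Rosendal's criterion for \emph{every} basic neighborhood $\cV_K$, whereas your sketch treats only the single $K$ produced in the proof of \Cref{prop:finiteendloopsloccb}; and (c) you yourself flag the absorption of the tree-supported word maps as the ``main obstacle'' and leave it open. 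As written, the proposal does not close, even though everything after your second paragraph could be replaced by the citation of \Cref{THM:oneendcb}.
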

    
    \begin{proof}
        Consider the connected components $\{T_i\}$ of $\G \setminus \G_c$. Since $\PMap(\G)$ is not CB, there exists an accumulation point in $E(\G)\setminus E_\ell(\G) = \bigsqcup E(T_i)$ so some $T_i$ has infinite end space. At the same time, $\PMap(\G)$ is locally CB and hence only finitely many of the $T_{i}$ have infinite end spaces. Thus, we can modify $\G$ by a proper homotopy equivalence in order to guarantee that $\G$ has only a single $T_{0}$ with an infinite end space. Use this $T_{0}$ to build the unbounded length function, $\ell$, as in \Cref{prop:lengthfunction}. 
        
        We claim that this length function is full. Indeed, $\ell^{-1}(\{0\})$ is exactly the subgroup of $\PMap(\G)$ that fixes $T_{0}$ up to proper homotopy. Since the end space of $\G \setminus T_{0}$ is discrete we see that $\ell^{-1}(\{0\})$ is CB by \Cref{THM:oneendcb}.
    \end{proof}
    
    \begin{LEM}\label{lem:fullCnCB}
        Let $\G$ be a locally finite, infinite graph with $|E_\ell|=1$ and with $\PMap(\G)$ locally CB, but not globally CB. Let $\ell$ be the unbounded, full, length function given by \Cref{lem:fulllength}. Then $H_{n} = \ell^{-1}([0,n])$ is CB in $\PMap(\G)$ for all $n \in \Z_{\geq 0}$. 
    \end{LEM}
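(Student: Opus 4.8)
The plan is to prove that $H_n$ is CB \emph{in itself} and then upgrade this to CB-ness in $\PMap(\G)$ via \Cref{COR:CBinLargerGroups}. First I would record that $H_n$ really is a Polish subgroup: since $\ell$ is an ultranorm (\Cref{prop:lengthfunction}), we have $\ell(gh)\le\max\{\ell(g),\ell(h)\}$ and $\ell(g^{-1})=\ell(g)$, so $H_n=\ell^{-1}(\{0,1,\ldots,n\})$ is closed under multiplication and inversion; as $\ell$ is continuous and $\Z_{\ge 0}$-valued, $H_n$ is in fact clopen, hence Polish in the subspace topology. Thus it suffices to produce a graph whose pure mapping class group is visibly isomorphic to $H_n$ and to which \Cref{THM:oneendcb} applies.

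The key geometric observation is that $\ell(g)\le n$ forces $g$ to be trivial far out in $T_0$. Recall that, after the modification in \Cref{lem:fulllength}, $T_0$ is the unique component of $\G\setminus\G_c$ with infinite end space, and it is a locally finite tree attached to $\G_c$ at a single vertex. Let $q_1,\ldots,q_r$ be the finitely many vertices of $T_0$ at distance $n$ from $\G_c$, and let $S_j$ be the subtree of $T_0$ hanging below $q_j$. By \Cref{rmk:RaymapsareWordmaps} the restriction of $g$ to $T_0$ is, up to proper homotopy, a finite product of word maps; but a word map supported at distance $\ge n$ in $T_0$ would move a geodesic line $L$ with $d(L,\G_c)\ge n$, contradicting $g(L)\simeq L$, which $\ell(g)\le n$ demands. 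Hence $g$ can be properly homotoped to the identity on each $S_j$.

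Consequently I would form the graph $\G_n^{\sharp}$ obtained from $\G$ by replacing each subtree $S_j$ with a single ray emanating from $q_j$, and argue that sending $[g]\in H_n$ (represented so that $g|_{S_j}=\Id$) to the induced proper homotopy equivalence of $\G_n^{\sharp}$ gives an isomorphism of topological groups $H_n\cong\PMap(\G_n^{\sharp})$, with inverse extending a mapping class of $\G_n^{\sharp}$ by the identity across the $S_j$. Now $\G_n^{\sharp}$ still has $|E_\ell|=1$, while $E(\G_n^{\sharp})\setminus E_\ell(\G_n^{\sharp})$ has no accumulation point: the infinitely many accumulating ends of $T_0$ have collapsed to the $r$ isolated ends of the new rays, and every remaining end of $E\setminus E_\ell$ accumulates, if at all, only at the single end of $E_\ell$, exactly as in the Millipede case. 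Therefore \Cref{THM:oneendcb} gives that $\PMap(\G_n^{\sharp})$ is CB, whence $H_n$ is CB in itself and, by \Cref{COR:CBinLargerGroups}, CB in $\PMap(\G)$.

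The main obstacle I anticipate is the well-definedness of the identification $H_n\cong\PMap(\G_n^{\sharp})$: I must check that two representatives of $[g]$ that are both the identity on the $S_j$ are properly homotopic \emph{through} maps that remain the identity on the $S_j$, so that the induced element of $\PMap(\G_n^{\sharp})$ is unambiguous, and that the resulting homomorphism and its inverse are continuous. This is the only step demanding genuine care, and it is where the relative-homotopy machinery in the spirit of \Cref{LEM:multiplebps} enters; everything else is bookkeeping on end spaces. A more self-contained alternative that sidesteps this identification is to rerun the proof of \Cref{THM:oneendcb} directly on $H_n$: enlarge the given compact set to contain the radius-$n$ part of $T_0$ together with the relevant core loops, use that every $g\in H_n$ is trivial on $T_0$ beyond distance $n$, and fit $g$ into $(\cF\cV)^m$ using folding (\Cref{LEM:folding}), loop swaps, and the word-map lemmas of \Cref{sec:CBPMCG} as before.
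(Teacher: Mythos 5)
Your overall route is the same as the paper's: show every class in $H_n$ can be represented by a map trivial beyond depth $n$ in the infinite-ended tree, identify $H_n$ with the pure mapping class group of a simpler graph with $\lvert E_\ell\rvert =1$ and $E\setminus E_\ell$ discrete, apply \Cref{THM:oneendcb}, and upgrade via \Cref{COR:CBinLargerGroups}. (The paper does this by taking $\Delta_n$ to be the radius-$n$ neighborhood of $\G_c$ and asserting $H_n\cong\PMap(\Delta_n)\cong\PMap(\G_N)$; your $\G_n^{\sharp}$ is a variant of the same truncation.) However, your ``key geometric observation'' is false as stated, and this is a genuine gap. You claim that a word map supported at distance $\ge n$ in $T_0$ must move a geodesic line $L\subset T_0$ with $d(L,\G_c)\ge n$. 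Take $T_0$ to be a comb: a spine ray with teeth $t_1,t_2,\ldots$ attached at spine vertices $u_i$ at distance $i$ from $\G_c$, and let $g=\wm{w,I}$ with $I$ at depth $m\gg n$ on the tooth $t_1$. Every geodesic line in $T_0$ through $I$ must pass through $u_1$, hence has $d(L,\G_c)\le 1$; every line with $d(L,\G_c)\ge n$ avoids $t_1$ entirely and is fixed pointwise. So $\ell(g)\le 2\le n$, i.e.\ $g\in H_n$, even though its support lies arbitrarily deep inside one of your subtrees $S_j$. Thus $\ell(g)\le n$ does \emph{not} directly force triviality beyond depth $n$: lines through the support of a word map may be compelled to dip below depth $n$ at the nearest branch point, and your inference ``hence $g$ can be properly homotoped to the identity on each $S_j$'' does not follow from the argument given.

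The conclusion you want is nevertheless true, and the missing ingredient is a sliding argument: if a (non-nullhomotopic) word map on $I\subset T_0$ moves no line at distance $\ge n$, then the nearest vertex below $I$ that branches toward another end of $T_0$ lies at depth $<n$, and the stretch between is free of end-bearing side branches; one can then slide $I$ down this stretch (two word maps with the same word on nearby intervals induce the same based $\pi_1$-maps, so \Cref{LEM:multiplebps} applies, and the word maps spawned on end-free finite side trees by \Cref{LEM:push} are properly nullhomotopic) until the support sits at depth $<n$, hence off $S_j$. Only after this normalization does your identification $H_n\cong\PMap(\G_n^{\sharp})$ make sense -- note it is also needed on the $\G_n^{\sharp}$ side, since elements of $\PMap(\G_n^{\sharp})$ need not be the identity on the new rays, though there the sliding is easy because rays have no branching. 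With that repair, the rest of your argument (clopen-ness of $H_n$ via the ultranorm, discreteness of $E(\G_n^{\sharp})\setminus E_\ell(\G_n^{\sharp})$, and the applications of \Cref{THM:oneendcb} and \Cref{COR:CBinLargerGroups}) is sound and matches the paper's proof in all essentials.
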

    
    \begin{proof}
    Let $\Delta_n \subset \G$ be the set of points in $\G$ of distance $<n$ 
    from the core graph $\G_c$. Since $\PMap(\G)$ is locally CB, there are finitely many components of $\G \setminus \G_c$ with infinite end space. Thus, by the classification of graphs (\Cref{thm:InfGraphClass}), it follows that $\Delta_n$ is properly homotopy equivalent to one of monster graphs $\G_N$ for some $N\in \Z_{\ge 0} \cup \{\infty\}$ (See \Cref{ss:stdforms} for the definition.) Because every mapping class in $H_n = \ell^{-1}([0,n])$ can be properly homotoped to be totally supported in $\Delta_n$, for each $n$ it follows that $H_n \cong \PMap(\Delta_n) \cong \PMap(\G_N)$ for some $N \in \Z_{\ge 0} \cup \{\infty\}$. However, we know $\PMap(\G_N)$ is CB for every such $N$ by \Cref{THM:oneendcb}, concluding the proof.
    \end{proof}
     
     \begin{proof}[{Proof of \Cref{thm:asdim0}}]
       First, note that if $\PMap(\G)$ is CB, it has asymptotic dimension $0$ as it is quasi-isometric to a point. Assume that $\PMap(\G)$ is not CB. 
       
       Fix an unbounded, full, length function $\ell$ on $\PMap(\G)$ as in the previous lemma. Observe that the induced pseudo-metric $\hat{d}$ on $\PMap(\G)$ is left-invariant and coarsely proper because each metric ball is exactly given by the sets $H_{n} = \ell^{-1}([0,n])$ defined above. Thus they are CB by \Cref{lem:fullCnCB}.
       
       Then by \Cref{prop:WDCEtype} and \Cref{lem:asdimCE}, the asymptotic dimension of $\PMap(\G)$ is well-defined and can be calculated as $\asdim \PMap(\G) = \asdim (\PMap(\G),\hat{d})$.
       By \Cref{rmk:asdim}, this is further equal to $\asdim (\cP,d)$.
       However, by \Cref{prop:PformsLeaves}, $\cP$ can be identified as the leaves in some simplicial tree, so it follows that $\asdim (\cP,d) = 0$ and $\asdim (\PMap(\G),\hat{d}) =0$ as well.
     \end{proof}
    
\subsubsection{Bounded Geometry}

We take a brief detour here and discuss groups of bounded geometry. The tools we have developed, in particular, the existence of unbounded, full, length functions, allow us to show that some of our groups do \emph{not} have bounded geometry. 

\begin{DEF}[{\cite[Definition 3.6]{rosendal2022}}]
    A Polish group $G$ is said to have \emph{bounded geometry} if there is a coarsely bounded set $A\subset G$ covering every coarsely bounded set $B\subset G$ by finitely many left-translates of $A$. That is, $B \subset \mathcal{F}A$ for some finite set $\mathcal{F}\subset G$ which depends on $B$.
\end{DEF}

Generally speaking, groups of bounded geometry should be ``most closely'' related to locally compact groups. Rosendal asks the following question, which remains open as our groups fail to provide examples.

\begin{Q}[{\cite[Problem A.15]{rosendal2022}}]
     Find a non-locally compact, topologically simple, Polish group of bounded geometry that is not coarsely bounded.
\end{Q}
     
\begin{PROP}\label{prop:BddGeometry}
    Let $\G$ be a locally finite, infinite graph with $|E_{\ell}|=1$ and $\PMap(\G)$ locally CB, but not globally CB. Then $\PMap(\G)$ does not have bounded geometry.
\end{PROP}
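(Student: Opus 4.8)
The plan is to leverage the full, unbounded ultranorm length function $\ell$ supplied by \Cref{lem:fulllength}, together with the fact (\Cref{lem:fullCnCB}) that each sublevel set $H_n := \ell^{-1}([0,n])$ is CB. First I would record two structural observations. Since $\ell$ is an ultranorm, each $H_n$ is in fact a \emph{subgroup}, and since $\ell$ is continuous and $\Z$-valued each $H_n$ is clopen. Moreover $\ell$ induces the left-invariant ultrametric $\hat d(g,h) = \ell(g^{-1}h)$, and the closed ball of radius $k$ about $g$ is exactly the left coset $gH_k$; in particular distinct cosets of $H_k$ are disjoint and $H_n$ is the disjoint union of the $H_k$-cosets it contains.

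Next I would set up the contradiction. Suppose $A \subseteq \PMap(\G)$ is a CB set witnessing bounded geometry. Because the left-multiplication action of $\PMap(\G)$ on $(\PMap(\G),\hat d)$ is continuous and isometric, condition (2) of \Cref{PROP:RosendalCB} forces $A$ to have finite $\hat d$-diameter (cf.\ \Cref{rmk:unbddlengthftn}), so $A \subseteq H_k$ for some $k$. Consequently every left translate satisfies $gA \subseteq gH_k$, i.e.\ each translate of $A$ lies inside a single coset of $H_k$. Thus, to defeat bounded geometry it suffices to produce a CB set $B$ meeting infinitely many distinct $H_k$-cosets; equivalently, to show $[H_n : H_k] = \infty$ for some $n$, since then $B := H_n$ (CB by \Cref{lem:fullCnCB}) cannot be covered by finitely many translates $g_1A,\dots,g_NA$: each $g_iA$ lies in one coset $g_iH_k$, so at most $N$ cosets of $H_k$ meet $\bigcup_i g_iA$, whereas $H_n$ contains infinitely many.

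The crux is therefore to exhibit infinitely many $H_k$-cosets inside a single bounded level $H_n$, and here I would use word maps supported deep in the tree $T_0$ (the component of $\G \setminus \G_c$ with infinite end space from which $\ell$ is built). I would fix one interval $I \subset T_0$ lying far enough out toward the accumulation end that $n := \ell(\wm{w,I}) > k$; this is possible because $\ell(\wm{w,I}) \to \infty$ as $I$ runs out to that end, by the final paragraph of \Cref{prop:lengthfunction}, and this value depends only on the support $I$ (the set of geodesic lines moved by $\wm{w,I}$ is exactly those through $I$) and not on the nontrivial word $w$. Since $\rk(\G) = \infty$, the group $\pi_1(\G)$ is infinite, and for distinct $w,w' \in \pi_1(\G)$ the composition rule \Cref{LEM:compositionTongues} (valid as $I \subset T_0 \subseteq \G \setminus \G_c$) gives $\wm{w',I}^{-1}\wm{w,I} \simeq \wm{w'^{-1}w,I}$ with $w'^{-1}w \neq 1$, whence $\ell(\wm{w',I}^{-1}\wm{w,I}) = n > k$. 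Therefore the word maps $\{\wm{w,I}\}_{w \in \pi_1(\G)}$ represent infinitely many distinct cosets of $H_k$, all contained in $H_n$. This gives $[H_n : H_k] = \infty$ and completes the contradiction, so no CB set $A$ can witness bounded geometry.

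The main obstacle I anticipate is the bookkeeping in the last step: verifying cleanly that $\ell(\wm{w,I})$ is a single value $n_I$ independent of the nontrivial word $w$ and that $n_I \to \infty$ as $I$ approaches the accumulation end, and checking the orientation convention of $I$ in the composition rule (\Cref{RMK:I_ConventionMatters}) so that the product of two word maps on $I$ is again a word map on $I$ whose defining word is (some ordering of) the product, which is nontrivial in either order. Everything else is routine once the dictionary ``radius-$k$ ball $=$ left $H_k$-coset'' is in place.
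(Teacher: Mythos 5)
Your proposal is correct, and it is built on the same three pillars as the paper's proof: the full length function $\ell$ from \Cref{lem:fulllength}, the fact that each sublevel set $H_n=\ell^{-1}([0,n])$ is CB (\Cref{lem:fullCnCB}), and the observation that any CB set has bounded $\ell$ and so is trapped in (a translate of) some $H_k$. Where you diverge is the finishing move. The paper never invokes the subgroup/coset structure: it fixes a geodesic line $L \subset T$ at distance $n$ from $\G_c$ and counts orbits, noting that $H_n$ fixes the proper homotopy class of $L$ while the $H_{n+1}$-orbit of $L$ is infinite, so $\lvert \cF H_n \cdot L\rvert \le \lvert\cF\rvert < \infty$ rules out $H_{n+1}\subset \cF H_n$. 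You instead use the ultranorm structure to promote each $H_k$ to a clopen subgroup whose cosets are exactly the $\hat d$-balls, and then show $[H_{n_I}:H_k]=\infty$ by exhibiting word maps $\{\wm{w,I}\}$ with $I$ deep in $T_0$, distinguished modulo $H_k$ via the composition rule (\Cref{LEM:compositionTongues}). These are two renderings of the same phenomenon --- by orbit--stabilizer, the word maps that make your cosets distinct are precisely the elements giving the paper's infinite orbit of $L$, with $H_k$ playing the role of the stabilizer --- but your coset formulation makes explicit the pleasant fact that $(\PMap(\G),\hat d)$ is a disjoint union of balls-as-cosets, at the cost of the extra verification that $\ell$ takes a uniform value on nontrivial word maps over a fixed $I$. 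Two small tidying points: finite $\hat d$-diameter gives only $A \subseteq gH_k$ for some $g$ (a single coset), which is all your argument needs; and you do not need $\ell(\wm{w,I})$ to be literally independent of the nontrivial word $w$ --- the upper bound $\ell(\wm{w,I})\le n_I$ (the moved lines are among those through $I$) together with the lower bound $n_I > k$ for \emph{every} nontrivial word (every such word map moves every line through $I$, and $I$ near the accumulation end admits lines through it staying far from $\G_c$, exactly as in the last paragraph of \Cref{prop:lengthfunction}) already yields infinitely many distinct $H_k$-cosets inside the CB set $H_{n_I}$.
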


\begin{proof}
    Let $\ell$ be an unbounded, full, length function given by \Cref{lem:fulllength}, and let $T$ be the component of $\G \setminus \G_{c}$ used to define $\ell$ and $H_n=\ell\inv\left([0,n]\right)$. Note that every CB set in $\PMap(\G)$ is contained in some $H_n$. Furthermore, each of the sets $H_{n}$ are CB by \Cref{lem:fullCnCB}.

    To show that $\PMap(\G)$ does not have bounded geometry we will show that finitely many translates of $H_n$ cannot cover $H_{n+1}$. Let $L$ be a geodesic line in $T$ which is at distance $n$ from $\G_c$.  Then the action of $H_n$ on $L$ is trivial, while the $H_{n+1}$ orbit of $L$ is infinite.  Let $\mathcal{F}$ be any finite set in $\PMap(\G)$, then $\left\lvert \mathcal{F} H_n \cdot L \right\rvert= \left\lvert \mathcal{F}\cdot L \right\rvert \leq \left\lvert \mathcal{F} \right\rvert<\infty$. Thus, $H_{n+1} \not\subset \mathcal{F}H_n$, so $\PMap(\G)$ does not have bounded geometry.
\end{proof}

\subsection{Pure Mapping Class Groups of Infinite Asymptotic Dimension} \label{ssec:asdimInf}
     
\begin{THM}
    \label{thm:asdimInf}
    Let $\G$ be a locally finite, infinite graph with $|E_\ell| \ge 2$ and $\PMap(\G)$ locally CB. Then the asymptotic dimension of $\PMap(\G)$ is $\infty$.
\end{THM}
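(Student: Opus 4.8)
The plan is to show $\asdim\PMap(\G)\ge n$ for every $n$, which forces $\asdim\PMap(\G)=\infty$. By \Cref{prop:WDCEpmap} and \Cref{lem:asdimCE} the asymptotic dimension is a well-defined coarse invariant, and since $\asdim\Z^n=n$ while $\asdim$ is monotone under coarse embeddings, it suffices to construct, for each $n$, a coarse embedding $\iota_n:\Z^n\to\PMap(\G)$. Observe first that by \Cref{THM:LocallyCB} the hypotheses (infinite rank and locally CB) force $2\le|E_\ell(\G)|<\infty$; in particular there are only finitely many ends accumulated by loops, so --- unlike the $|E_\ell|=\infty$ situation --- I cannot use infinitely many loop shifts with pairwise disjoint supports attached to distinct pairs of ends. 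The embeddings must instead be built between a single fixed pair of ends, which is the source of the difficulty.

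To build $\iota_n$, fix $e_-,e_+\in E_\ell(\G)$ and the line $L$ between them in the spanning tree; the loops incident to $L$ are indexed by $\Z$. Partition them into $n$ disjoint infinite bands $\A_1,\dots,\A_n$, each accumulating onto both $e_-$ and $e_+$, and let $h_i\in\PMap(\G)$ be the loop shift associated to $\A_i$ from \Cref{ss:loopshifts}. I would first record a lemma that, because the $h_i$ permute pairwise disjoint collections of loops, they pairwise commute up to proper homotopy and generate a subgroup isomorphic to $\Z^n$; then set $\iota_n(a_1,\dots,a_n)=h_1^{a_1}\cdots h_n^{a_n}$. The coarsely Lipschitz (upper) bound is automatic from subadditivity of any length function, $\ell(\iota_n(a))\le\sum_i|a_i|\,\ell(h_i)$, so the content is the coarse lower bound.

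For the lower bound I would produce $n$ coarsely Lipschitz ``partial flux'' maps $F_1,\dots,F_n:\PMap(\G)\to\Z$ with $F_i(h_j)=\delta_{ij}$; then $F=(F_1,\dots,F_n)$ is coarsely Lipschitz and $F\circ\iota_n$ lies within bounded distance of $\mathrm{id}_{\Z^n}$, which immediately forces $\iota_n$ to be coarsely expanding and hence a coarse embedding. Each $F_i$ is modeled on the flux construction of \Cref{ssec:fluxMap}, but with the corank counted only along the band $\A_i$: morally it records the net number of $\A_i$-loops that an element pushes across the cut point $x_0$. Crucially $F_i$ depends on the choice of band, so it is \emph{not} conjugation-invariant, which is exactly what is needed to separate the mutually conjugate generators $h_1,\dots,h_n$.

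The main obstacle is precisely the well-definedness and coarse-Lipschitz control of these partial fluxes. The genuine flux $\Phi$ is a homomorphism, but $F_i$ cannot be: the bands are interchanged by a mapping class (the $h_i$ are all conjugate, e.g.\ by a full loop shift), so no homomorphism --- indeed no homogeneous quasimorphism --- distinguishes them, and the $\Z^n$ we build is invisible both to $H^1(\PMap(\G);\Z)$ (whose rank is only $|E_\ell|-1$) and to every homogeneous quasimorphism. Thus $F_i$ can at best be a quasi-invariant, and the real work is to show it has bounded oscillation on the coarsely bounded neighborhood $\cV_K$ --- which should hold because, by the decomposition of $\G\setminus K$ in \Cref{prop:finiteendloopsloccb}, every complementary component of the relevant $K$ has at most one end accumulated by loops and hence supports no loop shift --- together with bounded defect under multiplication. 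Equivalently one may bypass $F_i$ and argue directly in the coarsely proper word metric that expressing $\iota_n(a)$ as a product of elements of $\cV_K$ and a fixed finite set requires $\gtrsim\sum_i|a_i|$ factors. An alternative route I would develop in parallel is to realize each coordinate through a continuous $\PMap(\G)$-action on an ultrametric (hence $0$-hyperbolic) tree, as in \Cref{cor:actionOnTree}, on which $h_i$ acts loxodromically while the other $h_j$ act elliptically, so that the orbit map of $\Z^n$ into the $\ell^1$-product of these $n$ trees is a coarse embedding; constructing these $n$ genuinely independent actions is the step I expect to be hardest.
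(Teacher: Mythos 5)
Your setup coincides with the paper's: the proof of \Cref{thm:asdimInf} also takes $n$ commuting loop shifts $h_1,\dots,h_n$ supported on disjoint infinite bands of loops (the paper partitions the loops of an embedded ladder $\Lambda$ into sublattices $\Lambda_0,\dots,\Lambda_{k-1}$ by residue mod $k$), and the coarsely Lipschitz upper bound is, as you say, automatic. But the entire content is the coarse lower bound, and there your proof is announced rather than executed, and both announced routes have concrete unresolved problems. For the partial fluxes $F_i$: the quantity ``net number of $\A_i$-loops pushed across $x_0$'' is not well defined on mapping classes, since a proper homotopy equivalence need not respect the band decomposition even coarsely --- the image of an $\A_i$-loop can be a word in loops from all $n$ bands, and a single band does not give a filtration by free factors that $f_*$ interacts with the way the $A_m$'s do in \Cref{ssec:fluxMap}. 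Moreover, for $F=(F_1,\dots,F_n)$ to be coarsely Lipschitz with respect to the coarsely proper metric you need uniformly bounded differences $\lvert F_i(gb)-F_i(g)\rvert\le C$ for all $g$ and all $b$ in a CB set, which is much stronger than boundedness of $F_i$ on $\cV_K$; since a general $b$ can mix the bands, this is precisely where the conjugacy obstruction you yourself identified re-enters, and you give no argument that it can be overcome. The tree route is in worse shape: the trees of \Cref{SEC:Length} and \Cref{cor:actionOnTree} exist only when some component of $\G\setminus\G_c$ has infinite end space, and under the hypotheses of \Cref{thm:asdimInf} there may be no such component at all (for the ladder graph $E(\Lambda)=E_\ell(\Lambda)$), so the $n$ independent actions with $h_i$ loxodromic would have to be built from scratch; nothing of the sort appears in the paper.

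The paper closes this gap by a mechanism that sidesteps coordinate separation entirely: a single subadditive \emph{displacement function} $\cD(f)=\cork\bigl(A,f_*(A)\cap A\bigr)+\cork\bigl(B,f_*(B)\cap B\bigr)$, where $A$ and $B$ are the fundamental groups of the two sides of the cut point $x_0$ (finiteness is \Cref{lem:finiteDisplacement}; continuity, $\cD(\Id)=0$, and the triangle inequality are \Cref{prop:displacementLength}). After symmetrizing, $\lvert\cD\rvert$ is a continuous length function inducing a continuous left-invariant pseudo-metric $d_{\cD}$, and one computes directly that $\lvert\cD\rvert(h_1^{a_1}\cdots h_n^{a_n})=\sum_i\lvert a_i\rvert$, so $(\Z^n,\ell^1)$ embeds \emph{isometrically} into $(\PMap(\G),d_{\cD})$. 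No coordinate projections are needed: a length restricting to exactly $\ell^1$ on the subgroup already forces coarse expansion, because by \Cref{prop:WDCEtype} the identity map $(\PMap(\G),d_{\max})\to(\PMap(\G),d_{\cD})$ is automatically coarsely Lipschitz whenever $d_{\cD}$ is continuous and left-invariant and $d_{\max}$ is coarsely proper. This is fully consistent with your correct observation that the $h_i$ are all conjugate and invisible to $H^1$ and to homogeneous quasimorphisms: $\cD$ is neither a homomorphism nor conjugation-invariant, merely subadditive, and subadditivity plus continuity is all the machinery requires. If you want to repair your plan along its own lines, replace the vector of quasi-fluxes by one such length function; as written, the lower bound --- the theorem's actual content --- is missing.
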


We prove this by coarsely embedding $\Z^k$ with the word metric into $\PMap(\G)$ with a coarsely proper metric for each $k$. Note such a coarsely proper metric exists for $\PMap(\G)$ by \Cref{prop:WDCEtype}, as it is locally CB. We get the embedding from a (not necessarily coarsely proper) length function on $\PMap(\G)$, but we need a few observations beforehand to justify the construction.

\begin{LEM}\label{lem:freefactorint}
    Let $F$ be a free group, and $A$ be a free factor of $F$.
    Then for any subgroup $K$ of $F$, the intersection $K \cap A$ is a free factor of $K$.
    In particular, If $K$ is also a free factor of $F$, then so is the intersection $K \cap A$.
\end{LEM}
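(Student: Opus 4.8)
The plan is to run a covering-space argument in the same spirit as the proof of \Cref{LEM:subfreefact}. First I would fix a graph model adapted to the free factor decomposition: since $A$ is a free factor of $F$, write $F = A * P$ and realize $F$ as $\pi_1(\Gamma, p)$, where $\Gamma = \Gamma_A \vee_p \Gamma_P$ is a wedge of two connected graphs joined at $p$, with $\pi_1(\Gamma_A, p) = A$ and $\pi_1(\Gamma_P, p) = P$. The essential features I want from this model are that $\Gamma_A$ is a \emph{connected subgraph of $\Gamma$ containing the basepoint} $p$ and that its fundamental group is exactly $A$.

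Next I would pass to the cover $\rho : (\Gamma_K, \tilde p) \to (\Gamma, p)$ with $\rho_*\pi_1(\Gamma_K, \tilde p) = K$. Let $\tilde\Gamma_A$ be the connected component of the preimage $\rho^{-1}(\Gamma_A)$ that contains $\tilde p$. Restricting $\rho$ makes $\tilde\Gamma_A \to \Gamma_A$ a covering of $\Gamma_A$, and the key computation is that $\rho_*\pi_1(\tilde\Gamma_A, \tilde p) = K \cap A$. This is the standard ``intersection equals the component through the basepoint'' fact: a loop representing $a \in A$ lifts, starting at $\tilde p$, to a path lying in $\rho^{-1}(\Gamma_A)$ and hence in the component $\tilde\Gamma_A$; this lift is closed precisely when $a \in K$, so an element of $A$ lies in $\rho_*\pi_1(\tilde\Gamma_A,\tilde p)$ if and only if it also lies in $K$.

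Finally I would invoke the elementary fact that the fundamental group of a connected subgraph containing the basepoint is a free factor of the ambient graph group: choosing a spanning tree $T_{\tilde\Gamma_A}$ of $\tilde\Gamma_A$ and extending it to a spanning tree $T$ of $\Gamma_K$, the basis of $\pi_1(\tilde\Gamma_A, \tilde p)$ indexed by the non-tree edges of $\tilde\Gamma_A$ is a subset of the basis of $\pi_1(\Gamma_K, \tilde p)$ indexed by the non-tree edges of $\Gamma_K$ (the two associated based loops agree because $T_{\tilde\Gamma_A} \subseteq T$ forces the relevant tree paths to coincide). Hence $\pi_1(\tilde\Gamma_A, \tilde p)$ is a free factor of $\pi_1(\Gamma_K, \tilde p)$, and pushing forward by the isomorphism $\rho_*$ shows $K \cap A$ is a free factor of $K$. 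For the ``in particular'' clause, transitivity of free factors finishes the job: if $K = (K \cap A) * Q$ and $F = K * R$, then $F = (K \cap A) * Q * R$, so $K \cap A$ is a free factor of $F$.

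I expect the main obstacle to be the middle step, namely verifying cleanly that $\tilde\Gamma_A$ is connected, covers $\Gamma_A$, and satisfies $\rho_*\pi_1(\tilde\Gamma_A, \tilde p) = K \cap A$, since the other two steps are routine. Alternatively, one could deduce the whole statement from the Kurosh subgroup theorem applied to $F = A * P$, reading off $K \cap A$ as the factor coming from the trivial double coset; but the covering-space argument stays closer to the graph-theoretic methods used elsewhere in the paper.
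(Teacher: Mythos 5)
Your proof is correct, but it takes a genuinely different route from the paper. The paper's proof is a one-step application of the Kurosh subgroup theorem to $F = A \ast B$: quoting the Kurosh decomposition of $K$ with representatives normalized so that $g_i = 1$ appears for the $A$-factor, the term $K \cap A$ is visibly one of the free factors, and the ``in particular'' clause follows by transitivity exactly as in your last step. You instead give a self-contained covering-space argument in the style of the paper's proof of \Cref{LEM:subfreefact}: realize $F = A \ast P$ on a wedge $\Gamma_A \vee_p \Gamma_P$, pass to the cover $\Gamma_K$, identify $\rho_\ast\pi_1(\tilde\Gamma_A,\tilde p) = K \cap A$ via the standard closed-lift criterion, and conclude with the spanning-tree extension argument showing that the fundamental group of a connected based subgraph is a free factor. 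All three steps check out — in particular the middle step you flagged is fine, since $\tilde\Gamma_A \to \Gamma_A$ is a covering (components of preimages of connected subgraphs are), the lift of any loop in $\Gamma_A$ based at $\tilde p$ stays in the component $\tilde\Gamma_A$, and it closes up precisely when its class lies in $K$; injectivity of $A \hookrightarrow F$ makes the identification unambiguous. What each approach buys: the Kurosh route is shorter but leans on a heavier black box (and on the normalization that the trivial double coset contributes the factor $K \cap A$), while your argument is elementary, stays closer to the graph-theoretic toolkit used throughout the paper, and in effect reproves exactly the special case of Kurosh that is needed — indeed, your closing remark that one could instead invoke Kurosh describes precisely the proof the paper gives.
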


\begin{proof}
    Let $F=A*B$ and $K \le F$ be a subgroup. By Kurosh subgroup theorem, (\cite{Kurosch1934} and see also \cite[Theorem 2.7.12]{Sklinos2021}) $K$ admits a free decomposition as
    \[
        K = \left(\ast_{i \in I_1}(K \cap g_{i}^{-1}Ag_i)\right) \ast \left(\ast_{j \in I_2}(K \cap g_{j}^{-1}Bg_{j})\right) \ast F'
    \]
    where $I_1,I_2$ are index sets, $g_i,g_j \in G$, $F'$ is a free group, and $g_i=1$ and $g_j=1$ for some $i \in I_1$ and $j \in I_2$. With this choice of $g_i$, we have that $K \cap A$ is a free factor of $K$, as desired.

    The latter part follows from the fact that a free factor of a free factor of $F$ is a free factor of $F$.
\end{proof}

Now fix a point $x_0 \in \G$ to be the midpoint of some non-loop edge. Let $\G_A := \G_0$, defined in \Cref{ssec:fluxMap} and $\G_B:= \overline{\G\setminus \G_A}$. Then write $A = \pi_1(\G_A,x_0)$ and $B= \pi_1(\G_B,x_0)$. 

By the previous lemma together with \Cref{LEM:subfreefact}, for $f \in \PPHE(\G)$ the two subgroups $f_*(A) \cap A$ and $f_*(B) \cap B$ are free factors of $A$ and $B$ respectively. Now we can show that they actually have finite corank.

\begin{LEM}\label{lem:finiteDisplacement}
    Let $f \in \PPHE(\G).$ Then $f_{*}(A) \cap A$ has finite corank in $A$ and $f_{*}(B) \cap B$ has finite corank in $B$.
\end{LEM}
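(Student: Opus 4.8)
The plan is to prove the statement for $f_*(A)\cap A\le A$ in detail; the assertion for $B$ follows by the identical argument with the two sides of $x_0$ (and of $C_L,C_R$) interchanged, using that $C_R$, like $C_L$, contains an end accumulated by loops so that $B$ has infinite rank. Throughout, let $g\in\PPHE(\G)$ be a proper homotopy inverse of $f$, so that $g_*=f_*^{-1}$ on fundamental groups (up to the basepoint conventions fixed in the flux construction), and recall from \Cref{ssec:fluxMap} the nested free factors $A_n=\pi_1(\G_n,x_0)$ of $A_0=A$, where for $k>0$ the subgraph $\G_{-k}$ is obtained from $\G_A$ by deleting the radius-$k$ ball about $x_0$ (retaining the spanning tree). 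Each $A_{-k}$ is a free factor of $A$ of \emph{finite} corank, since $\G_A\setminus\G_{-k}$ contains only the finitely many loops of $\G_A$ within distance $k$ of $x_0$.

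First I record two structural reductions. Since $f_*$ is an automorphism of $F=\pi_1(\G,x_0)=A*B$, the subgroup $f_*(A)$ is again a free factor of $F$; hence by \Cref{lem:freefactorint} the intersection $f_*(A)\cap A$ is a free factor of $F$, and then by \Cref{LEM:subfreefact} it is a free factor of $A$, so $\cork(A,f_*(A)\cap A)$ is well defined. Next, suppose we can produce some $k$ with $A_{-k}\le f_*(A)$. Then $A_{-k}\le f_*(A)\cap A\le A$, and all three are free factors of $A$ (for $A_{-k}\le f_*(A)\cap A$ this uses \Cref{LEM:subfreefact} again). Additivity of corank (\Cref{LEM:corkadditive}) gives
\[
\cork(A,A_{-k})=\cork(A,f_*(A)\cap A)+\cork(f_*(A)\cap A,A_{-k}),
\]
and as both summands are nonnegative we conclude $\cork(A,f_*(A)\cap A)\le\cork(A,A_{-k})<\infty$. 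Thus everything reduces to the single geometric claim that $A_{-k}\le f_*(A)$, equivalently $g_*(A_{-k})\le A$, for some (hence all large) $k$.

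The heart of the argument is this claim, and it is where properness and purity of $g$ enter, exactly as in the proof of \Cref{COR:admisexist}(ii). Enumerate the loops of $\G_A$ as $\{\alpha_i\}$ with based classes $a_i$; because $C_L$ contains an end accumulated by loops, there are infinitely many of them and they leave every compact set. It suffices to show that $g(\alpha_i)$ homotopes into $\G_A$ — equivalently $g_*(a_i)\in\pi_1(\G_A)=A$, i.e. $a_i\in f_*(A)$ — for all but finitely many $i$: local finiteness then confines the exceptional loops to some ball $B_k(x_0)$, so that the classes of all loops at distance $\ge k$ lie in $f_*(A)$ and generate $A_{-k}$, giving $A_{-k}\le f_*(A)$. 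To prove this, choose a compact $W\subset\G$ with $x_0\in W$ such that each component of $\G\setminus W$ has end space contained entirely in $C_L$ or entirely in $C_R$, with the $C_L$-components lying in $\G_A$; such $W$ exists because $C_L\sqcup C_R$ is a clopen partition of the compact space $E(\G)$. Since $g$ is proper, $g^{-1}(W)$ is compact and meets only finitely many $\alpha_i$; for every other $i$ the connected set $g(\alpha_i)$ avoids $W$ and so lies in a single component of $\G\setminus W$. If infinitely many such $g(\alpha_i)$ lay in $C_R$-components, then picking $p_i\in\alpha_i$ and passing to a subsequence with $p_i\to e$ in the end compactification $\hat\G$, we would have $e\in C_L$ (the $\alpha_i$ sit in $C_L$-components and leave every compact set), while the continuous extension $\hat g$ fixes ends, so $g(p_i)=\hat g(p_i)\to e\in C_L$; this contradicts $g(p_i)\in g(\alpha_i)$ lying in a $C_R$-component, whose closure accumulates only onto $C_R$. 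Hence cofinitely many $g(\alpha_i)$ lie in $\G_A$, proving the claim.

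The main obstacle is precisely this last step: no purely group-theoretic bound on the ambient coranks suffices, because the corank of an intersection of two finite-corank free factors of a free group can be infinite — for instance $V\cap aVa^{-1}$ is trivial when $F=V*\langle a\rangle$ — so finiteness genuinely requires that $f$ be an honest end-preserving proper homotopy equivalence rather than an arbitrary automorphism. One must also track basepoints carefully when converting the containment $g_*(A_{-k})\le A$ into statements about based loops, following the conventions set in the remark preceding \Cref{COR:admisexist}. With the claim established the proof is complete, the statement for $B$ being entirely symmetric.
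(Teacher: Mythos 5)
Your scaffolding is sound — that $f_*(A)\cap A$ is a free factor of $A$ (via \Cref{lem:freefactorint} and \Cref{LEM:subfreefact}), that $A_{-k}$ has finite corank in $A$, and that additivity (\Cref{LEM:corkadditive}) would finish the proof — and your end-compactification argument does correctly show that for cofinitely many loops $\alpha_i$ of $\G_A$ the image $g(\alpha_i)$ lies in a component of $\G\setminus W$ contained in $\G_A$. But the step you pass over with the word ``equivalently'' is where the proof fails: containment of the \emph{unbased} loop $g(\alpha_i)$ in $\G_A$ only shows that $g_*(a_i)$ is \emph{conjugate} into $A$, not that $g_*(a_i)\in A$. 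The based class is $[g(\gamma_i)\ast g(\alpha_i)\ast g(\gamma_i)^{-1}]$, and the image $g(\gamma_i)$ of the connecting tree path can leave $\G_A$ even when $g(\alpha_i)$ does not; since free factors are malnormal, a conjugator outside $A$ destroys membership. Concretely, let $w$ be the based class of a loop of $\G_B$ and let $f=\wm{w,I}$ be a word map with $I$ an interval in an edge of $\G_A$ at distance $d$ from $x_0$, chosen so that infinitely many loops of $\G_A$ lie beyond $I$ (e.g.\ take $\G$ to be the ladder $\Lambda$). Then $f\in\PMapc(\G)\subset\PPHE(\G)$, and $g=f^{-1}$ is the identity on every loop $\alpha_i$, so your geometric criterion is satisfied for \emph{every} $i$; yet $f_*(a_i)=wa_iw^{-1}$ and hence $g_*(a_i)=w^{-1}a_iw\notin A$ for every $a_i$ beyond $I$ (no letter of $w\in B$ cancels), so $a_i\notin f_*(A)$ for all such $i$ and $A_{-k}\not\le f_*(A)$ for every $k$. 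Your central claim is therefore false as stated, not merely in need of the ``careful basepoint tracking'' you defer: with this identification of based fundamental groups one computes $f_*(A)\cap A=\langle a_j:\ \alpha_j \text{ between } I \text{ and } x_0\rangle$, and only after twisting $f_*$ by an inner automorphism (a different choice of identification path from $x_0$ to $f(x_0)$) does one recover a finite-corank intersection. The conjugation ambiguity is thus the whole difficulty of the lemma, and no routine bookkeeping makes your intermediate statement true.

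This is also where your route genuinely diverges from the paper's. The paper never asserts a containment $A_{-k}\le f_*(A)$; it argues by contradiction, using \Cref{COR:admisexist} to place $A$ and $f_*(A)$ as finite-corank free factors of a common $A_m$ and then applying additivity, so that the quantities it manipulates are $\cork(A_m,A)$ and $\cork(A_m,f_*(A))$ — coranks of the factors themselves, which involve no intersection and are stable under the inner-automorphism ambiguity (by additivity against coranks in $F$, which are conjugation-invariant) — with properness of $g$ supplying the final contradiction as in the proof of \Cref{COR:admisexist}(ii). Your reduction via $A_{-k}\le f_*(A)$ is strictly stronger than the lemma and is the wrong intermediate statement. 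To repair your argument you would need either to follow the paper's corank bookkeeping, or to prove a conjugacy-normalized version of your claim (say $uA_{-k}u^{-1}\le f_*(A)$ for some $u\in F$) \emph{together with} a separate argument that the intersection $f_*(A)\cap A$ — which is not conjugation-equivariant — nevertheless has finite corank; your current proof contains neither.
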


\begin{proof}
    By symmetry, it suffices to show $\cork(A,f_*(A) \cap A)<\infty$.
    Suppose, for the sake of contradiction, $\cork(A, f_*(A) \cap A) =\infty$.
    By \Cref{COR:admisexist}, there exists $A_m$ that contains $A$ and $f_*(A)$ as free factors with finite corank. By the assumption $\cork(A_m,f_*(A) \cap A) = \cork(A_m,A) + \cork(A,f_*(A) \cap A) = \infty$. Because $\cork(A_m,A)<\infty$, we can find an \emph{infinite} list of distinct basis elements $a_i \in A_m$, such that $a_i \not\in f_*(A)$, but this contradicts $\cork(A_m,f_*(A)) <\infty$.
\end{proof}

Since properly homotopic proper homotopy equivalences induce the same isomorphisms on $\pi_1$,
the quantities $\cork(A, f_*(A) \cap A)$ and $\cork(B,f_*(B) \cap B)$ are well-defined on $\PMap(\G)$.

\begin{DEF}
    Define the \textbf{displacement function} based at $x_0$ on $\PMap(\G)$ to be the map $\cD_{x_{0}}: \PMap(\G) \to \Z_{\ge 0}$, defined as:
    \[
        \cD_{x_0}(f) = \cork(A, f_{*}(A) \cap A) + \cork(B, f_{*}(B) \cap B).
    \]
    We will omit the basepoint $x_0$ and call such map a displacement function on $\PMap(\G)$ and write $\cD$.
\end{DEF}
 By \Cref{lem:finiteDisplacement}, $\mathcal{D}$ only admits finite values. We first show that $\cD$ satisfies the axioms for a length function other than symmetry. 

\begin{PROP}\label{prop:displacementLength}
    $\cD$ is continuous, $\cD(\Id)=0$, and satisfies the triangle inequality.
\end{PROP}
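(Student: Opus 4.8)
The plan is to check the three properties separately, the triangle inequality being the only substantial one. The identity $\cD(\Id)=0$ is immediate: $\Id_*$ fixes $A$ and $B$, so $\Id_*(A)\cap A=A$ and $\Id_*(B)\cap B=B$, and $\cork(A,A)=\cork(B,B)=0$.

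For continuity, since $\cD$ is $\Z_{\ge 0}$-valued it suffices to show it is locally constant. Fix $f$ and a connected compact $K\subset\G$ containing the separating edge whose midpoint is $x_0$; then $f\cV_K$ is an open neighborhood of $f$, and I claim $\cD\equiv\cD(f)$ on it. Any $u\in\cV_K$ fixes $K$ pointwise and preserves each component of $\G\setminus K$, so it preserves $\G_A$ and $\G_B$ setwise and fixes $x_0$; as $u^{-1}\in\cV_K$ as well, $u_*$ restricts to an automorphism of $A=\pi_1(\G_A,x_0)$ and of $B$, giving $u_*(A)=A$ and $u_*(B)=B$. Hence $(fu)_*(A)\cap A=f_*(u_*(A))\cap A=f_*(A)\cap A$, and similarly for $B$, so $\cD(fu)=\cD(f)$.

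For $\cD(fg)\le\cD(f)+\cD(g)$ it is enough to prove the $A$-summand is subadditive (the $B$-summand is identical). Write $\alpha:=f_*$, $\beta:=g_*$ (both automorphisms of $F$, so $(fg)_*=\alpha\beta$), and set $P:=\beta(A)\cap A$, $Q:=\alpha(A)\cap A$; I want
\[
\cork(A,\alpha\beta(A)\cap A)\le\cork(A,Q)+\cork(A,P).
\]
By \Cref{lem:freefactorint} every subgroup appearing below is a free factor of $A$, so \Cref{LEM:corkadditive} supplies both additivity along nested chains and the resulting monotonicity ``larger factor, smaller corank.'' Since $P\le\beta(A)$ gives $\alpha(P)\le\alpha\beta(A)$, we get $\alpha(P)\cap A\le\alpha\beta(A)\cap A$ and hence $\cork(A,\alpha\beta(A)\cap A)\le\cork(A,\alpha(P)\cap A)$. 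As $\alpha(P)\le\alpha(A)$ we have $\alpha(P)\cap A=\alpha(P)\cap Q\le Q\le A$, so
\[
\cork(A,\alpha(P)\cap A)=\cork(A,Q)+\cork(Q,\alpha(P)\cap Q).
\]
The term $\cork(A,Q)$ is the $A$-part of $\cD(f)$, so the proof reduces to
\[
\cork(Q,\alpha(P)\cap Q)\le\cork(\alpha(A),\alpha(P))=\cork(A,P),
\]
the last equality by $\alpha$-invariance of $\cork$, with the right-hand side the $A$-part of $\cD(g)$.

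This last inequality is an instance of the following algebraic statement, which I expect to be the crux: if $R$ and $S$ are free factors of a free group $C$ with $\cork(C,R)<\infty$, then $\cork(S,R\cap S)\le\cork(C,R)$. To prove it, write $C=R*R'$ with $\rk R'=\cork(C,R)$, let $q\colon C\to C/\langle\langle R\rangle\rangle\cong R'$, and use \Cref{lem:freefactorint} to write $S=(R\cap S)*S''$. Then $q(R\cap S)=1$ and $S''\cap R=1$, and the heart of the matter is to show $q|_{S''}$ is injective, for then $\rk S''=\rk q(S'')\le\rk R'$, i.e.\ $\cork(S,R\cap S)=\rk S''\le\cork(C,R)$. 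This injectivity is exactly where the free-factor hypothesis on $S$ is essential (it fails for arbitrary subgroups), and I would establish it using the Kurosh subgroup theorem together with the homological rigidity of free factors, or by a Stallings-graph argument in the spirit of \Cref{LEM:subfreefact,lem:freefactorint}. Granting the claim, the displayed inequalities combine to give subadditivity of the $A$-summand, and the identical argument for $B$ completes the triangle inequality; since continuity and normalization are routine, this corank-monotonicity claim is the sole point requiring genuinely new work.
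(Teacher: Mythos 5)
Your verification that $\cD(\Id)=0$ and your continuity argument are correct and essentially the paper's own (the paper checks local constancy by multiplying by $\cV_{x_0}$ on the left, you by $\cV_K$ on the right; both work for the same reason, $u_*(A)=A$ and $u_*(B)=B$). Your reduction of the triangle inequality is also sound as far as it goes: by \Cref{lem:freefactorint}, \Cref{LEM:subfreefact} and additivity (\Cref{LEM:corkadditive}), the monotonicity steps are legitimate, and you correctly isolate
\[
\cork\bigl(Q,\alpha(P)\cap Q\bigr)\le\cork\bigl(\alpha(A),\alpha(P)\bigr)=\cork(A,P)
\]
as the remaining point. The genuine gap is that the general statement you reduce to is \emph{false}: it is not true that free factors $R,S$ of a free group $C$ with $\cork(C,R)<\infty$ satisfy $\cork(S,R\cap S)\le\cork(C,R)$. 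Counterexample: in $C=F(a,b,c)$ take $R=\langle a,b\rangle$, so $\cork(C,R)=1$, and $S=\langle ac,\,cb\rangle$. Since $\{ac,cb,c\}$ generates $C$ and has cardinality $3=\rk(C)$, it is a basis, so $S$ is a rank-$2$ free factor. Writing $x=ac$, $y=cb$ and checking all twelve admissible junctions between $x^{\pm1},y^{\pm1}$ shows no cancellation ever occurs when a reduced word in $x,y$ is rewritten in $a,b,c$; hence every nontrivial element of $S$ contains a letter $c^{\pm1}$, so $R\cap S=1$ and $\cork(S,R\cap S)=\rk S=2>1=\cork(C,R)$. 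The same example defeats both steps of your sketched proof of the claim: for $q\colon C\to C/\langle\langle R\rangle\rangle\cong\langle c\rangle$ one has $S''=S$ and $q(ac)=q(cb)=c$, so $q|_{S''}$ is \emph{not} injective even though $S$ is a free factor; and even where injectivity holds, $q(S'')$ is merely a subgroup of a free group of rank $p=\cork(C,R)$, and such subgroups have unbounded rank, so $\rk q(S'')\le p$ would not follow anyway.

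The paper's proof is structured differently and does not pass through a one-map-at-a-time monotonicity statement: it decomposes $A=(g_*f_*(A)\cap A)\ast H$ with $\rk H=r$, transports the complement $H$ through $f_*$, splits $f_*(H)=(f_*(H)\cap A)\ast H'$ with $H'\cap A=1$, and then bounds the two pieces separately, getting $m\le q$ from the rank-$m$ free factor $f_*(H)\cap A$ of $A$ whose image under $g_*$ meets $A$ trivially, and $r-m\le p$ from the rank-$(r-m)$ free factor $f_*^{-1}(H')$ of $A$, so that $r\le p+q$. Note that those counting steps also exploit that the free factors in question sit inside $A$ itself and arise from the single complement $H$ of $g_*f_*(A)\cap A$; your route instead abstracts the last inequality into a standalone claim about an arbitrary pair of free factors, and the counterexample above shows that abstraction loses exactly the information needed. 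To salvage your argument you would have to use the specific structure of $Q=\alpha(A)\cap A$ and $\alpha(P)$ (or revert to the paper's simultaneous decomposition), since the ``crux'' lemma as you state it is simply not a theorem.
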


\begin{proof}
  First, $\cD(\Id)=\cork(A,A)+\cork(B,B)=0$. Let $\cV_{x_{0}}$ be a neighborhood about the identity in $\PMap(\G)$ that fixes $x_{0}$ and stabilizes the complementary components of $x_{0}$. Let $g\in\cD\inv(\{n\})$ and $h\in \cV_{x_0}$. Because $h_{*}(A)=A$ and $h_{*}(B)=B$, we have that $\cD(h)=0$ and $\cD(hg)=\cD(g)$. So, $\cV_{x_{0}}g \subset \cD^{-1}(\{n\})$ is an open neighborhood of $g$ in $\cD\inv(\{n\})$, and $\cD$ is continuous.

  Now we show that $\cD(gf) \le \cD(g) + \cD(f)$ for $f,g \in \PMap(\G)$.
  First, decompose
  \[
    A = (g_{*}f_{*}(A) \cap A) \ast H,
  \]
  for some free factor $H$ of $A$.
  By \Cref{lem:finiteDisplacement}, we can define
  \begin{align*}
    r&:= \rk (H) = \cork(A, g_{*}f_{*}(A) \cap A) < \infty, \\
    p&:= \cork(A, f_{*}(A) \cap A) < \infty, \\
    q&:= \cork(A, g_{*}(A) \cap A) < \infty.
  \end{align*}  
  Let $H=\langle a_{1},\ldots,a_{r} \rangle$.
  Further decompose $f_{*}(H)$ as
  \[
    f_{*}(H) = (f_{*}(H) \cap A) \ast H'
  \]
  for some free factor $H'$ of $f_{*}(H)$. Because $\rk(f_{*}(H)) = \rk(H)=r<\infty$, it follows that
  both $f_{*}(H) \cap A$ and $H'$ are of finite rank $\le r$. Pick bases
  \begin{align*}
    f_{*}(H) \cap A &=: \langle a_{1}',\ldots,a_{m}' \rangle \subset A, \\
    H' &=: \langle w_{m+1},\ldots,w_{r} \rangle.
  \end{align*}
   By construction $H' \cap A = 1.$ Now
  \[
    g_{*}f_{*}(H) = \langle g_{*}(a_{1}'),\ldots,g_{*}(a_{m}') \rangle \ast \langle g_{*}(w_{m+1}),\ldots,g_{*}(w_{r}) \rangle.
  \]
  By construction of $H$, we have $g_*f_*(H) \cap A =1,$ and in particular
  \begin{equation} \label{eq:corkgstar}\tag{$\star$}
     g_*\langle a_1',\ldots,a_m' \rangle \cap A = \langle g_*(a_1'),\ldots,g_*(a_m')\rangle \cap A = 1.
  \end{equation}
  Because $\langle a_1',\ldots,a_m' \rangle$ is a rank $m$ free factor of $A$, \Cref{eq:corkgstar} shows that the corank in $A$ of $g_*(A)\cap A$ is at least $m$. That is, $m\leq q$.
 
  On the other hand, because $g_*f_*(H) \cap A =1$, we also have that
  \begin{equation} \label{eq:corkfstar}\tag{$\star\star$}
      f_*\langle f_*^{-1}(w_{m+1}),\ldots,f_*^{-1}(w_r) \rangle \cap A = \langle w_{m+1},\ldots,w_{r} \rangle \cap A = 1,
  \end{equation}
  where $\langle f_*^{-1}(w_{m+1}),\ldots,f_*^{-1}(w_r) \rangle \subset H$ and is a rank $r-m$ free factor of $A$.
  By a similar argument as before, we have $r-m \le p$ by \Cref{eq:corkfstar}.

  All in all, we proved $r=(r-m)+m\le p+q$, hence
  \[
    \cork(A, g_{*}f_{*}(A) \cap A) \le \cork(A, f_{*}(A) \cap A) + \cork(A, g_{*}(A) \cap A).
  \]
  Doing the same for $\cork(B,g_{*}f_{*}(B) \cap B)$, we obtain $\cD(gf) \le \cD(g) + \cD(f)$, concluding the proof.
\end{proof}

Now we symmetrize $\cD$ to make it a length function:
\[
  |\cD|(f):= \frac12\left(\cD(f) + \cD(f^{-1})\right),
\]
for $f \in \PMap(\G)$. Symmetrizing also gives that $|\cD|(f^{-1})=|\cD|(f)$, so $|\cD|$ is a length function by \Cref{prop:displacementLength}.
\begin{COR}\label{cor:displacementMetric}
     The absolute displacement function $|\cD|$ induces a continuous, left-invariant, pseudo-metric $d_{\cD}$ on $\PMap(\G)$.
\end{COR}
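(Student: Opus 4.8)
The plan is to set $d_{\cD}(f,g) := |\cD|(f^{-1}g)$ for $f,g \in \PMap(\G)$, which is exactly the construction by which a length function induces a left-invariant pseudo-metric, as already observed in \Cref{rmk:unbddlengthftn}. Since the discussion preceding the corollary records that $|\cD|$ is a length function---it is continuous, satisfies $|\cD|(\Id)=0$ and the triangle inequality by \Cref{prop:displacementLength}, and is symmetric ($|\cD|(f^{-1})=|\cD|(f)$) by construction of the symmetrization---the entire content of the corollary reduces to routine verifications, and I would simply run through them.

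First I would check the pseudo-metric axioms. The identity $d_{\cD}(f,f) = |\cD|(\Id) = 0$ is immediate from $\cD(\Id)=0$. Symmetry follows from $|\cD|(f^{-1})=|\cD|(f)$, since $d_{\cD}(g,f) = |\cD|(g^{-1}f) = |\cD|\left((f^{-1}g)^{-1}\right) = |\cD|(f^{-1}g) = d_{\cD}(f,g)$. For the triangle inequality I would insert $g^{-1}g$ and use subadditivity of $|\cD|$:
\[
  d_{\cD}(f,h) = |\cD|(f^{-1}h) = |\cD|\left(f^{-1}g \cdot g^{-1}h\right) \le |\cD|(f^{-1}g) + |\cD|(g^{-1}h) = d_{\cD}(f,g) + d_{\cD}(g,h).
\]
Left-invariance is then a one-line cancellation: for any $k \in \PMap(\G)$,
\[
  d_{\cD}(kf,kg) = |\cD|\left((kf)^{-1}(kg)\right) = |\cD|(f^{-1}k^{-1}kg) = |\cD|(f^{-1}g) = d_{\cD}(f,g).
\]

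Continuity is the last point, and here I would argue that $|\cD|$ is continuous because $\cD$ is continuous by \Cref{prop:displacementLength}, inversion is continuous in the topological group $\PMap(\G)$, and hence $|\cD| = \frac12\left(\cD(f) + \cD(f^{-1})\right)$ is continuous as a sum of continuous functions; composing with the continuous map $(f,g)\mapsto f^{-1}g$ shows $d_{\cD}$ is continuous. There is no genuine obstacle here: once $|\cD|$ is known to be a length function, this corollary is precisely the standard passage from a length function to a left-invariant pseudo-metric, with each step being a single application of one of the length-function axioms or of continuity of the group operations. The only substantive work---establishing finiteness, additivity under composition, and continuity of the displacement---was already carried out in \Cref{lem:finiteDisplacement} and \Cref{prop:displacementLength}.
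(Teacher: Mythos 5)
Your proof is correct and follows the same route as the paper: the paper also defines $d_{\cD}(g,h) := |\cD|(g^{-1}h)$ and cites the continuity of $\cD$ from \Cref{prop:displacementLength}, treating the remaining pseudo-metric axioms as the standard passage from a length function to a left-invariant pseudo-metric. You have simply written out explicitly the routine verifications the paper leaves implicit, which is fine.
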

\begin{proof}
    Define $d_{\cD}(g,h):= |\cD|(g^{-1}h)$.
    The continuity follows from the continuity of $\cD$, justified in \Cref{prop:displacementLength}.
\end{proof}

For graphs $\G$ with $|E_\ell| \ge 2$ and $\PMap(\G)$ locally CB,
let $d_{\max}$ be the continuous left-invariant and coarsely proper pseudo-metric on $\PMap(\G)$ obtained from \Cref{prop:LocCBCPmetric}.
Then we can compute $\asdim \PMap(\G)$ as $\asdim (\PMap(\G),d_{\max})$, and are ready to prove \Cref{thm:asdimInf}.

\begin{proof}[Proof of \Cref{thm:asdimInf}]
Fix $k \ge 1$ and we will show that we can obtain an \emph{isometric} embedding $(\Z^k,\ell^1) \hookrightarrow (\PMap(\G),d_{\cD})$ from the following class of loop shifts. The $\ell^{1}$-metric is defined on $\Z^k$ as:
\[
    \ell^1\left((e_1,\ldots,e_k),(e'_1,\ldots,e'_k)\right)=\sum_{i=1}^k|e_i-e'_i|.
\]

First, consider the ladder graph $\Lambda$ in standard form as in \Cref{fig:laddergraph}. Label the loops of $\Lambda$ by $\Z$, and for $i=0,\ldots,k-1$, let $\Lambda_i$ be the smallest connected subgraph of $\Lambda$ that consists of the loops labeled by $i$ modulo $k$. Embed $\Lambda$ into $\G$ and now let $\Lambda_i$ refer to its image in $\G$. Let $h_i$ be a loop shift supported on $\Lambda_i \subset \G$. Pick a base point $x_0 \in (v_0,v_1) \subset \Lambda$, the edge connecting the $0$th loop and $1$st loop of $\Lambda$. Then we obtain the absolute displacement function $|\cD|$ on $\G$. Note that any two loop shifts $h_i$ and $h_j$ commute, and
\[
  |\cD|(h_{0}^{e_{0}}\cdots h_{k-1}^{e_{k-1}}) = |e_{0}| + \ldots + |e_{k-1}|,
\]
by the definition of $|\cD|$.
Hence, $\{h_0,\ldots,h_{k-1}\}$ generate a group $H$ isomorphic to $\Z^k$. Therefore, the map
\[
    (\Z^k,\ell^1) \longrightarrow (H,d_{\cD}|_H), \qquad  (e_0,\cdots,e_{k-1}) \mapsto h_0^{e_0}\ldots h_{k-1}^{e_{k-1}}
\]
is an isometry and an isomorphism.
In particular, we have that the map $(\Z^{k},\ell^{1}) \to (H,d_{\cD}|_{H})$ is a coarse equivalence.

Now consider the following commutative diagram: 
\[\begin{tikzcd}
	{(\Z^{k},\ell^1)} && {(H,d_{\cD}|_H)} & {} & {(\PMap(\G),d_{\cD})} \\
	\\
	&& {(H,d_{\max}|_H)} && {(\PMap(\G),d_{\max})}
	\arrow[hook, from=1-3, to=1-5]
	\arrow[hook, from=3-3, to=3-5]
	\arrow["{\iota_1}"', from=1-3, to=3-3]
	\arrow["{\iota_2}", from=3-5, to=1-5]
	\arrow["\cong", from=1-1, to=1-3]
\end{tikzcd}\]
where $\iota_{1}$ and $\iota_{2}$ are the identity maps, and the horizontal inclusions are isometric embeddings. We will check that $\iota_{1}$ is a coarse embedding. Using \Cref{prop:WDCEtype} again, $\iota_{1}$ and $\iota_{2}$ are coarsely Lipschitz because $d_{\cD}|_{H}$ is a coarsely proper metric on $H$ and so is $d_{\max}$ of $\PMap(\G)$. Hence, $\iota_{2}^{-1}$ admits a lower control function $\Phi_{-}:[0,\infty) \to [0,\infty]$, such that for any $g,g' \in \PMap(\G)$:
\[
  \Phi_{-}(d_{\cD}(g,g')) \le d_{\max}(\iota_{2}^{-1}g, \iota_{2}^{-1}g')=d_{\max}(g,g').
\]
Therefore, for any $h,h' \in H \le \PMap(\G)$:
\[
  \Phi_{-}(d_{\cD}|_{H}(h,h')) = \Phi_{-}(d_{\cD}(h,h'))
  \le d_{\max}(h,h') = d_{\max}|_{H}(\iota_{1}(h),\iota_{1}(h')),
\]
proving that $\iota_{1}$ is coarsely expansive. Therefore $\iota_{1}$ is a coarse embedding.
All in all, the composition
\[
  (\Z^{k},\ell^{1}) \longrightarrow (H,d_{\cD}|_{H}) \overset{\iota_{1}}{\longrightarrow} (H,d_{\max}|_{H})  \lhook\joinrel\longrightarrow (\PMap(\G),d_{\max})
\]
is a coarse embedding. This yields an inequality by \Cref{lem:asdimCE}:
\[
  k = \asdim(\Z^{k},\ell^{1}) \le \asdim(\PMap(\G),d_{\max}) = \asdim(\PMap(\G)).
\]
Since $k \ge 1$ was chosen arbitrarily, we conclude $\asdim(\PMap(\G)) = \infty$. 
\end{proof}


\appendix

\section{Ultrametric Spaces are $0$-Hyperbolic}

In this appendix we give a proof of \Cref{lem:ultram0hyp}, that every ultrametric space is 0-hyperbolic. This is a well known fact \cite[Exercise 1]{Duchin2017} but as of yet we have not found a written proof in the literature. 
Recall that Gromov $\delta$-hyperbolicity can be illustrated with the Gromov product as follows: Given a metric space $(X,d)$, define a Gromov product $(x,y)_w:= \frac12 \{d(x,w) + d(y,w) - d(x,y)\}$ for $w,x,y\in X$. Then $X$ is Gromov $\delta$-hyperbolic if and only if
\[
    (x,y)_w \geq \min\{(x,z)_w, (y,z)_w \} - \delta,
\]
for all $w,x,y,z \in X$.

\begin{LEM}
\label{lem:ultram0hyp2}
Every ultrametric space is Gromov 0-hyperbolic.
\end{LEM}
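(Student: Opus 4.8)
The plan is to fix the basepoint $w$ and rephrase everything in terms of the \emph{radii} $r_p := d(p,w)$. Since the inequality to be verified is exactly $(x,y)_w \ge \min\{(x,z)_w,(y,z)_w\}$ with $\delta = 0$, and since $(a,b)_w = \tfrac12(r_a + r_b - d(a,b))$, the whole statement involves only the three radii $r_x,r_y,r_z$ together with the mutual distances $d(x,y),d(x,z),d(y,z)$. First I would record two elementary consequences of the ultrametric axiom $d(a,b)\le\max\{r_a,r_b\}$: the lower bound $(a,b)_w \ge \tfrac12\min\{r_a,r_b\}$, and the sharper ``isosceles'' statement that $(a,b)_w = \tfrac12\min\{r_a,r_b\}$ \emph{whenever} $r_a\ne r_b$ (because then, comparing $d(a,b),r_a,r_b$ in the triangle $\{a,b,w\}$, the strictly larger of $r_a,r_b$ is forced to equal $d(a,b)$). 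I would also note the trivial upper bound $(a,b)_w\le\min\{r_a,r_b\}$ coming from the ordinary triangle inequality $d(a,b)\ge|r_a-r_b|$.

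Next I would run a short case analysis on the relative sizes of $r_x,r_y,r_z$; by the symmetry of the target inequality in $x$ and $y$, I may assume $r_x\le r_y$. When the three radii are pairwise distinct, the isosceles identity computes all three Gromov products explicitly as halves of the relevant smaller radii, and the inequality becomes a transparent comparison — for instance, in the regime $r_x<r_z<r_y$ both sides equal $\tfrac12 r_x$. The cases with exactly two equal radii are handled the same way, applying the lower bound $\ge\tfrac12\min$ to the tied pair and the upper bound $\le\min$ to control the minimum on the right-hand side.

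The one genuinely substantive case — and the place I expect the real content to sit — is when all three radii coincide, say $r_x=r_y=r_z=t$. There the isosceles identity is unavailable, and instead $(a,b)_w = t-\tfrac12 d(a,b)$, so that the claimed inequality $(x,y)_w\ge\min\{(x,z)_w,(y,z)_w\}$ collapses precisely to $d(x,y)\le\max\{d(x,z),d(y,z)\}$, i.e. to the ultrametric axiom itself. Thus the only ``hard'' case is exactly the defining property, and there is no hidden difficulty beyond bookkeeping. I would close by remarking that one could alternatively route through the four-point condition, checking that $(x,y)_w\ge\min\{(x,z)_w,(y,z)_w\}$ is equivalent to $d(x,y)+d(z,w)\le\max\{d(x,z)+d(y,w),\,d(x,w)+d(y,z)\}$ and then verifying that inequality; but keeping the basepoint $w$ fixed and working with radii is the cleaner route.
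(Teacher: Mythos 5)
Your proof is correct, and it takes a genuinely different route from the paper's. The paper first rewrites $0$-hyperbolicity as a four-point condition on the six mutual distances, namely that $d(x,y)+d(z,w)\le\max\{d(x,z)+d(y,w),\,d(x,w)+d(y,z)\}$ (exactly the equivalence you note in your closing remark), then argues by contradiction: assuming both inequalities fail, it labels the six distances and runs a two-case analysis according to which sides of the triangle $\triangle xyz$ are the tied long sides of the isosceles configuration, deriving chains of forced equalities and strict inequalities among all four triangles until a contradiction such as $A_1>B_1>A_2>B_2>A_1$ appears. You instead keep the basepoint $w$ fixed throughout and reduce everything to the radii $r_p=d(p,w)$: your isosceles identity $(a,b)_w=\tfrac12\min\{r_a,r_b\}$ for $r_a\ne r_b$ (valid because the strict maximum of $r_a,r_b$ is forced to equal $d(a,b)$), together with the elementary bounds $\tfrac12\min\{r_a,r_b\}\le (a,b)_w\le\min\{r_a,r_b\}$, disposes of every case with not-all-equal radii by direct computation, and the remaining case $r_x=r_y=r_z$ collapses the desired inequality to the ultrametric axiom for the triple $x,y,z$ itself. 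What your approach buys is a direct (non-contradiction) verification that isolates exactly where the ultrametric inequality among $x,y,z$ is used, and it avoids juggling six distances simultaneously; what the paper's approach buys is that it works straight from the basepoint-symmetric four-point condition, so no case analysis on the position of $w$ relative to $x,y,z$ is ever needed --- though in your formulation that case analysis is only on the order type of three real numbers, which is mild. One small point of bookkeeping: in the case $r_x<r_y=r_z$, the upper bound $(y,z)_w\le\min\{r_y,r_z\}$ does not by itself control the right-hand minimum (it gives $\le r_y$, not $\le\tfrac12 r_x$); what closes that case is the exact value $(x,z)_w=\tfrac12 r_x$ on the untied pair, which your exact identities do supply, so the argument stands as written.
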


\begin{proof}
    Suppose $(X,d)$ is ultrametric space.
    We first paraphrase what it means to be 0-hyperbolic: For every $w,x,y,z\in X$,
    \begin{flalign*}
        &\phantom{{}\Leftrightarrow{}}(x,y)_w \ge \min \left\{(x,z)_w, (y,z)_w\right\} \\ 
        &\Leftrightarrow d(x,w)+d(y,w)-d(x,y) \ge \min\left\{d(x,w)+d(z,w)-d(x,z), \ d(y,w)+d(z,w)-d(y,z)\right\} \\
        &\Leftrightarrow d(x,w)+d(y,w)-d(x,y) \ge d(x,w)+d(z,w)-d(x,z) \quad \text{OR}
        \\ &\phantom{{}\Leftrightarrow{}} d(x,w)+d(y,w)-d(x,y) \ge d(y,w)+d(z,w)-d(y,z) \\
        &\Leftrightarrow d(x,z)+d(y,w) \ge d(x,y) +d(z,w) \quad \text{OR} \quad d(y,z)+d(x,w) \ge d(x,y) + d(z,w).
    \end{flalign*}

    Now, for the sake of contradiction, suppose there exist $w,x,y,z\in X$ such that:
    \begin{equation} \tag{$\ast$}
    \begin{aligned}
    \label{ineq:toward0hyp}
    &d(x,z)+d(y,w) < d(x,y) + d(z,w) \quad \text{AND} \\ &d(y,z)+d(x,w) < d(x,y) + d(z,w).
    \end{aligned}
    \end{equation}
    For the notational simplicity, write:
    \begin{align*}
        A_1 := d(x,z), \qquad A_2 := d(y,w) \\
        B_1 := d(y,z), \qquad B_2 := d(x,w) \\
        C_1 := d(x,y), \qquad C_2 := d(z,w)
    \end{align*}
    Refer to \Cref{fig:ultratriangle}. 
    \begin{figure}[ht!]
        \centering
        \includegraphics[width=0.3\textwidth]{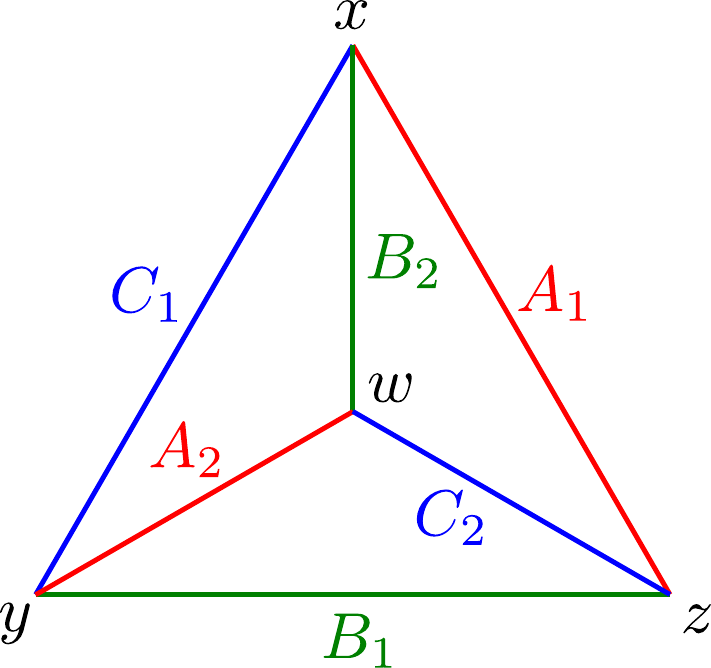}
        \caption{Pictorial representation of notations for the distances of pairs of $x,y,z,w\in X$ in the proof of \Cref{lem:ultram0hyp2}.}
        \label{fig:ultratriangle}
    \end{figure}
    Then our assumption \eqref{ineq:toward0hyp} can be rewritten as:
    \begin{align}
    \label{ineq:0hyperbolic1}
        &A_1 + A_2 < C_1 + C_2 \qquad \text{AND} \tag{i}\\
    \label{ineq:0hyperbolic2}
        &B_1 + B_2 < C_1 + C_2. \tag{ii}
    \end{align}
    Note that the ultrametric property is equivalent to saying that every triangle in \Cref{fig:ultratriangle} is isosceles, with the two identical sides no smaller than the third.
    From this observation, it breaks into two cases in regards to the triangle $\triangle xyz$.
    \begin{CASE}
    $C_1 \le A_1 = B_1$.
    \end{CASE}
    We proceed as follows:
    \begin{align*}
        &A_2 < C_2, \qquad B_2 < C_2 &&\because \text{\eqref{ineq:0hyperbolic1}, and \eqref{ineq:0hyperbolic2}} \\
        &B_1 = C_2 > A_2  &&\because\text{Triangle $\triangle yzw$} \\
        &A_1 = C_2 > B_2  &&\because\text{Triangle $\triangle xzw$} \\
        &C_1 > A_2, \qquad C_1 > B_2 &&\because \text{\eqref{ineq:0hyperbolic1}, and \eqref{ineq:0hyperbolic2}}.
    \end{align*}
    Now in Triangle $\triangle xyw$, we have $C_1$ is the only large side and the other two $A_2,B_2$ are strictly smaller. This contradicts $\triangle xyw$ being isosceles with the two identical sides are the largest among the three sides.
    
    \begin{CASE}
    $C_1 = A_1 > B_1$ \qquad \text{OR} \qquad $C_1 = B_1 >A_1$.
    \end{CASE}
    Note the two situations in this case are symmetric by interchanging $A_i$'s with $B_i$'s, so it suffices to consider when $C_1 = A_1 > B_1$. Now:
    \begin{align*}
        &A_2 < C_2 &&\because \text{\eqref{ineq:0hyperbolic1}} \\
        &B_1 = C_2 > A_2  &&\because\text{Triangle $\triangle yzw$} \\
        &C_1 > B_2 &&\because \text{\eqref{ineq:0hyperbolic2}} \\
        &A_2 = C_1 > B_2  &&\because\text{Triangle $\triangle xyw$} \\
        &A_1 < C_2 &&\because \text{\eqref{ineq:0hyperbolic1}} \\
        &B_2 = C_2 > A_1  &&\because\text{Triangle $\triangle xzw$}.
    \end{align*}
    However, now we have a contradiction that $A_1 > B_1 > A_2 > B_2 > A_1$.
\end{proof}

\bibliography{bib}

\begin{thebibliography}{10}

\bibitem{AB2021}
Yael Algom-Kfir and Mladen Bestvina.
\newblock Groups of proper homotopy equivalences of graphs and {N}ielsen
  realization.
\newblock {\em arXiv preprint arXiv:2109.06908}, 2021.

\bibitem{APV2020}
Javier Aramayona, Priyam Patel, and Nicholas~G Vlamis.
\newblock The first integral cohomology of pure mapping class groups.
\newblock {\em International Mathematics Research Notices},
  2020(22):8973--8996, 2020.

\bibitem{aramayona2020big}
Javier Aramayona and Nicholas~G Vlamis.
\newblock Big mapping class groups: an overview.
\newblock {\em In the Tradition of Thurston}, pages 459--496, 2020.

\bibitem{ayala1990proper}
R.~Ayala, E.~Dominguez, A.~M\'{a}rquez, and A.~Quintero.
\newblock Proper homotopy classification of graphs.
\newblock {\em Bull. London Math. Soc.}, 22(5):417--421, 1990.

\bibitem{bestvina2002rtrees}
Mladen Bestvina.
\newblock {$\mathbb{R}$}-trees in topology, geometry, and group theory.
\newblock In {\em Handbook of Geometric Topology}, pages 55--91. North-Holland,
  Amsterdam, 2002.

\bibitem{bestvina2014geometry}
Mladen Bestvina.
\newblock Geometry of {O}uter {S}pace.
\newblock In Mladen Bestvina, Michah Sageev, and Karen Vogtmann, editors, {\em
  Geometric {G}roup {T}heory}, volume~21, pages 173--206. IAS/Park City
  Mathematics Series, 2014.

\bibitem{cornulier2016}
Yves Cornulier and Pierre de~la Harpe.
\newblock {\em Metric geometry of locally compact groups}, volume~25 of {\em
  EMS Tracts in Mathematics}.
\newblock European Mathematical Society (EMS), Z\"{u}rich, 2016.
\newblock Winner of the 2016 EMS Monograph Award.

\bibitem{Dehn1987}
Max Dehn.
\newblock {\em Papers on group theory and topology}.
\newblock Springer-Verlag, New York, 1987.

\bibitem{DHK2022Aut}
George Domat, Hannah Hoganson, and Sanghoon Kwak.
\newblock The automorphism group of the infinite-rank free group is coarsely
  bounded.
\newblock {\em New York J. Math.}, 28:1506--1511, 2022.

\bibitem{Duchin2017}
Moon Duchin.
\newblock Hyperbolic groups.
\newblock In Matt Clay and Dan Margalit, editors, {\em Office Hours with a
  Geometric Group Theorist}, chapter~9, pages 176--202. Princeton University
  Press, Princeton, 2017.

\bibitem{dudley1961}
R.~M. Dudley.
\newblock Continuity of homomorphisms.
\newblock {\em Duke Math. J.}, 28(4):587--594, 12 1961.

\bibitem{DFV2018}
Matthew~Gentry Durham, Federica Fanoni, and Nicholas~G Vlamis.
\newblock Graphs of curves on infinite-type surfaces with mapping class group
  actions.
\newblock In {\em Annales de l'Institut Fourier}, volume~68, pages 2581--2612,
  2018.

\bibitem{GRV2021}
Curtis Grant, Kasra Rafi, and Yvon Verberne.
\newblock Asymptotic dimension of big mapping class groups.
\newblock {\em arXiv preprint arXiv:2110.03087}, 2021.

\bibitem{Grushko1940}
Igor~Aleksandrovich Grushko.
\newblock On the bases of a free product of groups.
\newblock {\em Matematicheskii Sbornik}, 8(50):169--182, 1940.

\bibitem{Hatcher2002}
A.~Hatcher, Cambridge~University Press, and Cornell University.~Department
  of~Mathematics.
\newblock {\em Algebraic Topology}.
\newblock Algebraic Topology. Cambridge University Press, 2002.

\bibitem{Kerekjarto1923}
B{\'e}la Ker{\'e}kj{\'a}rt{\'o}.
\newblock {\em Vorlesungen uber Topologie}.
\newblock J. Springer, 1923.

\bibitem{Kurosch1934}
Alexander Kurosch.
\newblock Die {U}ntergruppen der freien {P}rodukte von beliebigen {G}ruppen.
\newblock {\em Math. Ann.}, 109(1):647--660, 1934.

\bibitem{Lickorish1964}
William~BR Lickorish.
\newblock A finite set of generators for the homeotopy group of a 2-manifold.
\newblock In {\em Mathematical Proceedings of the Cambridge Philosophical
  Society}, volume~60, pages 769--778. Cambridge University Press, 1964.

\bibitem{mann2022large}
Kathryn Mann and Kasra Rafi.
\newblock Large scale geometry of big mapping class groups.
\newblock {\em Geometry and Topology}, 2022.
\newblock To appear.

\bibitem{PatelVlamis}
Priyam Patel and Nicholas~G. Vlamis.
\newblock Algebraic and topological properties of big mapping class groups.
\newblock {\em Algebr. Geom. Topol.}, 18(7):4109--4142, 2018.

\bibitem{Richards1963}
Ian Richards.
\newblock On the classification of noncompact surfaces.
\newblock {\em Transactions of the American Mathematical Society},
  106(2):259--269, 1963.

\bibitem{roe2003lectures}
John Roe.
\newblock {\em Lectures on coarse geometry}, volume~31 of {\em University
  Lecture Series}.
\newblock American Mathematical Society, Providence, RI, 2003.

\bibitem{roelcke1981uniform}
W.~Roelcke and S.~Dierolf.
\newblock {\em Uniform Structures on Topological Groups and Their Quotients}.
\newblock Advanced book program. McGraw-Hill, 1981.

\bibitem{rosendal2022}
Christian Rosendal.
\newblock {\em Coarse geometry of topological groups}, volume 223 of {\em
  Cambridge Tracts in Mathematics}.
\newblock Cambridge University Press, Cambridge, 2022.

\bibitem{Sklinos2021}
Rizos Sklinos.
\newblock {\em 2 Independence and interpretable structures in nonabelian free
  groups}, pages 51--86.
\newblock De Gruyter, 2021.

\bibitem{stallings1983topology}
John~R Stallings.
\newblock Topology of finite graphs.
\newblock {\em Inventiones Mathematicae}, 71(3):551--565, 1983.

\bibitem{vogtmann2002automorphisms}
Karen Vogtmann.
\newblock Automorphisms of free groups and outer space.
\newblock In {\em Proceedings of the {C}onference on {G}eometric and
  {C}ombinatorial {G}roup {T}heory, {P}art {I} ({H}aifa, 2000)}, volume~94,
  pages 1--31, 2002.

\end{thebibliography}
\bibliographystyle{plain}

\end{document}